\documentclass[11pt]{amsart}
\usepackage{amscd,amssymb}
\usepackage[all]{xy}
\usepackage{stmaryrd}
%%%%%%%%%%%%%%%%%%%%%%%%%%%%%%%%%%%%%%%%%%%%%%%%%%%%%%%
\title{Non-adic formal schemes}
\author{Takehiko Yasuda}
\thanks{Research Fellow of the Japan Society for the Promotion of Science}
\address{Research Institute for Mathematical Sciences, Kyoto University, 
Kyoto, 606-8502, Japan, Fax:075-712-1836}

\email{takehiko@kurims.kyoto-u.ac.jp}

\subjclass[2000]{14A20, 14B10}
%\keywords{formal scheme, foliation}
%%%%%%%%%%%%%%%%%%%%%%%%%%%%%%%%%%%%%%%%%%%%%%%%%%%%%%%
%%%%%%%%%%%%%%%%%%%%%%%%%%%%%%%%%%%%%%%%%%%%%%%%%%%%%%%

\theoremstyle{plain}
\newtheorem{thm}{Theorem}[section]

\newtheorem{prop}[thm]{Proposition}
\newtheorem{cor}[thm]{Corollary}
\newtheorem{lem}[thm]{Lemma}
\newtheorem{prop-defn}[thm]{Proposition-Definition}
\newtheorem{cor-defn}[thm]{Corollary-Definition}

\theoremstyle{definition}
\newtheorem{defn}[thm]{Definition}
\newtheorem{expl}[thm]{Example}

\newtheorem{conv}[thm]{Convention}

\theoremstyle{remark}
\newtheorem{rem}[thm]{Remark}

%%%%%%%%%%%%%%%%%%%%%%%%%%%%%%%%%%%%%%%%%%%%%%%%%%%%
%%%%%  Begin  NEWCOMMAND 
%%%%%%%%%%%%%%%%%%%%%%%%%%%%%%%%%%%%%%%%%%%%%%%%%%%%

\def\AA{\mathbb A}
\newcommand{\CC}{\mathbb C}

\newcommand{\ZZ}{\mathbb Z}

\newcommand{\PP}{\mathbb P}

\newcommand{\NN}{\mathbb N}
\newcommand{\No}{\mathbb{N}_0}

\newcommand{\cF}{\mathcal{F}}
\newcommand{\cG}{\mathcal{G}}
\newcommand{\cH}{\mathcal{H}}
\newcommand{\cI}{\mathcal{I}}
\newcommand{\cJ}{\mathcal{J}}
\newcommand{\cK}{\mathcal{K}}
\newcommand{\cL}{\mathcal{L}}
\newcommand{\cM}{\mathcal{M}}
\newcommand{\cN}{\mathcal{N}}
\newcommand{\cO}{\mathcal{O}}

\newcommand{\cQ}{\mathcal{Q}}

\newcommand{\bA}{\mathbf{A}}

\newcommand{\bC}{\mathbf{C}}
\newcommand{\bD}{\mathbf{D}}

\newcommand{\bF}{\mathbf{F}}

\newcommand{\bK}{\mathbf{K}}
\newcommand{\bP}{\mathbf{P}}

\newcommand{\cX}{\mathcal{X}}

\newcommand{\fp}{\mathfrak{p}}

\newcommand{\fm}{\mathfrak{m}}

\newcommand{\fB}{\mathfrak{B}}

\newcommand{\Ker}{\mathrm{Ker}\,}
\newcommand{\Coker}{\mathrm{Coker}\,}

\newcommand{\Hom}{\mathrm{Hom}}
\newcommand{\Spec}{\mathrm{Spec}\,}
\newcommand{\Spf}{\mathrm{Spf}\,}

\def\Im{\mathrm{Im}}

\newcommand{\Supp}{\mathrm{Supp}\,}

\newcommand{\red}{\mathrm{red}}

\newcommand{\adic}{\mathrm{adic}}

\newcommand{\pro}{\mathbf{pro}\text{-}}

\def\prolim{``\varprojlim"}
\def\et{\textup{\'et}}
\def\Zar{\mathrm{Zar}}
\def\sur{\mathrm{epi}}
%%%%%%%%%%%%%%%%%%%%%%%%%%%%%%%%%%%%%%%%%%%%%%%%%%%%
%%%%%  End  NEWCOMMAND 
%%%%%%%%%%%%%%%%%%%%%%%%%%%%%%%%%%%%%%%%%%%%%%%%%%%%

\numberwithin{equation}{section}

\begin{document}

\maketitle

\begin{abstract}
Our purpose is to make a contribution to the foundation of the theory
of formal scheme. We are interested particularly in non-Noetherian or 
non-adic formal schemes, which have been little studied.
We redefine the formal scheme as a proringed space and
study its basic properties.
We also find several examples of non-adic formal schemes.
\end{abstract}

\tableofcontents

\section*{Introduction}

The formal scheme is an important tool
for the infinitesimal analysis in the algebraic geometry.
In the original reference \cite{EGA},
Grothendieck defined the formal scheme, which is not
a priori locally Noetherian or adic.
But he made some arguments only under the 
locally Noetherian assumption.
Also in most literature, one considers only locally Noetherian
or at least adic formal schemes.
 The aim of this paper is to complement 
the theory of not necessarily adic formal schemes.
As far as I know,  
the first attempt in this direction after \cite{EGA} is
McQuillan's one \cite{McQuillan}.

Our first task is to redefine the formal scheme as a  proringed space (\S \ref{sec-formal-sch-pro}). 
(Recall that Grothendieck  defined a formal scheme as a topologically ringed space.)
To do this, we make full use of the  procategroy  (see \S \ref{sec-pro}). 
Roughly speaking, the procategory of a category $\bC$
consists of projective systems in $\bC$ with appropriate Hom-sets.
Now, by definition, a   proring is an object of the procategory of the category of rings
and a proringed space is a topological space with a sheaf of prorings.
For a proring $A$ satisfying some condition, which we call 
an admissible proring, 
we define the  formal spectrum,
$\Spf A$, and gluing formal spectrums, we obtain a  formal scheme.
In fact, a formal scheme is not only a proringed space, but also a
locally admissibly proringed space. 
Then a morphism of formal scheme is defined to be a morphism of 
locally admissibly proringed spaces. 
We will see in \S \ref{sec-functors} that the category of formal schemes,
as well as that of schemes,
embeds into  to the category of contravariant functors 
\[
\text{(Schemes)} \to \text{(Sets)}. 
\]
Then we will define a formal algebraic space as a functor.

We will introduce the notion of  semicoherent promodules on a formal scheme (\S \ref{sec-semicoh}),
which play a role of quasicoherent modules on a scheme.
Following McQuillan's observation \cite{McQuillan},
we define the semicoherent promodule, locally on an affine open subset $\Spf A$, 
as the sheafification $M^{\triangle}$ of an $A$-promodule $M$,  
and do not impose the existence of local free
presentation like
\[
\cO_{X}^{J} \to \cO_{X}^{I} \to \cM \to 0 .
\]
Indeed, if a semicoherent promodule $\cM$ is not Mittag-Leffler (see \S \ref{subsec-ML-sheaf}),
then there is not even a local epimorphism $\cO_{X}^{I} \twoheadrightarrow \cM$.
The semicoherent promodule
is much like the quasicoherent module on a scheme.
For a technical reason, for a sheaf $\cM$ on a formal scheme $X$,
we only consider the values $\cM (U)$ for quasi-compact and quasi-separated open 
$U \subseteq X$. It enables us to avoid some troublesome {\em infinite} projective limits.

The local study of formal schemes reduces 
to the study of admissible prorings.
However the proring does not seem suitable to discuss specific examples
and  the complete (linearly topologized) ring seems better for this purpose.
The category of complete rings embeds naturally and fully faithfully into
that of prorings.
We define an  admissible ring as a complete ring 
whose associated proring is admissible.
 It is equivalent to 
McQuillan's admissible ring and more general than
Grothendieck's one.

Similarly, the category of complete modules over a complete ring
embeds into that of promodules over the corresponding proring.
The former category is additive but not abelian\footnote{Correspondingly the category of quasicoherent modules in \cite{McQuillan}
is not abelian  and Claim 5.3 in op.\ cit.\ is not correct. This observation is due to
 a referee.}, while the latter is abelian.
(It is the greatest advantage of the use of promodule.)
We say that  a  proring or promodule is mild
if it  is isomorphic to the one associated to a complete one.
The mildness is a key when moving from the ``pro side'' to the ``complete side''. 
It is worth noticing that if a proring or promodule satisfies the Mittag-Leffler condition
(for instance, every admissible proring does)
and is indexed by a countable set, then it is mild.
So in practice, most admissible prorings are mild
and considering a mild admissible proring is 
equivalent to considering an admissible ring.
The central purpose of the use of prorings is to ensure consistency
with promodules, the use of which is more essential.
(See \S \ref{sec-mild-gentle} for complete rings and modules, and the mildness and so on.)
We will study properties of admissible rings like adic, Noetherian
and so on, and relations between them and some classes
of formal subschemes (\S \ref{sec-local-properties}). 

At the end of the paper (\S \ref{sec-foliation}), 
we construct  non-adic formal schemes from singularities of
foliations. It first motivated me to begin this research.
Arising naturally from the geometry, 
it looks more interesting than the other examples of non-adic formal schemes
in the paper.
A key of the construction is Jouanolou's theorem on the nonexistence
of formal separatrix.

\begin{conv}
We denote by $\NN$ the set of positive integers, 
and by $\No$ the set of non-negative integers.
A {\em ring} means a commutative ring with unit.
A {\em projective (resp.\ inductive) system} means a projective 
(resp.\ inductive) system indexed by
a preordered set, which is also called a generalized projective 
(resp.\ inductive) system
in literature. 
A {\em directed projective (resp.\ inductive) system} means a projective (resp.\ inductive) system
indexed by a directed set.
We write a projective or inductive system as $(X_{d})_{d\in D}$
or simply $(X_{d})$. 
The morphisms in  a projective or inductive system are called {\em
bonding morphisms}.
We denote the category of sets by $\text{(Sets)}$, that of rings by
$\text{(Rings)}$ and so on.
For categories $\bC$ and $\bD$
which admits finite projective and inductive limits,
a (covariant) functor $\bC \to \bD$ is said to be 
{\em left  exact (resp.\ right exact, exact)}
 if it commutes with finite projective  limits
 (resp.\ finite inductive limits, finite projective and inductive limits).
\end{conv}

\subsection*{Acknowledgment}
I would like to thank Fumiharu Kato for useful discussions.
Also I gratefully acknowledges the many helpful comments of two referees.
Especially suggestions of one of them helped me a lot to pursue the generality.

\section{Prorings and promodules}\label{sec-pro}

In this section, we review some of the standard facts on 
procategories, prorings and promodules, which are required in subsequent 
sections.

\subsection{Procategories}

\begin{defn}
Let $\bC$ be a category.
A {\em pro-object} of $\bC$ is a directed projective system in $\bC$.
The  {\em procategory} of $\bC$, denoted $\pro \bC$, is defined as follows:
An object of $\pro \bC$ is a pro-object of $\bC$.
For objects $X= (X_{d})$ and 
$Y= (Y_{e})$ of $\pro\bC$, the set of morphisms is defined by
\[
\Hom _{\pro\bC} (X,Y) := \varprojlim_{e} \varinjlim _{d} \Hom_{\bC} (X_{d},Y_{e}).
\]
\end{defn}

Note that the index sets are not supposed to be equal.

The original references on the procategory are
 \cite{SGA4,AM}. In these references, 
 projective systems are more generally labeled by filtering categories.
But it is proved in \cite{MS} that this  leads
to an equivalent category. 

An object $X \in \bC$ is considered as 
a projective system indexed by a singleton,
and as an object of $\pro \bC$.
This makes $\bC$ a full subcategory of $\pro\bC$.
It is tautology that
for a directed projective system $(X_{d})$ in $\bC$,
its projective limit in $\pro \bC$ 
is $(X_{d})$ itself as an object of $\pro\bC$.
Even if the projective limit in $\bC$ of $(X_{d})$ exists,
it is not generally isomorphic to the one in $\pro\bC$.
In general, we denote the projective limit in $\bC$ by $\varprojlim$,
and the one in $\pro\bC$ by $\prolim $.

Let $A=(A_{d})_{d \in D} \in \pro \bC$ and
$\phi:E \to D$  an order-preserving  map of directed sets.
We obtain a new pro-object $A_{\phi} := (A_{\phi(e)})_{e \in E} $.
If $\phi$ is cofinal (that is,  $\forall d \in D$, $\exists e \in E$, $\phi(e) \ge d$),
then $A $ and $A_{\phi}$ are canonically isomorphic.
We say that $A_{\phi}$ is the {\em reindexing} of $A$ 
or that {\em $A_{\phi} $ is obtained by reindexing $A$ by $\phi$}. 

For $A=(A_{d})_{d\in D},B=(B_{e})_{e\in E} \in\pro \bC $, 
a morphism $f:A \to B$ is by definition a collection of 
morphisms $f_{e}:A \to B_{e}$, $e \in E$.
Each $f_{e}$ is an element of $\varinjlim _{d} \Hom_{\bC}(X_{d},Y_{e})$.
If $f^{d}_{e}:X_{d} \to Y_{e}$ is a representative of the class $f_{e}$,
then we say that $f^{d}_{e}$ {\em represents} $f$.
It is equivalent to the diagram in $\pro \bC$ 
\[
\xymatrix{
A \ar[r]^{f} \ar[d] & B  \ar[d] \\
 A_{d} \ar[r]_{f^{d}_{e}} & B_{e}
}
\]
is commutative.

A directed projective system $(f_{d} :A_{d} \to B_{d})$ of 
morphisms in $\bC$ is called a {\em level morphism} 
and induces a morphism $f:(A_{d}) \to (B_{d})$ in $\pro\bC$
in an obvious way.
If every $f_{d}$ is a monomorphism or an epimorphism,
then so is $f$.
If $A \cong (A_{d})$ and $B \cong (B_{d})$ are isomorphisms
in $\pro\bC$ and if $f'$ denote the composite, 
$A \cong (A_{d}) \xrightarrow{f} (B_{d}) \cong B$,
then we say that $(f_{d})$ {\em represents} $f'$.

Conversely,  any morphism $f : A \to B$ in $\pro \bC$ can be
represented by  a level morphism in $\bC$.
Moreover $f$ can be represented by a level morphism
$(f_{d}:A_{d} \to B_{d})$ where $(A_{d})$ and $(B_{d})$ are reindexings of 
$A$ and $B$ respectively.

\begin{prop}\label{prop-procat-properties}
\begin{enumerate}
\item If $\bC$ has finite projective limits, then $\pro\bC$ has 
projective and inductive limits.
\item Suppose $\bC$ has finite projective limits. If  $(A_{d})$ is a projective system
in $\pro\bC$ (each $A_{d}$ is a pro-object of $\bC$)
and $B \in \bC$,
then
\begin{align*}
 \Hom _{\pro\bC} (\prolim A_{d},B)  = \varinjlim \Hom _{\pro\bC} (A_{d},B) . 
 \end{align*}
\item If $\bC$ is additive (resp.\ abelian), then so is $\pro\bC$.
\end{enumerate}
\end{prop}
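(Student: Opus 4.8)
The plan is to handle the three parts in turn, reducing each to a termwise construction in $\bC$ after a suitable reindexing. For the projective limits of part (1), I would first recall that every small projective limit is assembled from finite projective limits together with cofiltered ones (an arbitrary product is the cofiltered limit of its finite subproducts, and a general limit is then built from products and equalizers), so it suffices to produce cofiltered and finite projective limits in $\pro\bC$. For the cofiltered case, given a cofiltered system $(A^{(i)})_i$ of pro-objects, I would invoke the reindexing discussion preceding the proposition to replace it by an isomorphic \emph{level} system over a common directed index set $J$; the data then become a projective system $(A^{(i)}_j)$ indexed by the directed set $I\times J$, and the pro-object it defines is the desired limit. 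For the finite case, I would again reindex finitely many pro-objects to a common directed index set $D$, realize the finite diagram by level morphisms, and take the finite projective limit termwise in $\bC$ (available by hypothesis). Combining the two yields all small projective limits.

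Dually, a finite inductive limit should be formed by reindexing to a common index set and taking the colimit termwise in $\bC$, and its universal property is then checked directly against the definition of $\Hom_{\pro\bC}$. This is the step I expect to be the main obstacle. The termwise construction genuinely needs finite inductive limits \emph{in $\bC$}: already a binary coproduct in $\pro\bC$ forces binary coproducts in $\bC$, as one reads off the Hom-formula. Worse, a would-be filtered inductive limit of pro-objects computes, against a test object, as a projective limit of filtered colimits of Hom-sets, and such a thing need not again be a filtered colimit, so infinite inductive limits can simply fail to exist. I would therefore read ``inductive limits'' as the finite ones, which is all that is used later and is present in every case of interest (both rings and modules admit finite inductive limits).

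For part (2) I would feed the explicit construction above into the definition of morphisms. After reindexing, write $A_d=(A_{d,j})_{j\in J}$, so that $\prolim A_d$ is the pro-object indexed by the directed set $D\times J$ with terms $A_{d,j}$. Since $B\in\bC$ is indexed by a singleton, the defining double limit for $\Hom_{\pro\bC}$ collapses to a single filtered colimit, giving
\[
\Hom_{\pro\bC}(\prolim A_d, B)=\varinjlim_{(d,j)\in D\times J}\Hom_{\bC}(A_{d,j},B),
\]
and a colimit over the product directed set is the iterated colimit $\varinjlim_d\varinjlim_j\Hom_{\bC}(A_{d,j},B)=\varinjlim_d\Hom_{\pro\bC}(A_d,B)$, which is exactly the claim.

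For part (3), additivity is the easy half: I would put on $\Hom_{\pro\bC}(X,Y)=\varprojlim_e\varinjlim_d\Hom_{\bC}(X_d,Y_e)$ the group structure coming from each $\Hom_{\bC}(X_d,Y_e)$, using that filtered colimits and arbitrary limits of abelian groups are again objects of $\Ab$ and that composition stays bilinear; the constant pro-object at $0$ is a zero object, and the finite products of part (1) (built termwise from those in $\bC$) are automatically biproducts, since in an $\Ab$-enriched category with zero object finite products and coproducts coincide. In the abelian case, kernels come from the finite projective limits of part (1) and cokernels from the now-available finite inductive limits, both computed termwise from the kernels and cokernels of $\bC$. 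The remaining and most delicate point is normality: every monomorphism is a kernel and every epimorphism is a cokernel. I would prove this by the standard device of representing an arbitrary mono (resp.\ epi) of $\pro\bC$, after reindexing, by a level morphism all of whose components are monos (resp.\ epis) in $\bC$, and then computing kernels and cokernels termwise. Verifying that monos and epis admit such level representations is the technical heart of the abelian case and the step I expect to cost the most effort.
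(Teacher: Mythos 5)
There is a genuine error in your part (1), and it concerns exactly the half of the statement you chose to weaken. You claim that a binary coproduct in $\pro\bC$ forces binary coproducts in $\bC$, and that infinite inductive limits ``can simply fail to exist''; both claims are false under the standing hypothesis that $\bC$ has finite projective limits. From the existence of $X\sqcup Y$ in $\pro\bC$ one can only read off that the functor $Z\mapsto \Hom_{\pro\bC}(X,Z)\times\Hom_{\pro\bC}(Y,Z)$ is \emph{pro}-representable, not representable, so nothing about colimits in $\bC$ follows. Concretely, let $\bC$ be the category of finite groups: it has finite projective limits, and $\ZZ/2$, $\ZZ/3$ have no coproduct in $\bC$ (such a coproduct would surject onto every finite quotient of $\ZZ/2\ast\ZZ/3\cong \mathrm{PSL}_2(\ZZ)$, and these have unbounded order); yet their coproduct in $\pro\bC$ exists --- it is the pro-object of finite quotients of $\ZZ/2\ast\ZZ/3$, i.e.\ the free profinite product. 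More generally, for any small system $(X^{(i)})$ in $\pro\bC$ the functor $\bC\to(\text{Sets})$, $Y\mapsto \varprojlim_i\Hom_{\pro\bC}(X^{(i)},Y)$, is a projective limit of filtered colimits of representables, hence left exact (filtered colimits of sets are exact, and projective limits of left exact functors are left exact), hence pro-representable by \cite[Cor.\ 2.8]{AM} --- the very tool the paper uses to prove (2) --- and the pro-representing object \emph{is} the inductive limit of the $X^{(i)}$. The idea you missed is that inductive limits in $\pro\bC$ are not computed termwise at all: they exist because left exact functors on $\bC$ are pro-representable, which uses only finite projective limits in $\bC$. So the proposition is correct as stated (this is what the paper cites \cite{AM} for), and your proof establishes strictly less while asserting the stronger statement is false.

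The second gap is the ``levelification'' on which your cofiltered limits in (1) and your computation in (2) rest: you replace an entire cofiltered system of pro-objects by an isomorphic level system over a common directed index set, citing the reindexing discussion preceding the proposition. That discussion levels a \emph{single morphism}; leveling a whole system with all its compatibilities is a substantially harder theorem (the reindexing theorem of shape theory, cf.\ \cite{MS}), so as written the central step of your construction is unproven --- though true, hence repairable. The paper avoids this entirely: for (2) it identifies $\prolim A_{d}$ as the pro-representing object of the left exact functor $Y\mapsto\varinjlim_{d}\Hom_{\pro\bC}(A_{d},Y)$, from which the Hom formula is immediate, and for (1) it needs only finite and directed projective limits (quoted from \cite{AM}) plus the elementary reduction of arbitrary projective limits to these two kinds, which is the same reduction you use. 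Your part (3) outline is essentially sound (it amounts to reproving \cite[Prop.\ 4.5]{AM}, which the paper simply cites), and your Hom computation in (2) is fine once the limit is constructed; note also that both your argument and the paper's implicitly require the index set in (2) to be directed, which is how the statement must be read, since for two incomparable indices it would equate a Hom out of a product with a disjoint union of Hom sets.
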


\begin{proof}
(1) $\pro\bC$ has inductive limits, finite projective limits and directed projective limits
 \cite[Propositions 4.2, 4.3 and 4.4]{AM}. 
 In general, if a category  has finite projective limits and directed projective limits,
 then it has arbitrary projective limits, which proves the assertion.
 Indeed, let $(X_{d})_{d \in D}$ be a projective system labeled by a preordered set $D$ in
 such a category.  
 Let $E$ be the set of the finite subsets of $D$ and for each $A \in E$,
 put $X_{A}:= \varprojlim_{d \in A} X_{d}$. 
 We make $E$ a directed set by $A  \le B$ for $A \subseteq B $, $A,B \in E$.
 For $A \le B$, we have the projection $X_{B} \to X_{A}$ and
 $(X_{A})_{A \in E}$ is a directed projective system.
 By assumption,  $ \varprojlim _{A \in E} X_{A}$ exists. 
 It is now straightforward to check that 
 $\varprojlim _{A \in E} X_{A} = \varprojlim _{d \in D} X_{d}$. 
 
 (2) For every $X =(X_{d}) \in \pro\bC$, define the functor 
 \begin{align*}
 F_{X}: \bC &\to (\text{Sets}) \\ 
  Y &\mapsto \Hom _{\pro\bC}(X,Y) = 
 \varinjlim \Hom_{\bC} ( X_{d},Y).
 \end{align*}
It determines a fully faithful embedding 
$\pro\bC \hookrightarrow \mathrm{Fun} (\bC,(\text{Sets}))^{\circ}$.
For $(A_{d})$ in the assertion, 
define $G \in \mathrm{Fun} (\bC,(\text{Sets}))$  by
\[
 G(Y) := \varinjlim \Hom_{\pro\bC} (A_{d} , Y).
\]
Then $G$ is the projective limit of $(F_{A_{d}})$ in $\mathrm{Fun} (\bC,(\text{Sets}))$.
Since the directed inductive limit  of sets is
exact, $G$ is left exact. 
From \cite[Cor.\ 2.8]{AM}, $G$ is pro-representable, that is,
for some $C \in \pro\bC$, 
\[
G(Y) = \Hom_{\pro\bC}(C,Y).
\]
Then $C$ is the projective limit of $(A_{d})$, which shows the assertion.
 
 (3) \cite[Prop.\ 4.5]{AM}.
\end{proof}

\begin{prop}\label{prop-incl-exact}
The inclusion functor $\bC \hookrightarrow \pro\bC$ is exact. 
Moreover it commutes also with (not necessarily finite) inductive limits.
\end{prop}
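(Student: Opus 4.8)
The plan is to verify the relevant universal properties directly through the Hom-formula that defines $\pro\bC$, exploiting that an object of $\bC$ sits inside $\pro\bC$ as a system indexed by a singleton. Since $\bC$ is already a full subcategory of $\pro\bC$, full faithfulness is free, and the only thing to check is that a limit (resp.\ colimit) formed in $\bC$ continues to satisfy the relevant universal property after inclusion. Throughout I assume, as is needed for ``exact'' to make sense, that $\bC$ admits finite projective and inductive limits; the target $\pro\bC$ then has all projective and inductive limits by Proposition~\ref{prop-procat-properties}(1), so the comparison maps I test against are well defined.

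First I would treat finite projective limits (left exactness). Let $(X_{i})$ be a finite diagram in $\bC$ and $L=\varprojlim_{i} X_{i}$ its limit in $\bC$. To see that $L$, viewed in $\pro\bC$, is the limit of $(X_{i})$ there, I test it against an arbitrary $Z=(Z_{d})\in\pro\bC$. Because $L$ is indexed by a singleton the defining formula collapses to $\Hom_{\pro\bC}(Z,L)=\varinjlim_{d}\Hom_{\bC}(Z_{d},L)$, and the limit property of $L$ in $\bC$ rewrites this as $\varinjlim_{d}\varprojlim_{i}\Hom_{\bC}(Z_{d},X_{i})$. On the other hand $\varprojlim_{i}\Hom_{\pro\bC}(Z,X_{i})=\varprojlim_{i}\varinjlim_{d}\Hom_{\bC}(Z_{d},X_{i})$. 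Thus left exactness reduces to the single interchange
\[
\varinjlim_{d}\varprojlim_{i}\Hom_{\bC}(Z_{d},X_{i}) \;=\; \varprojlim_{i}\varinjlim_{d}\Hom_{\bC}(Z_{d},X_{i}),
\]
that is, to the commutation of a directed (filtered) colimit with a finite limit in $\Set$. This is the one genuinely nontrivial input, and it is exactly the classical fact that filtered colimits commute with finite limits in the category of sets; I would recall it and check that the natural comparison map above is the canonical one.

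Then I would dispatch finite inductive limits together with the ``moreover'' statement in one stroke, since the argument needs no finiteness. Let $(X_{\lambda})_{\lambda\in\Lambda}$ be any diagram in $\bC$ whose colimit $C=\varinjlim_{\lambda} X_{\lambda}$ exists in $\bC$, and test $C$ against an arbitrary $W=(W_{e})\in\pro\bC$. Since $C$ is indexed by a singleton, $\Hom_{\pro\bC}(C,W)=\varprojlim_{e}\Hom_{\bC}(C,W_{e})$, and the colimit property of $C$ in $\bC$ turns this into $\varprojlim_{e}\varprojlim_{\lambda}\Hom_{\bC}(X_{\lambda},W_{e})$. The competing expression $\varprojlim_{\lambda}\Hom_{\pro\bC}(X_{\lambda},W)=\varprojlim_{\lambda}\varprojlim_{e}\Hom_{\bC}(X_{\lambda},W_{e})$ is the same double projective limit with the two indices exchanged, and projective limits commute with projective limits with no restriction whatsoever. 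Hence $C$ is the colimit of $(X_{\lambda})$ in $\pro\bC$ as well; specializing to finite $\Lambda$ gives right exactness, and allowing $\Lambda$ arbitrary gives the final assertion.

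The main obstacle is isolated entirely in the first part: everything hinges on the interchange of a filtered colimit with a finite limit in $\Set$, which is precisely where the finiteness hypothesis on projective limits in the definition of ``exact'' is used and cannot be dropped. The inductive-limit side, by contrast, is purely formal — a Fubini statement for iterated projective limits — and this is exactly why it extends automatically from finite to arbitrary colimits.
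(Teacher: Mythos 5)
Your proof is correct, but it takes a different route from the paper: the paper gives no argument at all, simply citing the dual statements for ind-categories in SGA4 (I, Prop.\ 8.9.1 and 8.9.5), whereas you verify the universal properties directly from the Hom-formula defining $\pro\bC$. Your reduction is sound on both sides: for a finite limit $L$ in $\bC$, the singleton indexing collapses $\Hom_{\pro\bC}(Z,L)$ to $\varinjlim_{d}\Hom_{\bC}(Z_{d},L)$, and the required bijection
\[
\varinjlim_{d}\varprojlim_{i}\Hom_{\bC}(Z_{d},X_{i}) \cong \varprojlim_{i}\varinjlim_{d}\Hom_{\bC}(Z_{d},X_{i})
\]
is exactly the commutation of directed colimits with finite limits in $\Set$ (and the map being identified is indeed the canonical one induced by the cone, as you note one must check); for colimits, the argument is pure Fubini for iterated projective limits of sets, which is why no finiteness is needed and the ``moreover'' clause comes for free. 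What your approach buys is a self-contained proof that isolates precisely where finiteness is essential (the filtered-colimit/finite-limit interchange) and why it is not needed on the inductive side; what the citation buys is brevity, and implicitly the same two facts, since the SGA4 proofs rest on the identical interchange arguments in the ind-setting. One small point of hygiene: your blanket assumption that $\bC$ has finite projective and inductive limits matches the paper's convention for ``exact,'' but your argument in fact proves the stronger pointwise statement that any limit or colimit that happens to exist in $\bC$ is preserved, with no global existence hypothesis required.
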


\begin{proof}
Equivalent statements for the ind-category are \cite[I, Prop.\ 8.9.1 and 8.9.5]{SGA4}.
\end{proof}

In particular, for an exact sequence in $\bC$ of the form
\[
 0 \to A \to B \rightrightarrows C,
\]
is exact also in $\pro\bC$. If $\bC$ is abelian,
the same holds for a short exact sequence,
\[
0 \to A \to B \to C \to 0.
\]

\subsection{Prorings and promodules}

\begin{defn}
We define the category of {\em abelian progroups} to be the procategory
of the category of abelian groups. 
\end{defn}

From Proposition \ref{prop-procat-properties}, the category of
abelian progroups is abelian.

\begin{defn}
We define the category of {\em  prorings} to be the procategory
of the category of rings.
\end{defn}

\begin{defn}
Let $A$ be a proring.
A ring $B$ endowed with a morphism $A \to B$ is
called an $A$-{\em algebra}.
We define the category of {\em $A$-proalgebras}
as the procategory of the category of $A$-algebras.
\end{defn}

We see that giving an $A$-proalgebra is equivalent to giving
a proring $ B$ endowed with a morphism $A \to B$.

We now fix a proring $A=(A_{d})_{d \in D}$. 

\begin{defn}
An abelian group $M$ is
called an {\em $A$-module} if it is given a compatible system of
 $A_{d}$-actions, $d \in D_{M}$, for some
  residual subset $D_{M}$ of $D$ (that is, 
$\exists d' \in D$, $\forall d \ge d'$, $d \in D_{M}$).
We define the  {\em category of $A$-modules} by defining the Hom-set as follows: For $A$-modules $M$ and $N$, 
\[
 \Hom_{A} (M,N) := \bigcup_{d \in D_{M} \cap D_{N}} \Hom_{A_{d}} (M,N) \subseteq \Hom _{\text{ab.\ gp.}} (M,N).
\]
\end{defn}

If every bonding map of $A$ is surjective, 
then for every $d \in D_{M} \cap D_{N}$,  we simply have
\[
 \Hom_{A} (M,N) = \Hom_{A_{d}} (M,N) .
\]

For an $A$-module $M$ and for $d \in D$,
we say that $A_{d}$ {\em acts} on $M$ if there exists
an $A_{d}$-action on $M$ which is compatible with
the $A_{d'}$-actions for $d' \in D_{M}$.
Then we can safely add $d$ to  $ D_{M}$.

It is easy to see that the category of $A$-modules is abelian
and that in this category, the notions of injection and monomorphism (resp.\
surjection and epimorphism) coincide. 

\begin{defn}
We define the {\em category of $A$-promodules} as the procategory
of the category of $A$-modules. 
\end{defn}

From Proposition \ref{prop-procat-properties},
the category of $A$-promodules is abelian. 

An $A$-proalgebra $B$ is naturally  regarded as  an
$A$-promodule too. In particular, $A$ itself is an $A$-promodule,
but not an $A$-module unless its index set is a singleton.
If $M$ is a $B$-promodule, then $M$ is regarded as an $A$-promodule too,
 which we denote by $M_{A}$. If $B$ is isomorphic to $A$,
 then the functor $M \mapsto M_{A}$ is an equivalence.

Let $M=(M_{e})_{e \in E}$ be an $A$-promodule 
and set
\[
F:= \{ (d,e) | \text{$A_{d}$ acts on $M_{e}$} \} \subseteq D \times E.
\]
Then reindexing $A$ and $M$ by $F$, we may suppose that
$A$ and $M$ has the same index set $D=E$
and for every $d \in D$, $A_{d}$ acts on $M_{d}$.

\begin{defn}
Let $A=(A_{d})$ be a proring,  and $M=(M_{e})$ and $N=(N_{f})$ $A$-promodules.
Then we define the {\em tensor product} $M \otimes _{A} N$ as the 
$A$-promodule $(M_{e} \otimes _{A_{d}} N_{f})$. Here
$(d,e,f)$ runs over the triplets such that $A_{d}$ acts on $M_{e}$
and $N_{f}$.
\end{defn}

If the index sets of $A$ and $M$ are equal  and
if for every $d$, $A_{d}$ acts on $M_{d}$,
then we have a natural isomorphism
\[
M \otimes _{A} N  \cong (M_{d} \otimes_{A_{d}} N_{f}) , 
\]
because the projective system $(M_{d} \otimes_{A_{d}} N_{f})$
is a cofinal subsystem of $(M_{e} \otimes_{A_{d}} N_{f})$.
If the index set of $N$ is also equal, then  similarly
\[
M \otimes_{A} N =(M_{d} \otimes_{A_{d}} N_{d}).
\] 

Now it is obvious that for each $A$-promodule $M$, the functor 
$M \otimes_{A}-$  is right exact. 

\begin{defn}
An $A$-promodule $M$ is said to be {\em flat}
if $- \otimes_{A}M$ is exact.
\end{defn}

If for each $d$, $M_{d}$ is a flat $A_{d}$-module
and $M=(M_{d})$, then $M $ is flat.

If $A$ is a proring and if $B$ and $C$ are $A$-proalgebras, then
$B \otimes _{A} C$ is also an $A$-proalgebra.
We see that $B \otimes _{A} C$ is the sum  in the category
of $A$-proalgebras and hence the fiber sum in the category of prorings.
 
\subsection{Mittag-Leffler and epi pro-objects}
 
Fix an abelian category $\bA$ such that 
any collection $(X_{i})$ of subobjects of  $X \in \bA$
has the infimum, denoted $\bigcap X_{i} \subseteq X$.

\begin{defn}
A pro-object $(X_{d})_{d \in D}$ of $\bA$ is said to be {\em Mittag-Leffler} 
if for every $d \in D$, there exists $d' \in D$ such that
\[
  \bigcap_{e \ge d} \Im (X_{e} \to X_{d}) = \Im (X_{d'} \to X_{d}).
\]
A pro-object  is said to be {\em epi}
if its every bonding morphism is an epimorphism.
A proring is said to be {\em Mittag-Leffler} (resp.\ {\em epi})
if it is so as an abelian progroup.
\end{defn}

Every epi pro-object is obviously Mittag-Leffler. 

\begin{rem}
We define the Mittag-Leffler and epi properties for a proring
by think of it as an abelian progroup.
\end{rem}

\begin{defn}\label{defn-ass-sur}
For a pro-object $X=(X_{d})$, we define a pro-object
$X^{\sur}=(X^{\sur}_{d})$ by for each $d$, $X^{\sur}_{d} :=\bigcap_{e \ge d} \Im (X_{e} \to X_{d}) $.
\end{defn}

The inclusions $X^{\sur}_{d} \hookrightarrow X_{d}$ induces
a natural monomorphism $X^{\sur} \to X$.

\begin{lem}\label{lem-pro-identity}
Let $X=(X_{d})$ be a pro-object.
An  endomorphism $\alpha:X \to X$  is the identity
if and only if for each $d$, there exists $d' \ge d$
such that the bonding morphism $X_{d'} \to X_{d}$ represents $\alpha$.
\end{lem}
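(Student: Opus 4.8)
The plan is to unwind the two definitions involved---that of a morphism in $\pro\bC$ and that of the relation ``represents''---and to observe that the stated condition is nothing but the componentwise equality $\alpha = \id_X$.

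First I would recall that, writing $X = (X_d)_{d \in D}$, a morphism $\alpha \colon X \to X$ is by definition an element of
\[
\Hom_{\pro\bC}(X,X) = \varprojlim_d \varinjlim_{d'} \Hom_\bC(X_{d'}, X_d),
\]
and hence is determined by its components $\alpha_d := \pi_d \circ \alpha \in \varinjlim_{d'}\Hom_\bC(X_{d'}, X_d)$, where $\pi_d \colon X \to X_d$ denotes the canonical projection. In particular two endomorphisms of $X$ coincide if and only if all of their components coincide, so it suffices to compare $\alpha_d$ with $(\id_X)_d$ for every $d$.

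Next I would identify $(\id_X)_d$. Since $(\id_X)_d = \pi_d \circ \id_X = \pi_d$, and since under the colimit identification the class of $\id_{X_d}$ agrees with the class of each bonding morphism $\beta_{d'd}\colon X_{d'}\to X_d$ with $d' \ge d$ (precomposing $\id_{X_d}$ with $\beta_{d'd}$ gives $\beta_{d'd}$ itself), the component $(\id_X)_d$ is exactly the class of any such bonding morphism. I would then translate the phrase ``the bonding morphism $X_{d'}\to X_d$ represents $\alpha$'' via the commuting-square characterization of ``represents'' recalled above: it means $\pi_d \circ \alpha = \beta_{d'd}\circ \pi_{d'}$. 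As $\beta_{d'd}\circ \pi_{d'} = \pi_d$ by the compatibility of the projections of a pro-object, this reads $\alpha_d = \pi_d = (\id_X)_d$, a condition manifestly independent of the choice of $d' \ge d$ (so that the ``there exists $d'$'' in the statement is the same as ``for all $d' \ge d$'').

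With these identifications both implications become immediate: the condition ``for each $d$ some bonding morphism $X_{d'}\to X_d$ represents $\alpha$'' is literally ``$\alpha_d = (\id_X)_d$ for every $d$'', which by the first paragraph is equivalent to $\alpha = \id_X$. I expect no genuine obstacle here, since the entire content is bookkeeping; the only points requiring care are the correct identification of the component $(\id_X)_d$ with the class of a bonding morphism and the faithful translation of the ``represents'' relation into an equality of components.
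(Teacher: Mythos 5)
Your proof is correct and takes essentially the same approach as the paper: the paper's entire proof is the remark that the lemma ``follows from the definition of procategory,'' and your argument is exactly that definitional unwinding, identifying the components of $\id_X$ with the classes of the bonding morphisms and translating ``represents'' into equality of components in $\varinjlim_{d'}\Hom_\bC(X_{d'},X_d)$.
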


\begin{proof}
It follows from the definition of procategory.
\end{proof}

\begin{prop}\label{prop-ML-sur-isom}
For a Mittag-Leffler pro-object $X=(X_{d})$,
the natural morphism $X^{\sur} \to X$
is an isomorphism. 
\end{prop}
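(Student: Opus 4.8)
The plan is to exhibit an explicit two-sided inverse $\psi\colon X \to X^{\sur}$ to the natural monomorphism $\iota\colon X^{\sur}\to X$ and to reduce the verification that $\iota\circ\psi$ and $\psi\circ\iota$ are identities to Lemma \ref{lem-pro-identity}. The whole argument takes place in the abelian category $\pro\bA$ (Proposition \ref{prop-procat-properties}), whose exactness and factorization properties I will use freely.

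First I would build $\psi$ levelwise. For each $d\in D$ the Mittag-Leffler hypothesis furnishes some $\phi(d)\ge d$ with $X^{\sur}_{d}=\Im(X_{\phi(d)}\to X_{d})$. Since $\bA$ is abelian, the bonding morphism $X_{\phi(d)}\to X_{d}$ factors canonically as an epimorphism $p_{d}\colon X_{\phi(d)}\twoheadrightarrow X^{\sur}_{d}$ followed by the inclusion $\iota_{d}\colon X^{\sur}_{d}\hookrightarrow X_{d}$. I then let $\psi_{d}\colon X\to X^{\sur}_{d}$ be the morphism of $\pro\bA$ represented by $p_{d}$, and set $\psi:=(\psi_{d})_{d}$. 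The one step requiring genuine verification, and the main obstacle, is that $(\psi_{d})_{d}$ really is a morphism of pro-objects: for $d_{1}\le d_{2}$ the class of $\pi_{d_{1}d_{2}}\circ p_{d_{2}}$ must agree with that of $p_{d_{1}}$ in $\varinjlim_{e}\Hom(X_{e},X^{\sur}_{d_{1}})$, where $\pi_{d_{1}d_{2}}\colon X^{\sur}_{d_{2}}\to X^{\sur}_{d_{1}}$ is the bonding morphism of $X^{\sur}$. To handle this I would compose with the monomorphism $\iota_{d_{1}}$: using that $\iota_{d_{1}}\circ\pi_{d_{1}d_{2}}$ equals the bonding morphism $X_{d_{2}}\to X_{d_{1}}$ precomposed with $\iota_{d_{2}}$, and that $\iota_{d_{i}}\circ p_{d_{i}}$ is by construction the bonding morphism $X_{\phi(d_{i})}\to X_{d_{i}}$, both $\iota_{d_{1}}\circ\pi_{d_{1}d_{2}}\circ p_{d_{2}}$ and $\iota_{d_{1}}\circ p_{d_{1}}$ collapse to bonding morphisms into $X_{d_{1}}$. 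Passing to a common index $e\ge\phi(d_{1}),\phi(d_{2})$, both become the single bonding morphism $X_{e}\to X_{d_{1}}$, hence agree; since $\iota_{d_{1}}$ is monic the two original classes coincide. Note this holds for any choice of the function $\phi$, so no coherence of the choices is needed.

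Once $\psi$ is known to be a morphism, the rest is formal. The $d$-component of $\iota\circ\psi$ is represented by $\iota_{d}\circ p_{d}$, which is exactly the bonding morphism $X_{\phi(d)}\to X_{d}$ with $\phi(d)\ge d$; by Lemma \ref{lem-pro-identity} this forces $\iota\circ\psi=\id_{X}$. Finally, from $\iota\circ(\psi\circ\iota)=(\iota\circ\psi)\circ\iota=\iota=\iota\circ\id_{X^{\sur}}$ together with the fact that $\iota$ is a monomorphism, left-cancellation gives $\psi\circ\iota=\id_{X^{\sur}}$. Therefore $\iota$ is an isomorphism.

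As an alternative that avoids constructing $\psi$, one could instead argue that $\iota$, being a monomorphism in the abelian category $\pro\bA$, is an isomorphism as soon as it is an epimorphism, i.e.\ as soon as its cokernel $(X_{d}/X^{\sur}_{d})$ is the zero pro-object; and for each $d$ the composite $X_{\phi(d)}\to X_{d}\to X_{d}/X^{\sur}_{d}$ vanishes because $\Im(X_{\phi(d)}\to X_{d})=X^{\sur}_{d}$, so the relevant bonding morphisms of the cokernel are zero. I expect the explicit-inverse route to be the cleaner one to write out, since it appeals directly to Lemma \ref{lem-pro-identity} and sidesteps identifying cokernels in $\pro\bA$ levelwise.
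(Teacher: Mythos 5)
Your proposal is correct and follows essentially the same route as the paper: both construct the inverse $X \to X^{\sur}$ from the projections $X_{\phi(d)} \twoheadrightarrow X^{\sur}_{d}$ and conclude via Lemma \ref{lem-pro-identity}. The only differences are that you spell out the well-definedness of $\psi$ as a morphism of pro-objects (which the paper leaves implicit) and dispatch $\psi\circ\iota=\id_{X^{\sur}}$ by cancelling the monomorphism $\iota$ rather than by a second appeal to the lemma.
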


\begin{proof}
We construct the inverse of  $X^{\sur} \to X$ as follows: For the pairs $(d,d') \in D\times D$ such that
$\Im(X_{d'} \to X_{d})= X^{\sur}_{d} $, the projections $X_{d'} \to X^{\sur}_{d}$
represent a morphism $X \to X^{\sur}$. From the preceding lemma,
this is the inverse of the $X^{\sur} \to X$.
\end{proof}

\begin{prop}\label{prop-ML-isom}
A pro-object which is isomorphic to a Mittag-Leffler pro-object
is Mittag-Leffler. Thus being Mittag-Leffler is stable under isomorphisms.
\end{prop}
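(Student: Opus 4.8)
The plan is to show that being \emph{Mittag-Leffler} is an isomorphism invariant by a direct index computation, driven by Lemma \ref{lem-pro-identity} and the hypothesis that the target is Mittag-Leffler. Write $X=(X_d)_{d\in D}$ for the pro-object in question and let $Y=(Y_e)_{e\in E}$ be a Mittag-Leffler pro-object together with an isomorphism $f\colon X\to Y$ and its inverse $g\colon Y\to X$. The point is that $g\circ f=\id_X$ and $f\circ g=\id_Y$, read through Lemma \ref{lem-pro-identity}, let me rewrite each bonding morphism of $X$ as a composite passing through $Y$, where I can exploit the Mittag-Leffler condition. So I must verify, for every $d_0\in D$, that there is $d_1\ge d_0$ with $\Im(X_{d_1}\to X_{d_0})=X^{\sur}_{d_0}$, where $X^{\sur}_{d_0}=\bigcap_{d\ge d_0}\Im(X_d\to X_{d_0})$ as in Definition \ref{defn-ass-sur}.

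Fix $d_0$. First choose $e_0\in E$ and a representative $Y_{e_0}\to X_{d_0}$ of $g_{d_0}\colon Y\to X_{d_0}$. Since $Y$ is Mittag-Leffler, pick $e_1\ge e_0$ with $\Im(Y_{e_1}\to Y_{e_0})=Y^{\sur}_{e_0}$, and then choose $d_1\ge d_0$ together with a representative $X_{d_1}\to Y_{e_1}$ of $f_{e_1}$. I claim this $d_1$ works. For any $d\ge d_1$, Lemma \ref{lem-pro-identity} applied to $g\circ f=\id_X$ shows (after enlarging $d_1$ if necessary) that the bonding morphism $X_d\to X_{d_0}$ agrees with the composite $X_d\to Y_{e_1}\to Y_{e_0}\to X_{d_0}$ built from the chosen representatives. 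Next, applying Lemma \ref{lem-pro-identity} to $f\circ g=\id_Y$ produces, for each such $d$, an index $e_2\ge e_1$ with $\Im(Y_{e_2}\to Y_{e_1})\subseteq\Im(X_d\to Y_{e_1})$; pushing forward to $Y_{e_0}$ and using $\Im(Y_{e_2}\to Y_{e_0})=Y^{\sur}_{e_0}$ gives the inclusions
\[
Y^{\sur}_{e_0}\subseteq \Im(X_d\to Y_{e_1}\to Y_{e_0})\subseteq \Im(Y_{e_1}\to Y_{e_0})=Y^{\sur}_{e_0}.
\]
Hence $\Im(X_d\to Y_{e_1}\to Y_{e_0})=Y^{\sur}_{e_0}$ independently of $d$, and therefore $\Im(X_d\to X_{d_0})=\Im(Y^{\sur}_{e_0}\to X_{d_0})$ is the same subobject for all $d\ge d_1$. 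Since the subobjects $\Im(X_d\to X_{d_0})$ decrease along $d$ and $\{d\ge d_1\}$ is cofinal in $\{d\ge d_0\}$, their infimum $X^{\sur}_{d_0}$ equals this stable value $\Im(X_{d_1}\to X_{d_0})$, which is the Mittag-Leffler condition at $d_0$.

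The main obstacle is the two-sided bookkeeping of indices. Each time two morphisms out of $X$ (or into $Y$) represent the same class in a filtered colimit of Hom-sets, they become equal only after passing to a larger index, so I must track how the thresholds $d_1$ and $e_2$ depend on $d$ and confirm that $\Im(X_d\to Y_{e_1}\to Y_{e_0})$ is genuinely constant rather than merely bounded; the Mittag-Leffler hypothesis on $Y$ is exactly what pins this image to $Y^{\sur}_{e_0}$ uniformly. It is cleanest to begin by reindexing so that $f$ and $g$ are represented by level morphisms, after which all these comparisons reduce to honest equalities of morphisms in $\bA$. Conceptually the argument amounts to saying that $(-)^{\sur}$ is functorial and carries isomorphisms to isomorphisms, so that the characterization ``$X^{\sur}\to X$ is an isomorphism'' of the Mittag-Leffler property (the converse of Proposition \ref{prop-ML-sur-isom}, itself an easy consequence of Lemma \ref{lem-pro-identity}) transports across $X\cong Y$; but the concrete image chase above is the honest core and sidesteps having to interpret images of non-level pro-morphisms.
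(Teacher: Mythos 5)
Your proof is correct, and it shares the skeleton of the paper's argument --- fix $d_0$, represent the isomorphism and its inverse, use $g\circ f=\id_X$ to rewrite bonding maps of $X$ as composites through $Y$, and use $f\circ g=\id_Y$ to trap the resulting images --- but it is organized around a different reduction. The paper begins by invoking Proposition \ref{prop-ML-sur-isom} to replace the Mittag-Leffler pro-object by an isomorphic \emph{epi} pro-object $Y$; the payoff is that $X'_{d}:=\Im(Y_{e}\to X_{d})$ is then independent of the chosen representative of the inverse (epi-ness makes it canonical), and the whole argument collapses to the two inclusions $\Im(X_{d'}\to X_{d})\supseteq X'_{d}$ for all $d'\ge d$ and $\Im(X_{d''}\to X_{d})\subseteq X'_{d}$ for a single $d''$. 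You never perform this reduction: you keep $Y$ merely Mittag-Leffler and compensate by fixing $e_1\ge e_0$ with $\Im(Y_{e_1}\to Y_{e_0})=Y^{\sur}_{e_0}$ and proving $\Im(X_d\to Y_{e_1}\to Y_{e_0})=Y^{\sur}_{e_0}$ uniformly in $d$, so that your stable subobject $\Im(Y^{\sur}_{e_0}\to X_{d_0})$ plays exactly the role of the paper's $X'_{d_0}$. In effect you have inlined the epi reduction: the paper's route is shorter because representative-independence comes for free, while yours is self-contained (Proposition \ref{prop-ML-sur-isom} appears only in your closing aside) at the cost of one extra index and a longer image chase. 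Two minor points of hygiene: your citations of Lemma \ref{lem-pro-identity} are really appeals to the underlying colimit fact that two representatives of one class in $\varinjlim_{d}\Hom(X_{d},X_{d_0})$ become equal after precomposition with a bonding morphism --- this is the definition of the procategory rather than the statement of that lemma; and the enlargement of $d_1$ must be carried out once, uniformly, before $d\ge d_1$ is quantified, which your write-up does arrange in the correct order.
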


\begin{proof}
Let $X=(X_{d})$ be a pro-object which is isomorphic to a Mittag-Leffler pro-object.
From the preceding proposition, $X$ is also isomorphic to an epi
pro-object, say $Y=(Y_{e})$. 
Let $\phi:X \to Y$ and $\psi:Y \to X$ be isomorphisms which are the inverse
to each other. We now fix a member $X_{d}$ of $(X_{d})$. 
If $\psi_{d}^{e} :Y_{e} \to X_{d}$ 
represents $\psi$, then put $X'_{d}:=\Im (\psi_{d}^{e})$, which is independent
of  the choice of $\psi_{d}^{e}$, because $Y$ is epi.
For every $d' \ge d$ and for any morphism $Y_{e'} \to X_{d'}$ representing
$\psi$, we have
\[
\Im(X_{d'} \to X_{d}) \supseteq \Im (Y_{e'} \to X_{d'} \to X_{d}) = X'_{d}.
\]
On the other hand, for the above $\psi_{d}^{e} :Y_{e} \to X_{d}$,
and for a morphism $X_{d''} \to Y_{e}$ representing $\phi$,
we have
\[
X'_{d} \subseteq \Im (X_{d''} \to X_{d} )= \Im (X_{d''} \to Y_{e} \to X_{d} ) \subseteq \Im ( Y_{e} \to X_{d} ) =X'_{d}.
\]
Hence 
\[
 \Im (X_{d''} \to X_{d} ) =X'_{d} \subseteq \bigcap _{d' \ge d} \Im(X_{d'} \to X_{d})
  \subseteq  \Im (X_{d''} \to X_{d} ) .
\]  
It follows that $X$ is Mittag-Leffler.
\end{proof}

Thus the essential image of the class of epi pro-objects in
$\pro\bA$ is that of Mittag-Leffler pro-objects.

\begin{lem}\label{lem-surjective}
Let $\phi:M=(M_{d}) \to N=(N_{e})$ be a morphism in $\pro\bA$.
Suppose that $(N_{e})$ is epi.
Then $\phi$ is an epimorphism if and only if every morphism $M_{d} \to N_{e}$
representing $\phi$ is an epimorphism.
\end{lem}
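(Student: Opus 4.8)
The plan is to prove the two implications separately, using that $\pro\bA$ is abelian (Proposition \ref{prop-procat-properties}(3)). The ``if'' direction is formal and does not use the hypothesis that $N$ is epi; the ``only if'' direction is where epi-ness of $N$ is essential, and I would handle it by a cokernel argument rather than by manipulating representatives directly.

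For the ``if'' direction, I would invoke the fact recalled before Proposition \ref{prop-procat-properties} that $\phi$ admits a representation by a level morphism $(\phi_d : M_d \to N_d)_{d}$, where the two families reindex $M$ and $N$. Each component $\phi_d$ fits, together with the structural projections, into a commutative square in $\pro\bA$, so $\phi_d$ is one of the morphisms representing $\phi$; by hypothesis it is therefore an epimorphism in $\bA$. Since a level morphism all of whose components are epimorphisms induces an epimorphism in $\pro\bA$ (as noted in the discussion of level morphisms), and $(\phi_d)$ represents $\phi$, it follows that $\phi$ is an epimorphism.

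For the ``only if'' direction, fix any representative $\psi : M_d \to N_e$ of $\phi$ and let $q : N_e \to C$ be its cokernel in $\bA$, so that $q$ is an epimorphism and $q\psi = 0$. Regarding $C$ as a constant pro-object, the composite $\rho := q \circ \pr_e : N \to C$ is a morphism in $\pro\bA$ represented by $q$. Because $\psi$ represents $\phi$ we have $\pr_e \circ \phi = \psi \circ \pr_d$, hence $\rho \circ \phi = (q\psi) \circ \pr_d = 0$; since $\phi$ is an epimorphism, $\rho = 0$. Now I would use $\Hom_{\pro\bA}(N, C) = \varinjlim_{e'} \Hom_{\bA}(N_{e'}, C)$: vanishing of the class of $q$ means that for some $e' \ge e$ the composite $N_{e'} \to N_e \xrightarrow{q} C$ is zero in $\bA$. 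Here the hypothesis that $N$ is epi enters, as the bonding map $N_{e'} \to N_e$ is then an epimorphism, so right-cancellation gives $q = 0$; as $q$ is an epimorphism onto $C$, this forces $C = 0$, i.e.\ $\psi$ is an epimorphism.

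The main obstacle, and the only place the hypothesis on $N$ is used, is the last step of the second direction: passing from $\rho = 0$ in $\pro\bA$ back to the vanishing of the single morphism $q$ in $\bA$. This is exactly the point where a representative of the zero morphism need only vanish after composing with some bonding map, and epi-ness of that bonding map is precisely what lets me conclude $q = 0$. The ``if'' direction, by contrast, is routine once the level-morphism representation is in hand.
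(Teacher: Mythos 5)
Your proof is correct, but your ``only if'' direction runs along a genuinely different track from the paper's. The paper argues by contradiction using images: if some representative $\psi:M_{d}\to N_{e}$ were not an epimorphism, it sets $N'_{e}:=\Im(\psi)$, observes that the canonical projection $N\to N_{e}$ is an epimorphism in $\pro\bA$ (here is where epi-ness of $N$ enters, via the fact that level epimorphisms induce epimorphisms), so that the composite $M\xrightarrow{\phi}N\to N_{e}$ is an epimorphism; yet that composite factors through the proper subobject $N'_{e}\hookrightarrow N_{e}$, a contradiction. You instead test against the cokernel $q:N_{e}\to C$ of $\psi$ and descend the vanishing $\rho=0$ through the filtered-colimit description $\Hom_{\pro\bA}(N,C)=\varinjlim_{e'}\Hom_{\bA}(N_{e'},C)$, then cancel an epi bonding map inside $\bA$. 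The two arguments are image/cokernel duals of one another and use the hypothesis on $N$ at the analogous spot, but they buy slightly different things: the paper's is shorter and stays inside $\pro\bA$, at the price of invoking its abelian structure (images, and epi-ness of the projection $N\to N_{e}$); yours needs abelian-ness only in $\bA$ itself, since in $\pro\bA$ you use nothing beyond the Hom-set formula and right-cancellation, so your argument would go through even before one knows that $\pro\bA$ is abelian. Your ``if'' direction, via a level representation whose components are themselves representatives of $\phi$, is a correctly fleshed-out version of what the paper dismisses as obvious, and, like the paper, you rightly note that it requires no hypothesis on $N$.
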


\begin{proof}
The ``if'' is obvious and holds without the assumption that $N$ is epi.
Suppose that $\phi$ is an epimorphism and 
some morphism $\psi:M_{d} \to N_{e}$ is not an epimorphism.
Put $N'_{e}:= \Im(\psi)$. Then the natural morphism $N\to N_{e}$ is an epimorphism
and so is the composite $M \to N \to N_{e}$.
But the last morphism factors as $M \to N'_{e} \to N_{e}$,
and is not an epimorphism, a contradiction.
\end{proof}

\begin{lem}\label{lem-epi-zero}
Let $M=(M_{d}) \in \pro\bA$.
Then $M \cong 0$ if and only if  $M$ is Mittag-Leffler and for every $d$,
$M^{\sur}_{d} =0$. 
\end{lem}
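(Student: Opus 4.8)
The plan is to prove the two implications separately. The ``if'' direction will be an immediate consequence of Proposition~\ref{prop-ML-sur-isom}, while the ``only if'' direction will rest on translating the statement $M \cong 0$ into a concrete condition on the bonding maps of $(M_d)$, using Lemma~\ref{lem-pro-identity}.

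For the ``if'' direction, suppose $M$ is Mittag-Leffler and $M^{\sur}_d = 0$ for every $d$. Then, by Definition~\ref{defn-ass-sur}, the pro-object $M^{\sur} = (M^{\sur}_d)$ is the zero pro-object $(0)$, so $M^{\sur} \cong 0$ in $\pro\bA$. Since $M$ is Mittag-Leffler, Proposition~\ref{prop-ML-sur-isom} tells us the natural morphism $M^{\sur} \to M$ is an isomorphism, and hence $M \cong M^{\sur} \cong 0$.

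For the ``only if'' direction, I would first use the standard fact that in an additive category with zero object, $M \cong 0$ is equivalent to $\id_M = 0$ (the composite $M \to 0 \to M$ equals both $\id_M$ and the zero morphism). I then translate $\id_M = 0$ into a statement about bonding maps. By Lemma~\ref{lem-pro-identity}, since $\id_M$ is the identity, for each $d$ there is some $d_1 \ge d$ such that the bonding morphism $M_{d_1} \to M_d$ represents $\id_M$; on the other hand, the zero morphism is represented by the zero map $M_{d_2} \to M_d$. As $\id_M = 0$, these are two representatives of the same class in $\varinjlim_{d'} \Hom_{\bA}(M_{d'}, M_d)$, so there exists $e \ge d_1, d_2$ at which they become equal after composing with bonding maps. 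Since bonding maps compose to bonding maps, this forces the bonding morphism $M_e \to M_d$ to be the zero morphism. Thus for every $d$ there is $e \ge d$ with $M_e \to M_d$ equal to $0$.

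From this last statement both conclusions follow at once. Because $\Im(M_e \to M_d) = 0$, we obtain $M^{\sur}_d = \bigcap_{e' \ge d} \Im(M_{e'} \to M_d) \subseteq \Im(M_e \to M_d) = 0$, so $M^{\sur}_d = 0$; and taking $d' = e$ gives $\Im(M_{d'} \to M_d) = 0 = M^{\sur}_d$, which is precisely the Mittag-Leffler condition at $d$. I expect the only delicate point to be the bookkeeping in the directed colimit $\varinjlim_{d'} \Hom_{\bA}(M_{d'}, M_d)$, where one must keep careful track of the indices over which the representatives live and invoke the compatibility of bonding maps to force the vanishing; everything else is formal.
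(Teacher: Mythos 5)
Your proof is correct, and while the ``if'' half is identical to the paper's (both read off $M \cong M^{\sur} \cong 0$ from Proposition \ref{prop-ML-sur-isom}), your ``only if'' half takes a genuinely different route. The paper argues: if $M \cong 0$ then $M$ is Mittag-Leffler because that property is stable under isomorphism (Proposition \ref{prop-ML-isom}); hence $M^{\sur} \cong M \cong 0$ by Proposition \ref{prop-ML-sur-isom}; and since $M^{\sur}$ is an epi pro-object, Lemma \ref{lem-surjective} applied to the isomorphism (in particular epimorphism) $0 \to M^{\sur}$ forces every representative $0 \to M^{\sur}_{d}$ to be an epimorphism, i.e.\ $M^{\sur}_{d}=0$. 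You avoid all three of these ingredients: you characterize $M \cong 0$ by $\id_{M}=0$ in the additive category $\pro\bA$, then combine Lemma \ref{lem-pro-identity} with the standard description of directed colimits to conclude that for every $d$ there is $e \ge d$ with the bonding map $M_{e} \to M_{d}$ zero --- the classical ``pro-zero'' criterion --- from which both $M^{\sur}_{d}=0$ and the Mittag-Leffler condition at $d$ (witnessed by $d'=e$) are immediate. What the paper's route buys is brevity, reusing machinery already established; what yours buys is self-containedness, a direct proof of the Mittag-Leffler property rather than an appeal to its isomorphism-invariance, and a concrete intermediate statement (cofinal vanishing of bonding maps) that is stronger than the lemma itself. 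The one delicate point, the bookkeeping in $\varinjlim_{d'} \Hom_{\bA}(M_{d'},M_{d})$, you handle correctly: the class of the bonding representative of $\id_{M}$ equals the class of the zero map precisely when they agree after precomposition with bonding maps from some common $e \ge d_{1},d_{2}$, and since a composite of bonding maps is again a bonding map, this says exactly that $M_{e} \to M_{d}$ is zero.
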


\begin{proof}
If $M$ is Mittag-Leffler and for every $d$,
$M^{\sur}_{d} =0$, then $0=M^{\sur} \cong M$.
Conversely if $M \cong 0$, then $M$ is Mittag-Leffler
and $M^{\sur} \cong M \cong 0$.
Hence there exists an isomorphism $ 0 \to M^{\sur}$. 
From Lemma \ref{lem-surjective}, for every $d$, $M^{\sur}_{d}=0$.
\end{proof}

\begin{cor}
Let $A$ be a proring.
Then a  sequence 
\[
0 \to L \to M \to N \to 0
\]
of $A$-promodules is exact if and only if it is exact as a sequence of
abelian progroups.
\end{cor}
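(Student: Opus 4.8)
The plan is to reduce the statement about $A$-promodules to the already-established fact (the final sentence of Proposition~\ref{prop-incl-exact}, together with Proposition~\ref{prop-procat-properties}(3)) that exactness in a procategory of an abelian category can be tested in the underlying structure. The key observation is that the forgetful functor from $A$-modules to abelian groups is exact: indeed, by the remarks following the definition of $A$-modules, monomorphisms coincide with injections and epimorphisms with surjections, so a sequence of $A$-modules is exact precisely when it is exact as a sequence of abelian groups. The category of $A$-promodules is $\pro(\Amod)$ and the category of abelian progroups is $\pro(\Ab)$, so the functor $M \mapsto M_A$ (forgetting to abelian groups, level by level) extends to a functor $\pro(\Amod) \to \pro(\Ab)$.

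The main point I would establish is that this extended functor $\pro(\Amod) \to \pro(\Ab)$ both preserves and reflects exactness of short exact sequences. First I would note that a functor between abelian categories that is exact on the underlying (module/group) level extends to an exact functor on the procategories: since finite limits and colimits in a procategory are computed from those in the base category via the formulas implicit in Proposition~\ref{prop-procat-properties}, an exact functor on the base induces a functor commuting with finite projective and inductive limits on the pro-level, hence an exact functor on $\pro$-categories. Applying this to the forgetful functor $\Amod \to \Ab$ shows the ``only if'' direction: an exact sequence of $A$-promodules maps to an exact sequence of abelian progroups.

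For the ``if'' direction (reflection of exactness), I would argue that the forgetful functor is \emph{faithful} and reflects kernels and cokernels. Concretely, given the sequence $0 \to L \to M \to N \to 0$ of $A$-promodules, form its kernel and cokernel objects in $\pro(\Amod)$; these are computed as pro-objects whose terms are the module-theoretic kernels and cokernels, which coincide with the abelian-group kernels and cokernels because the forgetful functor $\Amod \to \Ab$ is exact. Thus the image in $\pro(\Ab)$ of the kernel (resp.\ cokernel) of a map of $A$-promodules is the kernel (resp.\ cokernel) of the corresponding map of abelian progroups. If the sequence is exact as abelian progroups, then the kernel of $L \to M$ and the cokernel of $M \to N$ vanish, and the canonical map from the image of $L$ to the kernel of $M \to N$ is an isomorphism, as seen \emph{after} applying the forgetful functor; by faithfulness these vanishings and isomorphisms already hold in $\pro(\Amod)$.

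The step I expect to be the main obstacle is the faithfulness/reflection argument, specifically verifying that an isomorphism of promodules can be detected after forgetting to abelian progroups. The subtlety is that a morphism in $\pro(\Amod)$ being an isomorphism is not a pointwise condition on the terms but involves the $\varprojlim\varinjlim$ Hom-formula, so ``reflecting isomorphisms'' must be checked carefully rather than taken for granted. I would handle this by showing directly that if $f\colon P \to Q$ in $\pro(\Amod)$ becomes an isomorphism in $\pro(\Ab)$, then its inverse as abelian progroups is automatically $A$-linear, using that the $A$-module structures are carried along the same index sets and that $A$-linearity is a closed condition compatible with the colimit of Hom-sets. Once faithfulness and the reflection of kernels and cokernels are in place, the equivalence of the two exactness conditions follows formally, and no genuine infinite-limit pathology intervenes because only finite (kernel/cokernel) limits enter.
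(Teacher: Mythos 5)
Your proposal is correct and the conclusion does follow from it, but the route is genuinely different from the paper's. Both arguments share the same starting point: represent each morphism by a level morphism and note that kernels and cokernels are then computed levelwise, identically in the category of $A$-promodules and in that of abelian progroups. The divergence is in how vanishing is transferred between the two categories. You invoke faithfulness of the forgetful functor from $A$-promodules to abelian progroups, so that vanishing of kernels and cokernels (and hence monomorphy/epimorphy) is \emph{reflected}. The paper instead quotes its intrinsic zero-object criterion, Lemma~\ref{lem-epi-zero}: a pro-object $M=(M_{d})$ is zero iff it is Mittag-Leffler and every $M^{\sur}_{d}=0$. Since the Mittag-Leffler condition and the subgroups $M^{\sur}_{d}$ are built from images of the bonding maps, which are the same whether one regards the maps as $A$-module or abelian-group homomorphisms, the equivalence ``$\Ker(\phi)\cong 0$ as an $A$-promodule iff as an abelian progroup'' is immediate, with no categorical machinery. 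Your approach buys generality (it is really the statement that a faithful exact functor between abelian categories induces a faithful exact functor on procategories, which then reflects short exact sequences); the paper's is shorter because Lemma~\ref{lem-epi-zero} was already on hand.

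One caution about the step you flag as the main obstacle. Trying to show directly that the pro-inverse of $f$ is ``automatically $A$-linear'' is the wrong move: a chosen representative $Q_{e}\to P_{d}$ of the inverse genuinely need not be $A$-linear, and ``$A$-linearity is a closed condition'' is not an argument inside a $\varprojlim\varinjlim$ of Hom-sets. The clean fix is already contained in your setup: a faithful additive functor reflects monomorphisms and epimorphisms, and since the category of $A$-promodules is abelian (Proposition~\ref{prop-procat-properties}), a morphism that is both is an isomorphism; alternatively, avoid isomorphism-reflection entirely by phrasing exactness at $M$ as ``the canonical map $L\to\Ker(M\to N)$ is an epimorphism,'' which is again a mono/epi condition. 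Note also that faithfulness at the pro-level does require a (one-line) verification: $\Hom_{A}(X_{d},Y_{e})\subseteq\Hom_{\text{ab.\ gp.}}(X_{d},Y_{e})$ by the paper's definition, and injectivity survives both the directed colimit and the projective limit defining pro-Hom-sets.
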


\begin{proof}
Consider  a level morphism $(\phi_{d}:X_{d } \to Y_{d}) $ of $A$-modules
and the induced morphism $\phi :(X_{d}) \to (Y_{d}) $ of $A$-promodules.
Then both in the category of $A$-promodules and that of abelian progroups,
we have $ \Ker (\phi) \cong (\Ker (\phi_{d}))$.
Hence 
\begin{align*}
& \text{$\phi$ is a monomorphism in the category of $A$-promodules} \\
&  \Leftrightarrow  \Ker (\phi) \cong 0 \text{ as an $A$-promodule} \\
& \Leftrightarrow  \Ker (\phi) \cong 0 \text{ as an abelian progroup  (the preceding lemma)} \\
& \Leftrightarrow \text{$\phi$ is a monomorphism in the category of abelian progroups}. 
\end{align*}
A similar statement holds for the epimorphism. Now the corollary is obvious.
\end{proof}

\section{Formal schemes as proringed spaces}\label{sec-formal-sch-pro}

In this section, we define formal schemes as proringed spaces.

\subsection{Sheaves on a qsqc basis}

\begin{defn}
Let $X$ be a topological space.
We say that $X$ is {\em quasi-separated} if 
for any two quasi-compact open 
$U,V \subseteq X$, $U \cap V$ is quasi-compact.
We say that $X$ is {\em qsqc}
if  it is quasi-separated and quasi-compact.
A {\em qsqc basis} is a basis of open subsets consisting of qsqc open subsets.
\end{defn}

For instance, qsqc are the underlying topological space of an affine scheme
and a Noetherian topological space.

Throughout the paper, we assume that every topological space has a  qsqc basis. 
For instance, the underlying topological space of any scheme 
satisfies this.  

\begin{defn}
Let $X$ be a topological space and $\fB$ a basis of open subsets of $X$.
For  open $U \subseteq X$, a (necessarily open) covering $U = \bigcup  U_{i}$
is called a {\em $\fB$-covering} if for every $i$, $U_{i} \in \fB$.
A {\em bicovering} of an open subset $U \subseteq X$ consists
of a covering $U = \bigcup  U_{i}$ and coverings $U_{i}\cap U_{j } = \bigcup
U_{ijk}$ for each two distinct indices $i ,j$, which we denote by $\{U_{i},U_{ijk}\}$.
A  bicovering $\{U_{i},U_{ijk}\}$ is said to be {\em finite} if
it consists of finite coverings, that is, $\{U_{i},U_{ijk}\}$ is a finite set.
A  bicovering $\{U_{i},U_{ijk}\}$ is called a {\em $\fB$-bicovering} if
$U_{i},U_{ijk} \in \fB$ for every $i$ and $(i,j,k)$.
\end{defn}

Let $X$ be a topological space and $\fB$ a qsqc basis.
Then every qsqc open $U \subseteq X$ has a finite $\fB$-bicovering.
Let  $\bK$  be a category which admits finite projective limits.
We say that a {\em presheaf} in $\bK$ on $\fB$ is 
a contravariant functor $\fB \to \bK$. Here we think of $\fB$
as a category so that the only morphisms are the inclusion maps. 
A presheaf $\cF$ is called a {\em sheaf} if for every $U \in \fB$
and its every finite $\fB$-bicovering  $\{U_{i},U_{ijk}\}$, 
the sequence
\begin{equation*}\label{sheaf-sequence}
0 \to \cF(U) \to \prod \cF(U_{i}) \rightrightarrows \prod \cF(U_{ijk})
\end{equation*}
is exact. 
When $\fB$ consists of {\em all} qsqc open subsets, we just say that 
$\cF$ is a {\em presheaf} or {\em sheaf} on $X$ respectively.
For any qsqc basis $\fB$ and a sheaf $\cF$ on $\fB$,
we can extend $\cF$ to all qsqc open subsets so that it becomes a sheaf on $X$:
For any qsqc open $U \subseteq X$, take a finite $\fB$-bicovering $\{U_{i},U_{ijk}\}$ of $U$
and put 
\[
\cF(U) := \varprojlim_{V \in \{U_{i},U_{ijk}\}} \cF(V).
\]
It is easy to show that the $\cF(U)$ defined in this way is independent of the 
choice of bicovering and the extended $\cF$ is a sheaf on $X$.

\begin{rem}\label{rem-only-qsqc}
An advantage of considering only qsqc open subsets is that
we need to consider  only {\em finite} bicoverings,
which corresponds to considering {\em finite} projective limits.
For instance, the following proposition is not true if
we consider values at all open subsets.
\end{rem}

\begin{prop}\label{prop-sheaf-pro}
Let $X$ be a topological space and $\bC$ a category having finite 
projective limits.
Then  a sheaf in $\bC$ on $X$ is  a sheaf in $\pro\bC$ as well.
\end{prop}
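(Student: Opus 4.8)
The plan is to reduce the sheaf condition in $\pro\bC$ to the sheaf condition in $\bC$ by invoking the exactness of the inclusion functor $\bC \hookrightarrow \pro\bC$ established in Proposition \ref{prop-incl-exact}. The crucial point is that, because we consider only qsqc open subsets and hence only \emph{finite} bicoverings, the sheaf axiom is entirely a statement about a finite projective limit, and such limits are preserved by an exact functor.

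First I would fix a qsqc open $U \subseteq X$ together with a finite bicovering $\{U_i, U_{ijk}\}$. Since $\cF$ is a sheaf in $\bC$, the sequence
\[
0 \to \cF(U) \to \prod \cF(U_i) \rightrightarrows \prod \cF(U_{ijk})
\]
is exact in $\bC$. Because the bicovering is finite, both products are \emph{finite} products, and the exactness of this sequence says precisely that $\cF(U)$ is the finite projective limit in $\bC$ of the diagram $\prod \cF(U_i) \rightrightarrows \prod \cF(U_{ijk})$, namely the equalizer of the two restriction maps, with $\cF(U) \to \prod \cF(U_i)$ the (monomorphic) equalizer arrow.

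Next I would apply Proposition \ref{prop-incl-exact}: the inclusion $\bC \hookrightarrow \pro\bC$ is exact, so it commutes with finite projective limits, in particular with finite products and with equalizers of pairs of parallel arrows. Consequently the finite products $\prod \cF(U_i)$ and $\prod \cF(U_{ijk})$ formed in $\pro\bC$ coincide with those formed in $\bC$, and $\cF(U)$ remains the equalizer of $\prod \cF(U_i) \rightrightarrows \prod \cF(U_{ijk})$ in $\pro\bC$. This is exactly the remark following Proposition \ref{prop-incl-exact}, that a sequence of the shape $0 \to A \to B \rightrightarrows C$ which is exact in $\bC$ stays exact in $\pro\bC$. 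Hence the displayed sequence is exact in $\pro\bC$ for every such $U$ and every finite bicovering, which is the assertion.

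The only point that genuinely needs care is the finiteness of the bicovering: it is what confines the entire argument to finite projective limits, the class the inclusion is guaranteed to preserve. Were one to allow infinite coverings (as Remark \ref{rem-only-qsqc} warns), the relevant limits would involve infinite products, with which the inclusion functor need not commute, and the argument would collapse. So there is no serious obstacle here beyond keeping track of this finiteness, which is precisely the design choice built into the qsqc framework.
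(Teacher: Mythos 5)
Your proof is correct and follows exactly the paper's argument: the paper's one-line proof also derives the statement from the exactness of the inclusion $\bC \hookrightarrow \pro\bC$ (Proposition \ref{prop-incl-exact}), with the finiteness of bicoverings being the implicit reason this suffices. You have merely spelled out the details (finite products, equalizers, and the remark about sequences $0 \to A \to B \rightrightarrows C$) that the paper leaves to the reader.
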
 

\begin{proof}
It is because the inclusion $\bC \hookrightarrow \pro\bC$ is exact
(Proposition \ref{prop-incl-exact}).
\end{proof}

\begin{prop}\label{prop-gluing-sheaves}
Let $X=\bigcup_{i \in I} U_{i}$ be an open covering of a topological space,
and  $\cF_{i}$, $i \in I$, sheaves in $\bK$ on $U_{i}$ respectively with a gluing data,
Then we can glue them to obtain a sheaf $\cF$ on $X$ with $\cF|_{U_{i}} =\cF_{i}$.
\end{prop}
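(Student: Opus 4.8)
The gluing data should consist of isomorphisms $\phi_{ij} : \cF_i|_{U_i \cap U_j} \xrightarrow{\sim} \cF_j|_{U_i \cap U_j}$ with $\phi_{ii} = \id$ and the cocycle relation $\phi_{jk} \circ \phi_{ij} = \phi_{ik}$ over $U_i \cap U_j \cap U_k$. The plan is \emph{not} to use the usual equalizer formula $\cF(V) = \mathrm{eq}\bigl(\prod_i \cF_i(V \cap U_i) \rightrightarrows \cdots\bigr)$, since that involves products over the possibly infinite set $I$ and values on the possibly non-qsqc opens $V \cap U_i$, both of which conflict with our standing finiteness conventions. Instead I would build $\cF$ on a conveniently small qsqc basis and then invoke the extension procedure established above. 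Concretely, let $\fB$ be the set of all qsqc open $V \subseteq X$ with $V \subseteq U_i$ for some $i$. First I would check that $\fB$ is a qsqc basis: given a point $x$ and an open neighbourhood $W$, choose $i$ with $x \in U_i$ and then, using that $X$ has a qsqc basis, a qsqc open $V$ with $x \in V \subseteq W \cap U_i$; such $V$ lies in $\fB$.

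Next I would define the presheaf $\cF$ on $\fB$. For each $V \in \fB$ fix one index $i(V)$ with $V \subseteq U_{i(V)}$ and set $\cF(V) := \cF_{i(V)}(V)$. For $V' \subseteq V$ in $\fB$ (note $V' \subseteq U_{i(V)}$ automatically), define the restriction $\cF(V) \to \cF(V')$ as the composite of the restriction $\cF_{i(V)}(V) \to \cF_{i(V)}(V')$ inside $U_{i(V)}$ with the gluing isomorphism $\phi_{i(V),\,i(V')}(V') : \cF_{i(V)}(V') \xrightarrow{\sim} \cF_{i(V')}(V')$. The hard (and essentially only nonformal) part is to verify that these restriction maps compose correctly, i.e.\ that $\cF$ is genuinely a functor on $\fB$; this is exactly where $\phi_{ii} = \id$, the naturality of the $\phi_{ij}$ under further restriction, and the cocycle relation on triple overlaps are used. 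The same data show that a different choice of the indices $i(V)$ yields a canonically isomorphic presheaf, so $\cF$ is well defined up to unique isomorphism.

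I would then check the sheaf axiom on $\fB$, and here the qsqc restriction pays off. Fix $V \in \fB$ and a finite $\fB$-bicovering $\{V_a, V_{abc}\}$ of $V$. Since $V \subseteq U_i$ for $i = i(V)$ and every $V_a, V_{abc}$ is contained in $V$, all these opens lie in $U_i$; via the gluing isomorphisms the sequence $0 \to \cF(V) \to \prod \cF(V_a) \rightrightarrows \prod \cF(V_{abc})$ is identified with the corresponding sequence for $\cF_i$, which is exact because $\cF_i$ is a sheaf. Thus $\cF$ is a sheaf on $\fB$, and applying the extension procedure above promotes it to a sheaf on all qsqc opens of $X$, that is, a sheaf on $X$.

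Finally I would verify $\cF|_{U_i} = \cF_i$. For $V \in \fB$ with $V \subseteq U_i$ the isomorphism $\cF(V) = \cF_{i(V)}(V) \xrightarrow{\phi_{i(V),i}(V)} \cF_i(V)$ gives the required identification, natural by naturality of the $\phi$'s. For an arbitrary qsqc open $V \subseteq U_i$, take a finite $\fB$-bicovering of $V$ (whose members all lie in $U_i$); then $\cF(V) = \varprojlim \cF(W) = \varprojlim \cF_i(W) = \cF_i(V)$, the middle equality by the basis case and the last one because $\cF_i$ is a sheaf. This exhibits the isomorphism $\cF|_{U_i} \cong \cF_i$ and completes the argument.
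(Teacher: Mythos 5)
Your proposal is correct and follows essentially the same route as the paper: the paper's (very terse) proof likewise first defines $\cF$ on the qsqc basis of opens contained in some $U_i$ and then invokes the unique extension to all qsqc open subsets described earlier in the section. Your write-up simply makes explicit the details (choice of indices, cocycle verification, sheaf axiom on the basis, and the identification $\cF|_{U_i}=\cF_i$) that the paper leaves to the reader.
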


\begin{proof}
We first define $\cF$ as a sheaf on the qsqc
open subsets $V \subseteq X$ such that $V \subseteq U_{i}$ for some $i$.
Then we uniquely extend it to all qsqc open subsets as above.
\end{proof}

\subsection{Locally admissibly proringed spaces}\label{subsec-locally-admissibly}

\begin{defn}
\begin{enumerate}
\item 
An {\em admissible system of (affine) schemes} 
is a directed inductive system $(X_{d})$ of (affine) schemes
such that every bonding morphism $X_{d} \to X_{d'}$ is  a bijective closed immersion.
\item 
A proring $(A_{d})$ is said to be {\em admissible}
if every bonding morphism $A_{d'} \to A_{d}$ is surjective
and induces an isomorphism $(A_{d'})_{\red} \to (A_{d})_{\red}$ of
the associated reduced rings, or equivalently
if  $(\Spec A_{d})$ 
is an admissible system of affine schemes.
For an admissible proring $A=(A_{d}) $, we define the associated reduced
ring $A_{\red}$ by $A_{\red} := A_{d,\red}$ for any $d$.
\end{enumerate}
\end{defn}

A morphism $\alpha:A \to B$ of admissible rings induces a morphism
$\alpha_{\red}:A_{\red} \to B_{\red}$ in an obvious way. 
If $\alpha$ is an isomorphism, so is $\alpha_{\red}$.

\begin{defn}
A {\em proringed space} is a topological space $X$ endowed with 
a sheaf $\cO_{X}$ of prorings, which is called the {\em structure sheaf}. 
A  proringed space $X$ is called an {\em admissibly proringed space}
if there exists a qsqc basis $\fB$ such that for every $U \in \fB$, 
$\cO_{X}(U)$ is an admissible proring (modulo isomorphisms).
\end{defn}

For an admissibly proringed space $X$, if $\fB$ is as above,
we have the sheaf of rings on $\fB$,
\[
\fB \ni U \mapsto (\cO_{X}(U))_{\red}.
\]
Extending it to all qsqc open subsets, we obtain a sheaf of rings on $X$
and denote it by $\cO_{X_{\red}}$. Then we denote by $X_{\red}$
the ringed space obtained by replacing $\cO_{X}$ with $\cO_{X_{\red}}$.

\begin{defn}
A {\em locally admissibly proringed space} is an admissibly proringed
space $X$ such that for every $x \in X$, $\cO_{X_{\red},x}$ is a local ring.
A {\em morphism} $\phi:Y \to X$ of locally admissibly proringed spaces
consists of a continuous map $\phi :Y \to X$ denoted by the same symbol
and a $\phi $-morphism $\phi^{*} :\cO_{X} \to \cO_{Y}$
such that for every $y$, the induced map $\cO_{X_{\red},\phi(y)} \to \cO_{Y_{\red},y}$
is a local homomorphism.
\end{defn}

In the definition, a $\phi$-morphism
$ \cO_{X} \to \cO_{Y}$ means a compatible collection
of morphisms $\cO_{X}(U) \to \cO_{Y}(V)$, where 
 $U$ and $V$ run over the qsqc open subsets $U \subseteq X$
 and $V \subseteq Y$ with $\phi(V) \subseteq U$.

\subsection{Formal spectra}

\begin{defn}
For an admissible proring $A=(A_{d})$, we define the {\em formal spectrum}
 $\Spf A$ as a locally admissibly proringed space as follows:
As a topological space, define 
\[
\Spf A:= \Spec A_{\red} =\Spec A_{d} , \ \forall d.
\]
We have a directed projective system $(\cO_{\Spec A_{d}})$ of sheaves of rings.
Now we define $\cO_{\Spf A}:= \prolim \cO_{\Spec A_{d}}$ (defined on qsqc open subsets).
\end{defn}

As easily checked, it is actually a locally admissibly proringed space.
We have $(\Spf A)_{\red} =\Spec A_{\red}$.
If $A$ is a ring, then it is also a admissible proring and $\Spf A= \Spec A$.

Let $A=(A_{d})$ be an admissible proring.
For $f \in A_{\red}$, write $D(f):= \Spec A_{\red,f} \subseteq \Spec A_{\red}$,
which is called an {\em distinguished open subset}.
Here $A_{\red,f}$ is the localization of $A_{\red}$ by $f$.
If $f' \in A_{d}$ is a lift of $f$, then $\cO_{\Spec A_{d}} (D(f)) = A_{d,f'}$.
Since  $A_{d,f'}$ is independent of the lift $f'$,
we write 
\[
 A_{d,f} := A_{d,f'} \text{ and } A_{f}:=( A_{d,f})
\]
Then $(D(f), \cO_{\Spf A}|_{D(f)}) = \Spf A_{f}$.

\begin{defn}
Let $(A_{d})$ be an admissible proring.
Then for each point $x \in \Spf A$, we define the {\em stalk} $\cO_{\Spf A,x}$ of $\cO_{\Spf A}$ at $x$
as the proring $( \cO_{\Spec A_{d},x} )$.
\end{defn}

If $\fp \subseteq A_{\red}$ is a prime ideal
and $\fp_{d} \subseteq A_{d}$ is its inverse image,
then 
\[
 \cO_{\Spec A_{d}, \fp} = A_{d,\fp_{d}} =: A_{d,\fp},
\]
where $A_{d,\fp_{d}}$ is the localization of $A_{d}$ with respect to $\fp$, and
\[
\cO_{\Spf A, \fp} = (A_{d,\fp}) =: A_{\fp}.
\]
Every stalk is local in the following sense:

\begin{defn}
An admissible proring $A=(A_{d})$ is said to be {\em local}
if some (and every) $A_{d}$ is a local ring, or equivalently if
$A_{\red}$ is a local ring.
A morphism $\alpha:(A_{d}) \to (B_{e})$ of local admissible rings
is said to be {\em local} if some (and every) morphism
$A_{d} \to B_{e}$ representing $\alpha$ is a local homomorphism,
or equivalently if $\alpha_{\red}$ is a local homomorphism.
\end{defn}

\subsection{Morphisms of formal spectra}

If $\alpha : A=(A_{d}) \to B=(B_{e})$ is a morphism of admissible prorings,
then for every morphism $A_{d} \to B_{e}$ representing it, 
we have the induced morphism of schemes, $\Spec B_{e} \to \Spec A_{d}$.
Then the morphisms $\Spec B_{e} \to \Spec A_{d}$ determines
a morphism $\phi_{\alpha}:\Spf B \to \Spf A$ of locally admissibly proringed spaces,
which is called the {\em morphism induced from $\alpha$}.
For  each $y \in \Spf B$, 
we have the natural local morphism $\cO_{\Spf B,y} \to \cO_{\Spf A,\phi_{\alpha}(x)}$.

\begin{prop}\label{prop-spf}
Let $A$ and  $B$ be admissible prorings and $\phi:\Spf B \to \Spf A$
a morphism of locally admissibly proringed spaces.
Then $\phi$ is the morphism induced from the morphism 
\[
\phi^{*}: A =\cO_{\Spf A}(\Spf A) \to B=\cO_{\Spf B}(\Spf B) .
\]
\end{prop}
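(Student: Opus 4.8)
The plan is to show that an arbitrary morphism $\phi:\Spf B \to \Spf A$ of locally admissibly proringed spaces is recovered from its effect on global sections, namely $\phi = \phi_{\phi^*}$. This mirrors the classical scheme-theoretic statement that morphisms $\Spec B \to \Spec A$ correspond to ring homomorphisms $A \to B$, and my strategy is to reduce to that statement level by level through the procategory. First I would analyze the underlying continuous map. Since $(\Spf A)_{\red} = \Spec A_{\red}$ and likewise for $B$, the map $\phi$ together with the locality condition on $\cO_{X_{\red},\phi(y)} \to \cO_{Y_{\red},y}$ gives a morphism of locally ringed spaces $\Spf B \to (\Spf A)_{\red}$, which by the classical theory is determined by the ring map $\phi^*_{\red}:A_{\red} \to B_{\red}$ induced by $\phi^*$ on reduced global sections. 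In particular the continuous map of $\phi$ agrees with that of $\phi_{\phi^*}$.

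Next I would verify that $\phi$ and $\phi_{\phi^*}$ agree as morphisms of proringed spaces, i.e.\ that their structure-sheaf morphisms coincide. It suffices to check equality on a qsqc basis, so I would take distinguished open subsets $D(f) = \Spf A_f \subseteq \Spf A$ and compare the two $\phi$-morphisms on sections over these. Using the definition of a $\phi$-morphism as a compatible collection $\cO_{\Spf A}(U) \to \cO_{\Spf B}(V)$ for $\phi(V)\subseteq U$, and writing $A = (A_d)$, $B = (B_e)$, I would represent the global map $\phi^*$ by ring homomorphisms $A_d \to B_e$. Here I can invoke the procategory machinery from \S\ref{sec-pro}: any morphism in $\pro(\text{Rings})$ is represented, after reindexing, by a level morphism $(A_d \to B_d)$, and Lemma \ref{lem-pro-identity} gives a clean criterion for two endomorphisms to coincide. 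The idea is that both $\phi^*$ and $(\phi_{\phi^*})^*$ are built, at each finite level $d$, from the same ring homomorphism $A_d \to B_d$, and the induced scheme morphisms $\Spec B_d \to \Spec A_d$ agree by the classical adjunction; passing to the projective limit $\prolim \cO_{\Spec A_d}$ defining $\cO_{\Spf A}$ then forces the proringed morphisms to agree.

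The main obstacle I anticipate is the reduction from the promorphism $\phi^*$ to its finite-level representatives in a way that is compatible with the local structure and the passage to $A_{\red}$. Concretely, the definition of $\Hom_{\pro\bC}$ as $\varprojlim_e \varinjlim_d \Hom_{\bC}(A_d,B_e)$ means the global map $\phi^*$ is only given up to the equivalence of representatives, so I must check that the induced morphism $\phi_{\phi^*}$ is well-defined independently of the chosen representatives and that it genuinely reconstructs the sheaf-level data of $\phi$, not merely its reduction. The subtle point is that $\phi^*$ carries more information than $\phi^*_{\red}$: while the topological map and locality are pinned down by the reduced picture, the full structure-sheaf morphism requires comparing the promorphisms at every thickening level. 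I expect the key technical input to be that the bonding maps $A_{d'} \to A_d$ are surjective (admissibility), so that, as noted after the definition of $A$-modules, the level-wise representatives are essentially unique, and the classical scheme statement applied to each closed immersion $\Spec A_d \hookrightarrow \Spec A_{d'}$ glues to give the desired identification $\phi = \phi_{\phi^*}$.
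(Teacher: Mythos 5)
Your first half is sound and matches the paper: passing to $\phi_{\red}\colon\Spec B_{\red}\to\Spec A_{\red}$ and invoking the classical theory of $\Spec$ pins down the continuous map and gives $\phi^{-1}(D(f))=D(g)$ for $g=\phi^{*}_{\red}(f)$; reducing the sheaf comparison to distinguished opens is also the right move. The gap is in the second half. What must be shown there is the following rigidity statement: the component of the given $\phi$-morphism over a distinguished open, i.e.\ the proring map $A_{f}=\cO_{\Spf A}(D(f))\to\cO_{\Spf B}(D(g))=B_{g}$, which a priori is \emph{any} map constrained only by commutativity of the square
\[
\xymatrix{
A \ar[r]^{\phi^{*}} \ar[d] & B \ar[d] \\
A_{f} \ar[r] & B_{g} ,
}
\]
is necessarily the localization of $\phi^{*}$. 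The paper settles this by the universal property of localization: level-wise, $\Hom(A_{d,f},B_{e,g})$ injects into $\Hom(A_{d},B_{e,g})$, and this injectivity survives the $\varinjlim_{d}$ and $\varprojlim_{e}$ computing Hom-sets in the procategory, so precomposition with $A\to A_{f}$ is injective and the bottom arrow is determined by the top one. Your sketch never supplies this. Instead you assert that ``both $\phi^{*}$ and $(\phi_{\phi^{*}})^{*}$ are built, at each finite level $d$, from the same ring homomorphism $A_{d}\to B_{d}$'' --- but for the arbitrary morphism $\phi$ of locally admissibly proringed spaces, nothing in the definition says its sheaf data over the various opens arises level-wise from representatives of the global map; that is exactly the content of the proposition, so this step is circular.

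Moreover, the technical inputs you propose to close the gap do not do the required work. Surjectivity of the bonding maps $A_{d'}\to A_{d}$ (admissibility) and essential uniqueness of level representatives concern how a single pro-morphism is presented, not why restriction-compatibility forces the maps over smaller opens; admissibility plays no role at this point of the paper's argument. Likewise, Lemma \ref{lem-pro-identity} is about recognizing the identity endomorphism, whereas here the two maps to be compared are not endomorphisms, and the real issue is excluding ``exotic'' fillers $A_{f}\to B_{g}$ over a fixed $\phi^{*}$ --- which is precisely what the epimorphism property of the localization $A\to A_{f}$ (level-wise, hence in the procategory) rules out. Once you insert that localization argument in place of the level-wise gluing, your outline collapses to the paper's proof.
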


\begin{proof}
$\phi$ induces a morphism $\phi_{\red}:\Spec B_{\red} \to \Spec A_{\red}$
of locally ringed spaces, which is induced  from $\phi_{\red}^{*}:A_{\red} \to B_{\red}$.
 In particular,
if $f \in A_{\red}$ and $ g:= \phi_{\red}^{*}(f) \in B_{\red}$,
then $\phi^{-1}(D(f))=D(g)$. 
Hence we have the commutative diagram
\[
\xymatrix{
A \ar[r]^{\phi^{*}} \ar[d] &B \ar[d] \\
A_{f} \ar[r]_{\phi^{*}} & B_{g} .
}
\]
It easily follows from the universality of localization that
 the bottom arrow is uniquely determined by the top one.
This shows the assertion.
\end{proof}

\subsection{Formal schemes}

\begin{defn}
An {\em affine formal scheme} is a locally admissibly proringed space
which is
 isomorphic  to the formal spectrum of some admissible proring.
\end{defn}

\begin{cor}
The category of affine formal schemes is equivalent to the dual
category of the category of admissible prorings.
\end{cor}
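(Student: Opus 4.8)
The plan is to realize $\Spf$ as a contravariant functor from the category of admissible prorings to that of affine formal schemes, and then to check that this functor is essentially surjective and fully faithful; an equivalence with the dual category follows at once. First I would confirm that $\Spf$ is genuinely functorial. Given a morphism $\alpha:A\to B$ of admissible prorings, the construction preceding the statement produces the induced morphism $\phi_{\alpha}:\Spf B\to\Spf A$; to see this is well defined I would note that any two morphisms $A_{d}\to B_{e}$ representing $\alpha$ yield the same morphism of locally admissibly proringed spaces, since the underlying map is the one induced on $\Spec(-)_{\red}$ and the map of structure sheaves is assembled from $\Spec B_{e}\to\Spec A_{d}$ through the $\prolim$ defining $\cO_{\Spf}$. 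Functoriality, i.e.\ $\phi_{\id_{A}}=\id_{\Spf A}$ and $\phi_{\beta\circ\alpha}=\phi_{\alpha}\circ\phi_{\beta}$, then reduces to the corresponding identities for the ordinary spectrum functor on rings, carried along the projective systems.

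Essential surjectivity is immediate: by definition an affine formal scheme is one isomorphic to $\Spf A$ for some admissible proring $A$, so every object lies in the essential image. For faithfulness I would use the global-sections functor $\Gamma$. Since $\Spf A=\Spec A_{\red}=\Spec A_{d}$ as a space and $\cO_{\Spf A}=\prolim\cO_{\Spec A_{d}}$, evaluating at the whole space gives $\cO_{\Spf A}(\Spf A)=(\cO_{\Spec A_{d}}(\Spec A_{d}))_{d}=(A_{d})_{d}=A$ as a proring, and likewise $\cO_{\Spf B}(\Spf B)=B$. The key point is that the morphism $\phi_{\alpha}$ induces exactly $\alpha$ on global sections, that is $\phi_{\alpha}^{*}=\alpha$; this is built into the construction, because the maps $A_{d}\to B_{e}$ used to define $\phi_{\alpha}$ are precisely representatives of $\alpha$. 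Hence $\Gamma$ is a one-sided inverse to $\Spf$ on Hom-sets, so $\alpha\mapsto\phi_{\alpha}$ is injective.

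Fullness is supplied directly by Proposition \ref{prop-spf}: any morphism $\phi:\Spf B\to\Spf A$ of locally admissibly proringed spaces equals the morphism induced from $\phi^{*}:A=\cO_{\Spf A}(\Spf A)\to B=\cO_{\Spf B}(\Spf B)$, so $\phi=\phi_{\phi^{*}}$ lies in the image of $\alpha\mapsto\phi_{\alpha}$. Combining the last two paragraphs, the map $\Hom(A,B)\to\Hom(\Spf B,\Spf A)$ is a bijection for all admissible prorings $A,B$, so $\Spf$ is fully faithful, and together with essential surjectivity this yields the asserted equivalence. The genuine content of the argument is precisely the surjectivity built into Proposition \ref{prop-spf}; since we may assume that result, the main obstacle that remains is the bookkeeping verification that $\Gamma\circ\Spf$ is the identity on morphisms, i.e.\ that the identification $\cO_{\Spf A}(\Spf A)=A$ is compatible with the construction of $\phi_{\alpha}$ so that $\phi_{\alpha}^{*}=\alpha$, which is where care with the $\prolim$ defining the structure sheaf is needed.
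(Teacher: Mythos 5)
Your proof is correct and follows essentially the same route as the paper, whose entire argument is to invoke Proposition \ref{prop-spf}: that proposition supplies fullness (and, combined with the identification $\cO_{\Spf A}(\Spf A)=A$, faithfulness), while essential surjectivity is immediate from the definition of affine formal scheme. Your additional verifications of functoriality and of $\phi_{\alpha}^{*}=\alpha$ are exactly the routine bookkeeping the paper leaves implicit.
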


\begin{proof}
It follows from Proposition \ref{prop-spf}.
\end{proof}

\begin{defn}
A  locally admissibly proringed space
$(X,\cO_{X})$ is called a {\em formal scheme}
if there exists an open covering
$X=\bigcup U_{i}$ such that for every $i$, $(U_{i},\cO_{X}|_{U_{i}})$
is an affine formal scheme.
\end{defn}

If $X$ is a scheme, then the structure sheaf $\cO_{X}$, which is a
sheaf of rings, is also a sheaf of prorings (Proposition \ref{prop-sheaf-pro}), and $X$ is regarded as
a formal scheme as well.\footnote{It holds because we consider only 
values of sheaves at qsqc open subsets. See Remark \ref{rem-only-qsqc}.} 
Thus the category of schemes is a full subcategory of that of formal schemes.
By abuse of terminology, a formal scheme which is isomorphic to a scheme
is also called a {\em scheme}.

\begin{conv}
For an open subset $U$ of a formal scheme $X$, $(U,\cO_{X}|_{U})$
is again a formal scheme, which is called an {\em open formal subscheme}.
Unless otherwise noted, we identify an open formal subscheme with 
its underlying open subset. We say that an open subset $U \subseteq X$ is {\em affine}
if it is an affine formal scheme.
If $U \cong \Spf A$, then we write $\Spf A \subseteq X$.
\end{conv}

\begin{prop}
We can glue formal schemes along open formal subschemes.
\end{prop}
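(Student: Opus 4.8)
The plan is to carry out the standard gluing construction in the category of locally admissibly proringed spaces, using Proposition~\ref{prop-gluing-sheaves} to glue the structure sheaves and then checking that the local conditions defining a formal scheme are preserved. Concretely, I take the gluing datum to consist of formal schemes $X_{i}$, open formal subschemes $X_{ij} \subseteq X_{i}$, and isomorphisms $\phi_{ij} \colon X_{ij} \xrightarrow{\sim} X_{ji}$ of formal schemes with $\phi_{ii} = \id$ satisfying the cocycle condition $\phi_{ik} = \phi_{jk} \circ \phi_{ij}$ on $X_{ij} \cap \phi_{ij}^{-1}(X_{jk})$. The goal is to produce a formal scheme $X$ with open formal subschemes $X_{i} \subseteq X$ covering $X$ whose pairwise intersections are identified with the $X_{ij}$ via the $\phi_{ij}$.

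First I would glue the underlying topological spaces, forming $X := \bigl(\coprod_{i} X_{i}\bigr)/\!\sim$, where a point of $X_{ij}$ is identified with its image under $\phi_{ij}$; the cocycle condition guarantees that $\sim$ is an equivalence relation, and each $X_{i}$ becomes an open subspace with $X_{i} \cap X_{j} = X_{ij}$. Since each $X_{i}$ carries a qsqc basis $\fB_{i}$ on whose members $\cO_{X_{i}}$ is admissible, the union $\bigcup_{i} \fB_{i}$ is a qsqc basis of $X$ (quasi-compactness and quasi-separatedness of a basis element are intrinsic and unaffected by the ambient space), so $X$ satisfies the standing assumption. The homeomorphisms underlying the $\phi_{ij}$ together with the sheaf isomorphisms $\phi_{ij}^{*}$ constitute a gluing datum for the sheaves of prorings $\cO_{X_{i}}$; Proposition~\ref{prop-gluing-sheaves} then yields a sheaf of prorings $\cO_{X}$ on $X$ with $\cO_{X}|_{X_{i}} = \cO_{X_{i}}$.

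It remains to verify that $(X, \cO_{X})$ lies in the right category and is a formal scheme. On $\bigcup_{i} \fB_{i}$ the values of $\cO_{X}$ agree with those of some $\cO_{X_{i}}$ and are thus admissible prorings, so $X$ is admissibly proringed; because $\cO_{X_{\red}}|_{X_{i}} = \cO_{(X_{i})_{\red}}$, every stalk $\cO_{X_{\red},x}$ equals a stalk of some $\cO_{(X_{i})_{\red}}$ and is therefore local, so $X$ is locally admissibly proringed. Each point lies in some $X_{i}$ and hence in an affine open $\Spf A \subseteq X_{i} \subseteq X$, giving an open covering of $X$ by affine formal schemes; thus $X$ is a formal scheme. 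The inclusions $X_{i} \hookrightarrow X$ are open immersions of formal schemes (the structure-sheaf map is the identity, hence induces isomorphisms, a fortiori local homomorphisms, on stalks), and by construction they are compatible with the $\phi_{ij}$.

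The construction is routine once Proposition~\ref{prop-gluing-sheaves} is in hand; the one point requiring genuine attention is that the \emph{defining local conditions} of a formal scheme---admissibility of the proring of sections, locality of the stalks of $\cO_{X_{\red}}$, and the existence of an affine neighbourhood---are all local and hence survive the gluing, which is precisely why the identification $\cO_{X}|_{X_{i}} = \cO_{X_{i}}$ supplied by Proposition~\ref{prop-gluing-sheaves} suffices. Uniqueness of $X$ up to unique isomorphism over the $X_{i}$ follows, as usual, from the fact that a morphism out of $X$ is determined by its restrictions to the open cover $\{X_{i}\}$ together with the sheaf condition.
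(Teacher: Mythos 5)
Your proposal is correct and follows essentially the same route as the paper: glue the underlying topological spaces first, then glue the structure sheaves via Proposition~\ref{prop-gluing-sheaves}, and observe that the defining local conditions (admissibility, locality of stalks of $\cO_{X_{\red}}$, affine neighbourhoods) are preserved. The paper states this in two lines; your write-up simply supplies the routine verifications it leaves implicit.
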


\begin{proof}
We can first glue them as a topological space and then 
glue the structure sheaves as in Proposition \ref{prop-gluing-sheaves}.
\end{proof}

\begin{prop}
Every formal scheme  admits a basis $\fB$ of open subsets
consisting of affine ones, which is a qsqc basis.
\end{prop}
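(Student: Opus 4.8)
The plan is to build $\fB$ explicitly out of the distinguished open subsets of affine charts, using the fact—recorded just before the statement—that each such subset is again an affine formal scheme and hence qsqc as a topological space. First I would invoke the definition of a formal scheme to fix an open covering $X = \bigcup_{i} U_{i}$ in which each $U_{i} \cong \Spf A_{i}$ for some admissible proring $A_{i}$. Topologically $U_{i} = \Spec (A_{i})_{\red}$ is the space of an affine scheme, so its distinguished open subsets $D(f)$, $f \in (A_{i})_{\red}$, form a basis for the topology of $U_{i}$; this is the classical fact for affine schemes. By the computation preceding the statement, $(D(f), \cO_{X}|_{D(f)}) = \Spf (A_{i})_{f}$, so each $D(f)$ is an affine open formal subscheme of $X$.

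Then I would take $\fB$ to be the collection of all these distinguished opens $D(f) \subseteq U_{i}$, as $i$ and $f$ range over all choices. To see that $\fB$ is a basis for the topology of $X$, I would pick an arbitrary open $W \subseteq X$ and a point $x \in W$: since the $U_{i}$ cover $X$, we have $x \in U_{i}$ for some $i$, and since the distinguished opens form a basis of $U_{i}$, there is an $f$ with $x \in D(f) \subseteq W \cap U_{i} \subseteq W$. As $W$ and $x$ were arbitrary, every open subset of $X$ is a union of members of $\fB$, which is precisely the basis property.

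Finally I would check that $\fB$ is a qsqc basis of affine opens. Each member $D(f) \in \fB$ is an affine formal scheme by construction, so its underlying topological space is $\Spec (A_{i})_{\red,f}$, the space of an affine scheme; and an affine scheme is quasi-separated and quasi-compact, as noted earlier in the paper. Hence every member of $\fB$ is qsqc, and $\fB$ is simultaneously a basis of affine open subsets and a qsqc basis, which is the assertion.

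I do not expect a genuinely hard step here: the argument merely packages three facts already established—that distinguished opens of $\Spf A$ are themselves affine formal spectra, that they form a topological basis of the affine scheme $\Spec A_{\red}$, and that affine schemes are qsqc. The only point needing mild care is the passage through the affine charts in the verification of the basis property, namely confirming that a distinguished open chosen inside $W \cap U_{i}$ indeed lies in $W$; this is immediate once the covering is fixed.
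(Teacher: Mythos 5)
Your proposal is correct and is essentially the paper's own argument: the paper likewise covers $X$ by affine charts $U_{i}$ and uses the distinguished open subsets of each $U_{i}$ (which are affine formal spectra and whose underlying spaces are those of affine schemes, hence qsqc) as the basis $\fB$. You have merely spelled out the verification of the basis property and the qsqc property, which the paper leaves implicit.
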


\begin{proof}
Every formal scheme $X$ has an affine covering $X =\bigcup U_{i}$ and each $U_{i}$
has a basis of open subsets consisting of distinguished open subsets.
This proves the proposition.
\end{proof}

\begin{prop}
Let $A$ be an admissible proring. Then $\Spf A$ is a scheme if and only if
$A$ is isomorphic to a ring.
\end{prop}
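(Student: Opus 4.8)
The plan is to prove both implications; the ``if'' direction is immediate from the formalism already in place. Suppose $A$ is isomorphic, in the category of prorings, to a ring $R$. Every ring is an admissible proring (indexed by a singleton), and under the dual equivalence between affine formal schemes and admissible prorings proved above the isomorphism $A \cong R$ yields an isomorphism $\Spf A \cong \Spf R$. Since $\Spf R = \Spec R$ for a ring $R$, this exhibits $\Spf A$ as isomorphic to a scheme, so $\Spf A$ is a scheme.

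For the converse I would start from an isomorphism $\phi : \Spf A \xrightarrow{\sim} X$ of locally admissibly proringed spaces, where $X$ is a scheme whose sheaf of rings is regarded as a sheaf of prorings by Proposition \ref{prop-sheaf-pro}. The underlying space of $\Spf A$ is $\Spec A_{\red}$, a spectral space, hence quasi-compact and quasi-separated; transporting this along $\phi$ shows that $X$ is qsqc, so $X$ is itself a qsqc open subset of $X$.

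The crux is to identify the proring $\cO_X(X)$. On the scheme $X$ the structure sheaf of prorings is simply the structure sheaf of rings composed with the inclusion $\bC \hookrightarrow \pro\bC$; this is the content of Proposition \ref{prop-sheaf-pro}, whose proof rests on the exactness of that inclusion (Proposition \ref{prop-incl-exact}). Because $X$ is a qsqc open of itself, $\cO_X(X)$ is therefore the ring $\Gamma(X,\cO_X)$ viewed as a singleton-indexed proring, i.e.\ is isomorphic to a ring in the category of prorings. The isomorphism $\phi$ induces an isomorphism of prorings on global sections, and using $A = \cO_{\Spf A}(\Spf A)$ from Proposition \ref{prop-spf} we conclude $A \cong \cO_X(X)$, a ring. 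Thus $A$ is isomorphic to a ring, as required.

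The step carrying the real weight is this last identification of $\cO_X(X)$ with an honest ring, and the point to watch is that it depends on only \emph{finite} projective limits entering the sheaf values: $X$ is qsqc, so one works with finite bicoverings and finite limits, where the inclusion $\bC \hookrightarrow \pro\bC$ is exact. This is precisely why the paper restricts to qsqc open subsets (Remark \ref{rem-only-qsqc}); allowing arbitrary opens would introduce genuinely infinite pro-limits, for which the global sections of a sheaf of prorings need not collapse to a ring and the argument would fail.
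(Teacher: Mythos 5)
Your proof is correct, and for the ``only if'' direction it takes a genuinely more direct route than the paper's. The paper identifies $\Spf A$ with the scheme, chooses a finite covering by affine opens $U_{i}$ (using quasi-compactness of $\Spec A_{\red}$), notes that $A_{i} := \cO_{X}(U_{i})$ and $A_{ij} := \cO_{X}(U_{i}\cap U_{j})$ are rings, and reconstructs $A$ from the sheaf condition as the difference kernel of $\prod A_{i} \rightrightarrows \prod A_{ij}$; this is a ring because the inclusion $(\text{Rings}) \hookrightarrow (\text{Prorings})$ preserves finite projective limits (Proposition \ref{prop-incl-exact}). You bypass the covering entirely: since the underlying space is the spectral space $\Spec A_{\red}$, it is qsqc, so $X$ itself is a legitimate qsqc open; by Proposition \ref{prop-sheaf-pro} the proring structure sheaf of a scheme takes ring values on every qsqc open, in particular $\cO_{X}(X) = \Gamma(X,\cO_{X})$ is a ring, and transporting along the isomorphism together with the identification $A = \cO_{\Spf A}(\Spf A)$ finishes the argument. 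Both proofs rest on the same two ingredients --- ring values of the scheme's structure sheaf on qsqc opens, and the identification of $A$ with global sections --- but yours trades the paper's explicit finite-limit computation for the observation that the whole space is qsqc, a quasi-separatedness the paper needs anyway so that the $U_{i}\cap U_{j}$ are qsqc and the $A_{ij}$ are defined. Your closing remark on why the restriction to qsqc opens is essential is exactly the point of Remark \ref{rem-only-qsqc}.
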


\begin{proof}
The ``if'' is trivial. Suppose that $X:=\Spf A$ is a scheme.
Then we have a finite open covering $\Spf A = \bigcup U_{i}$
by affine schemes. Put $A_{i}:= \cO_{X}(U_{i})$, which are rings.
For each $i,j$, $A_{ij}:=\cO_{X}(U_{i}\cap U_{j})$ is also a ring.
Then $A$ is isomorphic to the difference kernel of 
natural maps $\prod A_{i} \rightrightarrows \prod A_{ij}$,
which is a ring. It proves the ``only if.''
\end{proof}

\begin{defn}
For a formal scheme $X$ and $x \in X$,
if $\Spf A \subseteq X$ is an affine neighborhood of $x$,
we define the {\em stalk} $\cO_{X,x}$ of $\cO_{X}$ at $x$
to be $\cO_{\Spf A,x}$. (It is clearly independent of the affine neighborhood.)
\end{defn}

\subsection{Fiber products}

The category of formal schemes has fiber products.
For morphisms $Y \to X$ and $Z \to X$ of formal schemes,
we denote the fiber product by $Y \times_{X} Z$.
To show the existence of fiber products,
we only have to consider the case where $X$, $Y$ and $Z$ are affine.
If we write $X= \Spf A$, $Y=\Spf B$ and $Z=\Spf C$,
then since $B \otimes _{A}C$ is the fiber sum,
dually the fiber product $Y \times _{X} Z$ exists and is isomorphic to
$ \Spf B \otimes_{A} C$.

\section{Semicoherent promodules and formal subschemes}\label{sec-semicoh}

In this section, we introduce the notions of
semicoherent promodule and formal subschemes,
and study their basis properties.

\subsection{Semicoherent promodules}\label{subsec-semicoh}

\begin{defn}
Let $X$ be a formal scheme.
An $\cO_{X}$-{\em (pro)module} is a sheaf of abelian (pro)groups, $\cM$,
such that for each qsqc open $U \subseteq X$, $\cM (U)$
is an $\cO_{X}$-(pro)module in a compatible way.

A {\em morphism} $\alpha:\cM \to \cN$ of $\cO_{X}$-(pro)modules
is a compatible system of morphisms, 
\[
\alpha(U):\cM(U) \to \cN(U), \ \text{qsqc } U \subseteq X,
\]
of $\cO_{X}(U)$-(pro)modules.
 
\end{defn}

Let $A=(A_{d})$ be an  admissible proring, $X:=\Spf A$
and $M_{0}$ an $A$-module.
Reindexing $A$, we may suppose that every $A_{d}$ acts on $M$.
Put $X_{d}:= \Spec A_{d}$.
For each $d$, we have the 
$\cO_{X_{d}}$-module $\tilde M_{0}$ associated to $M_{0}$.
As a sheaf of abelian groups, $\tilde M_{0}$ is independent of $d$.
All the $\cO_{X_{d}}$-module structures make $\tilde M_{0}$
an $\cO_{X}$-module.
Next, for an $A$-promodule $M=(M_{e})$, we define
the {\em associated $\cO_{X}$-promodule} $M^{\triangle}$
by $M^{\triangle}:= \prolim  \tilde M_{e}$.
For $f \in A_{\red}$ and a lift $f' \in A_{d}$, 
\[
\tilde M_{0}(D(f)) = M_{0,f'} = M_{0} \otimes _{A_{d}} A_{d,f} =:M_{0,f},
\]
and hence
\[
M^{\triangle} (D(f)) =( M_{e,f}) = M \otimes_{A}A_{f} =:M_{f}.
\]
For $x \in X$, if $\fp \subseteq A_{\red}$ is the corresponding prime ideal,
 the stalk $\tilde M_{0,x}$ is an $\cO_{X, x}$-module and
\[
\tilde M_{0,x} = M_{0} \otimes_{ A} A_{\fp} =: M_{0,\fp}.
\]
Then we define the {\em stalk} $M^{\triangle}_{x}$ by
\[
M^{\triangle}_{x}=M_{\fp}:= M \otimes _{A}A_{\fp} =(M_{e,\fp}),
\]
which is an $\cO_{X,x}$-promodule.

Let $N=(N_{c})$ be another $A$-promodule and $\alpha :M \to N$ a morphism
of $A$-promodules.
The morphisms $\alpha^{e}_{c}:M_{e} \to N_{c}$ representing $\alpha$
determines 
\[
\tilde \alpha^{e}_{c} (U) : \tilde M_{e}(U) \to \tilde N_{c}(U),\ \text{qsqc } U \subseteq X.
\]
These morphisms then determine $M ^{\triangle} (U) \to N ^{\triangle}(U)$, $U \subseteq X$,
and  a morphism $\alpha^{\triangle}:M^{\triangle} \to N ^{ \triangle}$ of $\cO_{X}$-promodules.
Conversely if $\beta :M ^{\triangle} \to N^{\triangle}$ is a morphism of $\cO_{X}$-promodules,
then $\beta = (\beta (X))^{\triangle}$. Indeed, for each $f \in A_{\red}$, 
we have the commutative diagram
\[
\xymatrix{
 M \ar[d] \ar[r] ^{\beta(X)}& N \ar[d] \\
 M_{f} \ar[r]_{\beta(D(f))} & N _{f}.
}
\]
From the universality of localization, $\beta(D(f))$ should be the one
induced from $\beta (X)$, which shows $\beta = (\beta (X))^{\triangle}$.

\begin{defn}
Let $X$ be a formal scheme. 
An $\cO_{X}$-promodule $\cM$ is said to be {\em semicoherent}
if every point of $X$ has an affine neighborhood $\Spf A \subseteq X$
such that $\cM|_{\Spf A} \cong M^{\triangle}$ for some $A$-promodule
$M$.
For a semicoherent $\cO_{X}$-promodule $\cM$ and for $x \in X$,
we define the {\em stalk} $\cM_{x}$ by $M^{\triangle}_{x}$ for $\Spf A$ and
$M$ as above. 
\end{defn}

For a scheme $X$, every quasi-coherent $\cO_{X}$-module is
a semicoherent $\cO_{X}$-module and vice versa:
\[
\text{(Quasi-coherent $\cO_{X}$-modules)} = \text{(Semicoherent $\cO_{X}$-modules)}.
\]
However in general there are much more semicoherent $\cO_{X}$-{\em pro}modules
than  quasi-coherent $\cO_{X}$-modules (see Example \ref{expl-completion}).

\begin{lem} \label{lem-affine-restriction}
Let $A$ be an admissible proring, $M$ an $A$-promodule, 
and $\Spf B \subseteq \Spf A$ an affine open subset.
Then $M^{\triangle} |_{\Spf B} =(M \otimes_{A}B)^{\triangle}$.
\end{lem}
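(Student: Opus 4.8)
The plan is to prove the equality by checking it on a well-chosen basis of qsqc open subsets of $\Spf B$ and then invoking the principle (\S\ref{sec-formal-sch-pro}) that a sheaf is determined by its restriction to a qsqc basis. The delicate point is the choice of basis: a distinguished open $D(g)$ of $\Spf B$ need not be distinguished in $\Spf A$, so I cannot directly evaluate $M^{\triangle}$ on an arbitrary $D(g)$. To sidestep this, I would use the basis of $\Spf B$ consisting of those distinguished opens $D(f) = \Spec A_{\red,f}$ of $\Spf A$ that happen to be contained in $\Spf B$. Writing $V \subseteq \Spf A$ for the underlying open subset of $\Spf B$, the sets $D(f)$ form a basis of $\Spf A = \Spec A_{\red}$, so those lying inside $V$ form a basis of $V$, and each is affine hence qsqc.

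First I would fix such an $f \in A_{\red}$ with $D(f) \subseteq V$, let $\alpha \colon A \to B$ be the map corresponding to the inclusion (Proposition \ref{prop-spf}), and set $g := \alpha_{\red}(f) \in B_{\red}$. A point-set computation gives $D(g) = \alpha_{\red}^{-1}(D(f)) = V \cap D(f) = D(f)$, so $D(f)$ is distinguished in $\Spf B$ as well. On this basis the two sides evaluate, using the defining formula for $N^{\triangle}$ and associativity of the tensor product, to
\[
(M^{\triangle}|_{\Spf B})(D(f)) = M^{\triangle}(D(f)) = M \otimes_A A_f,
\]
\[
(M \otimes_A B)^{\triangle}(D(f)) = (M \otimes_A B)^{\triangle}(D(g)) = (M \otimes_A B) \otimes_B B_g = M \otimes_A B_g.
\]

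The crux is then the identification $A_f \cong B_g$ as prorings, compatibly over $A$. This is exactly where the hypothesis that $\Spf B \subseteq \Spf A$ is an \emph{open formal subscheme} enters: under the identification of $\Spf B$ with $(V, \cO_{\Spf A}|_V)$, the structure sheaf of $\Spf B$ is the restriction of that of $\Spf A$, so
\[
B_g = \cO_{\Spf B}(D(g)) = \cO_{\Spf A}(D(f)) = A_f.
\]
Here I must also check that the two $A$-algebra structures on this common proring coincide — the one via $A \to A_f$ directly and the one via $A \xrightarrow{\alpha} B \to B_g$ — which holds because $\alpha = \phi^{*}$ is the restriction $\cO_{\Spf A}(\Spf A) \to \cO_{\Spf A}(V)$ and restriction maps compose. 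Granting this, $M \otimes_A B_g = M \otimes_A A_f$, so the two sides agree on the chosen basis.

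Finally I would observe that all these identifications are natural in $f$, i.e.\ compatible with the localization/restriction maps attached to inclusions $D(f') \subseteq D(f)$, so they assemble into an isomorphism of sheaves on the basis and hence into an isomorphism $M^{\triangle}|_{\Spf B} \cong (M \otimes_A B)^{\triangle}$ of $\cO_{\Spf B}$-promodules. I expect the main obstacle to be the bookkeeping around the identity $A_f = B_g$ and the compatibility of the various $A$- and $B$-module structures; by contrast, the reduction to the basis and the tensor-product computations are routine given the formulas for $M^{\triangle}$ already established.
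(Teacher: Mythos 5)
Your proof is correct, and it fills in exactly the routine verification the paper had in mind: the paper's own ``proof'' of this lemma is the single word ``Obvious.'' Checking the two sheaves on the basis of distinguished opens $D(f)$ of $\Spf A$ contained in $\Spf B$, using $\cO_{\Spf B}=\cO_{\Spf A}|_{V}$ to identify $B_{g}$ with $A_{f}$ over $A$, and then extending from the qsqc basis is precisely the standard unwinding of the definitions that the author deemed not worth writing out.
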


\begin{proof}
Obvious.
\end{proof}

\begin{prop}\label{prop-affine-semicoh}
Let $\cM$ be a semicoherent $\cO_{X}$-promodule
on an affine formal scheme  $\Spf A$.
Then $\cM = (\cM(\Spf A))^{\triangle}$.
\end{prop}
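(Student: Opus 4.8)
The plan is to realize $\cM$ as the sheaf associated to its own module of global sections and then verify that the resulting comparison map is an isomorphism by applying a flat base change to the sheaf sequence. Write $X=\Spf A$ and set $M:=\cM(X)$, which is an $A$-promodule since $\cO_X(X)=A$. First I would construct a canonical morphism $\theta\colon M^{\triangle}\to\cM$ of $\cO_X$-promodules: for each distinguished open $D(f)$ the restriction $M=\cM(X)\to\cM(D(f))$ is $A$-linear and lands in an $A_f$-promodule, so by the universal property of $-\otimes_A A_f$ it factors uniquely through $M_f=M\otimes_A A_f=M^{\triangle}(D(f))$. These factorizations are compatible with further localization, so (the $D(f)$ forming a basis) they glue to a morphism $\theta$ with $\theta(X)=\id_M$. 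As a morphism of sheaves that is an isomorphism over the members of an open cover is itself an isomorphism, it suffices to exhibit a cover on which $\theta$ is invertible.

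Next I would produce a good cover. By the definition of semicoherence together with Lemma \ref{lem-affine-restriction}, every distinguished open $D(f)$ contained in one of the affine neighborhoods furnished by semicoherence satisfies $\cM|_{D(f)}\cong N^{\triangle}$ for some $A_f$-promodule $N$, and such $D(f)$ cover $X$. Since $X=\Spec A_{\red}$ is quasi-compact, finitely many suffice: $X=\bigcup_{i=1}^{n}D(f_i)$ with $\cM|_{D(f_i)}\cong N_i^{\triangle}$ and $N_i=\cM(D(f_i))$. Because intersections of distinguished opens are again distinguished, the sheaf condition for $\cM$ on this cover is an exact sequence of $A$-promodules
\[
0\to M\to\prod_{i}\cM(D(f_i))\rightrightarrows\prod_{i,j}\cM(D(f_if_j)).
\]

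The heart of the argument is a flat base change. Each $A_{f_k}=(A_{d,f_k})$ is a flat $A$-promodule, since every $A_{d,f_k}$ is a localization and hence a flat $A_d$-module; therefore $-\otimes_A A_{f_k}$ is exact, and being additive it commutes with the finite products above. Applying it to the displayed sequence and using $\cM(D(f_i))\otimes_A A_{f_k}=N_i\otimes_{A_{f_i}}A_{f_if_k}=\cM(D(f_if_k))$ (and similarly for triple intersections), I obtain precisely the sheaf sequence for $\cM$ on the cover $D(f_k)=\bigcup_i D(f_if_k)$. Comparing kernels identifies $M\otimes_A A_{f_k}$ with $\cM(D(f_k))=N_k$ through the natural map, that is, $\theta(D(f_k))$ is an isomorphism. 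On the affine $D(f_k)=\Spf A_{f_k}$ both $M^{\triangle}|_{D(f_k)}=M_{f_k}^{\triangle}$ and $\cM|_{D(f_k)}\cong N_k^{\triangle}$ are of associated form, so $\theta|_{D(f_k)}$ equals $(\theta(D(f_k)))^{\triangle}$ and is thus an isomorphism of sheaves. Since the $D(f_k)$ cover $X$, I conclude that $\theta$ is an isomorphism, i.e.\ $\cM=M^{\triangle}$.

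The step I expect to demand the most care is the verification that the two sheaf sequences genuinely coincide after base change: one must check that tensoring the difference map with $A_{f_k}$ reproduces the restriction (localization) maps of the sheaf sequence on $D(f_k)$, which follows from transitivity of restriction, and that the exactness of the sheaf sequence may legitimately be read in the abelian category of $A$-promodules (using the corollary that exactness there is detected on abelian progroups). The finiteness of the cover, and hence the quasi-compactness of $X$, is essential here, for it is exactly what lets $-\otimes_A A_{f_k}$ pass through the products.
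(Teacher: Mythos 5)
Your proof is correct and follows essentially the same route as the paper: cover $\Spf A$ by affine opens on which $\cM$ is of associated form, write down the sheaf (descent) exact sequence of $A$-promodules, and exploit the flatness of localization so that tensoring preserves exactness and identifies sections. The only cosmetic differences are that the paper packages the cover as a single Zariski covering $\phi:\Spf B \to \Spf A$ with $C = B\otimes_{A}B$ and tensors the sequence with an arbitrary affine open $\Spf D \subseteq \Spf A$, whereas you work with an explicit finite cover by distinguished opens and an explicit comparison morphism $\theta$ glued over that cover.
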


\begin{proof}
There exists a Zariski covering $\phi:\Spf B \to \Spf A$
such that $\phi^{*}\cM = (M')^{\triangle}$ for the $B$-promodule 
$M' := (\phi^{*}\cM)(\Spf B)$.
Put $C:=B \otimes_{A} B$
and let $\psi :\Spf C \to \Spf A$ be the natural morphism, which is again
a Zariski covering. 
Then $\psi^{*}\cM \cong (M'')^{\triangle}$, where
$M'' := (\psi^{*}\cM)(\Spf C)$.

Then we have the exact sequence
of $A$-promodules
\[
0 \to M \to M'_{A} \rightrightarrows M''_{A} .
\]
For each affine open $\Spf D \subseteq \Spf A$, 
since $- \otimes_{A} D$ is exact,
the sequence
\[
0 \to M\otimes_{A} D \to (M'_{A})\otimes_{A} D \rightrightarrows  (M''_{A} )\otimes_{A} D
\]
is exact. It follows that $\cM(\Spf D)=M\otimes_{A} D$,
 which implies the assertion. 
\end{proof}

The following is a direct consequence:

\begin{cor}
For an affine formal scheme $X=\Spf A$, we have the equivalence
\begin{align*}
\textup{($A$-promodules)} & \cong \textup{(Semicoherent $\cO_{X}$-promodules)} \\
  M & \mapsto M ^{\triangle} \\
   \cM(X) & \mapsfrom   \cM .
\end{align*}
\end{cor}

\begin{cor}
For a formal scheme $X$, the category of semicoherent $\cO_{X}$-promodules
is abelian.
\end{cor}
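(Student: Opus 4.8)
The plan is to reduce everything to the affine case via the equivalence of the preceding corollary, under which semicoherent $\cO_X$-promodules on $\Spf A$ correspond to $A$-promodules, a category already known to be abelian by Proposition \ref{prop-procat-properties}. Concretely, the three things to establish are that the category is additive, that kernels and cokernels exist within it, and that for every morphism the canonical arrow from the coimage to the image is an isomorphism.

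First I would check additivity. The $\Hom$-sets are abelian groups with bilinear composition, since a morphism is a compatible system of morphisms of $\cO_X(U)$-promodules. The zero sheaf is semicoherent, and for semicoherent $\cM, \cN$ the direct sum is again semicoherent because on any affine $\Spf A \subseteq X$ with $\cM|_{\Spf A} \cong M^{\triangle}$ and $\cN|_{\Spf A} \cong N^{\triangle}$ one has $(M \oplus N)^{\triangle} \cong M^{\triangle} \oplus N^{\triangle}$; hence finite direct sums exist. For kernels, given $\alpha : \cM \to \cN$ I would form the presheaf kernel $U \mapsto \Ker(\cM(U) \to \cN(U))$ over qsqc opens; since projective limits are left exact this presheaf is already a sheaf $\cK$. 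To see $\cK$ is semicoherent, restrict to an affine $\Spf A$ where, by Proposition \ref{prop-affine-semicoh}, $\alpha$ corresponds to a morphism $\phi : M \to N$ of $A$-promodules with $M = \cM(\Spf A)$ and $N = \cN(\Spf A)$. For any affine open $\Spf D \subseteq \Spf A$, exactness of $- \otimes_A D$ (as used in the proof of Proposition \ref{prop-affine-semicoh}) gives $\cK(\Spf D) = \Ker(M \otimes_A D \to N \otimes_A D) = (\Ker \phi) \otimes_A D$, so $\cK|_{\Spf A} \cong (\Ker \phi)^{\triangle}$ is semicoherent, and the universal property of the sheaf kernel exhibits $\cK$ as a kernel inside the category.

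The cokernel is the main obstacle, since sheaf cokernels are not computed sectionwise. Rather than sheafify, I would build the cokernel from the affine-local algebraic cokernels: on each affine $\Spf A$ set $\cC_A := (\Coker \phi)^{\triangle}$ with $\phi$ as above. These glue, because on an affine overlap $\Spf B \subseteq \Spf A$, Lemma \ref{lem-affine-restriction} together with right exactness of $- \otimes_A B$ gives $\cC_A|_{\Spf B} \cong ((\Coker \phi) \otimes_A B)^{\triangle} \cong (\Coker(\phi \otimes_A B))^{\triangle}$, which is exactly the cokernel formed on $\Spf B$; so the $\cC_A$ patch to a semicoherent promodule $\cC$ with a natural map $\cN \to \cC$. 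To verify $\cC$ is the cokernel in the category, I would use that a morphism of semicoherent promodules is determined by its restrictions to an affine cover, so that the universal property into a semicoherent target may be tested affine-locally, where by the preceding corollary it becomes the universal property of $\Coker \phi$ in the abelian category of $A$-promodules; the resulting local factorizations agree on overlaps by local uniqueness and glue to the required global one.

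Finally, the remaining abelian axiom, that the canonical arrow $\mathrm{coim}\,\alpha \to \Im\,\alpha$ is an isomorphism, is a condition that can be checked on an affine cover, where by the preceding corollary it is precisely the corresponding statement for the morphism $\phi$ of $A$-promodules, which holds since that category is abelian. The hard part throughout is the cokernel: ensuring the affine-local cokernels glue coherently and that the glued object carries the \emph{global} universal property. This rests on right exactness of $- \otimes_A B$ over affine opens (which suffices, flatness not being needed here) together with the fact that morphisms between semicoherent promodules are determined affine-locally.
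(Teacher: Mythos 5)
Your proposal is correct and follows essentially the same route as the paper: reduce to the affine case via the preceding equivalence $\textup{($A$-promodules)} \cong \textup{(Semicoherent $\cO_{X}$-promodules)}$, construct the kernel and cokernel (the paper says kernel and image) affine-locally in the abelian category of $A$-promodules, and glue the local constructions, with the remaining abelian axioms checked locally. Your write-up simply makes explicit the gluing details and the use of exactness of $-\otimes_{A} D$ that the paper leaves as ``easy to check.''
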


\begin{proof}
From the preceding corollary, for a morphism $\cM \to \cN$
of semicoherent $\cO_{X}$-promodules, its kernel and image
is defined on each affine open subset. Gluing the local ones,
we obtain the globally defined kernel and image.
The rest is easy to check.
\end{proof}

\subsection{Pullback and pushforward}

\begin{defn}
Let $\phi:Y  \to X$ be a morphism of formal schemes
and $\cM$ a semicoherent $\cO_{X}$-promodule.
We define the {\em pullback} $\phi^{*} \cM$, which is a semicoherent
$\cO_{Y}$-promodule, as follows:
If $V \subseteq Y$ is an affine open subset such that 
$\phi(V)$ is contained in an affine open $U \subseteq X$,
then put $(\phi^{*}\cM)(V):= \cM(U) \otimes_{\cO_{X}(U)} \cO_{Y}(V)$,
which is independent of $U$ from Lemma \ref{lem-affine-restriction}.
Such $V$'s form a qsqc basis of $Y$ and $\phi^{*}\cM$ is
a sheaf on this basis. We can now uniquely extend it to all qsqc open 
subsets.
\end{defn}

With the above notation, If $Y=\Spf B$, $X=\Spf A$
and $\cM=M^{\triangle}$, then $\phi^{*}(M^{\triangle} )=(M \otimes _{A}B)^{\triangle}$.

\begin{defn}
A continuous map $\phi :Y \to X$ is said to be {\em qsqc} if for every qsqc open $U \subseteq X$,
$\phi^{-1}(U)$ is qsqc.
\end{defn}

\begin{defn}\label{def-pushforward}
Let $\phi:Y \to X$ be a qsqc morphism of formal schemes and  $\cN$ a semicoherent
$\cO_{Y}$-promodule.  We define the {\em pushforward} $\phi_{*}\cN$,
which is an $\cO_{X}$-promodule,
as follows: For qsqc open $U \subseteq X$, $(\phi_{*}\cN)(U):=\cN(\phi^{-1}(U))_{\cO_{X(U)}}$.
\end{defn}

\begin{lem}
Let $\phi :\Spf B \to \Spf A$ be a morphism of affine formal schemes,
which is always qsqc. For a $B$-promodule $N$,
$\phi_{*}(N^{\triangle})=(N_{A})^{\triangle}$.
\end{lem}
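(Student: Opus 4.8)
The plan is to compare the two $\cO_X$-promodules on the qsqc basis of distinguished open subsets of $X=\Spf A$, where their values are forced to agree. Write $\alpha:=\phi^{*}:A \to B$ for the morphism of admissible prorings that induces $\phi$ (Proposition~\ref{prop-spf}), and for $f \in A_{\red}$ put $g:=\alpha_{\red}(f)\in B_{\red}$. As in the proof of Proposition~\ref{prop-spf}, $\phi_{\red}$ is induced from $\alpha_{\red}$, so $\phi^{-1}(D(f))=D(g)$.

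First I would compute the left-hand side on a distinguished open $D(f)$. By Definition~\ref{def-pushforward} and the formula $M^{\triangle}(D(g))=M\otimes_{B}B_{g}$ for semicoherent promodules, together with $\cO_{X}(D(f))=A_{f}$,
\[
(\phi_{*}N^{\triangle})(D(f)) = N^{\triangle}(D(g))_{A_{f}} = (N\otimes_{B}B_{g})_{A_{f}},
\]
that is, $N_{g}$ regarded as an $A_{f}$-promodule by restriction of scalars along $A_{f}\to B_{g}$. On the other hand, the same formula applied on $\Spf A$ gives
\[
(N_{A})^{\triangle}(D(f)) = N_{A}\otimes_{A}A_{f}.
\]

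The crux is to identify these two $A_{f}$-promodules, and the key point is that localization commutes with base change, i.e.\ $B\otimes_{A}A_{f}\cong B_{g}$ as $A_{f}$-proalgebras. This reduces to the levelwise ring identity $B_{e}\otimes_{A_{d}}A_{d,f}=B_{e,g}$ (localizing $B_{e}$ at the image of $f$, a lift of $g$), combined with the fact, established in the discussion of tensor products of promodules, that $\otimes$ is computed by a cofinal levelwise system. Granting this, associativity of $\otimes$ yields
\[
N_{A}\otimes_{A}A_{f} = N\otimes_{B}(B\otimes_{A}A_{f}) = N\otimes_{B}B_{g} = N_{g}
\]
as $A_{f}$-promodules, matching the pushforward. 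Finally I would check that these identifications are compatible with the restriction maps for $D(f')\subseteq D(f)$ (again a levelwise localization check), so that they assemble into an isomorphism of sheaves on the qsqc basis $\{D(f)\}$; since a sheaf is determined by its values on such a basis, the two $\cO_{X}$-promodules coincide, and incidentally $\phi_{*}N^{\triangle}$ is seen to be semicoherent. The main obstacle is the bookkeeping in the base-change identity $B\otimes_{A}A_{f}\cong B_{g}$ at the pro-level---matching index sets and verifying that the restriction-of-scalars $A_{f}$-structure on $N_{g}$ agrees with the one coming from $N_{A}\otimes_{A}A_{f}$---but this is routine once the cofinality observations are invoked.
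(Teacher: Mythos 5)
Your proof is correct, and it is essentially the argument the paper has in mind: the paper's own proof is just ``Obvious from the definition,'' and your computation on the distinguished opens $D(f)$ --- using $\phi^{-1}(D(f))=D(g)$, the formula $M^{\triangle}(D(f))=M\otimes_{A}A_{f}$, and the base-change identity $B\otimes_{A}A_{f}\cong B_{g}$ (levelwise localization) --- is precisely the routine unwinding that the paper leaves implicit.
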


\begin{proof}
Obvious from the definition.
\end{proof}

\begin{prop}
Let the notation be as in Definition \ref{def-pushforward}.
Then $\phi_{*}\cN$ is semicoherent.
In addition, if $\cN$ is an $\cO_{Y}$-module, then $\phi_{*}\cN$ is
an $\cO_{X}$-module.
\end{prop}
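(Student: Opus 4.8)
The plan is to prove semicoherence locally on $X$, so I may assume $X = \Spf A$ is affine; since $\phi$ is qsqc (Definition \ref{def-pushforward}), $Y = \phi^{-1}(X)$ is then qsqc. Writing $M := (\phi_{*}\cN)(X) = \cN(Y)_{A}$, the goal becomes to produce an isomorphism $\phi_{*}\cN \cong M^{\triangle}$, which exhibits $\phi_{*}\cN$ as semicoherent. There is a canonical morphism $M^{\triangle} \to \phi_{*}\cN$: for each $f \in A_{\red}$ the restriction $M = (\phi_{*}\cN)(X) \to (\phi_{*}\cN)(D(f))$ lands in an $A_{f}$-promodule on which $f$ acts invertibly, so by the universal property of localization it factors through $M_{f} = M^{\triangle}(D(f))$, compatibly as $f$ varies. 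Thus it suffices to check that this map is an isomorphism on the distinguished basis, i.e.\ that $M_{f} \to (\phi_{*}\cN)(D(f)) = \cN(\phi^{-1}(D(f)))_{A_{f}}$ is an isomorphism for every $f$.

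Fix $f$ and put $W := \phi^{-1}(D(f))$. Because $Y$ is qsqc I may choose a \emph{finite} affine bicovering $\{V_{i}, V_{ijk}\}$ of $Y$ with $V_{i} = \Spf B_{i}$ and $V_{ijk} = \Spf B_{ijk}$; intersecting with $W$ gives a finite affine bicovering of $W$, where $V_{i} \cap W = \Spf B_{i,f}$ with $B_{i,f} = B_{i} \otimes_{A} A_{f}$, and similarly for the $V_{ijk}$. Writing $N_{i} := \cN(V_{i})$ and $N_{ijk} := \cN(V_{ijk})$, the semicoherence of $\cN$ together with Lemma \ref{lem-affine-restriction} identifies
\[
\cN(V_{i} \cap W) = N_{i} \otimes_{B_{i}} B_{i,f} = N_{i} \otimes_{A} A_{f},
\]
and likewise $\cN(V_{ijk} \cap W) = N_{ijk} \otimes_{A} A_{f}$. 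The sheaf axiom for $\cN$ (valid in $\pro\bA$, and using only finite bicoverings) applied to $Y$ and to $W$ yields the two exact sequences
\[
0 \to \cN(Y) \to \prod_{i} N_{i} \rightrightarrows \prod_{ijk} N_{ijk}, \qquad 0 \to \cN(W) \to \prod_{i} (N_{i} \otimes_{A} A_{f}) \rightrightarrows \prod_{ijk} (N_{ijk} \otimes_{A} A_{f}).
\]

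The key step is to compare these. Since $A_{f}$ is flat over $A$, the functor $- \otimes_{A} A_{f}$ is exact, so applying it to the first sequence preserves exactness. Moreover the products occurring are \emph{finite}—this is exactly where the qsqc hypothesis enters—hence are biproducts, and the additive functor $- \otimes_{A} A_{f}$ commutes with them, carrying $(\prod_{i} N_{i}) \otimes_{A} A_{f}$ to $\prod_{i}(N_{i} \otimes_{A} A_{f})$ and similarly for the triple products. The resulting sequence then has the same middle and right terms as the second sequence above, so their kernels coincide:
\[
M_{f} = \cN(Y) \otimes_{A} A_{f} \cong \cN(W) = (\phi_{*}\cN)(D(f)).
\]
One checks that this isomorphism is the canonical map and is natural in $f$, so it assembles into the desired isomorphism $M^{\triangle} \cong \phi_{*}\cN$; hence $\phi_{*}\cN$ is semicoherent. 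Finally, if $\cN$ is an $\cO_{Y}$-module, then each $\cN(\phi^{-1}(U))$ is an honest $\cO_{Y}(\phi^{-1}(U))$-module, and restriction of scalars along $\cO_{X}(U) \to \cO_{Y}(\phi^{-1}(U))$ keeps it an $\cO_{X}(U)$-module, so $\phi_{*}\cN$ is an $\cO_{X}$-module. The main obstacle is precisely the commutation of $- \otimes_{A} A_{f}$ with the products in the sheaf sequence, which fails for infinite products and is the reason the qsqc (finiteness) hypothesis on $\phi$ is imposed.
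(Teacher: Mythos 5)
Your proof is correct, but it takes a genuinely different route from the paper's. The paper's argument is structural: it takes a finite affine bicovering $\{U_{i},U_{ijk}\}$ of $Y$, notes via the preceding lemma that each $\phi_{*}(\cN|_{U_{i}})=((N_{i})_{A})^{\triangle}$ and $\phi_{*}(\cN|_{U_{ijk}})=((N_{ijk})_{A})^{\triangle}$ is semicoherent, exhibits $\phi_{*}\cN$ as the kernel of the resulting morphism $\bigoplus\phi_{*}(\cN|_{U_{i}})\to\bigoplus\phi_{*}(\cN|_{U_{ijk}})$, and then simply invokes the previously established fact that semicoherent $\cO_{X}$-promodules form an abelian category (so kernels stay semicoherent); the $\cO_{X}$-module statement is read off from the same kernel presentation. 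You instead prove the explicit identification $\phi_{*}\cN\cong(\cN(Y)_{A})^{\triangle}$ by hand: you build the canonical map via the universal property of localization and verify it is an isomorphism on the distinguished basis by comparing the two \v{C}ech-type equalizer sequences, using flatness of $A_{f}$ and the commutation of $-\otimes_{A}A_{f}$ with \emph{finite} products. This essentially inlines, in the pushforward setting, the argument the paper already packaged into Proposition \ref{prop-affine-semicoh} (and which underlies the abelian-ness of the semicoherent category), so the two proofs rest on the same ingredients --- the finiteness supplied by the qsqc hypothesis and exactness of localization --- but organize them differently. What the paper's route buys is brevity and reuse of established structure; what yours buys is a self-contained, explicit description of $\phi_{*}\cN$ as the sheaf associated to its global sections, without appealing to the abelian category machinery or to the affine pushforward lemma. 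Two small points you correctly flag but leave routine: the identification $\cN(V_{i}\cap W)=N_{i}\otimes_{A}A_{f}$ needs the $V_{i}$ chosen inside semicoherence neighborhoods (or a citation of Proposition \ref{prop-affine-semicoh} together with Lemma \ref{lem-affine-restriction}), and the final isomorphism must be checked to be the canonical map, natural in $f$, so that it glues to an isomorphism of sheaves; both checks are straightforward and do not create circularity, since Proposition \ref{prop-affine-semicoh} precedes this proposition in the paper.
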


\begin{proof}
We may suppose that $X$ is affine, say $X =\Spf A$.
Then $Y$ is qsqc, so there exists an affine bicovering 
$\{U_{i}, U_{ijk}\}$ of $Y$.
Put $N_{i}:= \cN(U_{i})$ and $N_{ijk}:=\cN(U_{ijk})$.
If we denote the restrictions of $\phi$ to $U_{i}$ 
and $U_{ijk}$ also by $\phi$,
then $\phi_{*}(\cN|_{U_{i}}) =((N_{i} )_{A})^{\triangle}$ 
and $\phi_{*}(\cN|_{U_{ijk}}) =((N_{ijk} )_{A})^{\triangle}$,
which are semicoherent.
Since $ \phi_{*} \cN $
is the kernel of a morphism of semicoherent $\cO_{X}$-promodules
\[
 \bigoplus \phi_{*}(\cN|_{U_{i}}) \to \bigoplus \phi_{*}(\cN|_{U_{ijk}}),
\]
$ \phi_{*} \cN $ is semicoherent too, which proves the first assertion.

If $\cN$ is an $\cO_{Y}$-module, then $ \bigoplus \phi_{*}(\cN|_{U_{i}})$
and $\bigoplus \phi_{*}(\cN|_{U_{ijk}})$ are $\cO_{X}$-modules,
and hence so is $\phi_{*} \cN$.
\end{proof}

\begin{prop}
Let $\phi:Y \to X$ be a qsqc morphism of formal schemes,
$\cN$ a semicoherent $\cO_{Y}$-promodule
and $\cM$ a semicoherent $\cO_{X}$-promodule.
Then we have a natural equation
\[
\Hom(\cM, \phi_{*} \cN) = \Hom ( \phi^{*}\cM ,\cN) .
\]
Namely $\phi_{*}$ is the right adjoint of $\phi^{*}$.
Hence $\phi_{*}$ is left exact and $\phi^{*}$ is right exact.
\end{prop}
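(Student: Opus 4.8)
The plan is to establish the adjunction $\Hom(\cM,\phi_{*}\cN)=\Hom(\phi^{*}\cM,\cN)$ by first reducing to the affine case and then invoking the tensor–hom adjunction at the level of prorings and promodules. Since both sides are defined sheaf-theoretically and the claim is a natural isomorphism of Hom-sets, I would reduce to the case where $X=\Spf A$ is affine, because a morphism of semicoherent promodules is determined by its values on an affine cover (here Proposition~\ref{prop-affine-semicoh} and its corollary are the key inputs: on an affine formal scheme every semicoherent $\cO_{X}$-promodule is of the form $\cM(X)^{\triangle}$, and the functor $M\mapsto M^{\triangle}$ is an equivalence between $A$-promodules and semicoherent $\cO_{X}$-promodules). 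Thus global sections turn the problem into a purely algebraic statement about prorings.

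With $X=\Spf A$ affine, I would write $\cM=M^{\triangle}$ for an $A$-promodule $M:=\cM(X)$. By the corollary following Proposition~\ref{prop-affine-semicoh}, morphisms of semicoherent promodules on $X$ correspond bijectively to morphisms of the associated $A$-promodules, so $\Hom_{\cO_{X}}(\cM,\phi_{*}\cN)=\Hom_{A}(M,(\phi_{*}\cN)(X))$. By Definition~\ref{def-pushforward}, $(\phi_{*}\cN)(X)=\cN(\phi^{-1}(X))_{A}=\cN(Y)_{A}$, where the subscript $A$ denotes restriction of scalars along $\phi^{*}\colon A\to\cO_{Y}(Y)$. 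On the other side, the description of the pullback (in the affine case $\phi^{*}(M^{\triangle})=(M\otimes_{A}B)^{\triangle}$ when $Y=\Spf B$) gives, after taking global sections, $\Hom_{\cO_{Y}}(\phi^{*}\cM,\cN)=\Hom_{B}(M\otimes_{A}B,\cN(Y))$, where $B=\cO_{Y}(Y)$. So the desired equality becomes the tensor–hom adjunction
\[
\Hom_{A}(M,\,\cN(Y)_{A})\;=\;\Hom_{B}(M\otimes_{A}B,\,\cN(Y))
\]
at the level of promodules over the prorings $A$ and $B$. This is the standard extension–restriction adjunction, and for promodules it follows by applying the level-wise adjunction componentwise and then passing to the projective–inductive limits defining the pro-Hom sets; the compatibility of these limits with the adjunction isomorphism makes the identification natural in both $\cM$ and $\cN$.

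The main obstacle is not the affine case itself but the reduction to it, namely verifying that the adjunction isomorphism, constructed locally, glues to a global natural isomorphism. Concretely, for general $X$ one covers $X$ by affines $\Spf A_{i}$ with overlaps covered by affines, applies the affine adjunction on each piece, and checks that the resulting bijections are compatible with the restriction maps appearing in the sheaf (exact) sequences computing the global Hom-sets. Here one uses that $\phi$ is qsqc so that $\phi^{-1}$ of a qsqc open is qsqc and the pushforward is well-defined on the qsqc basis, and that the pullback is likewise defined on a qsqc basis of $Y$; the gluing then follows from the uniqueness of the sheaf extension from a qsqc basis. Once the adjunction is in hand, the final sentence is immediate: a right adjoint preserves projective (finite inductive-free) limits, hence $\phi_{*}$ is left exact, and a left adjoint preserves inductive limits, hence $\phi^{*}$ is right exact, in the abelian categories of semicoherent promodules guaranteed by the corollary to Proposition~\ref{prop-affine-semicoh}.
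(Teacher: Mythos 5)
Your overall strategy (reduce to the affine case, then invoke the tensor--hom adjunction for prorings and promodules) is close in spirit to the paper's argument, and the promodule-level adjunction $\Hom_{A}(M,N_{A})=\Hom_{B}(M\otimes_{A}B,N)$ is indeed the paper's key input. But there is a genuine gap in your reduction: you only make $X$ affine, and then you silently treat $Y$ as affine as well. After setting $X=\Spf A$, you write $\Hom_{\cO_{Y}}(\phi^{*}\cM,\cN)=\Hom_{B}(M\otimes_{A}B,\cN(Y))$ with $B=\cO_{Y}(Y)$. That identification is exactly the global-sections equivalence of Proposition \ref{prop-affine-semicoh}, which is available only on an \emph{affine} formal scheme; here $Y=\phi^{-1}(X)$ is merely qsqc (that is all the qsqc hypothesis on $\phi$ buys you), and on a non-affine formal scheme morphisms of semicoherent promodules are not computed by global sections. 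Already for schemes this fails: on $\PP^{1}$ one has $\Hom_{\cO_{\PP^{1}}}(\cO_{\PP^{1}}(-2),\cO_{\PP^{1}}(-1))\cong H^{0}(\PP^{1},\cO_{\PP^{1}}(1))\neq 0$, while both sheaves have zero global sections, so the Hom of global sections vanishes. Likewise the base-change formula $\phi^{*}(M^{\triangle})=(M\otimes_{A}B)^{\triangle}$ that you invoke is stated in the paper only when $Y=\Spf B$ is affine.

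The repair is to localize on $Y$ as well, which is what the paper's proof does: it never reduces $X$ to affine at all, but instead observes that a morphism $\cM\to\phi_{*}\cN$ is the same as a compatible system of $\cO_{X}(U)$-morphisms $\cM(U)\to\cN(V)_{\cO_{X}(U)}$ over all pairs of affine opens $U\subseteq X$ and $V\subseteq Y$ with $\phi(V)\subseteq U$, and then applies the promodule adjunction on each such pair to get a compatible system of $\cO_{Y}(V)$-morphisms $\cM(U)\otimes_{\cO_{X}(U)}\cO_{Y}(V)\to\cN(V)$, which is by the very definition of the pullback a morphism $\phi^{*}\cM\to\cN$. Alternatively, you can keep your reduction to affine $X$, but you must then cover $Y$ by affines $V_{i}$ with affine-covered overlaps $V_{ijk}$ and glue: a morphism $\phi^{*}\cM\to\cN$ is a compatible system of morphisms $M\otimes_{A}\cO_{Y}(V_{i})\to\cN(V_{i})$, these correspond by adjunction to $A$-morphisms $M\to\cN(V_{i})_{A}$, and compatibility lets them assemble through the exact sequence $0\to\cN(Y)\to\prod\cN(V_{i})\rightrightarrows\prod\cN(V_{ijk})$ (restriction of scalars preserves this kernel) into an $A$-morphism $M\to\cN(Y)_{A}=(\phi_{*}\cN)(X)$. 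Your final deduction of left/right exactness from adjointness is fine as stated.
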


\begin{proof}
In general, let $A$ be a proring, $B$ an $A$-proalgebra,
$M$ an $A$-promodule,  and $N$ a $B$-promodule.
Then 
\[
 \Hom _{A} (M,N_{A}) = \Hom _{B}(M \otimes_{A}B,N).
\]
Giving a morphism$ \cM \to \phi_{*}\cN$ is equivalent to
giving a compatible system of  $\cO_{X}(U)$-morphisms 
$\cM(U) \to \cN(V)_{\cO_{X}(U)}$ for affine open subsets $U \subseteq X$
and $V \subseteq Y$ with $\phi(V) \subseteq U$.
In turn, it is equivalent to giving a compatible system 
of $\cO_{Y}(V)$-morphisms $\cM(U) \otimes _{\cO_{X}(U)}\cO_{Y}(V) \to \cN(V)$
for such pairs $(U,V)$.
Finally it is equivalent to giving a morphism $\phi^{*}\cM \to \cN$,
which completes the proof.
\end{proof}

\subsection{Mittag-Leffler $\cO_{X}$-promodules}\label{subsec-ML-sheaf}

\begin{defn}
Let $X$ be a formal scheme and $\cM$ a semicoherent $\cO_{X}$-promodule.
We say that $\cM$ is {\em Mittag-Leffler} if
every $x \in X$ has an affine neighborhood $U \subseteq X$
such that $\cM(U)$ is Mittag-Leffler.
\end{defn}

\begin{prop}
Suppose that $X=\Spf A$ is an affine formal scheme and
that $\cM$ is a Mittag-Leffler semicoherent $\cO_{X}$-promodule.
Then $\cM(X)$ is Mittag-Leffler
\end{prop}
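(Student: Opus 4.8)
The plan is to reduce everything to a statement about a single module over a ring and the stabilization of a descending chain of submodules. First I would invoke Proposition \ref{prop-affine-semicoh} to write $\cM = M^{\triangle}$ with $M := \cM(\Spf A)$, so the goal becomes: if $M$ is an $A$-promodule whose localizations are Mittag-Leffler on an affine cover, then $M$ is Mittag-Leffler. Since $X = \Spec A_{\red}$ is quasi-compact, the definition of a Mittag-Leffler semicoherent promodule lets me cover $X$ by finitely many affine opens on which $\cM$ has Mittag-Leffler sections; refining to distinguished opens, I get $f_1,\dots,f_n \in A_{\red}$ with $\bigcup D(f_i) = X$ and each $\cM(D(f_i)) = M_{f_i}$ Mittag-Leffler. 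This refinement step needs the (easy) observation that a localization $N_g$ of a Mittag-Leffler promodule $N$ is again Mittag-Leffler: the Mittag-Leffler condition says the descending chain of images literally stabilizes, and since localization is exact it commutes with the relevant images, so it carries a stabilized chain to a stabilized chain.

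Next I would reindex so that $A=(A_d)$ and $M=(M_d)$ share their index set with $A_d$ acting on $M_d$, and translate the Mittag-Leffler condition into the language of images. For fixed $d$ put $I_{c,d} := \Im(M_c \to M_d) \subseteq M_d$ for $c \ge d$; these form a descending family, and being Mittag-Leffler at $d$ means exactly that this family stabilizes, i.e.\ $I_{c,d} = I_{d',d}$ for all $c \ge d'$ for some $d' \ge d$. Because $(-)_{f_i}$ is exact it commutes with images, so $(I_{c,d})_{f_i} = \Im(M_{c,f_i} \to M_{d,f_i})$, and the hypothesis that $M_{f_i}$ is Mittag-Leffler says that each localized family $((I_{c,d})_{f_i})_c$ stabilizes. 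Choosing $d'\ge d$ dominating the finitely many indices at which the $n$ localized families stabilize (possible since the index set is directed), I get, for every $c \ge d'$ and every $i$, the equality $(I_{c,d})_{f_i} = (I_{d',d})_{f_i}$, together with the inclusion $I_{c,d} \subseteq I_{d',d}$.

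The hard part is passing from these local equalities back to the global equality $I_{c,d} = I_{d',d}$: one cannot simply localize the infinite intersection in the definition, since localization does \emph{not} commute with infinite intersections of submodules. The purpose of working with the stabilized chain is precisely to replace that infinite intersection by the single comparison of the nested pair $I_{c,d} \subseteq I_{d',d}$, which involves only finite data. For this I would use the elementary fact that a pair of submodules of $M_d$ agreeing after localization on a covering family $\{D(f_i)\}$ must be equal: the quotient $I_{d',d}/I_{c,d}$ localizes to $0$ on each $D(f_i)$, hence has empty support (every prime lies in some $D(f_i)$), hence vanishes. This yields $I_{c,d} = I_{d',d}$ for all $c \ge d'$, so the chain stabilizes globally and $M$ is Mittag-Leffler at $d$. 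As $d$ was arbitrary, $M = \cM(X)$ is Mittag-Leffler, completing the argument.
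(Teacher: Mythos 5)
Your proof is correct and takes essentially the same route as the paper: reduce to a finite covering $X=\bigcup D(f_{i})$ with each $M_{f_{i}}$ Mittag-Leffler, observe that the Mittag-Leffler condition is a stabilization of images, and glue the local stabilizations (using directedness to pick a common index) back into a global one. The paper phrases the gluing step at the level of the projective system of sheaves $(\tilde M_{e})$ and leaves the details implicit, while you carry it out with submodules, localization, and the support argument, but the underlying argument is the same.
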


\begin{proof}
Write $M=(M_{e}):= \cM(X) $.
There exists a finite open covering $X= \bigcup D(f_{i})$ such
that the $M_{f_{i}}$ are Mittag-Leffler.
Then for each $i$,
the projective system $(\tilde M_{e} |_{D(f_{i})})$ of sheaves
satisfies the Mittag-Leffler condition and so does $(\tilde M_{e})$.
Therefore $M$ is Mittag-Leffler.
\end{proof}

\begin{prop}\label{prop-quot-ML}
Let $\phi:\cF \to \cG$ be an epimorphism of
semicoherent $\cO_{X}$-promodules. If $\cF$ is Mittag-Leffler, then so is $\cG$.
\end{prop}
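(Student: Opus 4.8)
The plan is to reduce to the affine case and there to produce, by an explicit construction, an epi pro-object isomorphic to $\cG$. Being Mittag-Leffler is by definition a local condition, and being an epimorphism of semicoherent $\cO_X$-promodules is local as well (its cokernel is computed sectionwise, so it vanishes on $X$ iff it vanishes on the members of any affine cover). Hence it suffices to fix a point $x$, choose an affine neighbourhood $U=\Spf A$ on which $\cF(U)$ is Mittag-Leffler, and prove the assertion there: once $\cG(U)$ is shown Mittag-Leffler, $U$ witnesses the Mittag-Leffler property of $\cG$ at $x$. On the affine $U$, the corollary to Proposition \ref{prop-affine-semicoh} identifies semicoherent $\cO_U$-promodules with $A$-promodules, so I am reduced to the following statement in $\pro\bA$: if $\phi\colon F\to G$ is an epimorphism of pro-objects and $F$ is Mittag-Leffler, then $G$ is Mittag-Leffler. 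By Proposition \ref{prop-ML-isom} it is enough to exhibit $G$ as isomorphic to an epi pro-object, since every epi pro-object is Mittag-Leffler.

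First I would replace $F$ by an epi pro-object: by Proposition \ref{prop-ML-sur-isom} the Mittag-Leffler $F$ is isomorphic to $F^{\sur}$, which is epi, and transporting $\phi$ across this isomorphism I may assume $F=(F_d)_{d\in D}$ has all bonding maps epimorphisms. Writing $G=(G_e)_{e\in E}$, the component $\phi_e\colon F\to G_e$ is an element of $\varinjlim_d \Hom(F_d,G_e)$, and I set $I_e:=\Im(\phi_e)\subseteq G_e$, the image of any representative $F_d\to G_e$. The role of the epi hypothesis on $F$ is precisely to make $I_e$ well defined: two representatives agree after composing with some bonding map $F_{d''}\to F_d$, which is an epimorphism and so does not change the image.

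Next I would verify that $I:=(I_e)$ is an epi sub-pro-object of $G$ that is in fact isomorphic to $G$. For $e'\ge e$ the projection $F\to G_e$ factors as $F\to G_{e'}\to G_e$, so $\phi_e=\rho_{e',e}\circ\phi_{e'}$ with $\rho_{e',e}\colon G_{e'}\to G_e$ the bonding map; taking images gives $\rho_{e',e}(I_{e'})=I_e$. Thus the $I_e$ are compatible under the bonding maps and $I$ is epi, while the inclusions $I_e\hookrightarrow G_e$ constitute a level monomorphism and hence induce a monomorphism $\iota\colon I\to G$. Moreover $\phi$ factors as $F\xrightarrow{\psi}I\xrightarrow{\iota}G$ by restricting each $\phi_e$ to its image; since $\phi=\iota\circ\psi$ is an epimorphism, so is $\iota$. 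In the abelian category $\pro\bA$ a morphism that is simultaneously a monomorphism and an epimorphism is an isomorphism, whence $G\cong I$ is epi and therefore Mittag-Leffler.

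The only genuinely delicate point — and the step I would present most carefully — is the construction of the image pro-object $I$, namely the well-definedness of $I_e$ and the surjectivity $\rho_{e',e}(I_{e'})=I_e$ of its bonding maps. Both rest entirely on having arranged $F$ to be epi, which is exactly what the Mittag-Leffler hypothesis on $\cF$ supplies through Proposition \ref{prop-ML-sur-isom}. The local reduction and the final mono-epi-implies-iso argument are formal, so the heart of the proof is this passage to the epi model of $F$ and the resulting image filtration of $G$.
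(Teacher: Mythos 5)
Your proof is correct. The paper itself disposes of this proposition with the single word ``Obvious,'' and your argument --- reduce to an affine $U=\Spf A$ where $\cF(U)$ is Mittag-Leffler (using that epimorphisms of semicoherent promodules are detected affine-locally via the cokernel), replace $\cF(U)$ by its epi model $F^{\sur}$, form the image filtration $I_{e}=\Im(\phi_{e})$ (well defined exactly because the source is epi), and conclude by mono-plus-epi-implies-iso in the abelian category $\pro\bA$ --- is precisely the routine verification the author is taking for granted, written out completely and with the genuinely delicate point (well-definedness and surjectivity of the bonding maps of $(I_{e})$) correctly identified and handled.
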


\begin{proof}
Obvious.
\end{proof}

\begin{prop}
Let $\phi:Y \to X$ be a morphism of formal schemes
and $\cM$ a Mittag-Leffler semicoherent $\cO_{X}$-promodule.
Then  $\phi^{*} \cM$ is also Mittag-Leffler.
\end{prop}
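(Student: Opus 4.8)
The plan is to reduce the statement to a purely algebraic claim about tensor products of pro-objects and then exploit the fact that an admissible proring is automatically epi. Since being Mittag-Leffler is a local condition on the base, it suffices to verify it in a neighborhood of an arbitrary point $y \in Y$. Put $x := \phi(y)$. As $\cM$ is Mittag-Leffler, I would choose an affine neighborhood $U = \Spf A$ of $x$ with $M := \cM(U)$ a Mittag-Leffler $A$-promodule, and then, using continuity of $\phi$ and the existence of affine bases, an affine neighborhood $V = \Spf B$ of $y$ with $V \subseteq \phi^{-1}(U)$, so that $\phi$ equips $B$ with the structure of an $A$-proalgebra. By the description of the pullback, $(\phi^{*}\cM)(V) = M \otimes_{A} B$. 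Everything thus reduces to the claim: \emph{if $M$ is a Mittag-Leffler $A$-promodule and $B$ an admissible $A$-proalgebra, then $M \otimes_{A} B$ is Mittag-Leffler.}

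To prove the claim I would first reindex $A$, $M$ and $B$ by a common directed index set $D$, as described before the definition of the tensor product, so that for every $d$ the ring $A_{d}$ acts on $M_{d}$ and there is a structure map $A_{d} \to B_{d}$; this identifies $M \otimes_{A} B$ with the level system $(M_{d} \otimes_{A_{d}} B_{d})_{d \in D}$. Reindexing is an isomorphism of pro-objects, so the reindexed $M$ is still Mittag-Leffler by Proposition \ref{prop-ML-isom}, and both $A$ and $B$ remain epi: every bonding map of an admissible proring is surjective, hence so is every bonding map of any reindexing.

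Next, since the reindexed $M$ is Mittag-Leffler, Proposition \ref{prop-ML-sur-isom} gives an isomorphism $M \cong M^{\sur}$ with $M^{\sur}_{d} = \bigcap_{e \ge d}\Im(M_{e} \to M_{d}) \subseteq M_{d}$. Moreover $M^{\sur}$ is itself epi: for each $d$ one has $\Im(M_{e} \to M_{d}) = M^{\sur}_{d}$ for all sufficiently large $e$, and lifting an element of $M^{\sur}_{d}$ through such an $M_{e}$ and pushing it down to $M^{\sur}_{d'}$ shows that every bonding map $M^{\sur}_{d'} \to M^{\sur}_{d}$ is surjective. Because each $M^{\sur}_{d}$ is an $A_{d}$-submodule of $M_{d}$, the common indexing persists, and functoriality of $\otimes_{A}$ gives $M \otimes_{A} B \cong M^{\sur}\otimes_{A} B = (M^{\sur}_{d}\otimes_{A_{d}} B_{d})_{d \in D}$. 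For $d' \ge d$ the bonding map $M^{\sur}_{d'}\otimes_{A_{d'}} B_{d'} \to M^{\sur}_{d}\otimes_{A_{d}} B_{d}$ sends $m \otimes b$ to $\bar m \otimes \bar b$; since the target is generated by simple tensors $m'\otimes b'$ and both $M^{\sur}_{d'} \to M^{\sur}_{d}$ and $B_{d'} \to B_{d}$ are surjective, every such $m'\otimes b'$ is hit. Hence $M^{\sur}\otimes_{A} B$ is a level system with epimorphic bonding maps, so it is epi and a fortiori Mittag-Leffler, and therefore so is $M \otimes_{A} B$.

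The substantive point, and the step I expect to require the most care, is the reindexing bookkeeping together with the observation that admissibility of $B$ forces its bonding maps to be surjective. Without that surjectivity the bonding maps of the tensor system need not be epimorphic, since tensoring preserves surjectivity of only one factor, and one would be driven into a delicate analysis of whether the images $M^{\sur}_{d}\otimes_{A_{d}}\Im(B_{e}\to B_{d})$ stabilize in $d$. It is precisely the admissibility hypothesis on $B$, through the epi property, that lets me bypass this and conclude directly that the pullback is in fact epi, not merely Mittag-Leffler.
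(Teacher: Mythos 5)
Your proposal is correct and takes essentially the same route as the paper: the paper's entire proof is the one-line remark that the claim ``follows from the construction of pullback,'' and your argument---reducing to $(\phi^{*}\cM)(V)=M\otimes_{A}B$ with $B$ admissible, replacing $M$ by the epi system $M^{\sur}$ via Proposition \ref{prop-ML-sur-isom}, and noting that the levelwise tensor product of two epi systems has surjective bonding maps, hence is epi and a fortiori Mittag-Leffler---is precisely the detail that remark suppresses. The extra bookkeeping you supply (common reindexing of $A$, $M$ and $B$; the fact that each $M^{\sur}_{d}$ is an $A_{d}$-submodule, which uses surjectivity of the bonding maps of $A$; replacing $\cO_{Y}(V)$ by an isomorphic genuinely epi admissible proring) is sound and consistent with the paper's conventions, so there is no gap.
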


\begin{proof}
It follows from the construction of pullback.
\end{proof}

\subsection{Stalks and exactness}

Let $X$ be a formal scheme.
Given a morphism $\alpha:\cF \to \cG$ of semicoherent
$\cO_{X}$-promodules,  for each $x \in X$,
we have the induced morphism
$\alpha _{x}:\cF_{x} \to \cG_{x}$ of stalks.
Then
\[
 \Ker ( \alpha _{x}) = \Ker (\alpha)_{x} , \ \Im ( \alpha _{x}) = \Im (\alpha)_{x}.
\]
As a consequence, we obtain the following:

\begin{cor}
Let
\[
0 \to \cF \to \cG \to \cH \to 0
\]
be an exact sequence of semicoherent $\cO_{X}$-promodules.
Then  for every $x \in X$, 
the induced sequence of stalks
\[
0 \to \cF_{x} \to \cG_{x} \to \cH_{x} \to 0,
\]
is exact.
\end{cor}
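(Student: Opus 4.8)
The plan is to deduce the statement entirely from the two identities
\[
\Ker(\alpha)_{x} = \Ker(\alpha_{x}), \qquad \Im(\alpha)_{x} = \Im(\alpha_{x})
\]
recorded just above, applied to the two maps $u \colon \cF \to \cG$ and $v \colon \cG \to \cH$ of the given sequence. Taken together these identities say precisely that the stalk functor $\cM \mapsto \cM_{x}$, from semicoherent $\cO_{X}$-promodules to $\cO_{X,x}$-promodules, commutes with the formation of kernels and images; equivalently, since this functor is additive (it is $-\otimes_{A}A_{\fp}$ on an affine chart $\Spf A$), it is exact, and an exact functor between abelian categories carries short exact sequences to short exact sequences. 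I will nonetheless argue term by term so that the dependence on the stated identities is explicit.

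First I would recall that, the category of semicoherent $\cO_{X}$-promodules being abelian, exactness of $0 \to \cF \xrightarrow{u} \cG \xrightarrow{v} \cH \to 0$ is equivalent to the three conditions: $\Ker u = 0$, the equality $\Im u = \Ker v$ of subobjects of $\cG$, and $\Im v = \cH$. Next I would transport each condition through the stalk functor at $x$. For the left end, $\Ker(u_{x}) = \Ker(u)_{x} = 0_{x} = 0$, using that the stalk of the zero promodule vanishes, so $u_{x}$ is a monomorphism. For the right end, $\Im(v_{x}) = \Im(v)_{x} = \cH_{x}$, so $v_{x}$ is an epimorphism. For the middle, the equality $\Im u = \Ker v$ inside $\cG$ gives, after taking stalks, $\Im(u)_{x} = \Ker(v)_{x}$ inside $\cG_{x}$, whence $\Im(u_{x}) = \Im(u)_{x} = \Ker(v)_{x} = \Ker(v_{x})$; this is exactness of the stalk sequence at $\cG_{x}$. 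Combining the three yields the exactness of $0 \to \cF_{x} \to \cG_{x} \to \cH_{x} \to 0$.

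I do not expect a genuine obstacle here, since all the substantive content has already been absorbed into the identities $\Ker(\alpha)_{x} = \Ker(\alpha_{x})$ and $\Im(\alpha)_{x} = \Im(\alpha_{x})$, and the corollary is then purely formal. The only points requiring a word of care are that the stalk functor sends the zero promodule to $0$ and sends the identity subobject $\cH \hookrightarrow \cH$ to $\cH_{x} \hookrightarrow \cH_{x}$, and that the middle exactness is best phrased as the equality of the subobjects $\Im u$ and $\Ker v$ of $\cG$ before passing to stalks, rather than attempted elementwise.
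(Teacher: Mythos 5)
Your proof is correct and follows the paper's own route: the paper states this corollary as an immediate consequence of the identities $\Ker(\alpha_{x}) = \Ker(\alpha)_{x}$ and $\Im(\alpha_{x}) = \Im(\alpha)_{x}$, which is precisely what you spell out term by term. Your careful phrasing of middle exactness as an equality of subobjects $\Im u = \Ker v$ of $\cG$ before passing to stalks is the right way to make the deduction rigorous.
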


The converse of the corollary holds only under some condition:

\begin{prop}\label{prop-exact-stalk}
Let  $\bD$ be an abelian subcategory of the category
of semicoherent  $\cO_{X}$-promodules.
Suppose that if $\cM \in \bD$ has Mittag-Leffler stalks, then $\cM$ is Mittag-Leffler.
Then a sequence
\[
0 \to \cF \to \cG \to \cH \to 0
\]
 in $\bD$ is exact if for every $x \in X$, 
the induced sequence of stalks
\[
0 \to \cF_{x} \to \cG_{x} \to \cH_{x} \to 0,
\]
is exact.
\end{prop}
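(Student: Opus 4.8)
The plan is to show that the sequence fails to be exact only through the three subquotients that measure this failure, and that each of them vanishes. Writing $\alpha\colon\cF\to\cG$ and $\beta\colon\cG\to\cH$, these are $\Ker\alpha$, the middle homology $\Ker\beta/\Im\alpha$, and $\Coker\beta$. Since $\bD$ is an abelian subcategory of the category of semicoherent $\cO_X$-promodules, each of these is again an object of $\bD$ (formed as a kernel, a cokernel, or the cokernel of an inclusion of objects of $\bD$). Thus everything reduces to the single claim: if $\cD\in\bD$ has $\cD_x=0$ for every $x\in X$, then $\cD=0$.

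First I would check that the three defect objects have vanishing stalks. The stalk functor is exact: on an affine $\Spf A$ it is $M\mapsto M\otimes_A A_{\fp}$, and $A_{\fp}=(A_{d,\fp})$ is a flat $A$-promodule, so $-\otimes_A A_{\fp}$ is exact. Hence the stalk functor commutes with kernels, images, cokernels, and therefore with homology. Together with the given identities $\Ker(\alpha)_x=\Ker(\alpha_x)$ and $\Im(\alpha)_x=\Im(\alpha_x)$ and the hypothesis that $0\to\cF_x\to\cG_x\to\cH_x\to 0$ is exact, this yields $(\Ker\alpha)_x=0$, $(\Ker\beta/\Im\alpha)_x=0$ and $(\Coker\beta)_x=0$ for all $x$.

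The heart of the matter is the reduced claim. Let $\cD\in\bD$ have zero stalks. The zero promodule is Mittag-Leffler, so $\cD$ has Mittag-Leffler stalks; by the standing hypothesis on $\bD$, $\cD$ is therefore Mittag-Leffler. By the definition of a Mittag-Leffler semicoherent promodule, every point has an affine neighborhood $\Spf A$ with $M:=\cD(\Spf A)$ Mittag-Leffler, and $\cD|_{\Spf A}=M^{\triangle}$ by Proposition \ref{prop-affine-semicoh}. By Proposition \ref{prop-ML-sur-isom} the natural morphism $M^{\sur}\to M$ is an isomorphism, where $M^{\sur}=(M^{\sur}_d)$ is epi. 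For each prime $\fp\subseteq A_{\red}$ we have $M_{\fp}=M\otimes_A A_{\fp}=0$; localizing the epi system $M\cong M^{\sur}$ and using exactness of localization, $M_{\fp}$ is again epi with $(M_{\fp})^{\sur}_d=(M^{\sur}_d)_{\fp}$, so Lemma \ref{lem-epi-zero} forces $(M^{\sur}_d)_{\fp}=0$ for every $d$ and every $\fp$. An $A_d$-module all of whose localizations at primes vanish is zero, so $M^{\sur}_d=0$ for all $d$ and $M\cong M^{\sur}=0$. Thus $\cD$ vanishes on an affine cover of $X$, whence $\cD=0$.

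Applying the reduced claim to $\Ker\alpha$, $\Ker\beta/\Im\alpha$ and $\Coker\beta$ gives $\Ker\alpha=0$, $\Im\alpha=\Ker\beta$ and $\Coker\beta=0$, which is exactly the exactness of $0\to\cF\to\cG\to\cH\to 0$. The main obstacle is the reduced claim, and within it the implication from vanishing stalks to vanishing of the promodule: this is precisely where the hypothesis on $\bD$ is indispensable, since a semicoherent promodule that is not Mittag-Leffler can have zero stalks without being zero. Once the Mittag-Leffler property is secured, the isomorphism $M\cong M^{\sur}$ to an epi system turns the problem into the standard fact that a module vanishing at every prime is zero.
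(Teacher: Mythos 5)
Your proof is correct and follows essentially the same route as the paper: the paper also reduces exactness to the vanishing of the defect objects (it treats $\Ker(\cF\to\cG)$ explicitly and notes the rest is similar), invokes the hypothesis on $\bD$ to upgrade ``zero stalks'' to ``Mittag-Leffler,'' and then applies the lemma immediately following the proposition, which states that a Mittag-Leffler semicoherent promodule with vanishing stalks is zero. Your ``reduced claim'' is exactly that lemma, and your inline argument for it (restrict to an affine, pass to an epi representative, apply Lemma \ref{lem-epi-zero} at each prime, and conclude since a module with all localizations zero is zero) coincides with the paper's proof of it.
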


\begin{proof}
Put $\cK := \Ker (\cF \to \cG)$. By assumption the stalks of $\cK$
are zero, in particular, Mittag-Leffler. So $\cK$ is Mittag-Leffler.
From the following lemma, $\cK=0$. Hence the $\cF \to \cG$ is a monomorphism.
The rest can be proved similarly. 
\end{proof}

\begin{lem}
Let $X$ be a formal scheme and $\cF $ a semicoherent $\cO_{X}$-promodule.
Then $\cF=0$ if and only if $\cF$ is Mittag-Leffler and for every $x \in X$,
$\cF_{x}=0$.
\end{lem}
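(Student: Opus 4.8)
The plan is to dispose of the ``only if'' direction immediately—if $\cF = 0$ then every stalk vanishes and the zero pro-object is trivially Mittag-Leffler—and to concentrate on the ``if'' direction. Since vanishing of a sheaf is local and both hypotheses are local, I would reduce at once to the affine case: given $x \in X$, the Mittag-Leffler hypothesis furnishes an affine neighborhood $U = \Spf A$ for which $M := \cF(U)$ is a Mittag-Leffler $A$-promodule, and by Proposition \ref{prop-affine-semicoh} one has $\cF|_U = M^{\triangle}$. Through the equivalence between $A$-promodules and semicoherent $\cO_U$-promodules it then suffices to prove $M = 0$. The standing data become: $M = (M_e)$ is Mittag-Leffler and $M \otimes_A A_{\fp} = 0$ for every prime $\fp$ of $A_{\red}$, since these tensor products are exactly the (vanishing) stalks $\cF_x$.

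Next I would pass to an epi representative. Because $M$ is Mittag-Leffler it is isomorphic to an epi pro-object $N = (N_d)$ (Propositions \ref{prop-ML-sur-isom} and \ref{prop-ML-isom}); after reindexing—which preserves the epi property—I may assume $A$ and $N$ share an index set with $A_d$ acting on $N_d$, so each $N_d$ is an honest $A_d$-module and every bonding map $N_{d'} \to N_d$ is surjective. Since localization is exact, for each prime $\fp$ the localized pro-object $N \otimes_A A_{\fp} = (N_{d,\fp})$ is again epi and is isomorphic to $M \otimes_A A_{\fp} = 0$. As $N \otimes_A A_{\fp}$ is epi, $(N \otimes_A A_{\fp})^{\sur}$ coincides with $N \otimes_A A_{\fp}$ itself, so Lemma \ref{lem-epi-zero} applied to this vanishing pro-object forces $N_{d,\fp} = 0$ for every $d$ and every $\fp$.

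Finally I would invoke the elementary fact that an $A_d$-module all of whose localizations at prime ideals vanish is zero. Here the bonding isomorphisms make $\Spec A_{\red} \to \Spec A_d$ a homeomorphism, so as $\fp$ ranges over $\Spec A_{\red}$ the modules $N_{d,\fp} = N_d \otimes_{A_d} A_{d,\fp}$ exhaust the localizations of $N_d$ at all primes of $A_d$; their vanishing yields $N_d = 0$. As this holds for every $d$ we get $N \cong 0$, hence $M \cong 0$ and $\cF|_U = M^{\triangle} = 0$. Covering $X$ by such $U$ gives $\cF = 0$.

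I expect the only genuine friction to be bookkeeping rather than conceptual: arranging the reindexing so that $A_d$ acts on $N_d$ while keeping $N$ epi, and checking that localization commutes with the epi structure so that $(N \otimes_A A_{\fp})^{\sur}$ really is $N \otimes_A A_{\fp}$. The substantive point is the passage from ``all stalks zero'' to ``all terms zero,'' which is precisely where the Mittag-Leffler hypothesis is used—without it the identification $M \cong M^{\sur}$ of Proposition \ref{prop-ML-sur-isom} could fail and the term-by-term vanishing argument would not recover $M$.
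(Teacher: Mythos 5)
Your proof is correct and follows essentially the same route as the paper: reduce to the affine case, replace the Mittag-Leffler promodule by an epi representative, apply Lemma \ref{lem-epi-zero} to the localized (still epi) pro-objects to get termwise vanishing of all localizations, and conclude each term is zero. The extra bookkeeping you spell out (reindexing, exactness of localization, the homeomorphism $\Spec A_{\red} \to \Spec A_d$) is implicit in the paper's shorter argument.
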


\begin{proof}
The ``only if'' is trivial. We prove the ``if.'' We may suppose that
$X=\Spf A$. Then $F=(F_{d})_{d \in D}:= \cF(X)$ is a Mittag-Leffler $A$-promodule
and so we may suppose also that $F$ is epi.
Then for each prime ideal $\fp \subseteq A_{\red}$, 
$\cF_{x} =(F_{d,\fp})_{d \in D} =0$. From Lemma \ref{lem-epi-zero}, for each $d \in D$,
$F_{d,\fp}=0$. Hence $F_{d}=0$ and $F=0$.
\end{proof}

\begin{prop}\label{prop-ML-epi-stalk}
Let $\phi:\cF \to \cG$ be a morphism of semicoherent $\cO_{X}$-promodules.
Suppose that $\cG$ is Mittag-Leffler and that 
for every $x \in X$, $\phi_{x}:\cF_{x} \to \cG_{x}$
is  an epimorphism.
Then $\phi$ is an epimorphism. 
\end{prop}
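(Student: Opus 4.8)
The plan is to prove that $\phi$ is an epimorphism by showing that its cokernel vanishes, and to detect this vanishing via the criterion established in the lemma immediately preceding this proposition: a semicoherent $\cO_X$-promodule is zero precisely when it is Mittag-Leffler and all of its stalks vanish. Accordingly, I set $\cC := \Coker(\phi)$. Since the category of semicoherent $\cO_X$-promodules is abelian, $\cC$ is again a semicoherent $\cO_X$-promodule, and it suffices to verify the two hypotheses of that criterion for $\cC$.

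First I would check that every stalk of $\cC$ is zero. The stalk functor $\cM \mapsto \cM_x$ is exact on semicoherent promodules: on an affine piece $\Spf A$ it is the localization $M \mapsto M \otimes_A A_{\fp}$, and in any case the identities $\Ker(\alpha_x)=\Ker(\alpha)_x$ and $\Im(\alpha_x)=\Im(\alpha)_x$ recorded just above show that it commutes with the formation of cokernels. Hence $\cC_x = \Coker(\phi_x)$ for every $x \in X$, and since each $\phi_x$ is assumed to be an epimorphism, $\cC_x = 0$.

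Next I would check that $\cC$ is Mittag-Leffler. Here the point is simply that $\cC$ is a quotient of $\cG$: the canonical map gives an epimorphism $\cG \twoheadrightarrow \cC$. As $\cG$ is Mittag-Leffler by hypothesis, Proposition \ref{prop-quot-ML} (quotients of Mittag-Leffler semicoherent promodules are Mittag-Leffler) immediately yields that $\cC$ is Mittag-Leffler. Combining the two steps, the preceding lemma forces $\cC = 0$, which is to say $\phi$ is an epimorphism.

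The genuinely delicate point — and the reason I organize the argument around the cokernel rather than trying to conclude directly from stalkwise surjectivity — is this second step, where the Mittag-Leffler hypothesis on $\cG$ is indispensable. Without it, a semicoherent promodule can have all stalks zero yet be nonzero; this is exactly the phenomenon behind Lemma \ref{lem-epi-zero}, where the vanishing of $M^{\sur}_d$ only characterizes the zero object under the Mittag-Leffler assumption. So the hypothesis on $\cG$ is precisely what guarantees that $\cC$ inherits enough rigidity for its vanishing stalks to be decisive. Everything else is formal bookkeeping in the abelian category of semicoherent promodules.
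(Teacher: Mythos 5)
Your proof is correct and is essentially identical to the paper's own argument: both form the cokernel, observe it is Mittag-Leffler via Proposition \ref{prop-quot-ML} because it is a quotient of $\cG$, note its stalks vanish by exactness of the stalk functor, and conclude it is zero by the preceding lemma. Your closing remarks on why the Mittag-Leffler hypothesis is indispensable are accurate but supplementary.
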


\begin{proof}
Put $\cH := \Coker ( \phi)$, which is Mittag-Leffler from Proposition \ref{prop-quot-ML}.
Moreover from the assumption, its stalks are zero. 
So, from the preceding lemma,  $\cH =0$ and
$\phi$ is an epimorphism.
\end{proof}

\subsection{Formal subschemes}

\begin{defn}
Let $X$ be a formal scheme. 
A {\em semicoherent proideal sheaf} on $X$ is a
semicoherent $\cO_{X}$-subpromodule of $\cO_{X}$.
\end{defn}

For a semicoherent proideal sheaf $\cI$ on $X$, the quotient $\cO_{X}$-promodule
$\cO_{X}/\cI$ is naturally regarded as a sheaf of prorings.

\begin{defn}
Let $\cI \subseteq \cO_{X}$ be a semicoherent proideal sheaf
and $Y := \{ x \in X| (\cO_{X}/\cI)_{x} \ne 0 \}$ its support. 
We say that the subspace $Y \subseteq X$ endowed with the sheaf
$ \cO_{X}/\cI$ is a {\em closed formal subscheme} of $X$
if it is a formal scheme.
(Unlike the scheme case,  $(Y, \cO_{X}/\cI)$ is not a priori a formal scheme.)
Then we say that $\cI$ is the {\em defining ideal sheaf} of $Y$.
If $Y$ is even a scheme, then we call it a {\em closed subscheme}.
\end{defn}

For a formal scheme $X$, the $X_{\red}$ defined in \S \ref{subsec-locally-admissibly} is a closed subscheme of $X$.

\begin{lem}
If $Y$ is a closed formal subscheme of a formal scheme $X$, 
then $Y_{\red}$ is a closed subscheme
of $X_{\red}$. In particular, $Y$ is set-theoretically a closed subset of $X$.
\end{lem}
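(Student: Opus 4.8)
The plan is to reduce to the affine case and then show that the induced morphism of schemes $Y_{\red}\to X_{\red}$ is a closed immersion. Since being a closed subscheme is local on the target and the affine opens $\Spf A\subseteq X$ cover $X$ with $X_{\red}|_{\Spf A}=\Spec A_{\red}$, I may assume $X=\Spf A$. By Proposition \ref{prop-affine-semicoh} the defining ideal sheaf is $\cI=I^{\triangle}$ for the $A$-subpromodule $I:=\cI(\Spf A)\subseteq A$, so that $\cO_{Y}=\cO_{X}/\cI=(A/I)^{\triangle}$; write $B:=A/I$, a quotient proring equipped with the canonical epimorphism $A\to B$. The inclusion $i\colon Y\hookrightarrow X$ of the support carrying the sheaf $\cO_{X}/\cI$ is a morphism of formal schemes, and reducing the surjection $\cO_{X}\to i_{*}\cO_{Y}$ of structure sheaves by the construction of \S\ref{subsec-locally-admissibly} yields a morphism of schemes $i_{\red}\colon Y_{\red}\to X_{\red}=\Spec A_{\red}$; the goal is to prove that $i_{\red}$ is a closed immersion.

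First I would treat the sheaf map. On stalks the epimorphism $A\to B$ localizes to an epimorphism $\cO_{X,\fp}=A_{\fp}\to B_{\fp}=\cO_{Y,\fp}$ of prorings. Here the hypothesis that $Y$ is a formal scheme is essential: it guarantees that each nonzero stalk $\cO_{Y,\fp}$ is an \emph{admissible} proring (it is the stalk of an affine formal scheme), even though $B$ itself need not be admissible. Since an admissible proring is epi, Lemma \ref{lem-surjective} shows this epimorphism is represented by surjective level maps, and passing to reduced rings gives a surjection $(\cO_{X_{\red}})_{\fp}=A_{\red,\fp}\to (B_{\fp})_{\red}=(\cO_{Y_{\red}})_{\fp}$. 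As this holds at every $\fp$, the structure-sheaf map $i_{\red}^{*}\colon\cO_{X_{\red}}\to i_{\red,*}\cO_{Y_{\red}}$ is surjective.

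Next I would pin down the support. By definition $Y=\{\fp\mid\cO_{Y,\fp}\neq 0\}=\{\fp\mid B_{\fp}\neq 0\}$, and because every such $B_{\fp}$ is admissible, $B_{\fp}\neq 0$ if and only if $(B_{\fp})_{\red}\neq 0$: an admissible proring vanishes exactly when its reduced ring does, since a ring with $A_{d,\red}=0$ satisfies $1\in\Nil(A_{d})$ and hence $A_{d}=0$. Thus $Y=\Supp(i_{\red,*}\cO_{Y_{\red}})$ is the support of a quotient of the quasi-coherent sheaf $\cO_{\Spec A_{\red}}$. Writing $\bar B:=\Gamma(Y_{\red},\cO_{Y_{\red}})$ and $J:=\ker(A_{\red}\to\bar B)$, one verifies that $i_{\red,*}\cO_{Y_{\red}}$ is quasi-coherent by computing its sections on distinguished opens, where $\cO_{Y_{\red}}(D(f)\cap Y)$ is the localization $\bar B_{f}$ (localization commuting with reduction on the affine pieces of $Y$); hence $i_{\red,*}\cO_{Y_{\red}}\cong\widetilde{A_{\red}/J}$ and $Y=V(J)$ is closed in $\Spec A_{\red}$.

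Putting these together, $i_{\red}$ is a homeomorphism onto the closed subset $V(J)$ whose structure-sheaf map is surjective, hence a closed immersion of schemes, so $Y_{\red}\cong\Spec(A_{\red}/J)$ is a closed subscheme of $X_{\red}$; gluing over an affine cover of $X$ gives the global statement, and the final sentence follows because the underlying space of a closed subscheme is a closed subset. The main obstacle — and the precise place where the formal-scheme hypothesis on $Y$ does real work — is that the quotient proring $B=A/I$ is in general not admissible, so neither its reduced ring nor its support can be read off naively from a level presentation of $I\to A$. One must instead route everything through the admissibility of the \emph{stalks} $\cO_{Y,\fp}$, using it both to detect vanishing (support) and to obtain surjectivity after reduction, and separately confirm quasi-coherence of the reduced quotient sheaf on distinguished opens to reach a genuinely scheme-theoretic closed immersion.
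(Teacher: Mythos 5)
Your use of the hypothesis through the stalks is a genuinely different route from the paper's. The paper reduces to $X=\Spf A$, then asserts that the \emph{global} quotient $B=A/I$ is an admissible proring up to isomorphism (this is exactly where the hypothesis that $Y$ is a formal scheme enters), and then a single application of Lemma \ref{lem-surjective} gives the surjection $A_{\red}\to B_{\red}$, hence simultaneously the closedness of the support and the closed immersion $Y_{\red}\to X_{\red}$ --- with no pushforward or quasi-coherence argument anywhere. You explicitly set that global claim aside (``$B$ itself need not be admissible'') and keep only admissibility of the stalks $\cO_{Y,\fp}$, which does follow from $Y$ being a formal scheme; your stalk computations (level surjectivity via Lemma \ref{lem-surjective}, detection of vanishing) are correct. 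But the entire weight of your proof then falls on the globalization step, and there is a genuine gap there.

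The gap is that every ingredient of your globalization presupposes that $Y$ is quasi-compact and quasi-separated, and this is not available: it is essentially the closedness being proved, since a closed subspace of the spectral space $\Spec A_{\red}$ is automatically qsqc. Concretely: the pushforwards $i_{*}\cO_{Y}$ and $i_{\red,*}\cO_{Y_{\red}}$ are defined in this paper only for qsqc morphisms (Definition \ref{def-pushforward}); the section $\bar B=\Gamma(Y_{\red},\cO_{Y_{\red}})$ is taken over an open not known to be qsqc, so it is not even defined in this framework; and the key identity $\cO_{Y_{\red}}(D(f)\cap Y)=\bar B_{f}$ is precisely the qcqs statement --- it is proved by localizing a \emph{finite} \v{C}ech kernel, and it fails for non-quasi-compact $Y$ (already for an infinite disjoint union of affines, because localization does not commute with infinite products); your parenthetical about localization commuting with reduction addresses a different, harmless point. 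Moreover the stalk-level facts you extracted cannot close this hole on their own: for $A=\CC[x]$ and the proideal $I=(I_{n})$ with $I_{n}=\bigl((x-1)(x-2)\cdots(x-n)\bigr)$, every stalk of $(A/I)^{\triangle}$ is either $0$ or the field $\CC$, and all the stalk conclusions of your second and third paragraphs hold, yet the support $\{(x-k)\mid k\in\NN\}$ is not closed. What excludes such examples is the global fact that this support is not a formal scheme, and your proof must convert that hypothesis either into quasi-compactness of $Y$ before running the quasi-coherence argument, or --- as the paper does --- into admissibility of $B=A/I$ up to isomorphism, which is the very statement you discarded.
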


\begin{proof}
Since the problem is local, we may suppose that $X=\Spf A$ for some
admissible proring.
Then put $I := \cI(X)$, a proideal of $A$.
The quotient ring $B:=A/I$ is an admissible proring up to isomorphisms.
From Lemma \ref{lem-surjective}, the epimorphism $A \to B$ (of $A$-promodules)
induces a surjection $A_{\red} \to B_{\red}$, and 
a closed immersion $Y _{\red} \to X_{\red}$, which proves the assertion.
\end{proof}

\begin{expl}\label{expl-completion}
For a scheme $ X $ and a closed subscheme $Y$, 
we define  the {\em completion} $ X_{/Y}$ of $X$ along $Y$,
which is a closed formal subscheme of $X$, as follows:
Let $\cI \subseteq \cO_{X}$ be the defining ideal sheaf of $Y$.
For each $n \in \NN$, we have an epimorphism $\cO_{X} \to \cO_{X}/\cI^{n}$.
Put $\cO:= \prolim \cO_{X}/\cI^{n}$.
Then the natural morphism $\cO_{X} \to \cO$ is an epimorphism
and we have $\cO = \cO_{X}/\cJ$ for some semicoherent proideal sheaf
$\cJ \subseteq \cO_{X}$. Let $Y_{n} \subseteq X$ be the closed subscheme
with the defining ideal $\cI^{n}$ and $\underline{Y}$ the underlying topological
space of $Y$. Then the support of $\cO$ is $\underline{Y}$ and 
the proringed space $(\underline{Y},\cO)$ is a formal scheme, which is
isomorphic to $\varinjlim _{n} Y_{n}$. We write $X_{/Y} := (\underline{Y},\cO)$.
The $\cO_{X}$-promodule $\cO$ is not generally an $\cO_{X}$-module.
\end{expl}

\begin{defn}\label{defn-sub}
A closed formal subscheme of an open formal subscheme of $X$ is
called a {\em formal subscheme} of $X$.
We identify formal subschemes $Y$ and $Z$ of $X$ if
the natural morphisms $Y \to X$ and $Z \to X$ are isomorphic.
A {\em subscheme} of $X$ is a formal subscheme of $X$ which is a scheme.
 
A morphism $W \to X$ of formal schemes is called a {\em (closed, open) immersion}
if it is an isomorphism onto a (closed, open) formal subscheme of $X$.
\end{defn}

\begin{prop}
Let $\phi:Y \to X$ be a morphism of formal schemes.
Then $\phi$ is an immersion if and only if $\phi$ is, as a continuous map,
a homeomorphism onto a locally closed subset of $X$, and 
 for every $y \in Y$,
the natural morphism $\cO_{X,\phi(y)} \to \cO_{Y,y}$ is an epimorphism (of
$\cO_{X,\phi(y)}$-promodules).
\end{prop}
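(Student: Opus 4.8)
The plan is to prove the two implications separately, reducing the nontrivial (``if'') direction to an assertion about a morphism of affine formal spectra, where it becomes a statement about prorings settled by the Mittag-Leffler machinery of \S\ref{subsec-ML-sheaf}. For the ``only if'' direction, recall from Definition \ref{defn-sub} that an immersion factors as an isomorphism onto a closed formal subscheme of an open formal subscheme. The open formal subscheme inclusion is tautologically a homeomorphism onto an open subset inducing isomorphisms on stalks. For the closed part, defined by a semicoherent proideal sheaf $\cI$, the lemma preceding Example \ref{expl-completion} shows the underlying map is a homeomorphism onto a closed subset, while on stalks the map is the quotient $\cO_{X,x} \to (\cO_{X}/\cI)_{x} = \cO_{X,x}/\cI_{x}$ (stalks commute with cokernels by \S Stalks and exactness), which is an epimorphism. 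Composing, $\phi$ is a homeomorphism onto a locally closed subset with epimorphic stalk maps.

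For the ``if'' direction, set $W := \phi(Y)$. Since $W$ is locally closed it is closed in some open $U \subseteq X$; replacing $X$ by the open formal subscheme $U$ (an immersion into $U$ postcomposed with the open immersion $U \hookrightarrow X$ is again an immersion), I may assume $W$ is closed in $X$. The goal is to produce a semicoherent proideal sheaf $\cI \subseteq \cO_{X}$ with $(W,\cO_{X}/\cI) \cong Y$, which by construction exhibits $\phi$ as a closed immersion. This construction is local on $X$, and I claim that for each $y \in Y$ there is a distinguished affine $U = \Spf A \ni \phi(y)$ with $\phi^{-1}(U)$ affine: choosing an affine $V = \Spf B_{0} \ni y$ in $Y$ and using that $\phi(V)$ is open in $W$, pick $U = D(a)$ with $\phi(y) \in U$ and $W \cap U \subseteq \phi(V)$; then $\phi^{-1}(U) = \phi^{-1}(W \cap U) \subseteq V$ is the distinguished open $D(\phi|_{V}^{*}a)$ of $V$, hence affine, say $\Spf B$. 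On such a $U$, the map $\phi \colon \Spf B \to \Spf A$ is a homeomorphism onto the closed set $W \cap U$ with epimorphic stalk maps and, by Proposition \ref{prop-spf}, is induced by a proring map $\alpha \colon A \to B$.

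The heart of the argument is to show that $\alpha$ is an epimorphism of $A$-promodules. The pushforward $\phi_{*}\cO_{\Spf B}$ equals $(B_{A})^{\triangle}$ by the pushforward lemma for affine formal spectra, so the sheaf map $\cO_{\Spf A} \to \phi_{*}\cO_{\Spf B}$ corresponds to $\alpha$. Since $B$ is an admissible proring its bonding maps are surjective, so $B$ is epi, hence Mittag-Leffler; as the Mittag-Leffler property depends only on the underlying abelian progroup, $(B_{A})^{\triangle}$ is a Mittag-Leffler semicoherent $\cO_{\Spf A}$-promodule. On stalks $(\phi_{*}\cO_{\Spf B})_{x} = \cO_{\Spf B,y} = B_{\fq}$, because $\phi$ is a homeomorphism onto the closed set $W \cap U$, and the stalk map $A_{\fp} \to B_{\fq}$ is exactly the hypothesized epimorphism. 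Proposition \ref{prop-ML-epi-stalk} therefore gives that $\alpha$ is an epimorphism, so with $I := \Ker \alpha$ we obtain $B \cong A/I$ and $\bigl(W \cap U,\, \cO_{\Spf A}/I^{\triangle}\bigr) = \Spf B = \phi^{-1}(U)$.

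Finally I glue. The local ideal sheaves $I^{\triangle} = \Ker(\cO_{U} \to \phi_{*}\cO_{\phi^{-1}(U)})$ agree on overlaps, and adjoining affine opens $U' \subseteq X \setminus W$ (on which $\phi^{-1}(U') = \emptyset$ and $\cI|_{U'} = \cO_{U'}$) produces a cover of $X$; gluing yields a semicoherent proideal sheaf $\cI$ with $(W,\cO_{X}/\cI) \cong Y$. Its support is exactly $W$, since $(\cO_{X}/\cI)_{x} = \cO_{Y,y}$ is nonzero for $x \in W$ (its reduction is a local ring) and zero otherwise. Thus $\phi$ is a closed immersion into $U$, hence an immersion into $X$. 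The main obstacle is the affine heart, namely the realization that the target $\phi_{*}\cO_{\Spf B} = (B_{A})^{\triangle}$ is Mittag-Leffler precisely because admissibility forces epi, hence Mittag-Leffler, bonding maps; this is exactly what licenses the passage from the pointwise epimorphism hypothesis to a genuine epimorphism via Proposition \ref{prop-ML-epi-stalk}, which would otherwise fail. The surrounding reductions (locally closed to closed, and the compatible choice of affine opens upstairs and downstairs) are routine but must be arranged so that the glued $\cI$ has underlying support precisely $W$.
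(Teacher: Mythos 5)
Your proof is correct and follows essentially the same route as the paper's: both reduce the locally closed case to the closed case inside an open formal subscheme $U$, observe that $\phi_{*}\cO_{Y}$ is Mittag-Leffler because the structure sheaf of a formal scheme is locally an admissible (hence epi) proring, and then apply Proposition \ref{prop-ML-epi-stalk} to conclude that $\cO_{U} \to \phi_{*}\cO_{Y}$ is an epimorphism of semicoherent promodules whose kernel is the desired proideal sheaf. The only difference is that you carry this out affine-by-affine (via Proposition \ref{prop-spf}) and then glue the local kernels, whereas the paper works with the global pushforward on $U$ directly; since $\phi_{*}\cO_{Y}$ is semicoherent for the qsqc map $\phi$ and kernels exist globally in the abelian category of semicoherent $\cO_{U}$-promodules, your localization and gluing steps are harmless but unnecessary.
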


\begin{proof}
The ``only if'' part is obvious and we prove the ``if'' part.
Take an open formal subscheme $U \subseteq X$ such that
 $\phi(Y)$ is a closed subset of $U$.
Then the map $\phi:Y \to U$ is qsqc and $\phi_{*}\cO_{Y}$ is well-defined as
a sheaf on $U$, which is clearly Mittag-Leffler.
From the assumption and Proposition \ref{prop-ML-epi-stalk}, 
the natural morphism $\cO_{U} \to \phi_{*} \cO_{Y}$
is an epimorphism of semicoherent $\cO_{U}$-promodules.
Its kernel is a proideal sheaf and defines a closed subscheme $Y' \subseteq U$.
Then $\phi$ is equal to the composite $Y \cong Y' \hookrightarrow U \hookrightarrow X$
and hence an immersion.
\end{proof}

\begin{prop}\label{prop-immersion-basechange}
Let $Y\to X$ and $W  \to X$ be morphisms of formal schemes.
If the $Y \to X$ is an immersion (resp.\ closed immersion, open immersion), then so is the natural morphism
$W \times_{X} Y \to W$.
\end{prop}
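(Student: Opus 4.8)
Proposition \ref{prop-immersion-basechange} states that immersions (of each of the three types) are stable under base change. The plan is to reduce to the affine case and then translate the geometric statement into an algebraic one about prorings and promodules, where the three cases can be handled uniformly.

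First I would observe that all three notions are local on the target $W$, so I may assume $W = \Spf C$ is affine. Next, since an immersion factors as a closed immersion into an open formal subscheme, and the open case is immediate (an open immersion base-changes to an open immersion, because $W \times_X Y \to W$ is just the inclusion of the open preimage $\phi^{-1}(\text{open})$), I would reduce to showing the two remaining assertions. So it suffices to treat the case where $Y \to X$ is a \emph{closed} immersion and $X = \Spf A$ is affine as well, with $W = \Spf C \to \Spf A = X$ induced by a morphism of admissible prorings $A \to C$. By the previous lemma on closed formal subschemes, $Y = \Spf(A/\cI(X))$ for a semicoherent proideal sheaf $\cI$; writing $I := \cI(X) \subseteq A$, we have $Y = \Spf(A/I)$.

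The core of the argument is then the base-change computation for the fiber product. By the construction of fiber products in the excerpt, $W \times_X Y \cong \Spf\bigl(C \otimes_A (A/I)\bigr)$, and the natural morphism $W \times_X Y \to W$ is induced by the proring map $C \to C \otimes_A (A/I)$. The key point is that this map is an epimorphism of $C$-promodules with the expected kernel: because $- \otimes_A C$ is right exact (indeed, tensoring the exact sequence $0 \to I \to A \to A/I \to 0$ of $A$-promodules gives an epimorphism $C \cong A \otimes_A C \twoheadrightarrow (A/I) \otimes_A C \cong C \otimes_A (A/I)$), the map $C \to C \otimes_A (A/I)$ is surjective, so $C \otimes_A (A/I)$ is an admissible proring (up to isomorphism) and the induced morphism on formal spectra is a closed immersion by the lemma characterizing closed formal subschemes. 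This handles the closed (hence also the general immersion) case. I expect the main obstacle to be a bookkeeping subtlety rather than a deep difficulty: namely verifying that $C \otimes_A (A/I)$ is genuinely an \emph{admissible} proring (surjective bonding maps inducing isomorphisms on reduced rings) so that its formal spectrum, and hence $W \times_X Y$, is a bona fide closed formal subscheme of $W$ in the sense of the definition — this requires checking that the support condition $Y' = \{w : (\cO_W/\cK)_w \ne 0\}$ yields a formal scheme, which follows since the quotient is by an honest semicoherent proideal and the reduced quotient $C_{\red} \to (C \otimes_A A/I)_{\red}$ is a surjection of rings defining a closed subscheme of $W_{\red}$.

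To assemble the pieces cleanly, I would run the closed case on each member of an affine covering and glue: for a general immersion $Y \to X$, cover $X$ by affines $\Spf A_\lambda$ meeting the open formal subscheme $U \supseteq Y$ in which $Y$ is closed, apply the affine computation to each $W_\lambda := W \times_X \Spf A_\lambda$, and note that the resulting closed (resp.\ open) formal subschemes of the $W_\lambda$ patch together because the fiber product construction and the formation of $\cO_W/\cK$ are compatible with restriction to open subsets. The parenthetical ``resp.''\ clauses then follow formally: the open-immersion statement is the preimage observation already noted, and the closed-immersion statement is exactly what the affine epimorphism argument produces.
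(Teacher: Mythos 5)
Your proposal is correct in substance, but it follows a genuinely different route from the paper's. The paper disposes of this proposition in one line by invoking the proposition immediately preceding it --- the characterization of immersions as morphisms that are homeomorphisms onto locally closed subsets and induce \emph{epimorphisms on stalks} $\cO_{X,\phi(y)} \to \cO_{Y,y}$ --- together with right exactness of the tensor product: the stalks of $W \times_{X} Y$ are the tensor products $\cO_{W,w} \otimes_{\cO_{X,x}} \cO_{Y,y}$, and a right exact functor preserves epimorphisms, so the stalkwise criterion passes to the base change with no chart-by-chart work and no gluing. You never use that characterization; instead you argue directly from the definition: identify $Y$ locally as $\Spf A/I$, compute $W \times_{X} Y = \Spf\bigl(C \otimes_{A} (A/I)\bigr)$ via the affine description of fiber products, use right exactness to see that $C \to C \otimes_{A} (A/I)$ is an epimorphism of $C$-promodules (so the target is $C/K$ for a proideal $K$, hence admissible up to isomorphism by the same argument as in the paper's lemma on closed formal subschemes), and then glue. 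Both arguments hinge on the same key algebraic fact; what yours buys is explicitness --- it exhibits the defining proideal sheaf of $W \times_{X} Y$ in $W$ and verifies directly that the quotient proringed space is a formal scheme, which in this paper is a genuine condition rather than an automatic one --- while the paper's stalkwise criterion buys brevity and globalization for free. The price of your route is the two steps you treat as routine: that being a (closed) immersion is local on the target $W$, and that the defining proideal sheaves constructed over the members of your covering agree on overlaps and glue; these are true but are established nowhere in the paper, so a complete write-up of your version would have to supply them (or, more economically, you could feed your affine computation into the paper's stalkwise characterization and skip the gluing altogether).
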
 

\begin{proof}
The assertion follows form the preceding proposition and 
the fact that the tensor product is right exact.
\end{proof}

\subsection{Extension of a promodule on an open subset}

The following two propositions generalizes
\cite[Prop.\ 9.4.2 and 9.5.10]{EGA} to formal schemes.

\begin{prop}\label{prop-ext}
Let $X$ be a formal scheme and $U \subseteq X$ an open subset
such that the inclusion map $\iota : U \hookrightarrow X$ is  quasi-compact, hence qsqc.
Let $\cN$ be a semicoherent
 $\cO_X$-promodule, $\cM \subseteq \cN|_U $ a semicoherent $\cO_U$-subpromodule,
and  $\cQ:= \cN|_U /\cM$ the quotient $\cO_{U}$-promodule.
Then there exists the largest  semicoherent 
$\cO_X$-subpromodule $\bar \cM \subseteq \cN$
such that $ \bar \cM |_U = \cM $. Correspondingly there exists the smallest
semicoherent quotient $\cO_X$-promodule $ \cN\twoheadrightarrow  \bar \cQ$
such that $ \bar \cQ |_U = \cQ $.
\end{prop}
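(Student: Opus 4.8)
The plan is to construct $\bar\cM$ by pushing $\cM$ forward along the open immersion $\iota$ and then pulling back into $\cN$. Since $\iota$ is qsqc by hypothesis, the pushforwards $\iota_*(\cN|_U)$ and $\iota_*\cM$ are again semicoherent $\cO_X$-promodules, and $\iota_*$ is left exact (being right adjoint to the restriction $\iota^*=(-)|_U$), so the inclusion $\cM \hookrightarrow \cN|_U$ pushes forward to a monomorphism $\iota_*\cM \hookrightarrow \iota_*(\cN|_U)$. Composing the adjunction unit $\cN \to \iota_*(\cN|_U)$ with the quotient $\iota_*(\cN|_U) \to \iota_*(\cN|_U)/\iota_*\cM =: \cR$, formed in the abelian category of semicoherent $\cO_X$-promodules, I define $\bar\cM := \Ker(\cN \to \cR)$; equivalently, $\bar\cM$ is the fiber product (preimage) $\cN \times_{\iota_*(\cN|_U)} \iota_*\cM$, a semicoherent $\cO_X$-subpromodule of $\cN$.

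First I would check $\bar\cM|_U = \cM$. Restriction to the open subset $U$ is exact: on an affine $\Spf B \subseteq U$ it is computed by $\cF \mapsto \cF(\Spf B)$, and a sequence of semicoherent promodules is exact precisely when it is so on every affine, so restriction commutes with kernels and images. Moreover, directly from the definition of $\iota_*$ one has $(\iota_*\cG)|_U = \cG$ for every semicoherent $\cO_U$-promodule $\cG$, and the restriction to $U$ of the unit $\cN \to \iota_*(\cN|_U)$ is the identity of $\cN|_U$. Applying $(-)|_U$ to the defining pullback square and using these identifications yields $\bar\cM|_U = \cN|_U \times_{\cN|_U} \cM = \cM$.

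Next, the maximality. Let $\cM' \subseteq \cN$ be any semicoherent $\cO_X$-subpromodule with $\cM'|_U = \cM$. I claim the composite $c : \cM' \hookrightarrow \cN \to \iota_*(\cN|_U)$ factors through $\iota_*\cM \hookrightarrow \iota_*(\cN|_U)$, which by the universal property of the fiber product forces $\cM' \subseteq \bar\cM$. Here the adjunction $\iota^* \dashv \iota_*$ does the work. Write $\Phi$ for the natural bijection $\Hom(\cM',\iota_*\cF) \cong \Hom(\cM,\cF)$ (for semicoherent $\cO_U$-promodules $\cF$, using $\cM'|_U = \cM$). Transposing the unit recovers the restriction of $\cM' \hookrightarrow \cN$, so $\Phi(c) = \theta$ where $\theta : \cM \hookrightarrow \cN|_U$ is the inclusion. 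Setting $h := \Phi^{-1}(\mathrm{id}_\cM) : \cM' \to \iota_*\cM$ and using naturality of $\Phi$ in $\cF$, the map $\iota_*(\theta)\circ h$ satisfies $\Phi(\iota_*(\theta)\circ h) = \theta \circ \mathrm{id}_\cM = \theta = \Phi(c)$, whence $\iota_*(\theta)\circ h = c$. Thus $c$ factors through $\iota_*\cM$, as claimed. I expect this naturality bookkeeping to be the only genuine subtlety; in particular one must avoid assuming $\iota_*$ exact (it is merely left exact), which is exactly why the kernel/fiber-product description of $\bar\cM$, rather than an identification $\cR \cong \iota_*\cQ$, is used throughout.

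Finally, the dual assertion is formal. Setting $\bar\cQ := \cN/\bar\cM$ and using exactness of restriction gives $\bar\cQ|_U = \cN|_U/\bar\cM|_U = \cN|_U/\cM = \cQ$. A semicoherent quotient $\cN \twoheadrightarrow \cQ'$ with $\cQ'|_U = \cQ$ is the same datum as its kernel $\cK \subseteq \cN$, and $\cK|_U = \Ker(\cN|_U \to \cQ'|_U) = \cM$; the smallest such quotient corresponds to the largest such kernel, namely $\bar\cM$, so $\bar\cQ = \cN/\bar\cM$ is the smallest, completing the proof.
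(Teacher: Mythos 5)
Your proof is correct and takes essentially the same route as the paper: the paper simply sets $\bar \cM := \Ker (\cN \to \iota_* \cQ)$ and $\bar \cQ := \Im (\cN \to \iota_* \cQ)$, and since $\iota_*$ is left exact your $\cR = \iota_*(\cN|_U)/\iota_*\cM$ embeds into $\iota_*\cQ$, so your kernel coincides with the paper's $\bar\cM$. The adjunction bookkeeping and the exactness of restriction to $U$ that you spell out are precisely the details the paper leaves implicit.
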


\begin{proof}
We have a natural morphism $\phi : \cN \to \iota_* \cQ$.
Then $\bar \cM := \Ker (\phi)$ and $\bar \cQ := \Im (\phi)$
have the desired properties. 
\end{proof}

\begin{prop}\label{prop-closure}
Let $X$ be a formal scheme and $Y \subseteq X$
a formal subscheme. 
Suppose that the inclusion map $ Y \hookrightarrow X$ is quasi-compact, hence
qsqc.
Then there exists a smallest 
closed formal subscheme $\bar Y \subseteq X$ which
contains $Y$ as an open formal subscheme.
If in addition $Y$ is a scheme, then so is $\bar Y$.
\end{prop}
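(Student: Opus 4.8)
The plan is to imitate, in the formal-scheme setting, the construction of the scheme-theoretic closure. By Definition~\ref{defn-sub} I may present $Y$ as a closed formal subscheme $j\colon Y \hookrightarrow U$ of an open formal subscheme $\iota\colon U \hookrightarrow X$; write $\cI \subseteq \cO_U$ for its defining ideal sheaf, let $\underline Y \subseteq U$ be the underlying closed subset, and set $g := \iota\circ j\colon Y \to X$. Although $\iota$ need not be quasi-compact, the hypothesis says $g$ is quasi-compact, hence qsqc, so the pushforward $g_*\cO_Y$ is a well-defined semicoherent $\cO_X$-promodule. First I would form the natural morphism $\phi\colon \cO_X \to g_*\cO_Y$, namely the unit of the adjunction $g^* \dashv g_*$ (here $g^*\cO_X = \cO_Y$), and set
\[
\bar\cM := \Ker(\phi), \qquad \bar\cQ := \Im(\phi) = \cO_X/\bar\cM .
\]
This is exactly the argument of Proposition~\ref{prop-ext} carried out with $g$ in place of an open inclusion, and it is legitimate precisely because $g$ is qsqc. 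I then propose $\bar Y := (\Supp\bar\cQ,\ \bar\cQ)$ as the candidate.

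Next I would check that $\bar Y$ contains $Y$ as an open formal subscheme. Restriction to the open set $U$ is exact, and base change along $\iota$ gives $(g_*\cO_Y)|_U = j_*\cO_Y = \cO_U/\cI$ (since $g^{-1}(U)=Y$), which follows concretely from Lemma~\ref{lem-affine-restriction}. Hence $\phi|_U$ is the defining quotient $\cO_U \to \cO_U/\cI$, so $\bar\cM|_U = \cI$ and $\bar\cQ|_U = \cO_Y$. In particular $\underline Y = \Supp\bar\cQ \cap U$ is open in $\Supp\bar\cQ$ and $\bar\cQ$ restricts to $\cO_Y$ there, so $Y$ is an open formal subscheme of $\bar Y$.

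The main obstacle is to show that $\bar Y$ really is a \emph{closed formal subscheme} of $X$, i.e.\ that $(\Supp\bar\cQ,\bar\cQ)$ is a formal scheme and not merely a proringed space; recall from the discussion after Example~\ref{expl-completion} that this is not automatic for a quotient by a proideal sheaf. The question is local, so I may assume $X=\Spf A$ with $A=(A_d)$ admissible; writing $N := (g_*\cO_Y)(\Spf A)$, Proposition~\ref{prop-affine-semicoh} gives $\bar\cQ = \bar B^{\triangle}$ with $\bar B = \Im(A \to N) = A/\bar I$, and I must prove that $\bar B$ is admissible. After reindexing $A$ and $N$ over a common directed set with level representatives $\phi_d\colon A_d \to N_d$, one has $\bar B = (B_d)$ with $B_d = \Im(\phi_d)$. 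The bonding maps $B_{d'} \to B_d$ are surjective because those of $A$ are and $A_{d'} \to A_d \to N_d$ has the same image as $A_d \to N_d$. The delicate point is that these bonding maps induce isomorphisms on reduced rings, equivalently that $(B_d)_{\red} = A_{\red}/\fq_d$ with $\fq_d = \Ker\bigl(A_{\red} \to (N_d)_{\red}\bigr)$ is independent of $d$. Here I would exploit that all the infinitesimal thickenings comprising $Y$ share the single reduction $Y_{\red}$, so that $(N_d)_{\red}$ is the level-independent ring of sections of $\cO_{Y_{\red}}$; equivalently, applying the lemma preceding Example~\ref{expl-completion} to the epimorphism $\cO_X \to \bar\cQ$ identifies $\bar B_{\red}$ with the coordinate ring of the scheme-theoretic closure of $Y_{\red}$ in $X_{\red}$, a genuine closed subscheme and the classical statement this proposition generalizes. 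This yields the required reduced isomorphisms, so $\bar B$ is admissible and $\bar Y$ is a closed formal subscheme.

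It remains to verify minimality and the scheme case. For minimality, if $Y'$ is any closed formal subscheme of $X$ containing $Y$ as an open formal subscheme, then the defining surjection $\cO_X \to \cO_{Y'}$ followed by the restriction $\cO_{Y'} \to g_*\cO_Y$ equals $\phi$, by the uniqueness built into the adjunction; hence the defining ideal $\cI'$ of $Y'$ satisfies $\cI' \subseteq \Ker(\phi) = \bar\cM$, which is precisely $\bar Y \subseteq Y'$, so $\bar Y$ is the smallest. Finally, if $Y$ is a scheme then $\cO_Y$ is an $\cO_Y$-module, so $g_*\cO_Y$ is an $\cO_X$-module and $N$ is an honest ring; then $\phi$ is represented by a single homomorphism $A_{d_0} \to N$, so $\bar B = \Im(A_{d_0}\to N)$ is a ring and $\bar Y = \Spf\bar B = \Spec\bar B$ is a scheme.
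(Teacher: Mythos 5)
Your construction of $\bar Y$ (underlying space the closure of $\underline{Y}$, structure sheaf $\bar\cQ = \Im(\cO_X \to g_*\cO_Y)$) is exactly the paper's, and your reduction to $X=\Spf A$ with $\bar\cQ = (B_d)^{\triangle}$, $B_d = \Im(A_d \to N_d)$, is a legitimate reformulation; you also correctly isolate the crux, namely that the bonding maps $B_{d'} \to B_d$ must induce isomorphisms on reduced rings. But your justification of that step rests on a false claim: that $(N_d)_{\red}$ ``is the level-independent ring of sections of $\cO_{Y_{\red}}$.'' Reduction does not commute with global sections on non-affine schemes: for a quasi-compact scheme $Z$ the natural map $(\Gamma(Z,\cO_Z))_{\red} \to \Gamma(Z_{\red},\cO_{Z_{\red}})$ is injective but in general not surjective (the obstruction to lifting a function on $Z_{\red}$ through a nilpotent thickening lies in an $H^1$ of the nilpotent ideal sheaf, and here the relevant schemes are only quasi-compact locally closed subschemes of the affine $X_d$, not affine). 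Moreover your ``equivalently'' — applying the lemma preceding Example \ref{expl-completion} to $\cO_X \to \bar\cQ$ — is circular: that lemma assumes its $Y$ is already a closed formal subscheme, which is precisely what you are trying to establish for $\bar Y$.

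The gap is fillable, because injectivity alone suffices: the kernel of a bonding map $N_{d'} \to N_d$ is the group of global sections of a nilpotent ideal sheaf (all levels of $Y$ share the reduction $Y_{\red}$), and on a quasi-compact scheme a global section of a nilpotent ideal sheaf is itself nilpotent; hence $(N_{d'})_{\red} \to (N_d)_{\red}$ and each $(N_d)_{\red} \to \Gamma(Y_{\red},\cO_{Y_{\red}})$ are injective, so all the ideals $\fq_d = \Ker(A_{\red} \to (N_d)_{\red})$ coincide with $\Ker(A_{\red} \to \Gamma(Y_{\red},\cO_{Y_{\red}}))$, which is what you need. It is worth noting that the paper sidesteps this issue entirely by staying at the level of schemes: it sets $Y_d := Y \times_X X_d$, takes the classical scheme-theoretic closure $\bar Y_d \subseteq X_d$ (which is your $\Spec B_d$), and observes that the maps $\bar Y_d \to \bar Y_{d'}$ are closed immersions that are bijective for purely topological reasons (every $\bar Y_d$ has underlying space the closure of $\underline{Y}$); thus $(\bar Y_d)$ is an admissible system and $\bar Y = \varinjlim \bar Y_d$, the condition on reductions being packaged inside ``bijective closed immersion'' with no commutation of $\Gamma$ and reduction ever invoked. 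Your remaining steps — openness of $Y$ in $\bar Y$, minimality via the adjunction, and the scheme case — are fine, and indeed more explicit than the paper's.
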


\begin{proof}
We can construct $\bar Y$ as follows: The underlying topological space of $\bar Y$
is the set-theoretic closure of $Y$. If $\iota : Y \hookrightarrow X$ denotes
the inclusion map, then the structure sheaf of $\bar Y$ is 
the image of $\cO_{X} \to \iota_{*} \cO_{Y} $. 

We now check that the above construction gives a closed formal subscheme of $X$.
To do this, we may suppose that $X$ is affine, and hence  $Y$ is qsqc.
Let $(X_{d})$ be an admissible system of affine scheme with $X=\varinjlim X_{d}$
and put  $Y_{d}:= Y \cap X_{d}= Y \times _{X} X_{d}$.
Then $(Y_{d})$ is an admissible system of schemes with $Y = \varinjlim Y_{d}$.
Now we can define the closure $\bar Y_{d}$ of $Y_{d}$ in $X_{d}$
so that the structure sheaf of $\bar Y_{d}$ is
the image of $\cO_{X_{d}} \to (\iota_{d})_{*} \cO_{Y_{d}}$,
where $\iota_{d}$ denotes the inclusion morphism $Y_{d} \hookrightarrow X_{d}$.
It is now easy to see that $\bar Y= \varinjlim \bar Y_{d}$
and that $\bar Y$ is a closed formal subscheme of $X$.

The second assertion of the proposition follows from the construction.
\end{proof}

\begin{defn}
With the notation in the preceding proposition, we call $\bar Y$ 
the {\em closure} of $Y$ in $X$.
\end{defn}

It also follows from the construction that if $Y$ admits an  admissible
system $(Y_{d})$ of schemes with $Y = \varinjlim Y_{d}$,
and if $\bar Y_{d}$ denotes the closure of $Y_{d}$
 in $X$, then $\bar Y = \varinjlim \bar Y_{d}$.

\subsection{Semicoherent promodules on a qsqc formal scheme}

\begin{prop}
For a qsqc formal scheme $X$, we have a natural equivalence
\begin{align*}
&\textup{(Semicoherent $\cO_{X}$-promodule)} \\
& \cong \pro\textup{(Semicoherent $\cO_{X}$-module)} .
\end{align*}
\end{prop}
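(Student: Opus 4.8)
The plan is to exhibit the equivalence explicitly by the functor
\[
\Phi : \pro(\text{semicoherent }\cO_X\text{-modules}) \to (\text{semicoherent }\cO_X\text{-promodules}),\qquad
\Phi\big((\cM_d)_{d\in D}\big) : U \mapsto \prolim_{d}\cM_d(U),
\]
the sectionwise projective limit over qsqc $U$, and to prove that $\Phi$ is a natural equivalence. Let $j$ denote the functor (semicoherent modules) $\hookrightarrow$ (semicoherent promodules) obtained by viewing each section as a constant (singleton-indexed) pro-object, so that $\Phi((\cM_d)) = \prolim_d j\cM_d$. Throughout I use that $X$, being qsqc, has a \emph{finite} affine $\fB$-bicovering $\{U_i,U_{ijk}\}$, so every sheaf condition, product, and $\Hom$ below is governed by a finite diagram; this finiteness is the crux. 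I also use the affine equivalence established above, namely that over $\Spf A$ one has (semicoherent $\cO$-promodules) $\cong$ ($A$-promodules) $= \pro(A\text{-modules}) \cong \pro(\text{semicoherent }\cO\text{-modules})$, the middle equality being the very definition of $A$-promodule.

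First I would check $\Phi$ is well defined. For a finite $\fB$-bicovering $\{U_i,U_{ijk}\}$ of a qsqc $U$ each $\cM_d$ is a sheaf, so $0 \to \cM_d(U) \to \prod_i \cM_d(U_i) \rightrightarrows \prod \cM_d(U_{ijk})$ is exact; applying $\prolim_d$ and using that $\prolim$ commutes with the \emph{finite} products (limits commute with limits), the corresponding sequence for $\Phi((\cM_d))$ is exact, so $\Phi((\cM_d))$ is a sheaf of promodules. Restricting to an affine $\Spf A \subseteq X$ and writing $M_d := \cM_d(\Spf A)$ gives $\cM_d|_{\Spf A} = M_d^{\triangle}$ and $\Phi((\cM_d))|_{\Spf A} = M^{\triangle}$ with $M = (M_d) \in A\text{-promod}$, so $\Phi((\cM_d))$ is semicoherent.

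For full faithfulness I would reduce to a $\Hom$-computation. Since the sheaf $\cH om$ is determined on the finite affine bicovering, $\Hom(\cF,\cG)$ is the equalizer of $\prod_i \Hom(\cF|_{U_i},\cG|_{U_i}) \rightrightarrows \prod \Hom(\cF|_{U_{ijk}},\cG|_{U_{ijk}})$. On each affine piece the affine equivalence yields, for a pro-object $(\cN_d)$ and a single module $\cP$, the cocompactness relation
\[
\Hom\big(\prolim_d j\cN_d,\ j\cP\big) = \varinjlim_d \Hom(j\cN_d, j\cP) = \varinjlim_d \Hom(\cN_d,\cP),
\]
since affinely this is $\Hom_{A\text{-promod}}((N_d),P) = \varinjlim_d \Hom_A(N_d,P)$ (so in particular $j$ is fully faithful). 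Feeding these into the finite equalizer and using that a \emph{filtered} colimit commutes with \emph{finite} limits of sets, the colimit $\varinjlim_d$ passes outside the equalizer, giving the relation globally; combined with $\Hom(\cF,\prolim_e j\cM_e) = \varprojlim_e\Hom(\cF,j\cM_e)$ this yields $\Hom(\Phi((\cN_d)),\Phi((\cM_e))) = \varprojlim_e\varinjlim_d\Hom(\cN_d,\cM_e)$, which is full faithfulness. Here again it is the finiteness of the bicovering (i.e.\ qsqc) that lets filtered colimits commute with the gluing limits; this is exactly what fails at arbitrary open subsets (cf.\ Remark \ref{rem-only-qsqc}).

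The main obstacle is essential surjectivity: realizing an arbitrary semicoherent $\cO_X$-promodule $\cM$ as $\Phi$ of a \emph{global} directed projective system of semicoherent $\cO_X$-modules. On each affine $U_i = \Spf A_i$ one has $\cM|_{U_i} = M_i^{\triangle}$ with $M_i = (M_{i,\lambda})$ an $A_i$-promodule, i.e.\ an honest projective system of semicoherent modules on $U_i$, and likewise on the $U_{ijk}$ (using Proposition \ref{prop-affine-semicoh}). The restriction isomorphisms of $\cM$ provide, over the $U_{ijk}$, gluing isomorphisms between these local systems; a priori these are morphisms in the procategories, and so ``mix up'' the local index sets. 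The key point is that there are only \emph{finitely many} such gluing data: a finite family of pro-morphisms can be simultaneously represented by level morphisms after passing to a common cofinal reindexing. Carrying this out, I would refine the local index sets to a single directed index set over which all the finitely many gluing isomorphisms become level isomorphisms of genuine semicoherent modules; the resulting level descent data then glue (Proposition \ref{prop-gluing-sheaves}) to a directed projective system $(\cM_d)$ of global semicoherent $\cO_X$-modules with $\Phi((\cM_d)) \cong \cM$. A more formal alternative is to invoke full faithfulness on every qsqc open together with the affine essential surjectivity and descent along the finite cover; either route works, and in both the finiteness coming from qsqc is precisely what is needed. Naturality of the resulting equivalence is then routine.
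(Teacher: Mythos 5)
Your construction of $\Phi$, the well-definedness check, and the full-faithfulness computation are sound; the paper dismisses the Hom formula $\Hom(\prolim \cM_{d}, \prolim \cN_{e}) = \varprojlim_{e}\varinjlim_{d}\Hom(\cM_{d},\cN_{e})$ as ``easy to see,'' and your route to it (the finite affine bicovering plus commutation of filtered colimits with finite limits) is a correct way to see it. The genuine gap is in essential surjectivity, which is where the paper spends essentially its entire proof. Your key claim --- that after a common cofinal reindexing the finitely many gluing pro-isomorphisms ``become level isomorphisms of genuine semicoherent modules'' --- is false. Reindexing can represent a finite family of \emph{morphisms} by level morphisms, but it can never turn a pro-isomorphism into a \emph{levelwise} isomorphism unless the members themselves happen to be isomorphic: the pro-object $(\ZZ \xleftarrow{0} \ZZ \xleftarrow{0} \cdots)$ is isomorphic to $0$ in the procategory of abelian groups, yet every member of every reindexing of it is $\ZZ$, so no level representative of that isomorphism is a levelwise isomorphism. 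The identifications $\cM(U_{i})\otimes_{A_{i}} A_{ijk} \cong \cM(U_{ijk})$ are exactly of this nature, so in general there is no ``level descent datum'' to feed into Proposition \ref{prop-gluing-sheaves}. Note also that descent data involve the inverses $\phi_{ij}^{-1}$ and cocycle relations, i.e.\ loops, so rectification theorems for finite \emph{loop-free} diagrams in procategories do not apply; and your fallback (``full faithfulness plus affine essential surjectivity plus descent along the finite cover'') assumes precisely the statement at issue, namely that pro-object descent data along a cover can be effectively glued.

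The paper's proof avoids gluing altogether. With $\{U_{i},U_{ijk}\}$ a finite affine bicovering and $\iota_{i}:U_{i}\hookrightarrow X$ the inclusions, it forms the pushforwards $\cM' := \prod (\iota_{i})_{*}(\cM|_{U_{i}})$ and $\cM'' := \prod (\iota_{ijk})_{*}(\cM|_{U_{ijk}})$; these are semicoherent on $X$ (pushforward along qsqc morphisms preserves semicoherence) and, since pushforward and finite products commute with $\prolim$, each is $\prolim$ of a directed system of \emph{global} semicoherent $\cO_{X}$-modules. The sheaf axiom exhibits $\cM$ as the difference kernel of the two natural morphisms $\cM' \rightrightarrows \cM''$. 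Now one only needs to represent two parallel \emph{morphisms} --- not isomorphisms --- by level morphisms $(\cM'_{d} \rightrightarrows \cM''_{d})$ after reindexing, which is legitimate; taking levelwise difference kernels $\cM_{d}$ and using that finite limits of level systems in a procategory are computed levelwise gives $\cM = \prolim \cM_{d}$. If you want to salvage your write-up, keep your first two steps and replace the gluing step by this pushforward--equalizer argument.
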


\begin{proof}
If $(\cM_{d})$ and $(\cN_{e})$ are directed projective systems of
semicoherent $\cO_{X}$-modules, then it is easy to see
\[
 \Hom (\prolim \cM_{d}, \prolim \cN_{e}) = \varprojlim_{e} \varinjlim_{d} \Hom ( \cM_{d},\cN_{e} ).
\]
So it suffices to show that for every semicoherent $\cO_{X}$-promodule 
$\cM$, there exists a directed projective system  $(\cM_{d})$ of
semicoherent $\cO_{X}$-modules with $\cM=\prolim \cM_{d}$.
Let $\{U_{i},U_{ijk}\}$ be a finite affine bicovering of $X$.
If $\iota_{i} :U_{i} \hookrightarrow X$ denotes the inclusion, 
we put $\cM_{U_{i}} : = (\iota_{i})_{*}(\cM|_{U_{i}})$
and similarly define $\cM_{U_{ijk}}$.
We set $\cM' := \prod \cM_{U_{i}}$ and $\cM'' := \prod \cM_{U_{ijk}}$.
Then there exist directed projective systems $(\cM'_{d})$
and $(\cM''_{e})$ of semicoherent $\cO_{X}$-modules
such that $\cM' = \prolim \cM'_{d}$ and $\cM'' = \prolim \cM''_{e}$ respectively.
Then reindexing $(\cM'_{d}) $ and $(\cM''_{e})$,
we may suppose that the natural morphisms $\cM' \rightrightarrows \cM''$
are represented by level morphisms $(\cM'_{d} \rightrightarrows \cM'' _{d})$.
For each $d$, if we put $\cM_{d}$ to be the difference kernel of $\cM'_{d} \rightrightarrows \cM'' _{d}$,
then $\cM=\prolim \cM_{d}$, which completes the proof.
 \end{proof}

\begin{prop}
Every qsqc formal scheme is the inductive limit of some admissible system of schemes.
\end{prop}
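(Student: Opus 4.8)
The plan is to reduce everything to the affine case, where the statement holds essentially by definition, and then to globalize using the preceding proposition identifying semicoherent $\cO_X$-promodules with pro-objects of semicoherent $\cO_X$-modules. If $X=\Spf A$ with $A=(A_d)$ admissible, then by construction $X=\varinjlim_d \Spec A_d$ and $(\Spec A_d)$ is an admissible system of affine schemes, so there is nothing to prove. For general qsqc $X$ I would fix a finite affine bicovering $\{U_i,U_{ijk}\}$ with $U_i=\Spf A^{(i)}$, and apply the preceding proposition to the structure sheaf $\cO_X$ itself, viewed as a semicoherent $\cO_X$-promodule, to write $\cO_X \cong \prolim_d \cO_d$ for a directed projective system $(\cO_d)$ of semicoherent $\cO_X$-modules.

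The first key point is that the $\cO_d$ may be taken to be sheaves of rings, so that $X_d:=(\lvert X\rvert,\cO_d)$ is a ringed space. This is because the system produced in the proof of the preceding proposition is assembled from the pushforwards $(\iota_i)_*(\cO_X|_{U_i})$ and $(\iota_{ijk})_*(\cO_X|_{U_{ijk}})$, their finite products, and a difference kernel of the two restriction morphisms. Each pushforward of a structure sheaf is a sheaf of rings, finite products of sheaves of rings are sheaves of rings, the two restriction morphisms are ring homomorphisms, and the difference kernel of ring homomorphisms is again a sheaf of rings. Hence each $\cO_d$ is a sheaf of rings, the bonding morphisms $\cO_{d'}\to\cO_d$ are ring homomorphisms, and $\cO_X=\prolim\cO_d$ as a sheaf of prorings.

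Next I would check that $(X_d)$ is an admissible system of schemes. This is local on $X$, so I restrict to a chart $U_i=\Spf A^{(i)}$ with $A^{(i)}=(A^{(i)}_e)$. There $\cO_X|_{U_i}=\prolim_e \cO_{\Spec A^{(i)}_e}$, and unwinding the difference-kernel construction together with the sheaf axiom identifies $\cO_d|_{U_i}$ with $\cO_{\Spec A^{(i)}_{e(d)}}$ for a suitable index $e(d)$; hence $X_d$ is covered by the affine schemes $\Spec A^{(i)}_{e(d)}$ and is a scheme. The bonding morphism $\cO_{d'}\to\cO_d$ restricts on $U_i$ to the bonding morphism $A^{(i)}_{e(d')}\to A^{(i)}_{e(d)}$ of the admissible proring $A^{(i)}$, which is surjective and induces an isomorphism on reduced rings; therefore each $X_d\to X_{d'}$ is a bijective closed immersion and $(X_d)$ is admissible. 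Finally the inductive limit $\varinjlim X_d$ of this system in the category of formal schemes has underlying space $\lvert X\rvert$ and structure sheaf $\prolim\cO_d=\cO_X$, so $X=\varinjlim X_d$, as desired.

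The step I expect to be the main obstacle is precisely the local identification $\cO_d|_{U_i}\cong\cO_{\Spec A^{(i)}_{e(d)}}$, that is, verifying that the abstractly constructed ring sheaves $\cO_d$ are genuinely structure sheaves of schemes rather than merely semicoherent sheaves that happen to carry a ring structure. This requires tracking the reindexing from the preceding proposition across the finitely many charts and invoking the sheaf property to recognize the difference kernel on $U_i$ as the honest affine scheme $\Spec A^{(i)}_{e(d)}$; once this matching is in place, the surjectivity of the bonding maps and the isomorphism on reduced rings follow immediately from the admissibility of each $A^{(i)}$, and the remaining verifications are routine.
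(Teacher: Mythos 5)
Your affine case is fine, and it is also true that when the construction from the preceding proposition is applied to $\cO_X$, the resulting system $(\cO_d)$ consists of sheaves of rings with ring-homomorphism bonding maps. The gap is exactly at the step you yourself flagged as the main obstacle: the identification $\cO_d|_{U_i}\cong\cO_{\Spec A^{(i)}_{e(d)}}$ is false, and no reindexing makes it true. Unwinding the construction, $\cO_d$ is the difference kernel of $\prod_i(\iota_i)_*\cO_{\Spec A^{(i)}_{e_i(d)}}\rightrightarrows\prod_{(i,j,k)}(\iota_{ijk})_*\cO_{\Spec A^{(ijk)}_{g(d)}}$, so for a qsqc open $V\subseteq U_i$ a section of $\cO_d$ over $V$ is a whole tuple $(s_j)_j$ with $s_j$ defined over $V\cap U_j$ for \emph{every} chart $j$, subject to agreement on overlaps only after truncation to the level $g(d)$ used on the $U_{ijk}$. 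The sheaf axiom you invoke holds for the limit $\cO_X=\prolim\cO_d$, where all truncation levels go to infinity together, but not at any finite level $d$: there the components $s_j$, $j\ne i$, carry data not determined by $s_i$ (they are constrained only modulo the lower level $g(d)$), so $\cO_d|_{U_i}$ is a fiber product of several thickenings over a smaller thickening --- a Ferrand-type pushout --- and not the structure sheaf of $\Spec A^{(i)}_{e(d)}$.

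This is not cosmetic, because even if one proves each $(|X|,\cO_d)$ is a scheme, the system need not be admissible. Take $X$ the completion of $\PP^1\times\AA^1$ along $\PP^1\times\{0\}$, with charts $U_1=\Spf k[x][[t]]$, $U_2=\Spf k[x^{-1}][[t]]$ and overlap $U_{12}=\Spf k[x^{\pm}][[t]]$. At a level $d$ with chart truncations $t^{10}$ and overlap truncation $t^{1}$, the pair $(s_1,s_2)=(0,t)$ is a section of $\cO_d$; at a level $d'\ge d$ with all truncations $t^{20}$, sections are pairs that are actually \emph{equal} on $U_{12}$, so $(0,t)$ has no preimage and the bonding map $\cO_{d'}\to\cO_d$ is not surjective, not even on stalks at points of $U_{12}$. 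Hence the bonding morphisms need not be closed immersions. The underlying difficulty is that the admissible systems on the various charts are not synchronized, and the difference-kernel construction cannot synchronize them. The paper's proof avoids this entirely and is the natural repair: it takes the closure $\bar U_{i,d}\subseteq X$ of each $U_{i,d}$ (a closed subscheme of $X$, and a scheme, by Proposition \ref{prop-closure}) and defines $X_d$ as the scheme-theoretic union of the $\bar U_{i,d}$, whose defining proideal is the intersection of those of the $\bar U_{i,d}$; intersecting the ideals produces a single global thickening dominating all charts at level $d$, which is exactly what your construction lacks.
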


\begin{proof}
Let $X$ be an arbitrary qsqc formal scheme.
Take a finite affine covering $X=\bigcup_{i \in I} U_{i}$.
For each $i$, $U_{i}$ is by definition the inductive limit of some admissible system 
of affine schemes, say $(U_{i,d})$.
Without loss of generality, we may suppose that the index sets of
$(U_{i,d})$, $i \in I$, are equal.
We denote $\bar U_{i,d}$ the closure of $U_{i,d}$, which is a closed
subscheme of $X$.
For each $d$, define $X_{d}$ to be the scheme-theoretic union
of $\bar U_{i,d}$, $i \in I$, whose defining proideal is 
the intersection of those of $\bar U_{i,d}$.
Then $X_{d}$ is a subscheme of $X$ and $(X_{d})$ is an admissible system
such that $X=\varinjlim X_{d}$.
\end{proof}

\subsection{Locally ind-Noetherian formal schemes}\label{subsec-ind-Noeth}

\begin{defn}
An admissible proring $(A_{d})$ is said to be {\em pro-Noetherian}
if every $A_{d}$ is Noetherian. 
\end{defn}

From Lemma \ref{lem-surjective}, the pro-Noetherian property for an admissible
proring depends only on its isomorphism class in the category of admissible prorings.

\begin{defn}
A formal scheme $X$ is said to be {\em locally ind-Noetherian}
if every $x \in X$ admits an affine neighborhood $x \in \Spf A \subseteq X$
with $A$ pro-Noetherian.
\end{defn}

\begin{prop}
Let $A$ be an admissible proring such that 
$\Spf A$ is locally ind-Noetherian.
Then $A$ is pro-Noetherian.
\end{prop}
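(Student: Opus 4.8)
The plan is to reduce the statement to the standard commutative-algebra fact that the Noetherian property of a ring is local on its spectrum for a finite cover by distinguished open subsets, and then apply this to each member $A_d$ separately.

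First I would exploit that $\Spf A = \Spec A_{\red}$ is a quasi-compact space. By hypothesis every point $x$ has an affine neighborhood $\Spf B \subseteq \Spf A$ with $B$ pro-Noetherian. Since the distinguished open subsets of $\Spec A_{\red}$ form a basis, I can shrink to a distinguished open $D(f) \subseteq \Spf B$ containing $x$. Then $A_f = \cO_{\Spf A}(D(f)) = \cO_{\Spf B}(D(f))$, and the latter is a localization of $B$; as each member of $B$ is Noetherian, so is each of its localizations, whence $A_f$ is pro-Noetherian. (Here I use that being pro-Noetherian is invariant under isomorphism of admissible prorings, via Lemma \ref{lem-surjective}, so the identification $A_f \cong \cO_{\Spf B}(D(f))$ causes no trouble even though $A$ and $B$ carry different index sets.) By quasi-compactness I extract a finite subcover $\Spf A = \bigcup_{i=1}^{n} D(f_i)$ with every $A_{f_i}$ pro-Noetherian.

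Next I would fix an arbitrary index $d$ and show $A_d$ is Noetherian. Lift each $f_i \in A_{\red}$ to $f_i' \in A_d$; since nilpotents do not affect the topology, $D(f_i') = D(f_i)$ inside $\Spec A_d = \Spec A_{\red}$, so the $D(f_i')$ still cover $\Spec A_d$ and the $f_i'$ therefore generate the unit ideal of $A_d$. Moreover $A_{d,f_i'} = A_{d,f_i}$ is a member of the pro-Noetherian proring $A_{f_i}$, hence Noetherian.

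Finally I would invoke the standard fact: if elements $f_1', \dots, f_n'$ of a ring $R$ generate the unit ideal and every localization $R_{f_i'}$ is Noetherian, then $R$ is Noetherian. Applying this with $R = A_d$ gives that $A_d$ is Noetherian, and since $d$ was arbitrary, $A$ is pro-Noetherian. The only nontrivial input is this last locality statement for the Noetherian property; the remaining steps are bookkeeping with localizations of prorings, where the one point to keep straight is that $A$ and $B$ need not share an index set, so I compare them through the proring isomorphism $A_f \cong \cO_{\Spf B}(D(f))$ rather than member by member.
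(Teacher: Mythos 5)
Your proof is correct and is essentially the paper's own argument, written out in full: the paper's proof is the one-line observation that each $\Spec A_{d}$ is a locally Noetherian scheme (which is exactly what your reduction to distinguished opens $D(f_i)$ and the isomorphism-invariance of the pro-Noetherian property establish) and hence $A_{d}$ is Noetherian (which is exactly your final appeal to the locality of the Noetherian property for a finite cover by $D(f_i')$ with the $f_i'$ generating the unit ideal). So you have supplied the details the paper leaves implicit, but the route is the same.
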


\begin{proof}
If we write $A=(A_{d})$, then $\Spec A_{d}$ is a locally Noetherian
scheme, and hence $A_{d}$ is Noetherian, and  $A$ is pro-Noetherian.
\end{proof}

\begin{defn}
Let $A=(A_{d})$ be a pro-Noetherian admissible ring.
An $A$-module $N$ is said to be {\em Noetherian}
if the set of $A$-submodules of $N$ satisfies the maximal condition,
or equivalently if for some (and every) $d$ such that $A_{d}$ acts on $N$,
$N$ is a Noetherian $A_{d}$-module.
An $A$-promodule $M=(M_{e})$ is said to be {\em pro-Noetherian}
if for every $e$, $M_{e}$ is a Noetherian $A$-module.
\end{defn}

Note that the pro-Noetherian property for promodules is not invariant under isomorphisms.
However if $M$ and $N$ are {\em epi} $A$-promodules isomorphic to each other
and if $M$ is pro-Noetherian, then so is $N$.

It is clear that the category of pro-Noetherian $A$-promodules
is abelian. In particular, every $A$-subpromodule
and quotient $A$-promodule of a pro-Noetherian $A$-promodule
are again pro-Noetherian up to isomorphisms.

\begin{defn}
Let $X$ be a locally ind-Noetherian formal scheme.
A semicoherent $\cO_{X}$-(pro)module $\cM$ is said to be {\em locally 
(pro-)Noetherian} if every point $x \in X$ admits an affine neighborhood
$U=\Spf A $ such that $\cM|_{U} \cong M ^{\triangle}$ for 
some (pro-)Noetherian $A$-(pro)module $M$.
\end{defn} 

Again it is clear that the category of locally Noetherian $\cO_{X}$-modules
and that of locally pro-Noetherian $\cO_{X}$-promodules
are abelian.

\begin{prop}
 Every formal subscheme of a locally ind-Noetherian formal scheme is again
locally ind-Noetherian.
\end{prop}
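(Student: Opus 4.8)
The plan is to exploit the fact that being locally ind-Noetherian is a purely local condition and then to exhibit, around each point of the subscheme, an affine neighborhood whose structure proring is pro-Noetherian. By Definition \ref{defn-sub} a formal subscheme $Y$ of $X$ is a closed formal subscheme of some open formal subscheme $U \subseteq X$. Since an open formal subscheme of a locally ind-Noetherian formal scheme is again locally ind-Noetherian (the defining condition only refers to affine neighborhoods, which one may shrink to distinguished opens lying in $U$), I would first replace $X$ by $U$ and thereby reduce to the case in which $Y$ is a \emph{closed} formal subscheme of a locally ind-Noetherian $X$, with defining semicoherent proideal sheaf $\cI \subseteq \cO_X$.

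Fix $y \in Y$ and regard it as a point $x$ of $X$. Using that $X$ is locally ind-Noetherian I choose an affine neighborhood $\Spf A$ of $x$ with $A = (A_d)$ pro-Noetherian; shrinking to a distinguished open $D(f) = \Spf A_f$ preserves this property, because each $A_{d,f}$ is a localization of the Noetherian ring $A_d$ and hence Noetherian. Now $Y \cap \Spf A$ is an open formal subscheme of $Y$ around $y$ and a closed formal subscheme of $\Spf A$, with defining proideal $I := \cI(\Spf A)$. As $\cI$ is semicoherent, Proposition \ref{prop-affine-semicoh} gives $\cI = I^{\triangle}$, and exactness of the equivalence $M \mapsto M^{\triangle}$ then yields $\cO_{\Spf A}/\cI = (A/I)^{\triangle}$. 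As in the local analysis of closed formal subschemes carried out above, $B := A/I$ is an admissible proring up to isomorphism, so $Y \cap \Spf A \cong \Spf B$ is affine.

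The crux is to verify that $B = A/I$ is pro-Noetherian. Here I would invoke the fact that every morphism of pro-objects is represented, after reindexing, by a level morphism (\S\ref{sec-pro}): write the inclusion $I \hookrightarrow A$ as a level morphism $(\iota_e : I_e \to A_{\phi(e)})$, so that $B \cong (A_{\phi(e)}/\Im \iota_e)$ as a proring. Each $A_{\phi(e)}$ is one of the Noetherian rings $A_d$, and $\Im \iota_e$ is an ideal of it since $\iota_e$ is $A_{\phi(e)}$-linear; hence every component $A_{\phi(e)}/\Im \iota_e$ is a Noetherian ring, and $B$ is pro-Noetherian by definition. Thus $\Spf B$ is an affine neighborhood of $y$ with pro-Noetherian structure proring, and as $y$ was arbitrary, $Y$ is locally ind-Noetherian.

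I expect the only genuinely delicate point to be the passage from the abstract quotient proring $A/I$ to a concrete presentation by Noetherian rings: one must pin down that $Y \cap \Spf A$ really is the affine $\Spf(A/I)$ (via semicoherence and the equivalence $M \mapsto M^{\triangle}$) and that the cokernel $A/I$, although a priori an abstract pro-object, admits a level presentation whose components are quotients of the Noetherian $A_d$. Everything else --- localization preserving Noetherianity, and quotients of Noetherian rings being Noetherian --- is routine.
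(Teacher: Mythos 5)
The paper offers no argument to compare against here---its proof of this proposition is literally ``Obvious''---so your write-up stands or falls on its own. The reduction to a closed formal subscheme, the localization to a pro-Noetherian affine $\Spf A$, and the identification of $Y\cap\Spf A$ with $\Spf (A/I)$ (via Proposition~\ref{prop-affine-semicoh} and the paper's earlier assertion that $A/I$ is admissible up to isomorphism) are all fine. The flaw is in the final step, where you conclude that ``$B$ is pro-Noetherian \emph{by definition}.'' Pro-Noetherianness is defined only for \emph{admissible} prorings, in terms of the components of an admissible presentation, and the paper explicitly notes that having Noetherian components is not an isomorphism-invariant notion for arbitrary presentations. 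Your presentation $B\cong (A_{\phi(e)}/\Im\iota_{e})$ is epi (its bonding maps are surjective, being induced by those of $A$), but you never verify that it is admissible: nothing forces the bonding maps $A_{\phi(e')}/\Im\iota_{e'}\to A_{\phi(e)}/\Im\iota_{e}$ to induce isomorphisms of reduced rings. This can genuinely fail for a proideal in a pro-Noetherian admissible proring---take $A$ the constant proring $k[x]$ and $I_{n}=(x(x-1)\cdots (x-n))$ with the inclusions as bonding maps; the level quotients have strictly growing supports---which is exactly why the paper must \emph{postulate}, in the definition of a closed formal subscheme, that the support with the quotient sheaf is a formal scheme. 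So the object whose components you have shown to be Noetherian is not known to be an admissible presentation, while the definition of locally ind-Noetherian requires one.

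The gap is small and the paper already contains the tool that closes it. Since $B=A/I$ is isomorphic to an admissible proring $(B_{e})$ (this is where the hypothesis that $Y$ is a formal scheme enters), and since both $(B_{e})$ and your level quotient $(A_{\phi(e)}/\Im\iota_{e})$ are epi, Lemma~\ref{lem-surjective} applies to the isomorphism between them: every representative $A_{\phi(e)}/\Im\iota_{e}\to B_{e}$ is surjective, so each $B_{e}$ is a quotient of a Noetherian ring, hence Noetherian, and $B$ is pro-Noetherian in the sense of the definition. This transfer is precisely the mechanism behind the paper's remark, following the definitions in \S\ref{subsec-ind-Noeth}, that pro-Noetherianness depends only on the isomorphism class among admissible prorings (resp.\ among epi promodules). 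Alternatively, you can sidestep quotient prorings altogether: since $Y$ is by definition a formal scheme, choose an affine open $\Spf C\subseteq Y$ with $y\in\Spf C=Y\cap D(f)$; then $C\cong A_{f}/I_{f}$ is an epimorphic image of the pro-Noetherian $A_{f}$ in $A_{f}$-promodules, $C$ is admissible and hence epi, and Lemma~\ref{lem-surjective} makes every representative $A_{d,f}\to C_{e}$ surjective, so each $C_{e}$ is Noetherian outright.
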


\begin{proof}
Obvious. 
\end{proof}

\begin{lem}
Let $A$ be a pro-Noetherian admissible proring, $X:= \Spf A$ and 
$\cM$ a locally Noetherian semicoherent  $\cO_{X}$-module.
Then $\cM(X)$ is a Noetherian $A$-module.
\end{lem}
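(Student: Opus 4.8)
The plan is to reduce the statement to the classical fact that on the Noetherian affine scheme $\Spec A_{d}$ a quasi-coherent sheaf which is locally of finite type has finitely generated global sections. First I would use that $X = \Spf A$ is affine together with Proposition \ref{prop-affine-semicoh} to write $\cM = N^{\triangle}$, where $N := \cM(X)$ is an honest $A$-module (it is a module, not merely a promodule, because $\cM$ is an $\cO_{X}$-module). Thus it suffices to prove that $N$ is a Noetherian $A$-module, i.e.\ (by the definition of Noetherianness over a proring) that $N$ is a Noetherian $A_{d}$-module for some---equivalently every---index $d \in D_{N}$ for which $A_{d}$ acts on $N$. Fix such a $d$; by the pro-Noetherian hypothesis $A_{d}$ is a Noetherian ring.

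Next I would produce a finite distinguished cover on which $N$ is visibly finite. Since $\cM$ is locally Noetherian, every point of $X$ has an affine neighborhood $\Spf B$ with $\cM|_{\Spf B} \cong M^{\triangle}$ for a Noetherian $B$-module $M$; shrinking to distinguished opens $D(f)$ and using that $X = \Spec A_{\red}$ is quasi-compact, I can cover $X$ by finitely many $D(f_{i})$ with $\cM|_{D(f_{i})}$ again of this form. By the affine equivalence and Lemma \ref{lem-affine-restriction}, $\cM(D(f_{i})) = N \otimes_{A} A_{f_{i}} =: N_{f_{i}}$, so each $N_{f_{i}}$ is a Noetherian $A_{f_{i}}$-module. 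The key bookkeeping point is that $N_{f_{i}}$, as an abelian group, is the ordinary localization of $N$ at a lift $f_{i}' \in A_{d}$ (independent of $d$, exactly as $\tilde M_{0}$ was independent of $d$), and that Noetherianness over the proring $A_{f_{i}}$ is by definition Noetherianness over the Noetherian ring $A_{d,f_{i}}$; hence each $N_{f_{i}}$ is a finitely generated $A_{d,f_{i}}$-module.

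Finally I would run the standard descent on $\Spec A_{d}$. Regard $\tilde N$ as the quasi-coherent sheaf on $\Spec A_{d}$ associated to the $A_{d}$-module $N$; its underlying space equals $\Spec A_{\red}$, so the $D(f_{i}')$ cover $\Spec A_{d}$ and on each one $\tilde N$ restricts to $\widetilde{N_{f_{i}}}$ with $N_{f_{i}}$ finitely generated. Thus $\tilde N$ is quasi-coherent of finite type on the quasi-compact scheme $\Spec A_{d}$: finitely many global sections $s_{1},\dots,s_{m} \in N$ generate it locally everywhere, the $A_{d}$-submodule $N' \subseteq N$ they span satisfies $N'_{f_{i}'} = N_{f_{i}'}$ for all $i$, and comparing associated sheaves gives $N' = N$. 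Hence $N$ is a finitely generated module over the Noetherian ring $A_{d}$, so it is Noetherian, and therefore $N = \cM(X)$ is a Noetherian $A$-module.

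I expect the only real obstacle to be the translation between the pro side and the ring side in the middle step---namely checking carefully that $N \otimes_{A} A_{f_{i}}$ collapses to the honest localization $N_{f_{i}'}$ independent of $d$, and that the promodule notion of ``Noetherian'' matches finite generation over $A_{d,f_{i}}$---after which the geometric argument over the genuine Noetherian scheme $\Spec A_{d}$ is entirely classical.
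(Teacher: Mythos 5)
Your proof is correct and follows essentially the same route as the paper, whose entire argument is the two-sentence compression ``Write $A=(A_{d})$. Then $\cM$ is a coherent sheaf on some $\Spec A_{d}$, so $\cM(X)$ is Noetherian.'' What you do---identify $\cM$ with the quasi-coherent sheaf $\tilde N$ on the Noetherian scheme $\Spec A_{d}$, use local Noetherianness and quasi-compactness to see it is of finite type, and conclude finite generation of the global sections---is precisely the unpacking of that compressed statement, including the pro-to-ring bookkeeping the paper leaves implicit.
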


\begin{proof}
Write $A=(A_{d})$. Then $\cM$ is a coherent sheaf on some $\Spec A_{d}$.
So $\cM(X)$ is Noetherian.
\end{proof}

\begin{prop}
Let $A$ be a pro-Noetherian admissible proring, $X:= \Spf A$ and 
$\cM$ a locally
pro-Noetherian semicoherent and Mittag-Leffler $\cO_{X}$-promodule.
Then $\cM(X)$ is a pro-Noetherian $A$-promodule up to isomorphisms.
\end{prop}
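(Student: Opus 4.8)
The plan is to reduce, using the Mittag-Leffler hypothesis, to an \emph{epi} representative of $M:=\cM(X)$, for which the pro-Noetherian property can be tested term by term, and then to verify Noetherianity of each term by a standard gluing over a finite affine cover on which $\cM$ is locally pro-Noetherian. First I would record that $\cM=M^{\triangle}$ with $M=\cM(X)$ by Proposition \ref{prop-affine-semicoh}, and that $M$ is Mittag-Leffler by the proposition on Mittag-Leffler sections over an affine formal scheme. Replacing $M$ by $M^{\sur}$, which is canonically isomorphic to $M$ by Proposition \ref{prop-ML-sur-isom}, I may assume $M=(M_{e})$ is epi; since the conclusion is only up to isomorphism, this costs nothing.

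Because $X=\Spf A$ is quasi-compact and $\cM$ is locally pro-Noetherian, I can choose a finite covering $X=\bigcup_{i} D(f_{i})$ by distinguished opens, each contained in an affine open $U=\Spf B$ with $\cM|_{U}\cong N^{\triangle}$ for some pro-Noetherian $B$-promodule $N$. By Lemma \ref{lem-affine-restriction} the sections of $\cM$ on $D(f_{i})$ are $M_{f_{i}}=M\otimes_{A}A_{f_{i}}$, and these same sections are also a localization of $N$; since localizing a Noetherian module over a Noetherian ring preserves Noetherianity, $M_{f_{i}}$ is isomorphic to a pro-Noetherian $A_{f_{i}}$-promodule. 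Moreover $M_{f_{i}}$ is again epi, being a localization of the epi system $M$.

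The key step is to show each term $M_{e,f_{i}}$ is a Noetherian $A_{f_{i}}$-module. Fix an isomorphism $M_{f_{i}}\cong N_{i}$ with $N_{i}=(N_{i,g})$ pro-Noetherian, given by mutually inverse $\phi,\psi$. For a fixed $e$, Lemma \ref{lem-pro-identity} supplies $e'\ge e$ for which the bonding morphism $M_{e',f_{i}}\to M_{e,f_{i}}$ represents the identity; factoring this identity through $\phi$ and $\psi$ yields representatives $M_{e',f_{i}}\to N_{i,g}\to M_{e,f_{i}}$ whose composite is that bonding morphism. As $M_{f_{i}}$ is epi the bonding morphism is surjective, so $M_{e,f_{i}}$ is the image of $M_{e',f_{i}}\to N_{i,g}\to M_{e,f_{i}}$, i.e. a quotient of a submodule of the Noetherian module $N_{i,g}$, hence Noetherian. (Alternatively, one invokes the observation above that pro-Noetherianity is invariant under isomorphism among epi promodules, applied to $M_{f_{i}}$ and $N_{i}^{\sur}$.)

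Finally I would glue. Fix $e$ and a $d$ with $A_{d}$ acting on $M_{e}$; topologically $\Spec A_{d}=X=\bigcup_{i}D(f_{i})$, and the associated quasi-coherent sheaf $\tilde M_{e}$ on the Noetherian scheme $\Spec A_{d}$ is finitely generated on each $D(f_{i})$ by the previous step. Hence $\tilde M_{e}$ is coherent and $M_{e}$ is a finitely generated, thus Noetherian, $A_{d}$-module, i.e. a Noetherian $A$-module; so $M=(M_{e})$ is pro-Noetherian. The main obstacle is precisely that pro-Noetherianity is not preserved under arbitrary isomorphisms of promodules, which is why the reduction to an epi system via the Mittag-Leffler hypothesis is indispensable: only for an epi system can the local isomorphism with a pro-Noetherian promodule be transferred to individual terms as in the key step.
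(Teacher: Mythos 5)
Your proof is correct and takes essentially the same route as the paper's: pass to an epi representative of $\cM(X)$ via the Mittag-Leffler hypothesis, restrict to a finite affine cover on which $\cM$ is locally pro-Noetherian, transfer pro-Noetherianity across the local isomorphisms using surjectivity of the bonding maps, and deduce that each $M_{e}$ is Noetherian by gluing (coherence of $\tilde M_{e}$ on the Noetherian scheme $\Spec A_{d}$). The paper compresses your key step into its standing remark that pro-Noetherianity passes between isomorphic \emph{epi} promodules, and your gluing step into its preceding lemma on locally Noetherian semicoherent $\cO_{X}$-modules; your write-up simply inlines both arguments (and is slightly more careful, since only one side of the isomorphism is known to be epi).
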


\begin{proof}
Let $M = (M_{e})$ be an epi $A$-promodule with $\cM(X) \cong M$
and $X=\bigcup U_{i}$, $U_{i}=\Spf A_{i}$,
 an affine covering such that for each $i$,
$\cM(U_{i})$ is a pro-Noetherian $A_{i}$-promodule up to isomorphisms.
Then for each $i$, being epi, 
$( \tilde M_{e}(U_{i}))$ is actually a pro-Noetherian $A_{i}$-promodule.
Hence every $\tilde M_{e}$ is locally Noetherian, and $M_{e}$ is Noetherian.
We conclude that $(M_{e})$ is pro-Noetherian, which completes the proof.
\end{proof}

\begin{cor}
Let $A$ be a pro-Noetherian admissible ring, $X:=\Spf A$
and  $\bD$ the smallest
abelian full subcategory of the category of locally pro-Noetherian
$\cO_{X}$-promodules which contains all Mittag-Leffler and locally
pro-Noetherian $\cO_{X}$-promodules.
Then for every $\cM \in \bD$, $\cM(X)$ is a pro-Noetherian $A$-promodule
up to isomorphisms. In particular, for every
 semicoherent proideal sheaf $\cI \subseteq \cO_{X}$,
 $\cI(X)$ is pro-Noetherian up to isomorphisms.
\end{cor}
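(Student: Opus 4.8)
The plan is to reduce everything to the exact equivalence between semicoherent $\cO_{X}$-promodules and $A$-promodules and then to argue that ``pro-Noetherian up to isomorphism'' carves out an abelian full subcategory on the module side. Since $X=\Spf A$ is affine, the global section functor $\cM \mapsto \cM(X)$ is the inverse of the equivalence $M \mapsto M^{\triangle}$ of the corollary to Proposition~\ref{prop-affine-semicoh}; being (one half of) an equivalence of abelian categories, it is exact, so it preserves and reflects kernels, cokernels, images and finite direct sums.

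First I would introduce the full subcategory $\cP$ of the category of $A$-promodules whose objects are the promodules that are pro-Noetherian up to isomorphism. The facts recorded just after the definition of pro-Noetherian promodules — that this category is abelian and that every subpromodule and every quotient promodule of a pro-Noetherian promodule is again pro-Noetherian up to isomorphism — say exactly that $\cP$ is closed, inside the category of $A$-promodules, under kernels, cokernels and finite direct sums; hence $\cP$ is an abelian full subcategory with exact inclusion. Transporting through the equivalence, the preimage $\cC$, i.e.\ the full subcategory of semicoherent $\cO_{X}$-promodules $\cM$ with $\cM(X)\in\cP$, is again an abelian full subcategory of the semicoherent $\cO_{X}$-promodules.

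Next I would set $\cC':=\cC\cap(\text{locally pro-Noetherian }\cO_{X}\text{-promodules})$. Since both the locally pro-Noetherian $\cO_{X}$-promodules and $\cC$ are abelian full subcategories of the semicoherent $\cO_{X}$-promodules with exact inclusions, their intersection is still closed under kernels, cokernels and finite direct sums, so $\cC'$ is an abelian full subcategory of the locally pro-Noetherian $\cO_{X}$-promodules. By the immediately preceding proposition, every Mittag-Leffler locally pro-Noetherian $\cO_{X}$-promodule has pro-Noetherian (up to isomorphism) global sections, hence lies in $\cC$ and therefore in $\cC'$. Thus $\cC'$ is an abelian full subcategory of the locally pro-Noetherian $\cO_{X}$-promodules containing all the Mittag-Leffler ones, so the minimality in the definition of $\bD$ yields $\bD\subseteq\cC'\subseteq\cC$. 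This is precisely the statement that $\cM(X)$ is pro-Noetherian up to isomorphism for every $\cM\in\bD$.

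For the final ``in particular'', note that $\cO_{X}(X)=A$ is an epi pro-Noetherian promodule: its bonding maps are surjective because $A$ is admissible, and each $A_{d}$ is a Noetherian ring, hence a Noetherian $A$-module. Given a semicoherent proideal sheaf $\cI\subseteq\cO_{X}$, exactness of the global section functor realizes $\cI(X)$ as a subpromodule of $A$, which is therefore pro-Noetherian up to isomorphism by the cited closure under subpromodules. The step requiring the most care is the categorical bookkeeping of the middle two paragraphs: one must verify that ``pro-Noetherian up to isomorphism'' is genuinely stable under passage to kernels and cokernels — which is exactly the content of the closure under subpromodules and quotients — and that the intersection of two abelian full subcategories with exact inclusions is again such a subcategory, so that the minimality defining $\bD$ applies.
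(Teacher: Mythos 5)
Your proof is correct and takes essentially the same route as the paper: the paper's one-sentence proof just observes that the property ``$\cM(X)$ is pro-Noetherian up to isomorphisms'' is stable under direct sums, subobjects and quotient objects, and then (implicitly) combines this with the preceding proposition on Mittag-Leffler promodules and the minimality of $\bD$ --- exactly the closure-plus-minimality argument you spell out via the affine equivalence. Your handling of the ``in particular'' clause (realizing $\cI(X)$ as a subobject of the pro-Noetherian promodule $A$ by exactness of global sections) is likewise the intended application of the subobject stability.
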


\begin{proof}
The property that $\cM(X)$ is pro-Noetherian up to isomorphisms
is stable under taking direct sums, subobjects and quotient objects.
This proves the corollary.
\end{proof}

\begin{prop}
Let $X$ be a locally ind-Noetherian formal scheme and 
$\cM$ a locally pro-Noetherian $\cO_{X}$-promodule.
Suppose that for every $x \in X$, the stalk $\cM_{x}$ is Mittag-Leffler.
Then $\cM$ is Mittag-Leffler.
\end{prop}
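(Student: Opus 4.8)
The plan is to reduce to the affine pro-Noetherian case and then to upgrade the stalkwise Mittag-Leffler property to a single affine open by a quasi-compactness argument. Since being Mittag-Leffler is by definition a local condition on $\cM$, I may assume $X=\Spf A$ with $A=(A_{d})$ pro-Noetherian and $\cM=M^{\triangle}$ for a pro-Noetherian $A$-promodule $M=(M_{e})_{e\in E}$; by Proposition \ref{prop-affine-semicoh} and the affine correspondence $M\mapsto M^{\triangle}$, I must then show that $M$ itself is Mittag-Leffler. Fix an index $e$ and set $N_{f}:=\Im(M_{f}\to M_{e})\subseteq M_{e}$ for $f\ge e$. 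As $f$ increases these submodules decrease and form a downward directed family, and the Mittag-Leffler condition at $e$ is exactly the assertion that $\bigcap_{f\ge e}N_{f}$ is attained, i.e.\ equals $N_{f_{0}}$ for some $f_{0}\ge e$.

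Now $M_{e}$ is a Noetherian module over some $A_{d}=:R$, and $R$ is a Noetherian ring with $\Spec R=\Spec A_{\red}$ quasi-compact. Because localization is exact, for a prime $\fp$ the localized family is $((N_{f})_{\fp})_{f}$, and the hypothesis that the stalk $\cM_{x}=(M_{e,\fp})_{e}$ is Mittag-Leffler says precisely that this localized family stabilizes: there is a stage $g(\fp)\ge e$ with $(N_{f})_{\fp}=(N_{g(\fp)})_{\fp}$ for all $f\ge g(\fp)$. For each $f'\ge e$ I introduce the locus $V_{f'}:=\{\,\fp\in\Spec R\mid (N_{f})_{\fp}=(N_{f'})_{\fp}\text{ for all }f\ge f'\,\}$. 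These loci increase with $f'$, and the stalk hypothesis says that they cover $\Spec R$.

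The crux is to show that each $V_{f'}$ is open. Granting this, the $V_{f'}$ form a directed open cover of the quasi-compact space $\Spec R$, so a single $V_{f_{0}}$ already equals $\Spec R$. Then $N_{f}=N_{f_{0}}$ for every $f\ge f_{0}$, because the finite $R$-module $N_{f_{0}}/N_{f}$ has vanishing localization at every prime and is therefore zero; consequently $\bigcap_{f\ge e}N_{f}=N_{f_{0}}$ and the Mittag-Leffler condition holds at $e$. Since $e$ was arbitrary, $M$ is Mittag-Leffler, and unwinding the reduction shows $\cM$ is Mittag-Leffler.

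The one substantial point, and the step I expect to be the main obstacle, is the openness of $V_{f'}$. Its complement is the increasing union $\bigcup_{f\ge f'}\Supp(N_{f'}/N_{f})$ of closed sets, which need not be closed for an arbitrary downward directed family; so the argument must use the Noetherian hypothesis on $M_{e}$ in an essential way, for instance by controlling the finitely many associated primes of the quotients $N_{f'}/N_{f}$ and arguing that non-stabilization at $\fp$ is detected along finitely many of them, so that stabilization by a given stage is an open (or at least constructible) condition amenable to quasi-compactness. This is exactly where local ind-Noetherianness of $X$ and pro-Noetherianness of $\cM$ must enter, and where the gap between the pro side and the naive intersection of images of $M_{e}$ has to be handled with care.
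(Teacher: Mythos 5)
Your reduction to the affine case is correct, and your translation of the stalk hypothesis is the right one: Mittag--Leffler-ness of $\cM_{x}=((N_{f})_{\fp})_{f}$ says exactly that the localized image families stabilize, nothing more. You also flagged the real issue, the openness of $V_{f'}$; but that step is not merely the hard part --- it is false, and it cannot be repaired, because the proposition itself fails. Take $R=\QQ[x]$, regarded as a discrete admissible proring, so that $X=\Spf R=\AA^{1}_{\QQ}$ is a locally ind-Noetherian formal scheme, and let $M=(N_{n})_{n\in\NN}$ with $N_{n}:=\bigl(\prod_{i=1}^{n}(x-i)\bigr)\subseteq R$ and the inclusions as bonding maps. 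Each $N_{n}$ is a finite $R$-module, so $\cM:=M^{\triangle}$ is locally pro-Noetherian. At $\fp=(x-j)$, $j\in\NN$, one has $(N_{n})_{\fp}=(x-j)R_{\fp}$ for all $n\ge j$ (the complementary factors are units in $R_{\fp}$), and at every other prime, including the generic point, $(N_{n})_{\fp}=R_{\fp}$ for all $n$; so every localized family is eventually constant and every stalk of $\cM$ is Mittag--Leffler. Yet for any $0\ne f\in R$, each $(N_{n})_{f}$ is nonzero while $\bigcap_{n}(N_{n})_{f}=0$, since a nonzero element of $R_{f}$ cannot be divisible by $x-i$ for infinitely many $i$; hence $\cM(D(f))=M_{f}$ is not Mittag--Leffler, and as every nonempty affine open of $\AA^{1}_{\QQ}$ is a $D(f)$, the sheaf $\cM$ is not Mittag--Leffler. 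Concretely, your locus $V_{n'}$ here equals $X\setminus\{(x-j)\mid j>n'\}$: these sets do increase and cover $X$, but they are not open --- not even constructible --- so neither quasi-compactness nor your fallback via associated primes can rescue the strategy.

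You should also know that the paper's own proof (a Noetherian induction on supports, hence a close cousin of your covering argument) founders on precisely the point you called ``the gap between the pro side and the naive intersection of images.'' The paper puts $\cM_{e}^{\infty}:=\bigcap_{e'\ge e}\Im(\cM_{e'}\to\cM_{e})$ and claims that Mittag--Leffler-ness of the stalk at $x_{1}$ yields $e_{1}$ with $(\cM_{e}^{e_{1}}/\cM_{e}^{\infty})_{x_{1}}=0$. This conflates $\bigcap_{e'}\bigl(\Im(\cM_{e'}\to\cM_{e})\bigr)_{x_{1}}$, the intersection of the stalks (which is what the stalk hypothesis controls), with $(\cM_{e}^{\infty})_{x_{1}}$, the stalk of the intersection (which is what the support condition measures); stalks and localization do not commute with infinite intersections. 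In the example above, at $x_{1}=(x-j)$ the former is $(x-j)R_{\fp}\ne 0$ while the latter is $0$, so no such $e_{1}$ exists and the induction never gets started. In short: your proposal and the paper's proof fail for the same intrinsic reason, and the statement as written should be regarded as incorrect; some additional hypothesis forcing the stalkwise stabilization to be locally uniform (for instance, local representability of $\cM$ by an epi promodule, which is essentially the conclusion) is needed to make it true.
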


\begin{proof}
Without loss of generality, we may suppose that $X=\Spf A$
for an admissible proring $A$ and that $\cM\cong M^{\triangle}$
for a pro-Noetherian $A$-promodule $M=(M_{e})$.
We have to show that $M$ is Mittag-Leffler.
Put $\cM_{e} := \tilde M_{e}$, $\cM_{e}^{e'}:= \Im (\cM_{e'} \to \cM_{e})$, 
$e' \ge e$,
 and $\cM_{e}^{\infty} := \bigcap_{e' \ge e} \cM_{e}^{e'}$.
We now fix an index $e$.
For a point $x_{1} \in X$, since $\cM_{x_{1}}$ is Mittag-Leffler,
there exists $e_{1} \ge e$ such that
$(\cM^{e_{1}}_{e}/\cM^{\infty}_{e})_{x_{1}}=0$, that is, $x_{1} \notin \Supp (\cM^{e_{1}}_{e}/\cM^{\infty}_{e})$.
Here $\Supp (-)$ denotes the support of a sheaf.
If $\Supp (\cM^{e_{1}}_{e}/\cM^{\infty}_{e}) \ne \emptyset$, then 
we choose $x_{2} \in \Supp (\cM^{e_{1}}_{e}/\cM^{\infty}_{e}) $ 
and take $e_{2} \ge e_{1}$ such that $x_{2} \notin \Supp (\cM^{e_{2}}_{e}/\cM^{\infty}_{e}) $.
Then 
\[
\Supp (\cM^{e_{2}}_{e}/\cM^{\infty}_{e}) \subsetneq \Supp (\cM^{e_{1}}_{e}/\cM^{\infty}_{e}) .
\]
We can continue this procedure until we get empty $\Supp (\cM^{e_{i}}_{e}/\cM^{\infty}_{e}) $.
Since the underlying topological space of $X$ is Noetherian and
the $\Supp (\cM^{e_{i}}_{e}/\cM^{\infty}_{e}) $ are closed subsets,
for some $e _{i}$, $\Supp (\cM^{e_{i}}_{e}/\cM^{\infty}_{e}) =\emptyset$.
It proves the proposition.
\end{proof} 

\begin{cor}
Let $X$ be a locally ind-Noetherian formal scheme.
Then a sequence of locally pro-Noetherian $\cO_{X}$-promodules
\[
0 \to \cL \to \cM \to \cN \to 0
\]
is exact if and only if for every $x \in X$, the induced sequence of stalks
\[
0 \to \cL_{x} \to \cM_{x} \to \cN_{x} \to 0
\]
is exact.
\end{cor}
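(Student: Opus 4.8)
The plan is to obtain both implications from results already in hand, so that the corollary becomes a formal consequence of the preceding two statements together with the stalk-exactness corollary; no new computation is needed. For the ``only if'' direction I would observe that a locally pro-Noetherian $\cO_X$-promodule is in particular semicoherent, so an exact sequence of such objects is an exact sequence of semicoherent $\cO_X$-promodules. The exactness of each stalk sequence then follows directly from the earlier corollary on stalks and exactness, which records that $\Ker$ and $\Im$ are preserved under passage to stalks.

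For the ``if'' direction I would apply Proposition \ref{prop-exact-stalk}, taking the abelian subcategory $\bD$ there to be the category of locally pro-Noetherian $\cO_X$-promodules; this has already been noted to be abelian. The sole hypothesis of that proposition requiring attention is that an object of $\bD$ with Mittag-Leffler stalks be Mittag-Leffler, and this is precisely the content of the immediately preceding proposition, which uses that $X$ is locally ind-Noetherian. With that hypothesis verified, Proposition \ref{prop-exact-stalk} gives that a sequence in $\bD$ whose stalk sequences are exact is itself exact, which is exactly the desired conclusion.

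The one point I would check with some care is the compatibility ensuring that $\bD$ functions as an honest abelian subcategory for the purposes of Proposition \ref{prop-exact-stalk}: namely that $\cL$, $\cM$, $\cN$ all lie in $\bD$, and that the kernel and cokernel used inside the proof of that proposition, when formed in $\bD$, agree with those formed in the ambient category of semicoherent promodules and again land in $\bD$. This is guaranteed because locally pro-Noetherian promodules are closed under passage to subobjects and quotients up to isomorphism, so there is no genuine obstacle; the whole argument reduces to matching hypotheses and citing the preceding proposition together with Proposition \ref{prop-exact-stalk}.
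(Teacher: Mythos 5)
Your proof is correct and takes essentially the same approach as the paper: the paper's own proof consists of the single line ``We can prove it like Proposition \ref{prop-exact-stalk},'' and your argument makes this precise by taking $\bD$ to be the category of locally pro-Noetherian $\cO_{X}$-promodules, verifying its Mittag-Leffler hypothesis via the immediately preceding proposition, and handling the ``only if'' direction with the earlier corollary on stalks and exactness. The compatibility point you flag (kernels and cokernels formed in $\bD$ agreeing with those in the ambient category of semicoherent promodules) is indeed the only thing to check, and it holds for exactly the reason you give, since locally pro-Noetherian promodules are closed under subobjects and quotients up to isomorphism.
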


\begin{proof}
We can prove it like Proposition \ref{prop-exact-stalk}.
\end{proof}

\section{Formal schemes as functors and formal algebraic spaces}\label{sec-functors}

In this section, we see that a formal scheme can be considered
as a sheaf on the category of schemes.
Along this line, we also define a formal algebraic space. 

\subsection{Formal schemes as functors}

Let $\bF$ denotes the category of contravariant functors 
\[
\text{(Schemes)} \to \text{(Sets)},
\]
and $\bF_{\Zar}, \bF_{\et} \subseteq \bF$
the full subcategories of Zariski and \'etale sheaves respectively.
For a formal scheme $X$, we define $F_{X} \in \bF$ by
\[
F_{X}(Y) := \Hom_{\text{form.\ sch.}}(Y,X), 
\]
which is clearly a sheaf for both the Zariski and \'etale topologies.
As is well-known, the functor 
\begin{equation*}
F_{\bullet} : (\text{Schemes}) \to \bF, \, X \mapsto F_{X} 
\end{equation*}
is fully faithful.
So, by abuse of terminology, we say that a functor $F \in \bF$ 
is a scheme if $F \cong F_{X}$ 
for some scheme $X$.

\begin{defn}
A morphism $F \to G$ in $\bF$ is said to be {\em schematic}
if for every scheme $X$ and for every morphism $X \to G$,
the fiber product $F \times _{G} X$, which exists in $\bF$, is a scheme.
\end{defn}

We can generalize various properties of morphisms of schemes
to schematic morphisms: 

\begin{defn}\label{defn-property}
Let $\bP$ be a property of morphisms of schemes which is stable
under base changes.
We say that a schematic morphism $F \to G$ of $\bF$
has a property $\mathbf{P}$ if  for every scheme $X$ and for every morphism $X \to G$,
the projection $F \times _{G} X \to X$ has the property $\bP$.
\end{defn}

\begin{thm}
The functor 
\begin{equation*}
F_{\bullet} : (\textup{Formal schemes}) \to \bF, \, X \mapsto F_{X} 
\end{equation*}
 is fully faithful.
\end{thm}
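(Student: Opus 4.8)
The plan is to reduce to the case where the source is affine and then play off two complementary facts against each other: that a morphism \emph{from a scheme} into $\Spf A=\prolim_d\Spec A_d$ always factors through one of the level schemes $X_d:=\Spec A_d$, and that $\Spf A=\varinjlim_d X_d$ is a genuine colimit in $\FSch$. So first I would fix an affine formal scheme $X=\Spf A$ with $A=(A_d)_{d\in D}$, write $\iota_d\colon X_d\hookrightarrow X$ for the canonical (bijective closed) immersions, and let $X'$ be an arbitrary formal scheme; the general case of $X$ will be recovered at the end by gluing over an affine open cover, using that an open immersion $U\hookrightarrow X$ induces a monomorphism $F_U\hookrightarrow F_X$ with $F_{f|_U}=F_f\circ(F_U\hookrightarrow F_X)$.

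The first pillar (A) is that the schemes $X_d$ detect every scheme-valued point of $X$. For an affine scheme $Y=\Spec R$ one has, by the equivalence between affine formal schemes and admissible prorings and then by Proposition~\ref{prop-procat-properties}(2) applied to the ring $R\in\bC$,
\[
F_X(Y)=\Hom_{\FSch}(\Spec R,\Spf A)=\Hom_{\mathrm{prorings}}(A,R)=\varinjlim_d\Hom(A_d,R)=\varinjlim_d F_{X_d}(Y),
\]
the transition maps being precomposition with the bonding maps, dual to the $\iota_d$. Thus every morphism from an affine scheme into $X$ factors through some $X_d$. The second pillar (B) is the colimit property
\[
\Hom_{\FSch}(X,X')=\varprojlim_d\Hom_{\FSch}(X_d,X'),
\]
naturally in $X'$. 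When $X'=\Spf A'$ is affine this is Proposition~\ref{prop-spf} together with the observation that $A=\prolim_d A_d$ is literally the projective limit of the $A_d$ in $\pro(\mathrm{Rings})$, so maps out of it are compatible systems of maps out of the $A_d$; for general $X'$ one glues these over an affine open cover of $X'$ (all the $X_d$ share the underlying space of $X$, so a compatible system $(g_d)$ has a well-defined underlying continuous map, and the sheaf datum is built locally where the image lands in an affine of $X'$).

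Given these, faithfulness for affine source is immediate: if $F_f=F_g$ then applying both to $\iota_d\in F_X(X_d)$ gives $f\circ\iota_d=g\circ\iota_d$ for all $d$, whence $f=g$ by the uniqueness half of (B). For fullness, given $\eta\colon F_X\to F_{X'}$ I would set $g_d:=\eta_{X_d}(\iota_d)\in\Hom_{\FSch}(X_d,X')$; naturality of $\eta$ with respect to each closed immersion $X_d\hookrightarrow X_{d'}$ shows $g_{d'}|_{X_d}=g_d$, so by (B) there is a unique $f\colon X\to X'$ with $f\circ\iota_d=g_d$. To see $F_f=\eta$, take any affine $Y$ and any $u\in F_X(Y)$; by (A) write $u=\iota_d\circ v$ with $v\colon Y\to X_d$ a morphism of schemes, and apply naturality of $\eta$ to $v$ to the element $\iota_d\in F_X(X_d)$: this yields $\eta_Y(u)=g_d\circ v=f\circ\iota_d\circ v=f\circ u$. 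Since $F_X$ and $F_{X'}$ are Zariski sheaves, a natural transformation between them is determined by its values on affine schemes, so $\eta=F_f$ on all test schemes.

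The main obstacle is pillar (B): pillar (A) is essentially a formal consequence of Proposition~\ref{prop-procat-properties}(2) (this is where the pro-structure does its work, turning "maps from a ring into a level limit" into a filtered colimit over the levels), whereas establishing that $X=\varinjlim X_d$ is an honest colimit in $\FSch$ requires genuine gluing bookkeeping over an affine cover of the \emph{target} $X'$, and checking the two directions do not conflict. A secondary subtlety worth care is the final passage from "agreement of $\eta$ and $F_f$ on affine $Y$" to "agreement on all $Y$," which relies on the sheaf property of $F_{X'}$ and must not be conflated with the factorization in (A), valid only for quasi-compact $Y$.
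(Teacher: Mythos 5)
Your proposal is correct and takes essentially the same route as the paper: reduce to the case where $X$ is the colimit of an admissible system $(X_d)$ of schemes (the affine case), use the canonical points $\iota_d\in F_X(X_d)$ together with the fact that $X=\varinjlim X_d$ in the category of formal schemes (your pillar (B)), and recover a general $X$ by Zariski gluing over an affine open cover. Your pillar (A) and the final check that $F_f=\eta$ via naturality and the sheaf property are exactly what the paper's terser argument leaves implicit (the factorization of scheme-valued points through the $X_d$ appears only in the remark following the theorem), so your write-up is, if anything, the more complete one.
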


\begin{proof}
Let $X$ and $Y$ be formal schemes.
First consider the case where $X = \varinjlim X_{d}$
for some admissible system $(X_{d})$ of schemes.
Given a morphism $F_{X} \to F_{Y}$, 
we have canonical elements $(X_{d} \to X) \in F_{X}(X_{d})$
and their images $(X_{d} \to Y) \in F_{Y}(X_{d})$,
which uniquely determine a morphism $X \to Y$.
It proves that the natural map
\[
\Hom (X,Y) \to \Hom (F_{X},F_{Y})
\]
is bijective.

Next consider the general case. Take an affine Zariski covering $X=\bigcup U_{i}$
and put $V:= \coprod U_{i}$ and $W := V \times_{X}V$.
Then there exist admissible systems $(V_{d})$ and $(W_{d})$
of schemes such that $V= \varinjlim V_{d}$ and $W = \varinjlim W_{d}$.
A morphism $F_{X} \to F_{Y}$ induces $F_{V} \to F_{Y}$ and 
$F_{W} \to F_{Y}$, and hence $V \to Y$ and $W \to Y$.
The last two morphisms is actually a gluing data of morphisms with respect to the Zariski topology,
so we  obtain a morphism $X \to Y$.
\end{proof}

\begin{rem}
Note that if $X$ is a formal scheme and $(X_{d})$ is an admissible 
system of schemes with $X=\varinjlim X_{d}$, 
then  $F_{X}$ is not isomorphic to 
the inductive limit $G$ of $F_{X_{d}}$'s in $\bF$, but isomorphic to its sheafification,
which is the inductive limit of $F_{X_{d}}$ in $\bF_{\Zar}$ and $\bF_{\et}$.
However $G$ is a sheaf on qsqc formal schemes. 
\end{rem}

\subsection{Formal algebraic spaces}

Again by abuse of terminology, we say that 
$F \in \bF$ is a formal scheme if it is isomorphic to $F_{X}$
with $X$ a formal scheme.
For a schematic morphism of formal schemes,
the immersions of Definitions \ref{defn-sub}
and \ref{defn-property} coincide thanks to the following:

\begin{lem}\label{lem-schematic-immersion}
Let $\phi :Y \to X$ be a schematic morphism of formal schemes.
Then $\phi$ is an immersion in the sense of 
Definitions \ref{defn-sub} if and only if for every morphism $W \to X$
with $W$ a scheme, so is the natural morphism $W \times_{X}Y \to Y$. 
\end{lem}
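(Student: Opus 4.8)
The plan is to prove the two implications separately, using the characterization of immersions established just above (the proposition preceding Proposition~\ref{prop-immersion-basechange}): a morphism of formal schemes is an immersion in the sense of Definition~\ref{defn-sub} if and only if it is a homeomorphism onto a locally closed subset and, for every $y$, the stalk map $\cO_{X,\phi(y)}\to\cO_{Y,y}$ is an epimorphism of prorings. Note that, by Definition~\ref{defn-property}, the condition in the statement is that the base-change morphism $W\times_{X}Y\to W$ is an immersion of schemes for every scheme $W\to X$; here $W\times_{X}Y$ is a scheme because $\phi$ is schematic.

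First I would treat the ``only if'' direction. Assume $\phi$ is an immersion. For a scheme $W\to X$, Proposition~\ref{prop-immersion-basechange} gives that $W\times_{X}Y\to W$ is an immersion of formal schemes, and schematicity makes $W\times_{X}Y$ a scheme. It then remains to observe that an immersion of formal schemes whose source and target are schemes is an immersion of schemes: by the above characterization it is a homeomorphism onto a locally closed subset whose stalk maps are epimorphisms of promodules between honest modules, hence surjective (the inclusion $\bC\hookrightarrow\pro\bC$ is faithful and exact, Proposition~\ref{prop-incl-exact}); this is precisely a scheme immersion.

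For the ``if'' direction I verify the two conditions of the characterization for $\phi$. For the topological condition, I base change along the canonical closed immersion $X_{\red}\to X$, whose underlying map is a homeomorphism; being radicial, its base change $X_{\red}\times_{X}Y\to Y$ is again a bijective closed immersion, so $|X_{\red}\times_{X}Y|=|Y|$. By hypothesis $X_{\red}\times_{X}Y\to X_{\red}$ is a scheme immersion, hence a homeomorphism onto a locally closed subset; composing with $|X_{\red}|=|X|$ identifies its underlying map with $\phi$, so $\phi$ is a homeomorphism onto a locally closed subset. For the stalk condition, fix $y\in Y$ and $x=\phi(y)$, and (using the topological condition just proved) choose affine opens $\Spf B\subseteq Y$ and $\Spf A\subseteq X$ with $\phi(\Spf B)\subseteq\Spf A$, so that $\phi$ corresponds to a morphism $A=(A_{d})\to B$ of prorings (Proposition~\ref{prop-spf}). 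Base changing along each $X_{d}=\Spec A_{d}\to X$ gives the scheme $X_{d}\times_{X}Y$, whose restriction over $\Spf B$ is $\Spf(A_{d}\otimes_{A}B)$ and which maps to $X_{d}$ by a scheme immersion; hence each stalk map $A_{d,\fp}=\cO_{X_{d},x}\to C_{d}:=(A_{d}\otimes_{A}B)_{\fq}$ is surjective. Since the factors $A_{d}\otimes_{A_{d}}B_{e}=B_{e}$ form a cofinal subsystem of $A_{d}\otimes_{A}B$, the procategory limit $\prolim(A_{d}\otimes_{A}B)$ over $d$ is isomorphic to $B$, and likewise $\prolim C_{d}\cong B_{\fq}=\cO_{Y,y}$. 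Thus $\cO_{X,x}=A_{\fp}=(A_{d,\fp})\to(C_{d})\cong\cO_{Y,y}$ is a level morphism with surjective components, hence an epimorphism, and the characterization yields that $\phi$ is an immersion.

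I expect the stalk step of the ``if'' direction to be the main obstacle: one must correctly identify $B$ with $\prolim(A_{d}\otimes_{A}B)$ and match the level morphism $(A_{d,\fp})\to(C_{d})$ with the stalk map $\cO_{X,x}\to\cO_{Y,y}$, so that the scheme-theoretic surjections $A_{d,\fp}\to C_{d}$ assemble into an epimorphism of prorings. The radicial base-change argument giving $|X_{\red}\times_{X}Y|=|Y|$ is the other point requiring care.
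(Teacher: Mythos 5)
Your proof is correct and takes essentially the same route as the paper's: the ``only if'' via Proposition \ref{prop-immersion-basechange} (plus the bridging remark that a formal-scheme immersion between schemes is a scheme immersion, which the paper elides), and the ``if'' by checking the topological condition with $W=X_{\red}$ and the stalk condition by base change along the member schemes $\Spec A_{d}\to X$, the paper phrasing the same conclusion as $\prolim f_{d}$ for $f_{d}:A_{d}\to A_{d}\otimes_{\cO_{X,\phi(y)}}\cO_{Y,y}$. One caution on wording: the cofinality you invoke holds in the total (triple-indexed) system computing $\prolim_{d}(A_{d}\otimes_{A}B)$, where the diagonal $d'=d$ is indeed cofinal, but not inside the fixed-$d$ pro-object $A_{d}\otimes_{A}B$ itself --- that object is $X_{d}\times_{X}Y$, not $Y$, so the sentence should be rephrased to make clear the cofinality is taken after passing to the limit over $d$.
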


\begin{proof}
The ``only if'' follows from Proposition \ref{prop-immersion-basechange}.
We now prove  the ``if''. Considering the case $W =X_{\red}$,
we easily see that $\phi$ is a homeomorphism onto
a locally closed subset. 
For $y \in Y$, if we write $\cO_{X,\phi(y)}=(A_{d})$ where the $A_{d}$
are rings, then by assumption, $B_{d}:=A_{d} \otimes_{\cO_{X,\phi(y)}}\cO_{Y,y}$ is
a ring up to isomorphisms, and the natural morphism $f_{d}:A_{d} \to B_{d}$
is surjective. 
Then the natural morphism $\cO_{X,\phi(y)}\to \cO_{Y,y}$ is equal to 
$\prolim f_{d}$ and hence an epimorphism, which proves the lemma.
\end{proof}

\begin{lem}\label{lem-algsp}
\begin{enumerate}
\item
For a formal scheme $X$, the diagonal morphism $X \to X \times X$
is schematic and an immersion.
\item 
Let $Y \to X$ be a morphism of $\bF$ which is schematic and an immersion.
If $X$ is a formal scheme, then so is $Y$.
\item
Let $F \in \bF$ such that the diagonal morphism $F \to F \times F$
is schematic and an immersion,
and let $U \to F$ and $V \to F$ be 
morphisms of $\bF$ with $U$ and $V$ formal schemes.
Then $U \times _{F} V$ is a formal scheme.
\end{enumerate}
\end{lem}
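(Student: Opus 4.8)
The plan is to establish the three assertions in order, with (2) as the technical core from which (3) follows formally, while (1) provides the basic example.

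For (1), I would reduce to an affine situation. Since being a scheme and being an immersion are both local on the target, it suffices to take an affine---hence quasi-compact---test scheme $T$ together with a pair $(f,g) : T \to X \times X$, and to compute the equalizer $X \times_{X \times X} T$. Near each point where $f$ and $g$ agree they map into a common affine open $U = \Spf A \subseteq X$, so after restricting $T$ I may assume $X = \Spf A = \varinjlim X_d$ with $X_d = \Spec A_d$. Because $T$ is quasi-compact and $\Hom(T,X) = \varinjlim_d \Hom(T,X_d)$, both $f$ and $g$ factor through one $X_d$. The bonding map $X_d \to X$ is a bijective closed immersion, hence a monomorphism of formal schemes, so $f$ and $g$ agree (after any base change) if and only if their factorizations $\bar f, \bar g : T \to X_d$ do; this identifies $X \times_{X \times X} T$ with $T \times_{X_d \times X_d} X_d$, which is a subscheme of $T$ since $X_d$ is a scheme with immersion diagonal. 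Thus the diagonal of $X$ is schematic and an immersion.

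For (2), the assertion is local on $X$, so using that formal schemes glue I again assume $X = \Spf A = \varinjlim X_d$. Put $Y_d := Y \times_X X_d$; being schematic makes each $Y_d$ a scheme, and the immersion property makes each $Y_d \to X_d$ an immersion. The crucial observation is that $(Y_d)$ is an \emph{admissible} system: for $d \le d'$ the transition map $Y_d \to Y_{d'}$ is the base change of the bijective closed immersion $X_d \to X_{d'}$ along $Y_{d'} \to X_{d'}$, hence again a bijective closed immersion. Let $Y'$ be the formal scheme glued from $(Y_d)$. The maps $Y_d \to Y$ induce $\varinjlim F_{Y_d} \to F_Y$; since $F_{Y'}$ is the sheafification of $\varinjlim F_{Y_d}$ and $F_Y$ is already a sheaf, this yields a comparison $F_{Y'} \to F_Y$, which I would check is an isomorphism on affine test schemes $T$. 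For such $T$ any map $T \to Y$ composes to $T \to X$, which factors through some $X_d$ by quasi-compactness and then lifts uniquely through $Y_d = Y \times_X X_d$; this gives $\Hom(T,Y) = \varinjlim_d \Hom(T,Y_d)$, i.e.\ $F_Y \cong F_{Y'}$ on affines, hence everywhere. Therefore $Y$ is a formal scheme.

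For (3), I would use the identity $U \times_F V \cong (U \times V) \times_{F \times F} F$, the right-hand factor being the diagonal of $F$; on points, a pair of maps to $U$ and $V$ with the same image in $F$ is exactly a map to $U \times V$ together with its (unique) common image in $F$. Now $U \times V$ is a formal scheme, and $U \times V \to F \times F$ pulls back the diagonal $F \to F \times F$, which is schematic and an immersion by hypothesis. Both being schematic and the immersion property (Definition \ref{defn-property}) are stable under base change in $\bF$---the latter because immersions of schemes are (cf.\ Proposition \ref{prop-immersion-basechange})---so the projection $U \times_F V \to U \times V$ is a schematic immersion, and (2) applied over the formal scheme $U \times V$ shows $U \times_F V$ is a formal scheme. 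The main obstacle is the functorial comparison in (2): one must match the fiber products formed abstractly in $\bF$ with geometric fiber products of formal schemes, and handle the passage from the presheaf colimit $\varinjlim F_{Y_d}$ to its sheafification $F_{Y'}$, proving the comparison map is bijective only after restricting to quasi-compact schemes---where the colimit formula $\Hom(T,X) = \varinjlim_d \Hom(T,X_d)$ and the fact that $X_d \to X$ is a monomorphism are exactly what legitimize the identifications used throughout (1) and (2).
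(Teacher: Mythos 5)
Parts (1) and (3) of your proposal are correct and essentially reproduce the paper's own arguments: in (1) you factor a quasi-compact test scheme through some $X_{d}$ and use that the bonding closed immersions $X_{d}\to X_{d'}$ are monomorphisms (the paper phrases the same fact by saying the squares comparing the diagonals of $X_{d}$ and $X_{d'}$ are cartesian, so the system $(X_{d}\times_{X_{d}\times X_{d}}V)$ is constant); in (3) you realize $U\times_{F}V$ as the base change of the diagonal $F\to F\times F$ along $U\times V\to F\times F$ and invoke (2), exactly as the paper does. The genuine gap is in (2), at the sentence ``Let $Y'$ be the formal scheme glued from $(Y_{d})$.'' At that point you have established only that $(Y_{d})$ is an admissible system of schemes, and that is not known to suffice: the paper says so explicitly, in the parenthetical remark inside this very proof (``I do not know if for any admissible system $(X_{d})$ of schemes, its limit in $\bF_{\Zar}$ is a formal scheme. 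So I have to add the condition of being an immersion.''). Worse, after recording that each $Y_{d}\to X_{d}$ is an immersion, your argument never uses that fact again: admissibility of $(Y_{d})$ needs only that $Y\to X$ is schematic (base change of a bijective closed immersion), and your comparison $\Hom(T,Y)=\varinjlim_{d}\Hom(T,Y_{d})$ for quasi-compact $T$ also uses only the fiber-product property. So, as written, your proof would show that the source of \emph{any} schematic morphism to a formal scheme is a formal scheme, which is precisely the statement the author declares he cannot prove; the central step is asserted, not proved. (Your closing paragraph confirms the mislocation of the crux: the sheafification/comparison issue you single out is the easy part, settled by the paper's remark that $\varinjlim F_{Y_{d}}$ is already a sheaf on qsqc schemes.)

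The immersion hypothesis must enter exactly where $Y'$ is constructed, and this is what the paper compresses into ``$Y$ is the inductive limit of $(Y_{d})$ \dots which is a formal subscheme of $X$ and hence a formal scheme.'' Since each $Y_{d}$ is a subscheme of $X_{d}$ and $Y_{d}=Y_{d'}\times_{X_{d'}}X_{d}$, all the $Y_{d}$ share one underlying topological space, locally closed in $X$; pass to an open formal subscheme $W\subseteq X$ in which it is closed. On an affine open $\Spf A\subseteq W$ with $A=(A_{d})$, each $Y_{d}$ is cut out of $\Spec A_{d}$ by an ideal $I_{d}\subseteq A_{d}$, and $(A_{d}/I_{d})$ is again an admissible proring; hence the kernel of the level epimorphism $\cO_{W}\to \prolim\,\cO_{Y_{d}}$ is a semicoherent proideal sheaf whose quotient is supported on the common underlying space and is locally $\Spf (A_{d}/I_{d})$, i.e.\ it defines a closed formal subscheme of $W$ and therefore a formal subscheme of $X$. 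Taking this formal subscheme as $Y'$, your functorial computation then correctly identifies $F_{Y'}$ with $F_{Y}$ (on quasi-compact test schemes, hence as sheaves). Without this construction your proof of (2) -- and hence also of (3), which rests on it -- is incomplete at its key point.
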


\begin{proof}
(1). We may suppose that $X$ is affine and 
there exists an admissible system of affine schemes $(X_{d})$ with 
$X= \varinjlim X_{d}$. Let $V$ be a quasicompact scheme and $V \to X \times X$
an arbitrary morphism. Then  
\[
 X \times_{X \times X} V  \cong \varinjlim X_{d} \times_{X_{d} \times X_{d}} V,
\]
where $d$ runs over those indices such that
$V \to X \times X$ factors through $X_{d} \times X_{d}$.
Since for $d' \ge d$, the natural diagram
\[
\xymatrix{
 X_{d} \ar[r] \ar[d] & X_{d} \times X_{d} \ar[d] \\
 X_{d'} \ar[r] & X_{d'} \times X_{d'} 
}
\]
is cartesian, the inductive system $(X_{d} \times_{X_{d} \times X_{d}} V)$
is constant. It proves the proposition.

(2). First suppose that there exists an admissible system of schemes
$(X_{d})$ with $X=\varinjlim X_{d}$. Then for each $d$, $Y_{d}:=Y \times_{X} X_{d}$ is
a subscheme of $X_{d}$, and $(Y_{d})$ is an admissible system of schemes.  
Then $Y$ is the inductive limit of $(Y_{d})$ say in $\bF_{\Zar}$,
which is a formal subscheme of $X$ and hence a formal scheme.
(I do not know if for any admissible system $(X_{d})$ of schemes,
its limit in $\bF_{\Zar}$ is a formal scheme. 
So I have to add the condition of being an immersion.
The problem is that I do not know
if for a bonding morphism $X_{d} \to X_{d'}$, the image of an affine open is affine,
and if the limit of $(X_{d})$ as a proringed space is covered by affine formal schemes.)

In the general case, we take a Zariski covering $U \to X$ such that
$U=\varinjlim U_{d}$ for some admissible system $(U_{d})$. 
If we put $V := U \times_{X} U$,then
there exists an admissible system $(V_{d})$ with $V =\varinjlim V_{d}$.
Then $Y_{U}:= Y \times _{X}U$ and $Y_{V}:= Y \times _{X} V$ are formal schemes. 
From the gluing data $(Y_{U}, Y_{V})$ of formal schemes, 
we obtain a formal scheme, which is nothing but $Y$. 

(3). The natural morphism $U \times _{F} V \to U \times V $ is
a base change of $F \to F \times F$, so schematic and an immersion. 
From (2), $U \times_{F} V$ is a formal scheme.
\end{proof}

As the definition of formal algebraic space, we adopt the following one:

\begin{defn}\label{def-as-functor}
An \'etale sheaf $X \in \bF_{\et}$ is called a {\em formal algebraic space}
if the diagonal morphism $X \to X \times X$ is schematic and an immersion,
and there exists a schematic \'etale morphism $U \to X$
with $U$ a formal scheme.
\end{defn}

From Lemma \ref{lem-algsp} (1), a formal scheme is a formal algebraic space.
For a formal algebraic space $X$ and $U \to X$ as in the definition,
from Lemma \ref{lem-algsp} (3), $R:= U \times_{X} U$ is a formal scheme.
The natural morphism $R \to U \times U$ is schematic and an immersion,
because it is a base change of $X \to X \times X$. 
The two projections $R \rightrightarrows U$ are schematic and \'etale. 
Thus $R$ is an \'etale equivalence relation on $U$,
and $X$ is the quotient $R/U$ in $\bF_{\et}$.
We have also the \'etale equivalence  relation $R_{\red}$ on the reduced scheme
$U_{\red}$ and obtain a reduced algebraic space $X_{\red}:=U_{\red}/R_{\red}$,
which is a generalization of the one defined for a formal scheme. 

Conversely given an equivalence relation $R \to U \times U$ in $\bF_{\et}$
such that $R$ and $U$ are formal schemes, $R \to U \times U$ is schematic
and an immersion, and $R \rightrightarrows U$ are schematic and \'etale,
then the quotient $R/U$ is a formal algebraic space.

\begin{defn}
For a formal algebraic space $X$, 
we define the {\em \'etale site}, $X_{\et}$, as  the category of  formal algebraic spaces $Y$ 
which are schematic and \'etale over $X$
with the obvious notion of covering.
\end{defn}

We define the {\em structure sheaf} $\cO^{\et}_{X}$ on $X_{\et}$ as follows,
which makes $X_{\et}$ a ``proringed site'': 
For $(U \to X) \in X_{\et}$ with $U$ a qsqc 
formal {\em scheme}, $\cO^{\et}_{X}(U) := \cO_{U}(U)$.
From the following lemma, 
this defines a sheaf on the  basis consisting of all such $U$'s.

\begin{lem}
Let $V \to U$ be an \'etale covering of formal schemes and $W:= V \times_{U} V$.
Then the sequence
\[
0 \to \cO_{U} (U) \to \cO_{V}(V) \rightrightarrows \cO_{W}(W)
\]
is exact. 
\end{lem}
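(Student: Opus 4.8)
The plan is to reduce the assertion to ordinary faithfully flat descent on each term of an admissible system of schemes and then to pass to the limit in the procategory. Since the values of $\cO^{\et}_{X}$ are only taken on qsqc formal schemes, I may assume that $U$, and hence $V$ and $W = V\times_{U}V$, are qsqc. By the proposition that every qsqc formal scheme is the inductive limit of an admissible system of schemes, write $U = \varinjlim U_{d}$ with $(U_{d})$ an admissible system of schemes. Setting $V_{d}:=V\times_{U}U_{d}$ and $W_{d}:=W\times_{U}U_{d}=V_{d}\times_{U_{d}}V_{d}$, I would first check that $(V_{d})$ and $(W_{d})$ are again admissible systems of schemes with $V=\varinjlim V_{d}$ and $W=\varinjlim W_{d}$: each bonding morphism $V_{d}\to V_{d'}$ is a base change of the bijective closed immersion $U_{d}\to U_{d'}$, hence again a bijective closed immersion, and since $V\to U$ is \'etale, $V\times_{U}U_{\red}$ is \'etale over the reduced $U_{\red}$, hence reduced, and with underlying space $\underline{V}$, so it equals $V_{\red}$; thus all the $V_{d}$ share the underlying space $\underline{V}$. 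With these identifications, $\cO_{U}(U)$, $\cO_{V}(V)$ and $\cO_{W}(W)$ are, as prorings, the pro-objects $(\Gamma(U_{d},\cO_{U_{d}}))_{d}$, $(\Gamma(V_{d},\cO_{V_{d}}))_{d}$ and $(\Gamma(W_{d},\cO_{W_{d}}))_{d}$, and the sequence in question is induced by a level morphism.

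Next, for each fixed index $d$, the morphism $V_{d}\to U_{d}$ is a quasi-compact surjective \'etale morphism of schemes, that is, an fppf covering, so faithfully flat descent for the structure sheaf (i.e.\ the fact that $\cO$ is an \'etale sheaf) yields the exact sequence of rings
\[
0 \to \Gamma(U_{d},\cO_{U_{d}}) \to \Gamma(V_{d},\cO_{V_{d}}) \rightrightarrows \Gamma(W_{d},\cO_{W_{d}}).
\]
In additive terms this says that $\Gamma(U_{d},\cO_{U_{d}})$ is injected into $\Gamma(V_{d},\cO_{V_{d}})$ and is the kernel of the level map $p_{1}^{*}-p_{2}^{*}\colon \Gamma(V_{d},\cO_{V_{d}})\to\Gamma(W_{d},\cO_{W_{d}})$ at each level.

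Finally I would pass to the procategory. The first arrow $\cO_{U}(U)\to\cO_{V}(V)$ is represented by the level morphism $(\Gamma(U_{d},\cO_{U_{d}})\to\Gamma(V_{d},\cO_{V_{d}}))$, each component of which is a monomorphism, so it is a monomorphism of prorings (\S\ref{sec-pro}). For exactness at $\cO_{V}(V)$, I use that the kernel of a level morphism is computed level-wise, exactly as in the proof of the final corollary of \S\ref{sec-pro}: the kernel of $(\Gamma(V_{d},\cO_{V_{d}}))_{d}\to(\Gamma(W_{d},\cO_{W_{d}}))_{d}$ is $(\Ker(p_{1}^{*}-p_{2}^{*}))_{d}=(\Gamma(U_{d},\cO_{U_{d}}))_{d}=\cO_{U}(U)$. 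Since the equalizer of a pair of ring maps is computed on underlying abelian groups, this identifies $\cO_{U}(U)$ with the difference kernel of $\cO_{V}(V)\rightrightarrows\cO_{W}(W)$, giving the desired exactness. The main obstacle is the bookkeeping of the first paragraph: verifying that the \'etale pullbacks $V_{d}=V\times_{U}U_{d}$ genuinely assemble into an admissible system recovering $V$ as a formal scheme, in particular that $\cO_{V}\cong\prolim\cO_{V_{d}}$, which rests on \'etale morphisms being flat and hence compatible with the pro-structure of $\cO_{U}$.
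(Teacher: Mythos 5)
Your proof is correct and follows essentially the same route as the paper's: pull back an admissible system $(U_{d})$ with $U=\varinjlim U_{d}$ to get admissible systems $(V_{d})$, $(W_{d})$, invoke \'etale descent for the structure sheaf on each scheme-level covering $V_{d}\to U_{d}$, and conclude by level-wise exactness in the procategory. The paper's proof is just a terser version of this, asserting without detail the bookkeeping you verify in your first paragraph and compressing your last paragraph into ``taking the projective limit.''
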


\begin{proof}
Take an admissible system $(U_{d})$ of schemes with $U=\varinjlim U_{d}$,
and put $V_{d} := U_{d} \times _{U} V$ and $W_{d}:=U_{d} \times_{U}W$.
Then $(V_{d})$ and $(W_{d})$ are admissible systems of schemes such that
 $V = \varinjlim V_{d}$ and $W = \varinjlim W_{d}$.
For each $d$, we have the exact sequence of rings
\[
 0 \to \cO_{U_{d}} (U_{d}) \to \cO_{V_{d}} (V_{d}) \rightrightarrows \cO_{W_{d}} (W_{d}).
\]
Taking the projective limit, we obtain the exact sequence in the lemma.
\end{proof}

Then we can uniquely
extend  $\cO^{\et}_{X}$ to  $X_{\et}$ as a sheaf on qsqc objects.
Here the qsqc object is defined just like the qsqc topological space. 
Then we easily see that for a formal scheme $ \in X_{\et}$,
being qsqc as an object of $X_{\et}$ is equivalent
to being qsqc as a topological space.

\begin{defn}
An  $\cO^{\et}_{X}$-promodule $\cM$ is said to be {\em semicoherent}
if for some \'etale covering $V \to X$ with $V$ a formal scheme,
the restriction  of $\cM$ to the Zariski site of $V$
is a semicoherent $\cO_{V}$-promodule.
\end{defn}

If $X$ is a formal scheme 
and $\cM$ is a semicoherent $\cO_{X}$-promodule,
then we define a semicoherent $\cO_{X}^{\et}$-promodule 
$\cM^{\et}$ so that for an \'etale morphism $\phi:U \to X$ with $U$ a formal scheme,
$\cM^{\et} (U) =( \phi^{*} \cM)(U)$.
Restricting $\cM^{\et}$ to the Zariski site of $X$, we can recover $\cM$.

\begin{lem}
If $X=\Spf A$ and if $\cM$ is a semicoherent $\cO_{X}^{\et}$-promodule,
then  $\cM \cong (\cM(X)^{\triangle})^{\et}$.
\end{lem}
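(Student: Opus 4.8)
The plan is to reduce the assertion to faithfully flat descent of promodules over the affine base $X=\Spf A$, and then to recognise $\cM(X)\otimes_{A}B$ as the value of $(\cM(X)^{\triangle})^{\et}$ on an affine object $\Spf B$ that is \'etale over $X$.

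First I would arrange a convenient affine \'etale covering. Since $X$ is affine, hence quasi-compact, and $\cM$ is semicoherent, there is an \'etale covering $V_{0}\to X$ with $V_{0}$ a formal scheme such that $\cM|_{V_{0}}$ is semicoherent on the Zariski site of $V_{0}$. Covering $V_{0}$ by affine opens and composing with $V_{0}\to X$ gives affine \'etale morphisms whose images cover $X$; by quasi-compactness finitely many, say $\Spf B_{1},\dots,\Spf B_{n}$, already cover. Setting $V:=\coprod_{i}\Spf B_{i}=\Spf B$ with $B:=\prod_{i}B_{i}$, I obtain a single affine $V=\Spf B$ that is \'etale and faithfully flat over $X$ with $\cM|_{V}$ semicoherent, so by Proposition~\ref{prop-affine-semicoh} I may write $\cM|_{V}\cong (M_{V})^{\triangle}$ where $M_{V}:=\cM(V)$. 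Put $W:=V\times_{X}V=\Spf C$ with $C:=B\otimes_{A}B$, again affine. As $p_{1}\colon W\to V$ is \'etale, $\cM|_{W}=p_{1}^{*}(\cM|_{V})$ is semicoherent with $\cM(W)=M_{V}\otimes_{B,p_{1}}C$; the two projections identify $\cM(W)$ with $M_{V}\otimes_{B,p_{1}}C$ and $M_{V}\otimes_{B,p_{2}}C$, furnishing the canonical descent datum on $M_{V}$.

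Next I would compute the global sections by descent. The sheaf property of $\cM$ for the covering $V\to X$ gives the exact equalizer sequence
\[
0\to \cM(X)\to M_{V}\rightrightarrows \cM(W)
\]
of $A$-promodules, the same exact sequence as in the structure-sheaf lemma preceding this one. Write $N:=\cM(X)$. Since $A\to B$ is \'etale, $B$ is flat over $A$, so $-\otimes_{A}B$ is exact on $A$-promodules; tensoring the sequence with $B$ shows $N\otimes_{A}B$ is the equalizer of $M_{V}\otimes_{A}B\rightrightarrows \cM(W)\otimes_{A}B$. Faithfully flat descent along $A\to B$, applied to $M_{V}$ with the descent datum above, then identifies this equalizer with $M_{V}$, so that $M_{V}\cong N\otimes_{A}B=\cM(X)\otimes_{A}B$ canonically, and likewise $\cM(W)\cong \cM(X)\otimes_{A}C$.

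Finally I would assemble the isomorphism. By the pullback formula $\phi^{*}(\cM(X)^{\triangle})=(\cM(X)\otimes_{A}B)^{\triangle}$ for $\phi\colon \Spf B\to X$, the definition of $(-)^{\et}$ gives $(\cM(X)^{\triangle})^{\et}|_{V}=(\cM(X)\otimes_{A}B)^{\triangle}=\cM|_{V}$, and similarly over $W$; under these identifications the canonical descent data match. Hence $\cM$ and $(\cM(X)^{\triangle})^{\et}$ are \'etale sheaves that agree on $V$ and $W$ compatibly with the descent data of the covering $V\to X$, so they glue to the desired isomorphism $\cM\cong(\cM(X)^{\triangle})^{\et}$. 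The main obstacle is the descent step itself: one must know that faithfully flat descent is valid for promodules, i.e.\ that base change $-\otimes_{A}B$ together with the canonical descent datum yields an equivalence with $A$-promodules. I expect to establish this by combining the exactness of $-\otimes_{A}B$ (flatness of the \'etale cover) with the exactness of the Amitsur complex after base change, thereby reducing the equivalence to classical faithfully flat descent at each level $A_{d}\to B_{d}$ and checking its compatibility with passage to the pro-category.
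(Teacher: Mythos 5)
There are two genuine gaps here, and the more serious one is not the one you flag. It is the step ``as $p_{1}\colon W\to V$ is \'etale, $\cM|_{W}=p_{1}^{*}(\cM|_{V})$, hence $\cM(W)=M_{V}\otimes_{B,p_{1}}C$, furnishing the canonical descent datum.'' Semicoherence of $\cM$ as an $\cO_{X}^{\et}$-promodule only says that the restriction of $\cM$ to the \emph{Zariski} site of $V$ is semicoherent; it says nothing about the values of $\cM$ on objects that are \'etale over $V$ without being open in $V$, and $W=V\times_{X}V$ is exactly such an object. Neither the definition nor the sheaf axioms identify $\cM|_{W,\Zar}$ with $p_{1}^{*}(\cM|_{V,\Zar})$: this ``cartesian'' identity is an instance of the very conclusion being proved (apply the lemma to the object $W\in X_{\et}$), so invoking it is circular. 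The same issue recurs in your final gluing step, where you match $\cM$ against $(\cM(X)^{\triangle})^{\et}$ only on the Zariski sites of $V$ and $W$; two \'etale sheaves can agree there and still differ, because their values on the remaining objects of $X_{\et}$ are not determined by those restrictions. In the Zariski prototype, Proposition \ref{prop-affine-semicoh}, this difficulty is absent, since there the overlap $\Spf B\times_{X}\Spf B$ is a disjoint union of \emph{open} subsets of $\Spf B$, so its value is controlled by Lemma \ref{lem-affine-restriction}; this is precisely the point where the \'etale case requires an argument rather than a transcription.

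The second gap is the one you acknowledge: faithfully flat descent for promodules. This theorem appears nowhere in the paper, and your proposed reduction ``to classical descent at each level $A_{d}\to B_{d}$'' is not routine: a descent datum is a pro-isomorphism satisfying a cocycle condition in the procategory, and after reindexing it is represented only by level morphisms that need be neither isomorphisms nor cocycle-compatible at any level --- a level map representing a pro-isomorphism need not be a levelwise isomorphism (for the pro-abelian group $(\ZZ\xleftarrow{2}\ZZ\xleftarrow{2}\cdots)$, multiplication by $2$ is a pro-isomorphism). Moreover, the proof the paper intends, namely ``the same manner as Proposition \ref{prop-affine-semicoh},'' is descent-free: for each affine \'etale $\Spf D\to X$ one writes the sheaf sequence of the induced covering $V\times_{X}\Spf D\to\Spf D$, identifies its terms with $-\otimes_{A}D$ applied to $0\to\cM(X)\to\cM(V)\rightrightarrows\cM(W)$, and compares the two equalizers using only the exactness of $-\otimes_{A}D$ (flatness of the \'etale $D$ over $A$), obtaining $\cM(\Spf D)\cong\cM(X)\otimes_{A}D$ for all such $D$ at once. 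Note that once the first gap is repaired (which any approach must do, since it is needed to identify the terms of these sequences), this equalizer comparison finishes the proof with no descent theorem at all; on your route you would additionally have to prove pro-descent.
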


\begin{proof}
It is proved in the same manner as Proposition \ref{prop-affine-semicoh}.
\end{proof}

As a consequence, for a formal scheme $X$, we obtain the equivalence
\[
\text{(Semicoherent $\cO_{X}$-promodules)} \cong \text{(Semicoherent $\cO_{X}^{\et}$-promodules)} .
\]

\begin{defn}
A morphism $\phi : Y \to X$ of formal algebraic spaces is {\em formally schematic}
if for every morphism $W \to X$ with $W$ a formal scheme, 
$Y \times _{X} W$ is also a formal scheme. 
A formally schematic morphism $\phi : Y \to X$ of formal algebraic spaces
is a {\em (resp. closed, open) immersion}
if for every morphism $W \to X$ with $W$ a formal scheme, 
the projection $Y \times _{X} W \to W$ is
a (resp. closed, open) immersion.
For a formal algebraic space $X$,
a {\em (resp.\ closed, open) formal algebraic subspace} of $X$ 
is an isomorphism classes of (resp. closed, open) immersions $Y \to X$.
\end{defn}

We can now translate all the results in \S \ref{sec-semicoh} to 
formal algebraic spaces. 
Note that  in Proposition \ref{prop-ext}, we have to replace
the open subset $U \subseteq X$  not with an \'etale morphism but with
  an open immersion $U \to X$ of formal algebraic spaces.

\section{Mild and gentle formal schemes}\label{sec-mild-gentle}

This section establishes the relation between
prorings (resp.\ promodules) and complete rings (resp.\ complete modules).
Also we introduce two classes of formal schemes
which are well-behaved when completing
the structure sheaf.

\subsection{Complete rings and complete modules}

A {\em linearly topologized ring} is a topological ring 
which admits a basis of open neighborhoods of $0$
consisting of ideals.
Such a basis is called a {\em basis of open ideals}.
Conversely if a ring $A$ is given a collection $(I_{d})$ of ideals 
which is directed with respect to the preorder $I_{d} \le I_{d'} \Leftrightarrow I_{d} \supseteq I_{d'}$,
then there exists a unique topology on $A$ for which $A$ is linearly topologized
and $(I_{d})$ is a basis of open ideals.
We call this topology the $(I_{d})$-{\em topology}. 
A linearly topologized ring is said to be {\em gentle}
if it has a {\em countable} basis of open ideals.
If $A$ is a linearly topologized ring and
$(I_{d})$ is a basis of open ideals,
then the {\em completion} $\hat A$ of $A$ is defined to be the projective limit
$\varprojlim A/I_{d} $ which is linearly topologized 
so that the kernels of $\hat A \to A/I_{d}$ form 
a basis of open ideals.
A {\em complete ring} is a linearly topologized ring $A$ such that 
the natural map $A \to \hat A$ is bijective.
As a special case, every ring with the discrete topology
is a complete linearly topologized ring.
We define a {\em morphism} of complete rings as
 a homomorphism of rings which is continuous.

Let $A$ be a linearly topologized ring
and $M$ an abelian topological group  endowed with an $A$-module structure.
We call $M$  a {\em linearly topologized $A$-module}
if it has a basis $(M_{e})$ of open neighborhoods of $0$
consisting of $A$-submodules $M_{e} \subseteq M$
and if for every open neighborhood $0 \in V \subseteq M$,
there exists an open ideal $I \subseteq A$ with $IM \subseteq V$. 
Such a basis is called a {\em basis of open submodules}.
A linearly topologized $A$-module is said to be {\em gentle}
if it admits a countable basis of open submodules.
If $M$ is a linearly topologized $A$-module, with the above notation, 
the completion $\hat M$ is defined to be $\varprojlim M/M_{e}$,
which is a linearly topologized $\hat A$-module as well as a linearly topologized
$A$-module.
A {\em complete $A$-module} is a linearly topologized 
$A$-module $M$ with  $M = \hat M$.
We define a {\em morphism} of complete $A$-modules
as a homomorphism of $A$-modules which is continuous.
 
\subsection{Pro vs.\ complete}

Let  $A=(A_{d})$ be a proring.
We give  to the projective limit, $\hat A:= \varprojlim A_{d}$,
in the category of rings the topology such that
 $\hat A$ is linearly topologized
and the kernels $\Ker (\hat A \to A_{d})$ form a basis of neighborhoods of $0$.
This defines a functor
\[
\wedge : \text{(Prorings)} \to \text{(Complete rings)},\ A \mapsto \hat A.
\]
Conversely if $B$ is a complete ring and $(I_{d})$ is a basis  of
open ideals, then $(B/I_{d})$ is an epi proring.
The isomorphism class of $(B/I_{d})$ is independent of the choice of $(I_{d})$.
To kill the ambiguity, taking the set $(I_{d})$ of all open ideals, we
put $\check{B}:=(B/I_{d}) $, which defines a functor
\[
\vee : \text{(Complete rings)} \to \text{(Epi prorings)},\ B \mapsto \check B.
\]
This is a fully faithful embedding and  
the composite functor $\wedge \circ \vee$ is isomorphic to the identity.

\begin{defn}
A proring $A$ is said to be {\em mild} if $A \cong \check B$ for some
complete ring $B$.
\end{defn}

A mild proring  is by definition isomorphic to an epi proring,
and hence Mittag-Leffler. But there exists an epi proring which is not mild (see 
Example \ref{expl-nonmild}).\footnote{It was a referee who let me know the existence of
non-mild and epi proring.}

Similarly, for a proring $A$,  we  have
\[
\wedge  : \text{($A$-promodules)} \to \text{(Complete $\hat A$-modules)},\ M \mapsto \hat M
\]
and for a complete ring $B$,
\[
\vee:\text{(Complete $B$-modules)} \to \text{(Epi $\check B$-promodules)},\ N \mapsto \check N.
\]
which is fully faithful. 

\begin{defn}
Let $A$  be a mild proring.
An $A$-promodule $M$ is said to be {\em mild} 
if $M \cong \check N$ for some complete $\hat A$-module $N$.
\end{defn}

For a mild proring $A$, we have
\[
\text{(Complete $\hat A$-modules)} \cong \text{(Mild $A$-promodules)}.
\]

\begin{prop}\label{prop-mild-criterion}
\begin{enumerate}
\item
An epi proring $A=(A_{d})$ is mild if and only if the natural maps $\hat A \to A_{d}$
are surjective.
\item Let $A$ be a mild proring. Then an epi $A$-promodule $M=(M_{e})$
is mild if and only if the natural maps $\hat M \to M_{e}$ are surjective.
\end{enumerate}
\end{prop}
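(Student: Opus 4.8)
The plan is to compare the given epi system $A=(A_d)$ with its completion $\hat A=\varprojlim A_d$ by means of the projections $\pi_d\colon\hat A\to A_d$, and to use Lemma \ref{lem-surjective} to transport surjectivity across a pro-isomorphism. Throughout I would regard a proring as an abelian progroup whenever speaking of epimorphisms and of the \emph{epi} condition, as the paper prescribes. For the ``if'' part of (1), I would assume each $\pi_d$ surjective and set $K_d:=\Ker(\pi_d)$, an open ideal of $\hat A$. Since $\{K_d\}$ is a basis of open ideals, the assignment $d\mapsto K_d$ is order-preserving and cofinal into the index set of all open ideals, so by reindexing $\check{\hat A}\cong(\hat A/K_d)_d$. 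The surjections $\pi_d$ then induce isomorphisms $\bar\pi_d\colon\hat A/K_d\xrightarrow{\sim}A_d$ compatible with the transition maps, hence a level isomorphism $\check{\hat A}\cong(\hat A/K_d)_d\cong(A_d)_d=A$; as $\hat A$ is a complete ring this exhibits $A$ as mild.

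For the ``only if'' part of (1) I would fix a complete ring $B$ and an isomorphism $\psi\colon\check B\to A$ of prorings. Writing $\check B=(B/I_j)_j$, completeness gives that the completion of $\check B$ is $B$, with surjective projections $B\to B/I_j$, and functoriality of $\wedge$ makes the induced $\hat\psi\colon B\to\hat A$ an isomorphism of complete rings. Viewing $\psi$ as an epimorphism of abelian progroups and using that $A$ is epi, Lemma \ref{lem-surjective} shows that for each $d$ some representative $B/I_j\to A_d$ of the $d$-th component of $\psi$ is surjective. Because the induced map on limits satisfies $\pi_d\circ\hat\psi=(\text{this representative})\circ(B\twoheadrightarrow B/I_j)$, the composite $\pi_d\circ\hat\psi$ is surjective, and since $\hat\psi$ is an isomorphism, $\pi_d$ is surjective. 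This direction is the main obstacle: mildness is only a condition on the isomorphism class, whereas surjectivity of $\pi_d$ refers to the chosen epi representative, so reconciling the two genuinely needs both the functoriality of completion (to produce $\hat\psi$ and the factorization above) and Lemma \ref{lem-surjective}. The delicate point to verify carefully is the tautological identity $\pi_d\circ\hat\psi=\psi_d^{(j)}\circ\pi_j^{\check B}$ describing the map on limits attached to a pro-morphism.

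For (2) I would run the identical argument with $A$-modules replacing rings and complete $\hat A$-modules replacing complete rings. Here the advantage is that the category of $A$-promodules is genuinely abelian, so Lemma \ref{lem-surjective} applies directly rather than through the forgetful passage to progroups. Replacing $K_d$ by the open submodules $L_e:=\Ker(\hat M\to M_e)$, and $B$ by a complete $\hat A$-module $N$ with $M\cong\check N$, the same two steps carry over: a cofinal reindexing yielding $M\cong\check{\hat M}$ when the projections $\hat M\to M_e$ are surjective, and a transfer of surjectivity along the isomorphism $\hat\psi\colon N\to\hat M$ via Lemma \ref{lem-surjective} when $M$ is mild. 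No new difficulty is expected beyond checking that $\hat M$ is a linearly topologized $\hat A$-module so that $\check{\hat M}$ is defined, which is part of the completion formalism already set up.
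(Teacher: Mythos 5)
Your proof is correct and takes essentially the same route as the paper: the ``if'' direction identifies $A$ with $\check{\hat A}$ via the kernels $K_{d}=\Ker(\hat A \to A_{d})$, which form a cofinal basis of open ideals, and the ``only if'' direction uses functoriality of completion together with Lemma \ref{lem-surjective} to obtain a surjective representative $B_{e}\to A_{d}$ and then factors the natural map $\hat A \cong B \to B_{e} \to A_{d}$ as a composite of surjections. The paper likewise proves only (1) and treats (2) as parallel, exactly as you do.
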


\begin{proof}
Since the proofs of (1) and (2) are parallel, we only prove (1).
We first prove the ``if'' part. Put  $I_{d} := \Ker (\hat A \to A_{d})$.
Then by definition, $(I_{d})$ is a basis of open ideals and 
\[
A=(A_{d})=(\hat A /I_{d}) \cong \check {\hat{A}}.
\]
So $A$ is mild.

Next we prove the ``only if'' part. Let $B$ be a complete ring such that $A \cong \check B$.
We write $\check B = (B_{e})$. By construction, we have the natural surjections
$B \to B_{e}$.
From Lemma \ref{lem-surjective}, for each $d$, 
there exists a surjection $B_{e} \to A_{d}$ which represents an isomorphism $\check B \to A$.
So the natural morphism
\[
\hat A \cong B \to B _{e} \to A_{d}
\]
is surjective.
\end{proof}

\begin{defn}
A pro-object of any category is said to be {\em gentle}
if it is isomorphic in the procategory to a pro-object indexed by 
a countable directed set, or equivalently to one indexed by $\NN$.
\end{defn}

From Proposition \ref{prop-mild-criterion} and 
the construction of the projective limit, 
every gentle and Mittag-Leffler proring is mild, and every gentle
and Mittag-Leffler $A$-promodule
for a mild proring $A$ is mild.
Hence
\[
\text{(Gentle complete rings)} \cong \text{(Gentle and Mittag-Leffler prorings)},
\]
and if $B$ is a complete ring and $A$ is the associated mild proring, 
\[
\text{(Gentle complete $B$-modules)} \cong \text{(Gentle and Mittag-Leffler $A$-promodules)}.
\]

\begin{expl}\label{expl-nonmild}
There exists a directed projective system $(S_{d})$ of sets
with surjective bonding maps and $\varprojlim S_{d} = \emptyset$ (see \cite{Henkin, Higman-Stone, Waterhouse}).
From such a system, imitating a construction of Higman and Stone  \cite{Higman-Stone},
we can construct an admissible proring which is not mild:
For $d' \ge d$,  we have a natural surjective 
homomorphism of polynomial rings (possibly with infinite variables)
\begin{equation}\label{poly-hom}
k[x_{s};s \in S_{d'}] \to k[x_{s};s\in S_{d}] .
\end{equation}
Here $k$ is a filed. 
Consider the quotient ring
\[
R_{d} :=k[x_{s};s \in S_{d'}]/ (x_{s}; s \in S_{d})^{2} ,
\]
 which is, as a vector space, isomorphic to 
 \[
 k  \cdot 1 \oplus \bigoplus_{s \in S_{d}} k \cdot x_{s} .
 \]
The homomorphism  \eqref{poly-hom} induces a surjective homomorphism $R_{d'} \to R_{d}$
and yields an epi admissible proring $R:=(R_{d})$.
We easily see that $\hat R =k$. So $R$ is not mild.
\end{expl}

\begin{cor}\label{cor-quot-mild}
\begin{enumerate}
\item Let $A \to B$ be a morphism of prorings which is an epimorphism
of $A$-promodules. If $A$ is mild, then so is $B$.
\item Let $A$ be a mild proring and $M \to N$ an epimorphism
of $A$-promodules. If $M$ is mild, then so is $N$.
\end{enumerate}
\end{cor}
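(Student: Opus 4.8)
The plan is to verify, in each case, the surjectivity criterion of Proposition \ref{prop-mild-criterion}. The arguments for (1) and (2) are parallel---the first carried out in the category of rings (with the prorings regarded as $A$-promodules, hence as abelian progroups), the second in the category of $A$-modules---so I would treat (1) in detail and indicate the changes for (2) at the end. The first move is to reduce to the case where $B$ is epi. Since $A$ is mild it is Mittag-Leffler, and the morphism $A \to B$, being an epimorphism of $A$-promodules, is also an epimorphism of abelian progroups (by the corollary following Lemma \ref{lem-epi-zero}); hence $B$, an epimorphic image of a Mittag-Leffler pro-object, is Mittag-Leffler by the argument of Proposition \ref{prop-quot-ML}. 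By Proposition \ref{prop-ML-sur-isom} the natural map $B^{\sur}\to B$ is then an isomorphism. Crucially, this reduction is legitimate \emph{as prorings}: each $B^{\sur}_{d}=\bigcap_{e\ge d}\Im(B_{e}\to B_{d})$ is an intersection of images of ring homomorphisms, hence a subring of $B_{d}$, so $B^{\sur}$ is a sub-proring of $B$ and $B^{\sur}\cong B$ is a proring isomorphism. Replacing $B$ by $B^{\sur}$ and precomposing $A\to B$ with this isomorphism, I may assume $B$ is epi; $A$ is already epi, being mild.

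Next I would produce a surjective level presentation. Reindexing $A$ and $B$ along a common directed set $D$, the epimorphism is represented by a level morphism $(f_{d}\colon A_{d}\to B_{d})_{d\in D}$, and since reindexing by cofinal maps preserves the epi property both systems remain epi. Because $B$ is epi and $A\to B$ is an epimorphism, Lemma \ref{lem-surjective} shows that every representing morphism, in particular each $f_{d}$, is surjective.

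The final step is a completion chase. As $A$ is mild and epi, Proposition \ref{prop-mild-criterion}(1) gives that each projection $\hat A\to A_{d}$ is surjective. The level morphism $(f_{d})$ induces $\hat A\to\hat B$ compatibly with the projections, so for every $d$ the composite $\hat A\to A_{d}\xrightarrow{f_{d}}B_{d}$ is surjective and factors through $\hat B\to B_{d}$; hence each $\hat B\to B_{d}$ is surjective, and Proposition \ref{prop-mild-criterion}(1) yields that $B$ is mild. For (2) the identical argument, carried out in the abelian category of $A$-modules and using Proposition \ref{prop-mild-criterion}(2) in place of (1) (with $\hat M\to M_{d}$ surjective because $M$ is mild), shows that $N$ is mild.

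The main obstacle I anticipate is the reduction to the epi case. One must know both that the epimorphic image of a Mittag-Leffler pro-object is again Mittag-Leffler and, more delicately, that the passage to the epi representative $B^{\sur}$ is valid in the category of prorings rather than merely of abelian progroups; this is exactly where the observation that each $B^{\sur}_{d}$ is a subring of $B_{d}$ is needed. Once $B$ is known to be epi, the remainder rests only on Lemma \ref{lem-surjective} and a routine diagram chase, so I expect those steps to be straightforward.
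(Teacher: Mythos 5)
Your proposal is correct and takes essentially the same route as the paper's own proof: reduce to the case where both $A$ and $B$ are epi, use Lemma \ref{lem-surjective} to see that the representing homomorphisms $A_{d}\to B_{d}$ are surjective, and conclude via the criterion of Proposition \ref{prop-mild-criterion} from the factorization $\hat A \to \hat B \to B_{d}$. The only difference is one of care, not of substance: where the paper simply says ``we may suppose $A$ and $B$ are epi,'' you justify that $B$ is Mittag-Leffler as a quotient and that each $B^{\sur}_{d}$ is a subring of $B_{d}$, so the passage to $B^{\sur}$ is an isomorphism of prorings (and, strictly speaking, $A$ should likewise be \emph{replaced} by an isomorphic epi proring rather than declared ``already epi'').
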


\begin{proof}
Again we prove only (1). Being mild, $A$ is Mittag-Leffler and so is $B$.
Hence we may suppose that $A=(A_{d})$ and 
$B=(B_{e})$ are epi. For each $e$, take a homomorphism 
$A_{d} \to B_{e}$ representing the given $A \to B$.
From Lemma \ref{lem-surjective}, it is surjective. Since $A$ is mild,
from Proposition \ref{prop-mild-criterion}, 
the natural map $\hat A \to A_{d} \to B_{e}$ is surjective.
Since $\hat A \to B_{e}$ factors as $\hat A \to \hat B \to B_{e}$,
the natural map $\hat B \to B_{e}$ is surjective, which proves the assertion.
\end{proof}

\begin{cor}\label{cor-tensor-mild}
\begin{enumerate}
\item Let $A \to B$ and $A \to C$ be morphisms of prorings.
If $B$ and $C$ are mild, then so is $B \otimes_{A} C$.
If $A$, $B$ and $C$ are gentle, then so is $B \otimes_{A} C$.
\item Let $A$ be a mild (resp.\ gentle) proring, and $M$ and $N$ mild (resp.\ gentle) $A$-promodules.
Then $M \otimes_{A} N$ is (resp.\ gentle) mild.
\end{enumerate}
\end{cor}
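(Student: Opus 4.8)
The plan is to verify mildness through the criterion of Proposition \ref{prop-mild-criterion} and gentleness through a direct count of index sets. Since the two parts are entirely parallel, I will describe (1) in detail and indicate the modification needed for (2). First I would reduce to a convenient indexing. As $B$ and $C$ are mild they may be taken epi, so after reindexing (following the construction of the tensor product) I may assume $A=(A_d)$, $B=(B_d)$, $C=(C_d)$ share a common directed index set $D$, with $A_d$ acting on both $B_d$ and $C_d$ and with $B_{d'}\to B_d$, $C_{d'}\to C_d$ surjective for $d'\ge d$; then $B\otimes_A C=(B_d\otimes_{A_d}C_d)$. Because the bonding maps of $B$ and $C$ are surjective, every elementary tensor in $B_d\otimes_{A_d}C_d$ lifts along $B_{d'}\otimes_{A_{d'}}C_{d'}\to B_d\otimes_{A_d}C_d$, so this system is epi, in particular Mittag-Leffler. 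Thus Proposition \ref{prop-mild-criterion}(1) applies, and it remains to show that each projection $\widehat{B\otimes_A C}\to B_d\otimes_{A_d}C_d$ is surjective.

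The heart of the argument is to factor a manifestly surjective map through this projection. The maps $A\to B$ and $A\to C$ induce $\hat A\to\hat B$ and $\hat A\to\hat C$, while the structural ring maps $B\to B\otimes_A C$ and $C\to B\otimes_A C$ (coming from the fiber sum) induce $\hat A$-algebra maps $\hat B,\hat C\to\widehat{B\otimes_A C}$; by the universal property of the tensor product these assemble into a single map $\hat B\otimes_{\hat A}\hat C\to\widehat{B\otimes_A C}$. Composing with the projection to $B_d\otimes_{A_d}C_d$ recovers the natural map $\hat B\otimes_{\hat A}\hat C\to B_d\otimes_{A_d}C_d$ induced by $\hat B\to B_d$ and $\hat C\to C_d$ (the image of $b$ projects to $\bar b\otimes 1$ and that of $c$ to $1\otimes\bar c$, whose product is $\bar b\otimes\bar c$). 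Since $B$ and $C$ are mild, Proposition \ref{prop-mild-criterion}(1) gives that $\hat B\to B_d$ and $\hat C\to C_d$ are surjective, whence this natural map is surjective; therefore the projection $\widehat{B\otimes_A C}\to B_d\otimes_{A_d}C_d$ is surjective too, and $B\otimes_A C$ is mild. Note that nowhere did I need $A$ itself to be mild.

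For part (2) the same scheme works, now with $A$ mild and $M$, $N$ mild over $A$. I reduce to $M\otimes_A N=(M_d\otimes_{A_d}N_d)$ with surjective bonding maps, which is therefore epi; then I build $\hat M\otimes_{\hat A}\hat N\to\widehat{M\otimes_A N}$, this time from the $\hat A$-bilinear maps $\hat M\times\hat N\to M_d\otimes_{A_d}N_d$ rather than from ring maps, since modules carry no units. Surjectivity of $\hat M\to M_d$ and $\hat N\to N_d$ (Proposition \ref{prop-mild-criterion}(2)) forces $\widehat{M\otimes_A N}\to M_d\otimes_{A_d}N_d$ to be surjective, and Proposition \ref{prop-mild-criterion}(2) then yields mildness of $M\otimes_A N$.

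Finally, gentleness is immediate from the definition of the tensor product: if $A$, $B$, $C$ (resp.\ $A$, $M$, $N$) are gentle I may take their index sets countable, and the tensor product is indexed by the set of triples $(d,e,f)$ for which $A_d$ acts on $B_e$ and $C_f$ (resp.\ on $M_e$ and $N_f$), a subset of a product of three countable sets, hence itself countable and directed; so the tensor product is gentle. The only obstacle I anticipate is pure bookkeeping: confirming that the two routes from $\hat B\otimes_{\hat A}\hat C$ to $B_d\otimes_{A_d}C_d$ genuinely coincide, that is, the commutativity that lets the manifest surjectivity pass to the projection out of the completion, and the corresponding bilinear verification in the module case. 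No idea beyond Proposition \ref{prop-mild-criterion} appears to be needed.
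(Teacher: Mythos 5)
Your proof is correct and takes essentially the same approach as the paper: reduce to epi level representations over a common index set, then verify the surjectivity criterion of Proposition \ref{prop-mild-criterion} by lifting elementary tensors through the surjections $\hat B \to B_{d}$ and $\hat C \to C_{d}$, and conclude gentleness by counting index sets. The paper simply lifts each elementary tensor directly to an element $(b_{d}\otimes c_{d}) \in \widehat{B\otimes_{A}C}$ instead of routing through the comparison map $\hat B \otimes_{\hat A} \hat C \to \widehat{B\otimes_{A}C}$, which is only a difference of packaging.
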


\begin{proof}
Again we prove only (1). We first consider the case where $B$ and $C$
are mild. Then we may suppose that $B$ and $C$ are epi
and that $A$, $B$ and $C$ has a same index set and
the given $A \to B$ and $A \to C$ are represented by level morphisms
$(A_{d} \to B_{d})$ and $(A_{d} \to C_{d})$ respectively.
Choose an index $d_{0}$ and arbitrary elements $b \in B_{d_{0}}$
and $c  \in C_{d_{0}}$. 
From Proposition \ref{prop-mild-criterion}, there exists $(b_{d}) \in \hat B=\varprojlim B_{d}$
and $(c_{d}) \in \hat C =\varprojlim C_{d}$ such that $b_{d_{0}}=b$
and $c_{d_{0}}=c$. Then 
\[
 (b_{d} \otimes c_{d}) \in \widehat{ B \otimes _{A} C} = \varprojlim B_{d} \otimes_{A_{d}} C_{d}.
\]
Hence $b \otimes c \in \Im (\widehat{ B \otimes _{A} C}  \to B_{d} \otimes _{A_{d}} C_{d} )$.
It follows that $\widehat{ B \otimes _{A} C}  \to B_{d} \otimes _{A_{d}} C_{d} $ is
surjective. From Proposition \ref{prop-mild-criterion}, this means that $B \otimes _{A} C$ is mild.

Next we consider the case where $A$, $B$ and $C$ are gentle.
Then we may suppose that $A=(A_{d})$, $B=(B_{e})$ and $C=(C_{f})$
are indexed by  countable sets. Then $B \otimes _{A}C =
(B_{e} \otimes_{A_{d}}  C_{f})$ is also indexed by a countable set, so gentle. 
\end{proof}

\begin{lem}\label{lem-epi}
Let $A$ be a mild proring, $\phi:M \to N$ a morphism of mild $A$-promodules
and $\hat \phi:\hat M \to \hat N$ the corresponding morphism of 
complete $\hat A$-modules. Then the following are equivalent:
\begin{enumerate}
\item $\phi$ is an epimorphism in the category of $A$-promodules.
\item $\phi$ is an epimorphism in the category of mild $A$-promodules.
\item $\hat \phi$ is an epimorphism in the category of complete
$\hat A$-modules.
\end{enumerate}
\end{lem}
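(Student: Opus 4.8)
The plan is to treat the two easy implications structurally and to reserve the real work for $(2)\Rightarrow(1)$. First, $(1)\Rightarrow(2)$ is immediate: the mild $A$-promodules form a \emph{full} subcategory of the abelian category of all $A$-promodules, and an epimorphism in an ambient category is automatically an epimorphism in any full subcategory containing its source and target, since one is then merely testing equality $g\phi=h\phi\Rightarrow g=h$ against fewer objects.

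For $(2)\Leftrightarrow(3)$ I would appeal directly to the equivalence established above, $(\text{Mild }A\text{-promodules})\cong(\text{Complete }\hat A\text{-modules})$, realized by the mutually quasi-inverse functors $\wedge$ and $\vee$. Since $\hat\phi=\wedge(\phi)$ by definition, and since any equivalence of categories both preserves and reflects epimorphisms, $\phi$ is an epimorphism of mild $A$-promodules if and only if $\hat\phi$ is an epimorphism of complete $\hat A$-modules. The point of routing through the equivalence is that it lets us avoid any direct analysis of epimorphisms in the category of complete modules, which is only additive and not abelian.

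The substance is $(2)\Rightarrow(1)$. Here I would form the cokernel $Q:=\Coker(\phi)$ in the \emph{abelian} category of all $A$-promodules, with canonical projection $\pi:N\to Q$; by construction $\pi$ is an epimorphism and $\pi\phi=0$. The crucial observation is that $Q$ is again mild, being a quotient of the mild promodule $N$, so that Corollary \ref{cor-quot-mild}(2) applies. Consequently both $\pi$ and the zero map are morphisms $N\to Q$ in the category of mild $A$-promodules, and they agree after precomposition with $\phi$; as $\phi$ is an epimorphism there, we conclude $\pi=0$. But a cokernel projection is an epimorphism, and a zero morphism that is epi forces its target to vanish, so $Q=0$ and hence $\phi$ is an epimorphism of $A$-promodules.

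The main obstacle is precisely the mildness of the cokernel $Q$: the argument works only because mildness is stable under passage to quotients (Corollary \ref{cor-quot-mild}), which keeps the cokernel computed in the large abelian category inside the mild subcategory where hypothesis $(2)$ can legitimately be invoked. Once that stability is granted, everything else is formal category theory.
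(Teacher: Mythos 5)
Your proof is correct, but your argument for the substantive implication $(2)\Rightarrow(1)$ takes a genuinely different route from the paper's. The paper likewise dismisses $(1)\Rightarrow(2)\Leftrightarrow(3)$ as obvious (your justifications via fullness of the subcategory and via the equivalence realized by $\wedge$ and $\vee$ are exactly what is implicit there), but for $(2)\Rightarrow(1)$ it works at the level of representatives: writing $M=(M_d)$ and $N=(N_e)$ as epi systems, it observes that each composite $\phi_e:M\to N\to N_e$ is an epimorphism of mild $A$-promodules (each $N_e$, being a discrete module, is mild), deduces that every representative $\phi^d_e:M_d\to N_e$ must then be surjective, and concludes that $\phi$ is an epimorphism of $A$-promodules via the criterion of Lemma \ref{lem-surjective}. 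You instead run a purely formal cokernel argument: form $Q=\Coker(\phi)$ in the abelian category of all $A$-promodules, invoke Corollary \ref{cor-quot-mild}(2) to see that $Q$ is mild, and test the hypothesis $(2)$ against the two maps $\pi,0:N\to Q$ to force $Q=0$. Your route is shorter and isolates exactly the structural fact that makes the lemma work --- the full subcategory of mild promodules is closed under quotients taken in the ambient abelian category --- whereas the paper's route stays concrete and exhibits the surjectivity of all level representatives, which is the form in which epimorphisms are actually exploited elsewhere in the paper (as in Lemma \ref{lem-surjective} and Proposition \ref{prop-mild-criterion}). Note, though, that the two proofs are not independent at bottom: the paper's proof of Corollary \ref{cor-quot-mild} itself runs through Lemma \ref{lem-surjective} and Proposition \ref{prop-mild-criterion}, so your argument repackages, rather than avoids, the representative-level work.
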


\begin{proof}
We obviously have $ (1)\Rightarrow  (2) \Leftrightarrow (3)$.
It remains to show $(2) \Rightarrow (1)$.
Suppose $(2)$ and write $M=(M_{d})$ and $N=(N_{e})$,
which may be supposed to be epi.
For every $e$, the composite morphism $\phi_{e}:M \to N \to N_{e}$ is an epimorphism
in the category of mild $A$-promodules.
Therefore every morphism $\phi^{d}_{e}:M_{d} \to N_{e}$
which represents $\phi_{e}$  must be surjective.
Hence $\phi$ is an epimorphism also in the category of $A$-promodules.
\end{proof}

\subsection{Admissible  rings}\label{subsec-admissible}

An open ideal $I $ of a topological ring  is called an {\em ideal of definition}
if every element $a \in I$ is topologically nilpotent (that is, $a^n \to 0$, as $n \to \infty$).\footnote{This 
definition is due to McQuillan \cite{McQuillan}.
The one in \cite{EGA} is more restrictive:
In \textit{op.\ cit.}, an ideal $I$ is an {\em ideal of definition} if
for every open neighborhood  $V$ of $0$, there exists $n \in \NN$ with $I^n \subseteq V$.}\label{footnote-ideal-of-def} 
A linearly topologized ring is called an {\em admissible ring}
if  it is  complete and admits an ideal of definition. 
Especially every discrete ring is admissible.
Every admissible ring has the largest ideal of definition, which is 
the ideal of
all the topologically nilpotent elements.\footnote{This fails if we adopt the definition in \cite{EGA}.}

A collection of ideals of definition in a topological ring $A$
is called a {\em basis of ideals of definition} if it is a basis 
of open ideals. 
If $A$ is an admissible ring, $J \subseteq A$ is an ideal of definition
and $(I_{d})$ is a basis of open ideals, then   $( I_d \cap J ) $
 is a basis of ideals of definition.
Thus every admissible ring admits a basis of ideals of definition.
In particular, the collection of all ideals of definition is a basis of 
ideals of definition.

If $A=(A_{d})$ is an admissible proring, then $\hat A$ is an admissible ring.
Conversely if $B$ is an admissible ring and $(I_{d})$ is the set of all
ideals of definition, then $(B/I_{d})$ is a mild admissible proring.
Hence 
\[
\text{(Mild admissible prorings)} \cong \text{(Admissible rings)}.
\]
This induces the equivalence of subcategories,
\[
\text{(Gentle admissible prorings)} \cong \text{(Gentle admissible rings)}.
\]

\subsection{Mild and gentle formal schemes}

\begin{defn}
A formal scheme is said to be {\em mildly (resp.\ gently) affine}
if it is  isomorphic to $\Spf A$ with $A$ mild (resp.\ gentle).
A formal scheme $X$ is said to be {\em mild (resp.\ gentle)} if every point $x \in X$ admits
a mildly (resp.\ gently) affine neighborhood.
For a mild  (resp.\ gentle) formal scheme $X$, a semicoherent $\cO_{X}$-promodule
$\cM$ is said to be {\em mild (resp.\ gentle)} if every point $x \in X$
admits a mildly  (resp.\ gently) affine neighborhood $U=\Spf A \subseteq X$ such that
$\cM(U)$ is a mild  (resp.\ gentle) $A$-promodule.
\end{defn}

By definition, every gentle formal scheme is mild.

\begin{prop}\label{prop-gentle-affine}
Suppose that an affine formal scheme $X=\Spf A$ is gentle.
Then $A$ is gentle. 
\end{prop}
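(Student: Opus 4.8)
The plan is to exhibit $A$ as a finite difference kernel of gentle prorings coming from a finite affine cover of $\Spf A$, and then to prove a general lemma that finite products and difference kernels of gentle prorings are again gentle.

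First I would set up the cover. The underlying space $\Spec A_{\red}$ of $X=\Spf A$ is quasi-compact, and by hypothesis every point has a gently affine neighborhood. Since the distinguished opens $D(g)$ ($g\in A_{\red}$) form a basis, for each point I would choose a distinguished open $D(g)$ contained in some gently affine neighborhood $\Spf B$, $B$ gentle. Because $D(g)\subseteq \Spf B$ and $D(g)$ is distinguished in $X$, its intersection with $\Spf B$ (namely all of $D(g)$) is the distinguished open $D(\bar g)$ of $\Spf B$, where $\bar g$ is the image of $g$; hence $A_g=\cO_X(D(g))=B_{\bar g}$ is a localization of the gentle proring $B$. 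A localization $(B_e)_{\bar g}=(B_{e,\bar g})$ is indexed by the same (countable) set, so $A_g$ is gentle. Extracting a finite subcover $X=\bigcup_{i=1}^n D(g_i)$, the intersections $D(g_i)\cap D(g_j)=D(g_ig_j)$ are again distinguished, and $A_{g_ig_j}=(A_{g_i})_{g_j}$ is a localization of a gentle proring, hence gentle.

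Next I would invoke the sheaf property of $\cO_X$. Since $\cO_X$ is a sheaf of prorings and the category of prorings has finite projective limits, the finite bicovering $\{D(g_i),D(g_ig_j)\}$ yields an exact sequence of prorings
\[
0 \to A \to \prod_{i} A_{g_i} \rightrightarrows \prod_{i\neq j} A_{g_ig_j},
\]
exhibiting $A$ as the difference kernel of a morphism between finite products of gentle prorings. It therefore remains to prove the lemma that finite products and difference kernels of gentle prorings are gentle. For products this is immediate: the product of pro-objects $(B^{(1)}_{e}),\dots,(B^{(n)}_{e})$ is computed levelwise over the product index set $E_1\times\cdots\times E_n$, which is countable. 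For the difference kernel of $u,v\colon B\rightrightarrows C$ with $B,C$ gentle, I would treat the single morphism $(u,v)\colon B\to C\times C$, represent it by a level morphism after reindexing $B$ and $C\times C$, and then compute the difference kernel levelwise as $(\mathrm{Eq}(u_j,v_j))_{j\in J}$ over the common index set $J$. Applying this to the displayed sequence gives that $A$ is gentle.

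I expect the delicate point to be precisely this representation of a morphism of gentle pro-objects by a level morphism over a \emph{countable} common index set. Concretely, the reindexing set must be realized as a cofinal subset $J$ of the product of the two original index sets (so that it is countable, and the projections to $B$ and to $C\times C$ remain cofinal and hence isomorphisms in $\pro$(rings)), and one must verify that the difference kernel is genuinely levelwise, as for kernels of level morphisms. Once this is in place, $J$ is countable and $(\mathrm{Eq}(u_j,v_j))_{j\in J}$ is gentle, completing the argument; the remainder is bookkeeping with distinguished opens and the sheaf sequence.
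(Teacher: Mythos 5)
Your proof is correct and follows essentially the same route as the paper's: the paper likewise takes a finite gently affine covering, exhibits $A$ as the difference kernel $0 \to A \to \prod A_{i} \rightrightarrows \prod A_{ij}$ coming from the sheaf condition, represents the two arrows by level morphisms $(B_{i} \rightrightarrows C_{i})_{i \in \NN}$ over a countable index set, and concludes $A \cong (\Ker(B_{i} \rightrightarrows C_{i}))_{i \in \NN}$ is gentle. Your refinements --- using distinguished opens so that the pairwise intersections are manifestly gently affine, and the pair trick $(u,v)\colon B \to C \times C$ to get a simultaneous level representation over a countable cofinal index set --- only make explicit steps the paper leaves implicit.
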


\begin{proof}
Take a finite gently affine covering  $X = \bigcup \Spf A_{i}$.
Put $A_{ij} := \Spf A_{ij}$, $B:=\prod A_{i}$ and $C:= \prod C_{ij}$.
We have an exact sequence
\[
 0 \to A \to B \rightrightarrows C .
\]
Since $B$ and $C$ are gentle, the morphisms $B \rightrightarrows C$
are represented by  level morphisms $(B_{i} \rightrightarrows C_{i})_{i \in \NN}$.
Then $A \cong (\Ker (B_{i} \rightrightarrows C_{i}))_{i \in \NN}$, and hence $A$ is gentle.
\end{proof}

\begin{prop}
\begin{enumerate}
\item
Every distinguished open subscheme of a mildly (resp.\ gently)
affine formal scheme is mildly (resp.\ gently) affine.
\item Let $X$ be a mild (resp.\ gentle) formal scheme.
Then for every $x \in X$, $\cO_{X,x}$ is mild (resp.\ gentle).
\end{enumerate}
\end{prop}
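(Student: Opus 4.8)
The plan is to reduce both parts to a single statement, namely that a localization of a mild (resp.\ gentle) admissible proring is again mild (resp.\ gentle), and then to verify this using the mildness criterion of Proposition \ref{prop-mild-criterion}. For part (1), a distinguished open subscheme of $\Spf A$ is $\Spf A_{f}$ with $A_{f}=(A_{d,f})$; for part (2), choosing a mildly (resp.\ gently) affine neighborhood $\Spf A\subseteq X$ of $x$ gives $\cO_{X,x}=A_{\fp}=(A_{d,\fp})$, where $\fp\subseteq A_{\red}$ is the prime corresponding to $x$. So in every case it suffices to treat a localization $S^{-1}A=(S_{d}^{-1}A_{d})$ of $A$, where $S_{d}$ is either the powers of a lift of $f$ or the complement $A_{d}\setminus\fp_{d}$.

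The gentle cases are immediate: localization does not alter the index set, so if $A$ is indexed by a countable directed set then so are $A_{f}$ and $A_{\fp}$, whence they are gentle.

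For the mild cases I would first record that the localization is epi. Since $A$ is mild it is epi, and localizing a surjective bonding map $A_{d'}\to A_{d}$ keeps it surjective. By Proposition \ref{prop-mild-criterion} it then remains to show that the natural map $\widehat{S^{-1}A}=\varprojlim_{d}S_{d}^{-1}A_{d}\to S_{d}^{-1}A_{d}$ is surjective for each $d$. Given $a/s\in S_{d}^{-1}A_{d}$, I would lift the numerator $a$ to a compatible family $(a_{e})\in\hat A$ using the surjectivity of $\hat A\to A_{d}$ (which holds because $A$ is mild), and lift the denominator to a compatible family $(s_{e})$ with $s_{e}\in S_{e}$ for all $e\ge d$; then $(a_{e}/s_{e})_{e\ge d}$ lies in $\widehat{S^{-1}A}$, the subset $\{e\ge d\}$ being cofinal, and maps to $a/s$, giving the required surjectivity.

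The one point requiring care, and the main obstacle, is the coherent lifting of the denominator \emph{inside} the multiplicative system. For the stalk $A_{\fp}$ this is automatic: any lift $(s_{e})$ of $s\notin\fp_{d}$ satisfies $s_{e}\notin\fp_{e}$ for $e\ge d$, since $\fp_{e}$ is the preimage of $\fp_{d}$ under the bonding map. For the distinguished open $A_{f}$ the lift of $s=f^{n}$ must itself remain a power of $f$, so rather than lifting $s$ arbitrarily I would fix once and for all a compatible lift $\phi=(f_{e})\in\hat A$ of $f\in A_{\red}$ (possible since $\hat A\to A_{\red}$ is surjective) and set $s_{e}:=f_{e}^{n}$. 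With the denominators chosen in this coherent way, compatibility of $(a_{e}/s_{e})$ under the bonding maps is routine, and the surjectivity, hence mildness, follows; this completes the proof.
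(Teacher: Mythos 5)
Your proof is correct and takes essentially the same route as the paper's: both reduce to the criterion of Proposition \ref{prop-mild-criterion}, lift $f$ to an element $f'=(f_{e})$ of $\hat A$ using the surjectivity of $\hat A \to A_{d}$, and deduce that $\widehat{A_{f}} \to A_{d,f}$ is surjective --- the paper phrases this via the factorization $(\hat A)_{f'} \to \widehat{A_{f}} \to A_{d,f}$ of the surjection obtained by localizing $\hat A \twoheadrightarrow A_{d}$, which is exactly the intrinsic form of your element-by-element lifting of numerator and denominator. Your unification of (1) and (2) into one statement about localization at a multiplicative system $S=(S_{d})$ is a tidy repackaging of what the paper does by proving (1) and declaring the proof of (2) parallel.
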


\begin{proof}
(1) Let $A:=(A_{d})$ be a mild admissible proring.
Set $X:=\Spf A$ and $X_{d}:=\Spec A_{d}$.
The natural map $\hat A \to A_{\red}$ is surjective.
Take an arbitrary $f \in A_{\red}$.
Then $D(f)= \Spf A_{f}$, where $A_{f}=(A_{d,f})$.
For any lift $f' \in \hat A$ of $f$, 
the natural map $(\hat A) _{f'} \to A_{d, f}$ is surjective
and
factors as $(\hat A)_{f'} \to \widehat{ A_{f} } \to A_{d, f}$.
So $\widehat{ A_{f} } \to A_{d, f}$ is surjective and $A_{f}$ is mild. It shows the assertion
for the mildly affine formal scheme. 
The assertion for the gently affine formal scheme is trivial.

(2) The proof is parallel to the one of (1).
\end{proof}

\begin{prop}
Let $X:=\Spf A$ be a gentle affine formal scheme and $\cM$ a gentle semicoherent
$\cO_{X}$-promodules. Then $\cM(X)$ is a gentle $A$-promodule.
Hence  we have the equivalence of abelian categories
\[
 \text{(Gentle $A$-promodules)} \cong \text{(Gentle semicoherent $\cO_{X}$-promodules)}
\]
\end{prop}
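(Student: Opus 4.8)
The plan is to reduce to the finite glueing sequence for the sheaf $\cM$ and to check that gentleness is preserved by the two operations occurring there, namely finite products and difference kernels; the asserted equivalence then follows, since the underlying correspondence is already known to be an equivalence.

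First I would note that $A$ is gentle: as $X=\Spf A$ is a gentle affine formal scheme, Proposition~\ref{prop-gentle-affine} applies. Using that $\cM$ is gentle and that $X$ is quasi-compact, choose a finite covering $X=\bigcup_i U_i$ by gently affine open subsets $U_i=\Spf A_i$ with each $\cM(U_i)$ a gentle $A_i$-promodule. Since $X$ is quasi-separated, every $U_i\cap U_j$ is qsqc, so it is covered by finitely many distinguished open subsets $U_{ijk}=\Spf (A_i)_f$ of $U_i$. By part~(1) of the proposition on distinguished opens, each $(A_i)_f$ is gentle, and combining Proposition~\ref{prop-affine-semicoh} with Lemma~\ref{lem-affine-restriction} gives $\cM(U_{ijk})\cong \cM(U_i)\otimes_{A_i}(A_i)_f$, which is gentle by Corollary~\ref{cor-tensor-mild}. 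Thus $\{U_i,U_{ijk}\}$ is a finite affine bicovering with all $\cM(U_i)$ and $\cM(U_{ijk})$ gentle; viewed as $A$-promodules by restriction of scalars (which does not alter the index set) they remain gentle.

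The sheaf axiom now presents $\cM(X)$ as the difference kernel of
\[
\prod_i \cM(U_i)\rightrightarrows \prod_{ijk}\cM(U_{ijk}).
\]
Set $P:=\prod_i\cM(U_i)$ and $Q:=\prod_{ijk}\cM(U_{ijk})$. Finite biproducts in the abelian category of $A$-promodules are computed index-wise over the (finite) product of the index sets, which stays countable; hence $P$ and $Q$ are gentle. Reindexing $P$ and $Q$ over $\NN$, I represent the two parallel maps simultaneously by a level morphism $(P_n\rightrightarrows Q_n)_{n\in\NN}$, so that $\cM(X)\cong(\Ker(P_n\rightrightarrows Q_n))_{n\in\NN}$ is gentle.

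Finally, the correspondence $M\mapsto M^{\triangle}$, $\cM\mapsto\cM(X)$ is already an equivalence between $A$-promodules and semicoherent $\cO_X$-promodules (the equivalence in the corollary following Proposition~\ref{prop-affine-semicoh}). The above shows it carries gentle objects to gentle objects; conversely, if $M$ is a gentle $A$-promodule then $M^{\triangle}(X)\cong M$ is gentle, so $M^{\triangle}$ is a gentle semicoherent $\cO_X$-promodule. Since kernels and cokernels of morphisms of gentle promodules, computed by the same level-morphism device, are again gentle, both sides are abelian and the equivalence restricts. The step I expect to be most delicate is the simultaneous level representation of $P\rightrightarrows Q$ over a common \emph{countable} index set: one must ensure the reindexings that turn a pro-morphism into a level morphism stay countable when source and target are gentle. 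This is precisely the device already invoked in the proof of Proposition~\ref{prop-gentle-affine}, and it works because all refinements happen inside products and cofinal subsets of countable directed sets.
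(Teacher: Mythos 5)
Your proof is correct and follows essentially the same route as the paper: the paper's proof is literally ``It can be proved in the same way as Proposition~\ref{prop-gentle-affine}'', and that proposition is proved exactly by your argument --- take a finite gently affine (bi)covering, write $\cM(X)$ as the difference kernel of $\prod_i\cM(U_i)\rightrightarrows\prod_{ijk}\cM(U_{ijk})$, represent the parallel arrows by level morphisms indexed by $\NN$, and take kernels levelwise. Your extra care on the intersections (covering $U_i\cap U_j$ by distinguished opens and invoking Lemma~\ref{lem-affine-restriction} and Corollary~\ref{cor-tensor-mild} to see that $\cM(U_{ijk})$ is gentle) fills in a point the paper leaves implicit, but it is a refinement of the same argument rather than a different one.
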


\begin{proof}
It can be proved in the same way as Proposition \ref{prop-gentle-affine}.
\end{proof}

\begin{prop}
Let $X$ be a mild formal scheme, $\cM$ a mild semicoherent 
$\cO_{X}$-promodule and $\cN$ a quotient semicoherent 
$\cO_{X}$-promodule of $\cM$. Then $\cN$ is mild.
\end{prop}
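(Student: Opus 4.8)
The plan is to use the fact that mildness of a semicoherent promodule is, by definition, a pointwise condition, and to reduce the whole statement to the purely algebraic assertion that a quotient of a mild promodule over a mild proring is again mild, which is Corollary \ref{cor-quot-mild}(2). So the only real work is to produce, around each point, one affine neighborhood on which $\cM$ already has mild sections, and to transport the given epimorphism $\cM \to \cN$ to sections over that neighborhood.

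First I would fix a point $x \in X$. Since $\cM$ is mild, its definition directly furnishes a mildly affine neighborhood $U = \Spf A \subseteq X$, so that $A$ is a mild admissible proring and $M := \cM(U)$ is a mild $A$-promodule. This is precisely the neighborhood I will work on; no further shrinking, and no separate appeal to the mildness of $X$, is required, because the neighborhood supplied by the mildness of $\cM$ is already mildly affine. Next I would push the given epimorphism $\cM \twoheadrightarrow \cN$ down to $U$. Because cokernels of semicoherent $\cO_X$-promodules are computed affine-locally (this is exactly how the category was shown to be abelian, in the corollary following Proposition \ref{prop-affine-semicoh}), the vanishing of $\Coker(\cM \to \cN)$ forces the vanishing of its value on $U$, namely $\Coker\bigl(\cM(U) \to \cN(U)\bigr)^{\triangle}$. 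Since $M \mapsto M^{\triangle}$ is an equivalence, $\Coker\bigl(\cM(U) \to \cN(U)\bigr) = 0$, so $M = \cM(U) \to \cN(U)$ is an epimorphism of $A$-promodules.

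Now $A$ is mild and $M$ is mild, so Corollary \ref{cor-quot-mild}(2) applies and shows that $\cN(U)$ is a mild $A$-promodule. Thus every point $x \in X$ admits a mildly affine neighborhood $U = \Spf A$ with $\cN(U)$ mild, which is exactly the requirement for $\cN$ to be mild, and this completes the argument. The proof is essentially formal; the single step that deserves attention — and which I expect to be the only, and mild, obstacle — is the affine-local computation of the cokernel that guarantees the epimorphism survives passage to sections over $U$. Everything else is a direct invocation of the already-established quotient-stability of mildness.
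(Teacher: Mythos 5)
Your proof is correct and takes essentially the same route as the paper: the paper's proof consists of the single sentence that the proposition is a direct consequence of Corollary \ref{cor-quot-mild}, and your argument spells out exactly that reduction---using the mildly affine neighborhood $U=\Spf A$ furnished by the mildness of $\cM$, the affine-local computation of cokernels in the abelian category of semicoherent $\cO_{X}$-promodules to see that $\cM(U)\to\cN(U)$ is an epimorphism of $A$-promodules, and then Corollary \ref{cor-quot-mild}(2). Nothing is missing; you have merely made explicit the sheaf-to-module reduction that the paper leaves implicit.
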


\begin{proof}
It is a direct consequence of Corollary \ref{cor-quot-mild}.
\end{proof}

\begin{cor}
Every formal subscheme of a mild formal scheme is mild.
\end{cor}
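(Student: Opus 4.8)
The plan is to reduce the statement to the affine, local situation and then invoke Corollary~\ref{cor-quot-mild}(1). Since the mildness of a formal scheme is a pointwise condition---every point must have a mildly affine neighborhood---it suffices to produce such a neighborhood around each point of the subscheme. Recall that a formal subscheme $Y$ of $X$ is, by Definition~\ref{defn-sub}, a closed formal subscheme of some open formal subscheme $U \subseteq X$; write $\cI \subseteq \cO_{U}$ for its defining proideal sheaf and fix a point $y$ of $Y$, viewed as a point of $U \subseteq X$.

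First I would pass to a convenient affine chart. Since $X$ is mild, $y$ has a mildly affine neighborhood in $X$; intersecting it with the open set $U$ and shrinking to a distinguished open subset, the proposition stating that every distinguished open subscheme of a mildly affine formal scheme is mildly affine yields a mildly affine open $\Spf A \subseteq U$ containing $y$, with $A$ a mild proring. Restricting the defining sheaf gives a proideal $I := \cI(\Spf A)$ of $A$, and---exactly as in the lemma computing the reduced subscheme of a closed formal subscheme---the quotient $B := A/I$ is an admissible proring up to isomorphism, with $Y \cap \Spf A = \Spf B$. This is an affine open neighborhood of $y$ inside $Y$, so everything comes down to showing that $B$ is mild.

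This last point is immediate: the canonical map $A \to B = A/I$ is a morphism of prorings which is also an epimorphism of $A$-promodules (it is a cokernel projection), and $A$ is mild, so Corollary~\ref{cor-quot-mild}(1) gives that $B$ is mild. Hence $\Spf B$ is a mildly affine neighborhood of $y$; as $y$ was arbitrary, $Y$ is mild. I expect no genuine difficulty here: the only points needing care are the bookkeeping identification $Y \cap \Spf A = \Spf B$ with $B=A/I$---so that the chart's structure proring really is the quotient $A/I$---and the observation that $A \to A/I$ is an epimorphism of $A$-promodules. Both are routine given the earlier development, since the substantive content (stability of mildness under quotients) is already packaged in Corollary~\ref{cor-quot-mild}. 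Alternatively one could deduce the result from the immediately preceding proposition on quotients of mild semicoherent promodules, which itself rests on the same corollary.
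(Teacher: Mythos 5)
Your proof is correct and takes essentially the same route as the paper: reduce to a closed formal subscheme of a mildly affine chart and apply Corollary \ref{cor-quot-mild} to the quotient $A \to A/I$, the only substantive input being that mildness passes to quotients. The paper merely packages the affine reduction through the preceding proposition on quotients of mild semicoherent $\cO_{X}$-promodules---exactly the alternative you mention at the end---so the two arguments coincide in substance.
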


\begin{proof}
It is clear that every {\em open} formal subscheme of a mild formal scheme
is mild. So it is enough to show that every {\em closed} formal subscheme
of a mild formal scheme is mild, which follows from the preceding proposition.
\end{proof}

\begin{prop}
\begin{enumerate}
\item
Let $Y \to X$ and $Z \to X$ be  morphisms of formal schemes.
If  $Y$ and $Z$ are mild, then so is $Y \times_{X} Z$.
If $X$, $Y$ and $Z$ are gentle, then so is $Y \times _{X} Z$.
In particular, the category of mild (resp.\ gentle) formal schemes is closed under fiber
products.
\item  Let $\phi: Y \to X$ be a morphism of mild (resp.\ gentle)
formal schemes
and $\cM$ a mild (resp.\ gentle) semicoherent $\cO_{X}$-promodule.
Then $\phi^{*} \cM$ is mild (resp.\ gentle).
\end{enumerate}
\end{prop}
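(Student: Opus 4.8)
The plan is to reduce both statements to the affine situation, where fiber products and pullbacks are computed by the tensor product of prorings and promodules, and then to apply Corollary~\ref{cor-tensor-mild}. Both mildness and gentleness are properties checked on an affine neighborhood basis, so a local verification suffices.

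For~(1), I would argue pointwise. Given $p \in Y \times_{X} Z$ with images $y \in Y$, $z \in Z$ and common image $x \in X$, choose an affine open $\Spf A \subseteq X$ containing $x$; then, using that $Y$ is mild and that distinguished opens of a mildly affine formal scheme are again mildly affine, choose a mildly affine $\Spf B \subseteq Y$ with $y \in \Spf B$ whose image lies in $\Spf A$, and likewise a mildly affine $\Spf C \subseteq Z$ with $z \in \Spf C$ mapping into $\Spf A$. Since $\Spf A \hookrightarrow X$ is a monomorphism, the open set $\pr_{Y}^{-1}(\Spf B) \cap \pr_{Z}^{-1}(\Spf C)$ is an affine neighborhood of $p$ equal to $\Spf B \times_{\Spf A} \Spf C = \Spf (B \otimes_{A} C)$, and Corollary~\ref{cor-tensor-mild}~(1) shows $B \otimes_{A} C$ is mild. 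In the gentle case one additionally takes $\Spf A$ gently affine, using that $X$ is gentle, and appeals to the gentle half of Corollary~\ref{cor-tensor-mild}~(1). The ``in particular'' clause is then the special case in which $X$, $Y$, $Z$ all belong to the relevant category.

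For~(2), localize in the same spirit: for $y \in Y$ with $x := \phi(y)$, pick a mildly affine $\Spf A \subseteq X$ around $x$ with $\cM|_{\Spf A} \cong M^{\triangle}$ for a mild $A$-promodule $M$, and a mildly affine $\Spf B \subseteq Y$ around $y$ mapping into $\Spf A$. Then $(\phi^{*}\cM)|_{\Spf B} \cong (M \otimes_{A} B)^{\triangle}$, so everything reduces to showing that $M \otimes_{A} B$ is a mild $B$-promodule. The step I expect to be the main obstacle is exactly here: Corollary~\ref{cor-tensor-mild}~(2) yields mildness of $M \otimes_{A} B$ only as an $A$-promodule, not as a $B$-promodule. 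I would bridge this gap with Proposition~\ref{prop-mild-criterion}~(2). After replacing $M$ and $B$ by epi representatives with a common index set on which $A$ acts compatibly, the system $M \otimes_{A} B = (M_{d} \otimes_{A_{d}} B_{d})$ is again epi, since a tensor product of surjections is surjective. For an epi promodule the mildness criterion is the surjectivity of the canonical maps $\widehat{M \otimes_{A} B} \to M_{d} \otimes_{A_{d}} B_{d}$, a condition intrinsic to the underlying projective system that makes no reference to the base ring. Thus mildness of $M \otimes_{A} B$ over $A$ (granted by Corollary~\ref{cor-tensor-mild}~(2), as $A$ is mild) forces this surjectivity, which in turn --- as $B$ is mild --- yields mildness over $B$. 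The gentle case is immediate, since a tensor product of countably indexed systems is countably indexed.
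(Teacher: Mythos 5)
Your proposal is correct and takes essentially the same route as the paper, whose entire proof reads ``The assertions follow from Corollary \ref{cor-tensor-mild}'' --- that is, precisely the affine reduction you perform, with the fiber product computed as $\Spf (B \otimes_{A} C)$ and the pullback as $(M \otimes_{A} B)^{\triangle}$. Your extra bridging step in (2) --- transferring mildness of $M \otimes_{A} B$ over $A$ to mildness over $B$ via the intrinsic criterion of Proposition \ref{prop-mild-criterion} --- fills in a point the paper's one-line proof leaves implicit; the only remaining pedantry is that literally invoking Corollary \ref{cor-tensor-mild}~(2) also needs $B$ to be mild \emph{as an $A$-promodule}, which holds by the same reasoning you employ ($B$ is epi with surjective completion maps $\hat B \to B_{d}$ by Proposition \ref{prop-mild-criterion}~(1), and these properties are all the corollary's proof actually uses).
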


\begin{proof}
The assertions follow from Corollary \ref{cor-tensor-mild}.
\end{proof}

\subsection{Complete sheaves}

\begin{defn}
Let $X$ be a formal scheme. We define the {\em complete structure
sheaf} $\hat \cO_{X}$, which is a sheaf of complete rings,
 by $\hat \cO_{X} (U):= \widehat{ \cO_{X}(U)} $
for qsqc $U \subseteq X$,
and the {\em complete stalk} $\hat \cO_{X,x}$ to
be the complete ring  corresponding to the proring $\cO_{X,x}$.
\end{defn}

Since the projective limit is left exact, 
the complete structure sheaf is indeed a sheaf of complete  rings.
If $X$ is mild, then every stalk $\cO_{X,x}$ is mild
and $\hat \cO_{X,x}$ is a local ring.

For mild formal schemes $X$ and $Y$,
a morphism $\phi:Y \to X$ gives the data of a continuous map $\phi:Y \to X$
denoted by the same symbol and
 a $\phi$-morphism $\hat \cO_{X} \to \hat \cO_{X} $
which induces local homomorphisms
$\hat \cO_{X,\phi(y)} \to \hat \cO_{Y,y}$, and vice versa.

\begin{conv}
Let  $A$ be a mild admissible proring and $B$ the corresponding 
admissible ring. 
Then by abuse of notation, we also write $\Spf B$ for $\Spf A$.
\end{conv}

The underlying topological space of $\Spf B$ is identified with the set of open prime ideals of $B$.

\begin{defn}
Let $X$ be a formal scheme. A {\em complete $\hat \cO_{X}$-module}
is a sheaf $\cM$ of complete abelian groups such that for each qsqc $U \subseteq X$,
$\cM(U)$ is given a complete $\hat \cO_{X}(U)$-module structure
in a compatible way.
\end{defn}

If $X$ is a formal scheme and $\cM$ is an $\cO_{X}$-promodule,
then putting $\hat \cM(U) := \widehat{\cM(U)}$,
we obtain a complete $\hat \cO_{X}$-module $\hat \cM$.

\begin{defn}
Let $X$ be a mild formal scheme.
A complete $\hat \cO_{X}$-module $\cN$ is said to be {\em semicoherent}
if $\cN \cong \hat \cM$ for some mild semicoherent $\cO_{X}$-promodule $\cM$.
If in addition $X$ is gentle and $\cM$ is gentle and Mittag-Leffler, then we say that $\cN$ is {\em gentle}.
\end{defn}

From the definition, we obtain:

\begin{prop}
\begin{enumerate}
\item 
For a mild formal scheme $X$,
\[
\textup{(Mild semicoherent $\cO_{X}$-promodules)} \cong 
\textup{(Semicoherent complete $\hat \cO_{X}$-modules)}.
\]
\item 
For a gentle formal scheme $X$,
\begin{multline*}
\textup{(Gentle Mittag-Leffler semicoherent $\cO_{X}$-promodules)} \\ 
\cong 
\textup{(Gentle semicoherent complete $\hat \cO_{X}$-modules)}.
\end{multline*}
\item 
For a gentle admissible ring $A$, if we put $X:=\Spf A$,
\[
\textup{(Gentle semicoherent complete $\hat \cO_{X}$-modules)} \\
\cong \textup{(Gentle complete $A$-modules)}.
\]
\end{enumerate}
\end{prop}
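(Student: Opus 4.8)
The plan is to deduce all three equivalences from the affine-local correspondences already in hand, globalized over a qsqc affine basis. Recall that for a mild proring with associated admissible ring we have the equivalence (Complete modules) $\cong$ (Mild promodules), realized by the mutually inverse functors $\wedge$ and $\vee$, and that by Proposition \ref{prop-affine-semicoh} a semicoherent promodule on an affine $\Spf A$ is recovered from its global sections via $M \mapsto M^{\triangle}$. The corresponding statement holds for complete $\hat\cO_X$-modules, which are likewise sheaves on the qsqc opens; so in each case an object is determined by its values on an affine basis and a morphism by the compatible system of its restrictions.

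For (1), the functor $\cM \mapsto \hat\cM$ with $\hat\cM(U) = \widehat{\cM(U)}$ carries a mild semicoherent $\cO_X$-promodule to a complete $\hat\cO_X$-module; it is essentially surjective onto semicoherent complete $\hat\cO_X$-modules by the very definition of the latter. For a quasi-inverse I would apply $\vee$ section-wise: on an affine open $U = \Spf A$ the value $\cN(U)$ is a complete $\hat A$-module, so $\check{\cN(U)}$ is a mild $A$-promodule and $(\check{\cN(U)})^{\triangle}$ is semicoherent; these glue over the affine basis to a mild semicoherent promodule. That the two functors are mutually inverse, and induce bijections on Hom-sets, reduces on each affine open to the established equivalence (Complete $\hat A$-modules) $\cong$ (Mild $A$-promodules) together with $\wedge\circ\vee \cong \mathrm{id}$; the sheaf property over the qsqc affine basis then propagates the local isomorphisms and Hom-bijections to global ones.

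For (2) I would restrict the equivalence of (1) to full subcategories. On a gently affine open $U = \Spf A$ with $A$ gentle, the equivalence (Gentle complete $\hat A$-modules) $\cong$ (Gentle and Mittag-Leffler $A$-promodules) matches precisely the gentle Mittag-Leffler values $\cM(U)$ with the gentle complete values $\hat\cM(U)$; and \emph{gentle} on the complete side is by definition what makes a semicoherent complete $\hat\cO_X$-module gentle. Hence $\wedge$ and its quasi-inverse carry the two gentle subcategories into one another. For (3), with $A$ a gentle admissible ring, $X = \Spf A$, associated mild proring $\check A$ and $\hat\cO_X(X) = A$, I would compose three equivalences: part (2) identifies gentle semicoherent complete $\hat\cO_X$-modules with gentle Mittag-Leffler semicoherent $\cO_X$-promodules; the affine gentle equivalence (Gentle $\check A$-promodules) $\cong$ (Gentle semicoherent $\cO_X$-promodules), restricted to Mittag-Leffler objects, identifies these with gentle Mittag-Leffler $\check A$-promodules; and finally (Gentle complete $A$-modules) $\cong$ (Gentle and Mittag-Leffler $\check A$-promodules) closes the chain.

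The main obstacle is the bookkeeping in the globalization for (1): checking that the section-wise application of $\vee$ does produce a sheaf of prorings that is locally of the form $M^{\triangle}$, hence genuinely semicoherent, and that the unit and counit isomorphisms furnished by the affine equivalence are compatible with the restriction maps, so that they glue to natural isomorphisms of functors rather than merely pointwise bijections. Once this is settled, (2) and (3) are formal, being obtained by restriction to the gentle Mittag-Leffler subcategories and by composition of equivalences.
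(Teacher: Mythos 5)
Your proposal is correct and takes essentially the same route as the paper: there the proposition is stated with no proof beyond ``From the definition, we obtain,'' since essential surjectivity of $\cM \mapsto \hat \cM$ is built into the definition of (gentle) semicoherent complete $\hat \cO_{X}$-modules, and full faithfulness reduces over the affine basis to the already-established equivalences between mild (resp.\ gentle Mittag-Leffler) $A$-promodules and complete (resp.\ gentle complete) $\hat A$-modules, exactly the reduction you carry out. Your explicit section-wise quasi-inverse via $\vee$ and the restriction/composition arguments for (2) and (3) simply spell out what the paper leaves implicit.
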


\section{Local properties of mild formal schemes}\label{sec-local-properties}

We will study local properties of mild formal schemes
and their formal subschemes in terms of complete rings.

\subsection{Adic and Noetherian admissible rings}

\begin{defn}
A complete ring $A$ is said to be {\em adic} if
there exists an ideal $I \subseteq A$ such that 
$I^{n}$, $n \in \NN$, form a basis of open ideals.
\end{defn}

By definition, an adic complete ring is admissible and gentle.

\begin{defn}
An admissible ring is said to be {\em Noetherian} if it is Noetherian 
as a ring.
An admissible ring $A$ is said to be {\em pro-Noetherian}
if its corresponding admissible proring is pro-Noetherian, or equivalently
if for every open ideal $I \subseteq A$, $A/I$ is Noetherian.
\end{defn}

It is obvious that  every Noetherian admissible ring is  pro-Noetherian. 

\begin{expl}\label{ex-kxy-nonadic}
Let $A$ be the ring $k[[x,y]]$ of formal power series in
variables $x$ and $y$
over a field $k$ with the $((xy^{n}))_{n \in \NN}$-topology.
Then $A$ is a Noetherian admissible ring which is not adic.
\end{expl}

\begin{defn}
A mild formal scheme $X$ is said to be {\em locally Noetherian} 
(resp.\  {\em adic})
if  every $x \in X$ has an affine neighborhood $x \in \Spf A \subseteq X$
with $\hat A$ Noetherian and adic (resp.\  adic).
\end{defn}

The definition is due to \cite{EGA}.
We note that if $A$ is Noetherian but not adic, then $\Spf A$ is not locally Noetherian.

\begin{prop}
Let $A$ be an admissible ring such that $\Spf A$ is locally Noetherian.
Then $ A$ is Noetherian and adic.
\end{prop}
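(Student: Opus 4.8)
The plan is to reduce everything to the finitely many Noetherian adic ``charts'' covering $\Spf A$ and then to glue. First I would use that the underlying space $\Spf A = \Spec A_{\red}$ is quasi-compact to extract a finite cover by distinguished opens $\Spf A = \bigcup_{i=1}^{n} D(f_i)$, each contained in one of the hypothesised Noetherian adic neighbourhoods; since a distinguished open of a Noetherian adic affine formal scheme is again Noetherian adic, I may assume each $B_i := \widehat{A_{f_i}}$ is Noetherian and adic. Write $\mathfrak r \subseteq A$ for the largest ideal of definition (the topologically nilpotent elements), so that $A_{\red} = A/\mathfrak r$ and $\sqrt{J} = \mathfrak r$ for every ideal of definition $J$.

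Next I would establish Noetherianity of all the discrete quotients. For an ideal of definition $J$ the ring $A/J$ is discrete with $\Spec(A/J) = \Spf A = \bigcup D(f_i)$ and $(A/J)_{f_i} = B_i/J B_i$, which is Noetherian because $B_i$ is. As Noetherianity is local on the spectrum for a finite distinguished cover, each $A/J$ is Noetherian; in particular $A_{\red}$ is Noetherian. Moreover the image of $\mathfrak r$ in the Noetherian ring $A/J$ is its nilradical, hence nilpotent, so $\mathfrak r^{N} \subseteq J$ for some $N$. This is one half of the assertion that the topology is $\mathfrak r$-adic.

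The heart of the argument, and the step I expect to be the main obstacle, is to show that the topology is exactly $\mathfrak r$-adic and that $\mathfrak r$ is finitely generated; the difficulty is that $A_{f_i} \to B_i$ is not known to be flat, so finiteness cannot simply be descended from the completed charts. On each $B_i$ the extended ideal $\mathfrak r B_i$ is finitely generated and satisfies $\sqrt{\mathfrak r B_i} = \mathfrak r_i$, the largest ideal of definition of the Noetherian adic ring $B_i$, so every $\mathfrak r^n B_i = (\mathfrak r B_i)^n$ is open in $B_i$. Fixing $n$, I would choose an ideal of definition $J$ of $A$ with $J B_i \subseteq \mathfrak r^n B_i$ for all $i$; then for each smaller ideal of definition $J'$, working inside the \emph{scheme} $\Spec(A/J')$ and comparing the quasi-coherent ideals attached to $J$ and to $\mathfrak r^n$ on the cover $\{D(f_i)\}$ gives $J \subseteq \mathfrak r^n + J'$, whence $J \subseteq \overline{\mathfrak r^n}$. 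Thus the closures $\overline{\mathfrak r^n}$ are open and, by the nilpotence above, form a basis of open ideals. A topological Nakayama argument then finishes: $\mathfrak r/\overline{\mathfrak r^2}$ is finite over the Noetherian ring $A_{\red}$, and lifting generators $x_1,\dots,x_s$ of it to $\mathfrak r$ one uses the inclusions $\overline{\mathfrak r^n} \subseteq (x_1,\dots,x_s)^n + \overline{\mathfrak r^{n+1}}$ together with completeness of $A$ to rewrite each element of $\mathfrak r$, and of each $\overline{\mathfrak r^n}$, as a convergent combination of the $x_l$, respectively of the degree-$n$ monomials in them. This shows at once that $\mathfrak r = (x_1,\dots,x_s)$ and that $\overline{\mathfrak r^n} = \mathfrak r^n$, so the topology is genuinely $\mathfrak r$-adic with $\mathfrak r$ finitely generated.

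Finally I would invoke the classical criterion that a ring separated and complete for the $\mathfrak r$-adic topology, with $\mathfrak r$ finitely generated and $A/\mathfrak r$ Noetherian, is itself Noetherian. Combined with the previous paragraph this yields that $A$ is Noetherian, and it is adic by construction. The routine points I am suppressing are the stability of ``Noetherian adic'' under passage to distinguished opens, the affine-local nature of Noetherianity, and the bookkeeping in the convergent-series rearrangement; the genuinely delicate point is the gluing that makes the $\overline{\mathfrak r^n}$ open without flatness of completion.
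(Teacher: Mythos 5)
Your proof is correct, and it takes a genuinely different route from the paper's, for the simple reason that the paper gives no argument at all: its entire proof is the citation \cite[Chap.\ I, Cor.\ 10.6.5]{EGA}. What you have produced is a self-contained reconstruction of that result inside this paper's own framework, and the two steps you flag as delicate both check out. First, the inclusion $J \subseteq \mathfrak{r}^n + J'$ is correctly obtained by comparing the two ideals inside the \emph{discrete} Noetherian quotient $A/J'$, where an inclusion of ideals may be tested after localizing at the finitely many $f_i$ because localization is exact; this genuinely sidesteps any flatness of $A_{f_i} \to B_i$, and the surjectivity $B_i \twoheadrightarrow (A/J')_{f_i}$ that it quietly uses is available because $A_{f_i}$ is mild (the paper's result that distinguished opens of mildly affine formal schemes are mildly affine, combined with Proposition \ref{prop-mild-criterion}). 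Second, the Nakayama step is sound because $(x_1,\dots,x_s)^n + \overline{\mathfrak{r}^{n+1}}$ contains the open ideal $\overline{\mathfrak{r}^{n+1}}$, hence is open, hence closed, so the elementary inclusion $\mathfrak{r}^n \subseteq (x_1,\dots,x_s)^n + \overline{\mathfrak{r}^{n+1}}$ does pass to closures, and completeness of $A$ then permits the coefficientwise summation that yields $\overline{\mathfrak{r}^n} = \mathfrak{r}^n$. As for what each approach buys: the citation is shorter, but it is stated in \cite{EGA} for rings admissible in Grothendieck's stronger sense (see Footnote \ref{footnote-ideal-of-def}), whereas the present proposition concerns the paper's weaker, McQuillan-style admissibility; to invoke EGA one must first know $A$ is pro-Noetherian, so that by Proposition \ref{prop-pro-Noetherian-admissible} its ideals of definition are ideals of definition in the sense of \cite{EGA} --- and that bridge is exactly what your Step 2 supplies, so your argument is the more complete one in this setting. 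Two minor simplifications: in Step 2 you only need $(A/J)_{f_i}$ to be a \emph{quotient} of $B_i$ (the equality $(A/J)_{f_i}=B_i/JB_i$ does hold, since ideals of the Noetherian adic ring $B_i$ are closed, but it is never needed); and once the topology is known to be $\mathfrak{r}$-adic you could finish by citing the paper's own Proposition \ref{prop-adic+pro-Noetherian-Noetherian} instead of the classical complete-ring criterion, since pro-Noetherianity is already in hand.
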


\begin{proof}
\cite[Chap.\ I, Cor.\ 10.6.5]{EGA}. 
\end{proof}

\begin{prop}\label{prop-pro-Noetherian-admissible}
Let $A$ be a pro-Noetherian admissible ring and $I \subseteq A$ an ideal of definition.
Then for any neighborhood $V$ of $0$, there exists $n \in \NN$ with $I^n \subseteq V$.
(Namely, in the sense of \cite{EGA}, $I$ is an ideal of definition 
and $A$ is admissible. See Footnote \ref{footnote-ideal-of-def}, page \pageref{footnote-ideal-of-def})
\end{prop}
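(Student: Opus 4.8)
The plan is to reduce the statement to a finiteness assertion inside the Noetherian quotients $A/J$ and then to invoke the elementary fact that a finitely generated ideal with nilpotent generators is nilpotent. First I would note that since $A$ is linearly topologized, the open ideals form a basis of neighborhoods of $0$; hence it suffices to produce, for every open ideal $J \subseteq A$, an integer $n \in \NN$ with $I^n \subseteq J$. Any given neighborhood $V$ of $0$ contains such a $J$, and then $I^n \subseteq J \subseteq V$.

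So fix an open ideal $J$. By the pro-Noetherian hypothesis (in the form: $A/J$ is Noetherian for every open ideal $J$), the image $\bar I$ of $I$ in $A/J$ is a finitely generated ideal, say $\bar I = (\bar a_1, \dots, \bar a_r)$ with $a_i \in I$. Because $I$ is an ideal of definition in McQuillan's sense, each $a_i$ is topologically nilpotent, i.e. $a_i^m \to 0$; as $J$ is an open neighborhood of $0$, there is $N_i$ with $a_i^{N_i} \in J$, equivalently $\bar a_i^{\,N_i} = 0$ in $A/J$.

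The key step is then purely algebraic. Setting $n := N_1 + \cdots + N_r$, every monomial $\bar a_1^{e_1}\cdots \bar a_r^{e_r}$ with $e_1 + \cdots + e_r = n$ has some exponent $e_i \ge N_i$ and therefore vanishes, whence $\bar I^{\,n} = 0$; that is, $I^n \subseteq J$, as required. This $n$ completes the reduction and hence the proof.

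I do not expect a genuine obstacle here. The two points that must be tracked carefully are that we are working with McQuillan's weaker notion of ideal of definition — so that topological nilpotence of the generators is exactly what the hypothesis delivers — and that the finite generation of $\bar I$ is where the pro-Noetherian assumption is essential: without finiteness the product argument giving $\bar I^{\,n} = 0$ would break down, so passing to the Noetherian quotient $A/J$ is the crux of the argument.
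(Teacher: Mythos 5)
Your proof is correct and follows essentially the same route as the paper's: reduce to the case where $V$ is an open ideal using linear topologization, use the pro-Noetherian hypothesis to get finite generation of $I(A/V)$, and conclude nilpotence of that ideal from (topological) nilpotence of its elements. The only difference is that you spell out the elementary pigeonhole argument (a finitely generated ideal with nilpotent generators is nilpotent) that the paper states in a single sentence.
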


\begin{proof}
Since $A$ is linearly topologized, we may suppose that $V$ is an ideal.
Then $ A/V $ is Noetherian. Therefore $I(A/V) $ is finitely generated.
Since every element of $I(A/V) $ is nilpotent, so is $I(A/V) $. This means
that for some $n$, $I^n \subseteq V$.
\end{proof}

From this proposition, our locally ind-Noetherian mild formal scheme
determines a formal scheme in the sense of \cite{EGA}.

\begin{prop}\label{prop-adic+pro-Noetherian-Noetherian}
Every pro-Noetherian adic ring $A$ is Noetherian.
Furthermore for every ideal $I$ of definition in $A$,
the topology on $A$ is identical to the $I$-adic topology.
\end{prop}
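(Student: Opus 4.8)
The plan is to handle the two assertions in turn, establishing Noetherianness first and then deducing the comparison of topologies.

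For the first assertion, I would fix an ideal $I \subseteq A$ for which $(I^{n})_{n \in \NN}$ is a basis of open ideals; then $A$ is separated and complete for the $I$-adic topology, since completeness means the map $A \to \varprojlim A/I^{n}$ is bijective. As each $I^{n}$ is open and $A$ is pro-Noetherian, every quotient $A/I^{n}$ is Noetherian. First I would extract that $I/I^{2}$ is a finitely generated $A/I$-module: it is an ideal of the Noetherian ring $A/I^{2}$, hence finitely generated over $A/I^{2}$, and being annihilated by $I$ it is finitely generated over $A/I$. Consequently the associated graded ring $\mathrm{gr}_{I}(A) = \bigoplus_{n \ge 0} I^{n}/I^{n+1}$ is generated as an $(A/I)$-algebra by its degree-one piece $I/I^{2}$, so it is a finitely generated algebra over the Noetherian ring $A/I$ and therefore Noetherian by the Hilbert basis theorem.

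Next I would invoke the classical principle that a ring which is separated and complete for the $I$-adic topology and whose associated graded ring $\mathrm{gr}_{I}(A)$ is Noetherian is itself Noetherian. The mechanism is standard: given an ideal $\fa \subseteq A$ one equips it with the induced filtration $\fa \cap I^{n}$, notes that $\bigoplus_{n}(\fa \cap I^{n})/(\fa \cap I^{n+1})$ is a graded submodule of the Noetherian module $\mathrm{gr}_{I}(A)$, hence finitely generated, lifts homogeneous generators to elements of $\fa$, and uses completeness to show by successive approximation that these elements generate $\fa$. This final completeness step is the main obstacle, being the only genuinely nontrivial point; I would either carry it out through the usual graded-to-filtered surjectivity lemma or simply cite it from \cite{EGA} (or a standard commutative algebra text).

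For the second assertion, I would keep the fixed ideal $I$ above and let $J$ be an arbitrary ideal of definition, and show that the given topology $\tau$ coincides with the $J$-adic topology. On one hand $J$ is open, so $J \supseteq I^{m}$ for some $m$, whence $J^{n} \supseteq I^{mn}$ is open for every $n$, and every $J$-adic neighborhood of $0$ is $\tau$-open. On the other hand $A$ is pro-Noetherian and admissible, so Proposition \ref{prop-pro-Noetherian-admissible}, applied to the ideal of definition $J$ and to the neighborhood $V = I^{m}$, yields for each $m$ an $n$ with $J^{n} \subseteq I^{m}$; hence each $\tau$-neighborhood $I^{m}$ of $0$ contains a $J$-adic one and is $J$-adically open. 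The two filtrations by open ideals are thus mutually cofinal and the topologies coincide. I would remark that this part uses only the admissible and pro-Noetherian hypotheses, not the Noetherianness proved above.
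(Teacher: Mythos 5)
Your proof is correct, and for the first assertion it follows the paper's route exactly: extract finite generation of $I/I^{2}$ from the Noetherianness of $A/I^{2}$, then conclude that $A$ is Noetherian by the standard completeness argument --- the paper simply cites \cite[0, Cor.\ 7.2.6]{EGA} at precisely the point where you propose either to cite it or to unfold the associated-graded proof. The genuine divergence is in the second assertion. To get $J^{m} \subseteq I^{n}$, the paper uses the finite generation of $J$ --- available because $A$ has just been proved Noetherian --- together with the topological nilpotence of its generators, so in the paper the topology statement logically depends on the first assertion. You instead invoke Proposition \ref{prop-pro-Noetherian-admissible}, applied to the ideal of definition $J$ and the open neighborhood $V=I^{n}$; this is legitimate, since an adic ring is admissible by definition, and it makes the comparison of topologies independent of the Noetherian conclusion, as you point out. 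Under the hood the two mechanisms are nearly identical (Proposition \ref{prop-pro-Noetherian-admissible} is itself proved by a finite-generation-plus-nilpotence argument in the Noetherian quotient $A/V$), but your version buys a cleaner logical structure --- the two halves of the proposition become independent --- whereas the paper's version keeps the argument self-contained inside the proof once Noetherianness is in hand.
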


\begin{proof}
Let  $I \subseteq A$ be an ideal such that  $(I^n)_{n \in \NN}$
is a basis of ideals of definition. 
By  definition, $A/I$ and $A/I^2$ are  Noetherian. 
Consequently $I/I^2$ is finitely generated, and from \cite[0, Cor.\ 7.2.6]{EGA},
$A$ is Noetherian. 

Let $J$ be an arbitrary ideal of definition.
Then for some $m \in \NN$, $I ^m \subseteq J$. 
Hence for every $n \in \NN$, $I ^{mn} \subseteq J ^n$, and so $J^n$ is open.
Conversely, since $J$ is finitely generated, for every $n \in \NN$, there exists
$m \in \NN$ with $J^m \subseteq I^n$. This proves the lemma.
\end{proof}

\begin{cor}\label{cor-ind-adic-Noet}
Every mild, locally ind-Noetherian and adic formal scheme is locally Noetherian.
\end{cor}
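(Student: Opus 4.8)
The plan is to reduce the global statement to a pointwise one and then let Proposition \ref{prop-adic+pro-Noetherian-Noetherian} do the real work, since that result already converts ``pro-Noetherian $+$ adic'' into ``Noetherian'' at the level of complete rings. Fix $x \in X$. Using that $X$ is mild and adic, I would first produce a mildly affine neighborhood $\Spf A \ni x$ with $A$ mild and $\hat A$ adic; granting this for the moment, everything else is formal. Since an open formal subscheme is in particular a formal subscheme, and since formal subschemes of a locally ind-Noetherian formal scheme are again locally ind-Noetherian, the affine open $\Spf A$ is itself locally ind-Noetherian. Hence, by the proposition saying that an admissible proring $A$ with $\Spf A$ locally ind-Noetherian is pro-Noetherian, the proring $A$ is pro-Noetherian. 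Note that the local ind-Noetherian hypothesis is thus used ``for free'': I do not need to align it with the chosen neighborhood, because it propagates to every affine open.

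Because $A$ is mild, $\hat A$ is exactly the admissible ring corresponding to $A$ under the equivalence of \S\ref{subsec-admissible}, and the assertion ``$A$ is pro-Noetherian'' is by definition the assertion that the admissible ring $\hat A$ is pro-Noetherian (its associated admissible proring is $A$). Thus $\hat A$ is a pro-Noetherian admissible ring which is moreover adic, so Proposition \ref{prop-adic+pro-Noetherian-Noetherian} applies and shows that $\hat A$ is Noetherian. Therefore $\hat A$ is Noetherian and adic, i.e.\ $\Spf A$ is a locally Noetherian neighborhood of $x$; as $x$ was arbitrary, $X$ is locally Noetherian.

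The only point that needs care, and the one I expect to be the main obstacle, is the existence of the mildly affine \emph{and} adic neighborhood in the first place. The clean case is when the neighborhood witnessing adicity is already mildly affine (which is the natural reading, since the definitions in this section are stated for mild formal schemes and the condition ``$\hat A$ adic'' is meant for $A$ a mild admissible proring): then that neighborhood serves directly. In general I would start from a mildly affine neighborhood $\Spf A_1$ of $x$ and an adic neighborhood $\Spf A_2$ of $x$, pass to a common distinguished open $D \ni x$, and observe that $D$ is mildly affine by the proposition that distinguished open subschemes of mildly affine formal schemes are mildly affine, so $A := \cO_X(D)$ is mild. To see that $\hat A$ remains adic, I would write the adic ring as $\hat{A_2}$ with ideal of definition $J$, reindex the proring by the cofinal family $(\hat{A_2}/J^{n})$, and note that the completed localization $\widehat{A_{2,g}}$ is just the $J$-adic completion of the ordinary localization $(\hat{A_2})_g$, because localization commutes with the quotients by $J^{n}$; such a completion is adic. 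The delicate feature is that this descent of adicity to distinguished opens is transparent only once one is on the mild side, where $\hat{A_2}\to A_{2,d}$ is surjective and the reindexing by $(\hat{A_2}/J^{n})$ is legitimate. Arranging the mild and the adic data to sit in one mildly affine adic chart is therefore the crux; once that is done, the three cited propositions assemble the conclusion immediately.
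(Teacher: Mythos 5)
Your proof is correct and is essentially the paper's own (the paper's proof reads simply ``Obvious''): pass to an affine chart, note that local ind-Noetherianness forces its proring to be pro-Noetherian, use mildness to identify this with pro-Noetherianness of the admissible ring $\hat A$, and conclude by Proposition \ref{prop-adic+pro-Noetherian-Noetherian} that $\hat A$ is Noetherian as well as adic. The chart-alignment issue in your final paragraph does not actually arise: in \S\ref{sec-local-properties} adicity is defined only for mild formal schemes, whose charts are given (per the stated convention $\Spf B := \Spf A$ for $B$ the admissible ring corresponding to a mild proring $A$) by mild admissible prorings, so your ``clean case'' is the only case --- which is just as well, since your general-case reindexing of $A_2$ by $(\hat A_2/J^{n})$ would already presuppose the mildness of $A_2$ that it was meant to circumvent.
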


\begin{proof}
Obvious.
\end{proof}

\subsection{Strict formal  subschemes}\label{subsec-what-subschemes}

Let $A$ be an admissible ring. 
The category of complete $A$-modules is an additive category with kernels and cokernels.
For a morphism $\phi :N \to M$ of complete $A$-modules,
we have the kernel $K \subseteq N$ in the category of $A$-modules.
Since $K \subseteq N$ is  closed,
it is a complete $A$-module with respect to the subspace topology,
and the kernel of $\phi$ also in the category of complete
$A$-modules. 

For every closed $A$-submodule $P $
of a complete $A$-module $M$, we can construct 
the {\em complete quotient}, denoted $M \sslash P$,
as the completion of the usual quotient $M / P$
which is endowed with the quotient topology.

If $M$ is gentle,
then $M/P$ 
is complete, so $M\sslash P=M/P$ (for example, see \cite[Th.\ 8.1]{Matsumura}).
Now the cokernel of a morphism $\phi:N \to M$ of 
complete $A$-modules is 
$M\sslash \overline{\Im \phi}$.
Here $\overline{\Im \phi}$ is the closure of $\Im \phi \subseteq M$.

\begin{defn}
Let $M \to N$ be an epimorphism of complete $A$-modules.
We say that $N$ is  a {\em normal quotient} of $M$ if 
$N \cong M \sslash \Ker (\phi)$.
\end{defn}

Let $M$ be a complete $A$-module, $(M_{d})$ a basis of open 
$A$-modules and $P \subseteq M$ a closed $A$-submodule.
Then we have the exact sequences of discrete $A$-modules
\[
0 \to P/(P \cap M_{d}) \to M/ M_{d} \to M/ (P+M_{d}) \to 0
\]
and the exact sequence of $\check A$-promodules
\[
0 \to (P/(P \cap M_{d})) \to (M/ M_{d}) \to (M/ (P+M_{d})) \to 0.
\]
Applying the completion functor
to the last exact sequence, we obtain the exact sequence of 
complete $A$-modules
\[
0 \to P \to M \to M \sslash P \to 0 .
\]

\begin{lem}\label{lem-normal-Mittag}
Let $A$ be an admissible ring and $M \to N$ an epimorphism of complete $A$-modules. Then $N$ is a normal quotient
if and only if the kernel of the natural epimorphism $\check M \to \check N$
of the corresponding mild $\check A$-promodules is mild.
\end{lem}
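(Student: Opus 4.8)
The plan is to work with $P:=\Ker\phi$ and the promodule kernel $K:=\Ker(\check\phi\colon\check M\to\check N)$, and to show that both the mildness of $K$ and the normal-quotient condition are governed by a single short exact sequence of $\check A$-promodules. First I would record that $P$ is a closed $\hat A$-submodule of $M$: it equals $\bigcap_e\phi^{-1}(N_e)$, an intersection of open (hence closed) submodules, since $N$ is separated; so $M\sslash P$ is defined. Because $\phi$ is an epimorphism of complete modules, Lemma \ref{lem-epi} gives that $\check\phi$ is an epimorphism of promodules, so $0\to K\to\check M\to\check N\to 0$ is exact. Applying the left-exact completion functor $\wedge$ and using $\widehat{\check M}=M$, $\widehat{\check N}=N$, I obtain $\hat K=\Ker\phi=P$. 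This identity is the technical pivot of the whole argument.

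Next I would factor $\phi$ through the canonical map $q\colon M\to M\sslash P$ as $\phi=\tilde\phi\circ q$, where $\tilde\phi\colon M\sslash P\to N$ is the induced morphism of complete modules; it exists by the universal property of completion, is an epimorphism, and $N$ is a normal quotient of $M$ precisely when $\tilde\phi$ is an isomorphism. Using the exact sequence $0\to P\to M\to M\sslash P\to 0$ displayed just before the statement, its promodule version identifies $\Ker(\check q)$ with $\check P$, which is mild (it is $\vee(P)$). Since $\check q$ is an epimorphism, the kernel-of-a-composite sequence for $\check\phi=\check{\tilde\phi}\circ\check q$ reads
\[
0\to\check P\to K\to L\to 0,\qquad L:=\Ker\check{\tilde\phi}.
\]
Because $\vee$ is fully faithful, $\tilde\phi$ is an isomorphism iff $\check{\tilde\phi}$ is; and as $\check{\tilde\phi}$ is already an epimorphism (again Lemma \ref{lem-epi}), this happens iff $L=0$. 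Thus \emph{$N$ is a normal quotient iff $L=0$}.

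The forward implication is then immediate: if $N$ is a normal quotient, $L=0$, so $K\cong\check P$ is mild. The reverse implication is the crux and the step I expect to be the main obstacle, because a priori a mild $K$ only forces $L$ to be mild (as a quotient of $K$, by Corollary \ref{cor-quot-mild}), and a nonzero mild promodule is perfectly possible. The resolution is to exploit $\hat K=P$ directly: by the definition of mildness together with $\wedge\circ\vee\cong\id$, a mild $K$ satisfies $K\cong\check{\hat K}=\check P$, so $\check P$ and $K$ are mild objects with the same completion. I would then observe that the inclusion $j\colon\check P\hookrightarrow K$ composes with $K\hookrightarrow\check M$ to the standard inclusion $\check P\hookrightarrow\check M$, whose completion is $P\hookrightarrow M$; since the completion of $K\hookrightarrow\check M$ is the same inclusion $\hat K=P\hookrightarrow M$ and this is a monomorphism, one gets $\wedge(j)=\id_P$. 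Finally, because $\wedge$ restricts to an equivalence between mild $\check A$-promodules and complete $\hat A$-modules, it reflects isomorphisms between mild objects; as $\wedge(j)=\id_P$ is an isomorphism, so is $j$, whence $L=\Coker(j)=0$ and $N$ is a normal quotient. The only points needing genuine care are the verification that $\hat K=P$ and that $\wedge(j)=\id_P$, both of which come from left-exactness of $\wedge$ applied to the two exact sequences above.
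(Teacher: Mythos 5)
Your proof is correct, and it reaches the hard (converse) direction by a genuinely different route than the paper, so a comparison is worthwhile. In substance both proofs treat the forward direction identically: the kernel of $\check M\to\check N$ gets identified with $\check P=(P/(P\cap M_d))$, which is mild. For the converse, the paper works entirely with level representations: it writes $\check M=(M_d)$ and $K=(K_d)$ with $K_d\subseteq M_d$, uses the Mittag-Leffler property (Proposition \ref{prop-ML-sur-isom}) to replace $K$ by the epi subsystem $L=(L_d)$, $L_d=\bigcap_{d'\ge d}\Im(K_{d'}\to K_d)$, identifies $\check N=(M_d/L_d)$, and then invokes the surjectivity criterion for mildness (Proposition \ref{prop-mild-criterion}) to conclude $N=M\sslash\hat L$; it thus constructs the closed submodule realizing $N$ as a normal quotient explicitly. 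You instead reduce both implications to the vanishing of $L=\Ker\check{\tilde\phi}$ through the exact sequence $0\to\check P\to K\to L\to 0$ coming from the factorization $\phi=\tilde\phi\circ q$, and you kill $L$ by two functorial facts: completion carries $K=\Ker\check\phi$ to $P=\Ker\phi$ (kernels of promodule morphisms are computed levelwise and $\varprojlim$ commutes with kernels), so mildness gives $K\cong\check{\hat K}=\check P$; and $\wedge$ restricted to mild promodules is a quasi-inverse to $\vee$, hence reflects isomorphisms, so the inclusion $j\colon\check P\hookrightarrow K$ with $\wedge(j)=\id_P$ is an isomorphism and $L=\Coker(j)=0$. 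Your route eliminates all Mittag-Leffler and reindexing bookkeeping and isolates the real mechanism (a mild promodule is recovered from its completion); its cost is that it leans on left exactness of $\wedge$ and on conservativity of $\wedge$ on mild objects, neither of which the paper records explicitly, though both follow routinely from what it does state (levelwise computation of kernels, and the equivalence between mild $A$-promodules and complete $\hat A$-modules). One shared point to be aware of: like the paper's own proof, you read ``$N\cong M\sslash\Ker(\phi)$'' as the assertion that the canonical map $\tilde\phi\colon M\sslash P\to N$ is an isomorphism; this canonical reading is exactly what the paper's forward computation presupposes, so your interpretation agrees with the intended one.
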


\begin{proof}
First suppose that $N$ is a normal quotient
and write $N = M \sslash P$. 
Take a basis $(M_{d})$ of open $A$-submodules of $M$. Then the kernel of 
$\check M \to \check N$ is $(P/(P\cap M_{d}))$, which is clearly mild. 

Conversely suppose that the kernel $K$ of $\check M \to \check N$ is
mild. Then we may suppose that $\check M =(M_{d})_{d\in D}$,
$K=(K_{d})_{d\in D}$   and for each $d \in D$, $K_{d} \subseteq M_{d}$.
Put $L_{d} := \bigcap_{d' \ge d} \Im (K_{d'} \to K_{d})$
and $L := (L_{d})$. Since $K$ is Mittag-Leffler, the natural morphism
$L \to K$ is an isomorphism, and $L$ is equal to $K$
as a subobject of $\check M$.
So 
\[
\check N = \check M / K = \check M /L = (M_{d}/L_{d}).
\]
Now we easily see that $N = M \sslash \hat L$ and so $N$ is a normal
quotient.
\end{proof}

If $A$ is an admissible ring, $(I_{d})$ is a basis of ideals of definition
and $J \subseteq A$ is a closed ideal, then $B:=(A/(J+I_{d}))$
is an admissible proring.
So
the normal quotient $A \sslash J = \check B$ is an admissible ring.

\begin{defn}
Let $X$ be a formal scheme. 
A formal subscheme $Y \subseteq X $ 
is said to be {\em strict}  if
for every $y \in Y$, there exists an affine neighborhood $U \subseteq X$
such that $Y \subseteq U$ is closed and
$\hat \cO_{Y} (U)$ is a normal quotient of $\hat \cO_{X} (U)$.
\end{defn}

If $A$ is a Noetherian adic ring, then every ideal $I$ of $A$
is closed (see \cite[page 264]{Zariski-Samuel-II} or \cite[Th.\ 8.2 and 8.14]{Matsumura}).
Then $A/I=A\sslash I$ and $\Spf A/I$ is
a strict closed formal subscheme of $\Spf A$. 
Thus for a locally Noetherian formal scheme $\cX$,
the formal closed subscheme of $\cX$ is  the same as
the closed subscheme in \cite{EGA}.

\begin{prop}
Let $X$ be a mild formal scheme and $Y $ a closed formal subscheme.
Then $Y$ is strict if and only if the defining proideal sheaf of $Y$ is
mild.
\end{prop}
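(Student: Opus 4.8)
The plan is to reduce everything to a single mildly affine open and there invoke Lemma \ref{lem-normal-Mittag}. The engine is the following \emph{local claim}: let $U=\Spf A\subseteq X$ be a mildly affine open, so $A$ is a mild admissible proring and $\hat A=\hat\cO_X(U)$ is the associated admissible ring. Since $A\to A/\cI(U)$ is an epimorphism of $A$-promodules and $A$ is mild, Corollary \ref{cor-quot-mild} shows $\cO_Y(U)=A/\cI(U)$ is mild. Passing to associated epi prorings, mildness gives canonical isomorphisms $\check{\hat A}\cong A$ and $\check{\widehat{A/\cI(U)}}\cong A/\cI(U)$, under which the natural map becomes $A\to A/\cI(U)$, of kernel $\cI(U)$. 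Applying Lemma \ref{lem-normal-Mittag} to the epimorphism $\hat A\to\hat\cO_Y(U)=\widehat{A/\cI(U)}$ of complete $\hat A$-modules then says \emph{exactly} that $\hat\cO_Y(U)$ is a normal quotient of $\hat\cO_X(U)$ if and only if $\cI(U)$ is mild. Thus on a mildly affine $U$, the defining condition of strictness is equivalent to mildness of $\cI(U)$.

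For the implication ($\Leftarrow$), suppose $\cI$ is mild. Given $y\in Y$, mildness furnishes a mildly affine neighborhood $U\ni y$ with $\cI(U)$ mild. By the local claim $\hat\cO_Y(U)$ is a normal quotient of $\hat\cO_X(U)$, and since $Y$ is closed in $X$ the set $Y\cap U$ is closed in $U$. Hence $U$ witnesses strictness at $y$, and $Y$ is strict.

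For ($\Rightarrow$), suppose $Y$ is strict. If $x\notin Y$, I choose (using that $X$ is mild) a mildly affine neighborhood $U\ni x$ disjoint from the closed set $Y$; then $\cO_Y|_U=0$, so $\cI|_U=\cO_X|_U$ and $\cI(U)=\cO_X(U)$ is mild. If $x=y\in Y$, strictness gives an affine neighborhood $U_1\ni y$ on which $\hat\cO_Y(U_1)$ is a normal quotient of $\hat\cO_X(U_1)$, while mildness of $X$ gives a mildly affine $U_0\ni y$. I then pass to a distinguished open $V=D(f)\subseteq U_0\cap U_1$ with $y\in V$; being a distinguished open of $U_0$, it is again mildly affine. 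Granting that the normal-quotient property descends from $U_1$ to its subopen $V$, the local claim yields that $\cI(V)$ is mild, so $\cI$ is mild at $y$; combined with the case $x\notin Y$ this proves $\cI$ mild.

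The main obstacle is precisely this last descent step. The definition of strictness quantifies over \emph{arbitrary} affine neighborhoods, whereas mildness of $\cI$ must be tested on \emph{mildly} affine ones, and the two classes need not coincide: an affine open of a mild formal scheme need not be mildly affine (this gap is exactly why the countability hypothesis is imposed in Proposition \ref{prop-gentle-affine}), so I cannot simply assert that $\cI(U_1)$ is mild. I expect to resolve the descent by combining Lemma \ref{lem-normal-Mittag} with the identity $\cI|_{D(f)}=(\cI(U_1)\otimes_A A_f)^{\triangle}$ of Lemma \ref{lem-affine-restriction} and with Corollary \ref{cor-tensor-mild}, tensoring with the mild localization $A_f$ preserving mildness. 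The delicate part will be verifying the compatibility of the associated-proring functor, completion, and localization in the non-Noetherian admissible setting, which is what makes the normal-quotient property localize to the mildly affine $V$.
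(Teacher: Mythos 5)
Your local claim together with the ($\Leftarrow$) direction is, in substance, the paper's entire proof: the paper's argument consists of the single sentence ``The proposition follows from Lemma \ref{lem-normal-Mittag}'', and the content of that sentence is exactly your local equivalence on a mildly affine $U=\Spf A$ --- namely that $A/\cI(U)$ is mild by Corollary \ref{cor-quot-mild}, that $\hat A\to\widehat{A/\cI(U)}$ is an epimorphism of complete $\hat A$-modules (this is Lemma \ref{lem-epi}, $(1)\Rightarrow(3)$, which you use tacitly), and that under the identifications $\check{\hat A}\cong A$ and $\check{\widehat{A/\cI(U)}}\cong A/\cI(U)$, valid precisely because both prorings are mild, Lemma \ref{lem-normal-Mittag} converts the normal-quotient property into mildness of $\cI(U)$. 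That part of your write-up is correct. One small repair: off $Y$, passing from ``all stalks of $\cO_X/\cI$ vanish on $U$'' to ``$\cI(U)=\cO_X(U)$'' requires the unlabeled zero-stalk lemma sitting between Propositions \ref{prop-exact-stalk} and \ref{prop-ML-epi-stalk}, whose Mittag-Leffler hypothesis is available because $A/\cI(U)$ is mild, hence Mittag-Leffler.

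The descent step you leave open is not resolved by the paper either; the paper is simply silent about it. Its one-line proof only makes sense if the existential ``affine neighborhood'' in the definition of strictness is read as running over the mildly affine basis, and under that reading your proof is already complete, with no descent needed. Your suspicion that the issue is genuine is well founded: the paper proves Proposition \ref{prop-gentle-affine} (a gently affine formal scheme has gentle coordinate proring) by an argument that uses countability essentially, and proves no mild analogue, so nothing in the paper shows that an affine open of a mild formal scheme is mildly affine. Worse for your proposed fix, when $B=\cO_X(U_1)$ is not mild, Lemma \ref{lem-normal-Mittag} applied on $U_1$ speaks about the kernel of $\check{\hat B}\to\check{\hat\cO_Y(U_1)}$, a promodule over $\check{\hat B}$, which admits no canonical identification with $\cI(U_1)$; so the tensoring strategy via Lemma \ref{lem-affine-restriction} and Corollary \ref{cor-tensor-mild} has nothing to latch onto. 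In short: you have not missed an idea that the paper's proof supplies; you have surfaced an elision in the paper. Measured against the paper's implicit, purely local reading of strictness, your argument is complete and more careful than the original.
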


\begin{proof}
The proposition follows from Lemma \ref{lem-normal-Mittag}.
\end{proof}

\begin{prop}
Every strict formal subscheme of a gentle formal scheme is gentle.
\end{prop}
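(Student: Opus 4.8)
The plan is to reduce the statement to the affine situation and then transport it to the ``complete side,'' where gentleness becomes the existence of a countable basis of open ideals and strictness becomes the normal-quotient condition.

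First I would observe that the assertion is local on $Y$. Given $y \in Y$, strictness provides an affine open $U \subseteq X$ with $Y \cap U$ closed in $U$ and $\hat \cO_Y(U)$ a normal quotient of $\hat \cO_X(U)$; shrinking $U$ to a distinguished open contained in a gently affine neighbourhood of $y$, I may assume in addition that $U = \Spf A$ with $A$ gentle, since distinguished opens of gently affine formal schemes are again gently affine. Writing $\cI \subseteq \cO_X$ for the defining proideal sheaf of the closed formal subscheme $Y \cap U$, and $B := (\cO_X/\cI)(U) = A/I$ with $I := \cI(U)$, it suffices to prove that the admissible proring $B$ is gentle.

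Since $A$ is gentle it is mild, and the epimorphism $A \to B$ makes $B$ mild by Corollary \ref{cor-quot-mild}. Hence $B$ corresponds to an admissible ring $\hat B$, and through the equivalence (Gentle admissible prorings) $\cong$ (Gentle admissible rings) it is enough to show that $\hat B$ is a gentle admissible ring, i.e.\ that it admits a countable basis of open ideals. Here strictness enters: because $Y \cap U$ is strict, $\hat B$ is a normal quotient of $\hat A$, so $\hat B \cong \hat A \sslash K$ where $K := \Ker (\hat A \to \hat B)$ is a closed ideal and $\hat A \sslash K$ carries the completed quotient topology of $\hat A / K$.

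It then remains to prove the elementary fact that a normal quotient of a gentle admissible ring is gentle. Since $A$ is gentle, $\hat A$ is a gentle admissible ring, so it has a countable basis $(I_d)_{d \in \NN}$ of open ideals. For the quotient topology on $\hat A/K$ the images $(I_d+K)/K$ form a countable basis of open ideals, and passing to the completion the kernels of $\hat B \to \hat A/(I_d+K)$ constitute a countable basis of open ideals of $\hat B$. Thus $\hat B$ is gentle, and hence so is $B$. The only real obstacle is the bookkeeping around strictness: it is precisely the normal-quotient condition that forces the topology of $\hat B$ to be the completed quotient topology of $\hat A$, so that the countable basis of $\hat A$ descends. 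Without strictness the topology on the quotient could a priori require uncountably many open ideals, and the conclusion would fail.
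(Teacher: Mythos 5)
The main body of your argument is sound, and it is surely what the paper's one-word proof (``Obvious'') has in mind: once the affine open in question is known to be $\Spf A$ with $A$ gentle, mildness of $A$ gives mildness of $B=A/I$ by Corollary \ref{cor-quot-mild}, strictness identifies $\hat B$ with the completed quotient $\hat A \sslash K$, a countable basis of open ideals of $\hat A$ descends to one of $\hat B$, and the equivalence between gentle admissible prorings and gentle admissible rings converts this back into gentleness of the proring $B$. Your closing observation that it is precisely strictness which prevents the quotient topology from requiring uncountably many open ideals is also on target; Example \ref{expl-infinite-embedded-points} (with $T$ uncountable) is exactly the counterexample showing the statement fails for non-strict closed formal subschemes.

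The gap is in your opening reduction. Strictness hands you the normal-quotient condition over one particular affine neighborhood $U$, and when you ``shrink $U$ to a distinguished open contained in a gently affine neighbourhood of $y$'' you assume \emph{in addition} that the normal-quotient condition still holds over the smaller open $W$. That is not automatic: the condition lives on the complete side, and transporting it from $U$ down to $W$ amounts (via Lemma \ref{lem-normal-Mittag}) to showing that mildness of the defining proideal localizes---an argument which itself needs $\cO_{X}(U)$ to be mild, which at that stage you do not know, since $U$ is only given as affine. The clean repair is Proposition \ref{prop-gentle-affine}: since $X$ is gentle and gently affine opens form a basis of $X$ (distinguished opens of gently affine formal schemes are again gently affine), \emph{every} affine open of $X$, in particular the $U$ furnished by strictness, is itself a gentle affine formal scheme, and Proposition \ref{prop-gentle-affine} then says that $\cO_{X}(U)$ is gentle. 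So no shrinking is needed at all: take $U$ itself as your gently affine neighborhood, and the rest of your proof goes through verbatim.
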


\begin{proof}
Obvious.
\end{proof}

\begin{cor}
Every strict closed formal subscheme of a gentle affine formal scheme $\Spf A$
is  $\Spf A / I$ for some closed ideal $I \subseteq A$. 
\end{cor}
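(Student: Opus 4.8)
The plan is to reduce the statement to Lemma~\ref{lem-normal-Mittag}, via the characterization of strictness in terms of the mildness of the defining proideal. Write $X=\Spf A$ with $A$ a gentle admissible ring (so the associated proring $\check A$ is gentle by Proposition~\ref{prop-gentle-affine}), let $Y\subseteq X$ be a strict closed formal subscheme, and let $\cI\subseteq\cO_{X}$ be its defining proideal sheaf. First I would record that $\cI$ is mild: this is precisely the preceding proposition characterizing strict formal subschemes. Since $X$ is gentle and $Y$ is gentle (by the proposition that strict formal subschemes of gentle formal schemes are gentle), the gentle-affine theory shows that $K:=\cI(X)$ is a gentle $\check A$-promodule; being mild it is in particular Mittag-Leffler, and the proposition computing global sections of Mittag-Leffler semicoherent promodules on an affine confirms that $K$ is gentle and Mittag-Leffler, hence mild.

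Next I would pass to global sections over the affine $X$. By the affine equivalence of Proposition~\ref{prop-affine-semicoh}, the exact sequence $0\to\cI\to\cO_{X}\to\cO_{Y}\to 0$ becomes an exact sequence of $\check A$-promodules
\[
0\to K\to \check A\to Q\to 0,\qquad Q:=\cO_{Y}(X),
\]
and $Q$ is mild by Corollary~\ref{cor-quot-mild}. The epimorphism $\check A\to Q$ of mild promodules corresponds, under completion, to an epimorphism $A=\hat\cO_{X}(X)\to\hat Q=\hat\cO_{Y}(X)$ of complete $A$-modules (Lemma~\ref{lem-epi}), and the kernel of the associated promodule map is exactly the mild promodule $K$. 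Hence Lemma~\ref{lem-normal-Mittag} applies and shows that $\hat Q$ is a \emph{normal quotient} of $A$; that is, $\hat Q\cong A\sslash I$ with $I:=\Ker(A\to\hat Q)$ a closed ideal of $A$. Because $A$ is gentle, the complete quotient coincides with the ordinary one, $A\sslash I=A/I$, so $I$ is a closed ideal with $\hat\cO_{Y}(X)\cong A/I$.

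Finally I would identify $Y$ with $\Spf(A/I)$ as closed formal subschemes of $X$, not merely as abstract formal schemes. Set $B:=A/I$. Both $Y$ and $\Spf B$ are cut out of $\Spf A$ by a defining proideal, so it suffices to show that $K=\cI(X)$ equals $\Ker(\check A\to\check B)$ as a subobject of $\check A$. Since $Q$ is mild with $\hat Q\cong B$, the unit of the pro--complete adjunction gives $Q\cong\check B$; and under the equivalence between mild $\check A$-promodules and complete $A$-modules the epimorphism $\check A\to Q$ corresponds to the canonical surjection $A\to B$, so that $\check A\to Q$ is identified with the canonical projection $\check A\to\check B$. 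Therefore $K=\Ker(\check A\to\check B)$, the two subschemes have the same defining proideal, and $Y=\Spf(A/I)$ with $I$ closed. The one delicate point, and the main obstacle, is precisely this last identification: one must verify that the quotient \emph{maps}, and not just the quotient rings, agree, which is where the full faithfulness of $B\mapsto\check B$ and the identity $\widehat{\check B}\cong B$ are essential; everything else is formal given Lemma~\ref{lem-normal-Mittag} and the gentle-affine dictionary.
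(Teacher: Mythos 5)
Your proposal is correct and takes essentially the same route as the paper's own (much terser) proof: both reduce strictness to mildness of the defining proideal sheaf $\cI$, pass to global sections over the affine $\Spf A$ using gentleness, and use that $A$ gentle forces $A \sslash I = A/I$; your ideal $I = \Ker(A \to \hat Q)$ agrees with the paper's $I = \widehat{\cI(X)}$ by left exactness of completion applied to $0 \to \cI(X) \to \check A \to Q \to 0$. The paper simply leaves implicit the bookkeeping you spell out via Lemma \ref{lem-normal-Mittag}, Lemma \ref{lem-epi}, and the final identification of defining proideals.
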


\begin{proof}
Let $Y \subseteq X:= \Spf A$ be a closed formal strict subscheme
and $\cI \subseteq \cO_{X}$ the defining proideal sheaf, which
is gentle and Mittag-Leffler. 
Now the completion $I$ of $\cI(X)$ is a closed ideal of $A$
and $Y = \Spf A \sslash I$.
\end{proof}

\begin{prop}\label{prop-strict-sub}
Let $A$ be an admissible ring, $J \subseteq A$ a closed ideal and $B:=A \sslash J$.
If $A$ is adic (resp.\ Noetherian and adic), then so is $B$.
Correspondingly every closed formal strict subscheme of an adic (resp.\ locally
Noetherian) formal scheme is adic (resp.\ locally Noetherian).
\end{prop}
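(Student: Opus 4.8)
The plan is to reduce at once to the ring-theoretic statement, since the formal-scheme assertion is local and a strict closed subscheme of a gentle affine $\Spf A$ is (by the corollary describing such subschemes) of the form $\Spf A \sslash I$ for a closed ideal $I$. The decisive point for the ring statement is that an adic complete ring is automatically \emph{gentle}, so the completion hidden in the normal quotient $A \sslash J$ does no work.

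Concretely, I would first record that, $A$ being adic, the powers $I^{n}$ of a defining ideal $I$ give a countable basis of open ideals; thus $A$ is gentle, hence gentle as a module over itself. As $J \subseteq A$ is closed, the ordinary quotient $A/J$ with the quotient topology is already complete, so that $B = A \sslash J = A/J$, the projection $\pi \colon A \to B$ being the canonical open morphism of topological rings. I then set $\bar I := \pi(I) = IB$ and check the identity $(\bar I)^{n} = \pi(I^{n}) = (I^{n}+J)/J$, which holds because $I^{n} \subseteq (I+J)^{n} \subseteq I^{n}+J$, so the two ideals have equal image modulo $J$. Because $\pi$ is open, each $\pi(I^{n})$ is an open ideal of $B$; and since every open ideal of $B$ pulls back to an open ideal of $A$ containing some $I^{n}$, the $(\bar I)^{n}$ are cofinal. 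Hence $(\bar I)^{n}$ is a basis of open ideals and $B$ is adic. When $A$ is in addition Noetherian, $B = A/J$ is a ring-theoretic quotient of a Noetherian ring, so it is Noetherian, and combined with adicity this settles the ``Noetherian and adic'' case.

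For the geometric assertion I would localize. Let $Y \subseteq X$ be a closed formal strict subscheme with $X$ adic (resp.\ locally Noetherian). Around each point of $Y$ choose an affine open $\Spf R \subseteq X$ whose complete ring $R$ is adic (resp.\ Noetherian and adic); such $R$ is gentle, so the trace $Y \cap \Spf R$ equals $\Spf R \sslash I$ for some closed ideal $I \subseteq R$. The first part shows $R \sslash I$ is adic (resp.\ Noetherian and adic), so $Y$ is adic (resp.\ locally Noetherian).

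The step I expect to be the genuine obstacle is the non-Noetherian adic case: there $I$ need not be finitely generated, so one cannot invoke Artin--Rees to identify $(IB)^{n}$ with the $n$-th standard open ideal, and in a general complete quotient the powers $(IB)^{n}$ may fail to be closed, with only their closures recovering a basis of open ideals. This is exactly where gentleness is indispensable: it forces $B = A/J$ without any further completion, so $\pi$ is open and the images $\pi(I^{n})$ are \emph{literally} a basis of open ideals, avoiding altogether the comparison between $(IB)^{n}$ and $\overline{(IB)^{n}}$.
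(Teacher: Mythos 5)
Your proposal is correct and takes essentially the same approach as the paper: both proofs hinge on the observation that an adic ring is gentle, so that $B = A \sslash J = A/J$ with the quotient topology, which is then identified as the $IB$-adic topology (the paper writes this as $B=\varprojlim A/(J+I^{n})=\varprojlim B/\bar{I}^{n}$ with $\bar{I}:=IB$, while you argue via openness of the projection and cofinality of the images $\pi(I^{n})$ --- the same computation), and the Noetherian case is handled identically, as a ring-theoretic quotient of a Noetherian ring.
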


\begin{proof}
In both cases, since $A$ is gentle, $B=A/J$. So if $A$ is Noetherian, so is $B$.
It remains to show that if $A$ is adic, so is $B$. 
If $A$ has the $I$-adic topology for some ideal $I \subseteq A$,
then $B=\varprojlim A/(J +I^{n}) =\varprojlim B/ \bar I ^{n} $, where $\bar I :=IB$.
It shows that $B$ has the $\bar I$-adic topology. We have proved the assertion.
\end{proof}

\begin{prop}
Every (non-formal) subscheme of a mild formal scheme is strict.
\end{prop}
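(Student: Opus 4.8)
The statement is local on $Y$, and by the proposition characterizing strictness (a closed formal subscheme of a mild formal scheme is strict precisely when its defining proideal sheaf is mild), the plan is to reduce to a mildness statement about an ideal. First I would fix $y \in Y$. Since $Y$ is a subscheme, it is a closed formal subscheme of some open formal subscheme $U_0 \subseteq X$; as $X$ is mild I may choose a mildly affine neighborhood $\Spf A \subseteq U_0$ of $y$, with $A=(A_d)$ a mild admissible proring. Then $Y \cap \Spf A$ is closed in $\Spf A$ with defining proideal $I := \cI(\Spf A) \subseteq A$, and it remains only to show that $I$ is mild. Crucially, $Y \cap \Spf A$ is a scheme, hence an affine scheme, so its structure proring $B = A/I$ is isomorphic to an honest ring $R$ (by the criterion that $\Spf A$ is a scheme iff $A$ is isomorphic to a ring).

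Next I would exploit that $R$ has a singleton index set. The quotient $A \to B \cong R$ is then represented by a single homomorphism $A_{d_0} \to R$; since it is an epimorphism of $A$-promodules, Lemma \ref{lem-surjective} forces every representative $A_d \to R$ with $d \ge d_0$ to be surjective. Putting $K_d := \Ker(A_d \to R)$, the level morphism $(A_d \to R)_{d \ge d_0}$ realizes $I \cong (K_d)_{d \ge d_0}$. A short diagram chase, using that the bonding maps $A_{d'} \to A_d$ are surjective (as $A$ is admissible), shows that each $K_{d'} \to K_d$ is surjective as well, so $I$ is epi.

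The heart of the argument is the mildness of $I$. By Proposition \ref{prop-mild-criterion}(2) I must verify that $\hat I = \varprojlim K_d \to K_d$ is surjective for every $d$. Given $x \in K_d \subseteq A_d$, the mildness of $A$ yields (Proposition \ref{prop-mild-criterion}(1)) a lift $\tilde x \in \hat A$ whose image in $A_d$ is $x$. The key observation is that the composite $\hat A \to R$ factors through $A_d \to R$ for each $d \ge d_0$ (because $R$ is a fixed ring along the whole system), so the image of $\tilde x$ in $R$ coincides with that of $x$, namely $0$. Hence $\tilde x \in \hat I$ and maps to $x$, giving the required surjectivity. Thus $I$ is mild, and the strictness criterion yields that $Y$ is strict.

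The step I expect to be delicate is exactly this last surjectivity $\hat I \to K_d$. It cannot be deduced from the surjectivity of the bonding maps of $(K_d)$ alone: Example \ref{expl-nonmild} produces epi (indeed admissible) prorings whose projection maps from the limit fail to be surjective, so an epi proideal need not be mild in general. What saves the argument is twofold---the mildness of the ambient $A$, giving surjectivity of $\hat A \to A_d$, together with the fact that $B$ is a genuine ring rather than a proper proring, which is what provides the factorization of $\hat A \to R$ through each $A_d$. This is precisely the point at which the hypothesis that the subscheme is \emph{non-formal} enters.
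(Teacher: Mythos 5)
Your proof is correct, but it finishes by a different route than the paper's. Both arguments share the same skeleton: reduce to a closed subscheme $Y=\Spec R$ of a mildly affine $X=\Spf A$, use Lemma \ref{lem-surjective} to see that every representative $A_{d}\to R$ is surjective, and use Proposition \ref{prop-mild-criterion}(1) to bring $\hat A$ into play. From there the paper stays on the complete side: the composite $\hat A \to A_{d} \to R$ is surjective, and its kernel contains the open ideal $\Ker (\hat A \to A_{d})$, hence is itself open; so the quotient topology on $\hat A/\Ker$ is discrete, a discrete ring is complete, and therefore $R=\hat A/\Ker=\hat A \sslash \Ker$ is a normal quotient — strictness follows directly from the definition, with no need to compute the kernel at all. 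You instead stay on the pro side: you realize the defining proideal as the level kernel $(K_{d})$, check it is epi, and verify mildness via Proposition \ref{prop-mild-criterion}(2), the key point being that $\hat A \to R$ factors through every $A_{d}$ because $R$ is an honest ring; you then invoke the mild-proideal characterization of strictness, which the paper derives from Lemma \ref{lem-normal-Mittag}. The two routes are equivalent in substance — your surjectivity of $\hat I \to K_{d}$ is exactly what makes the paper's quotient discrete and complete — but the paper's version is shorter, while yours has the merit of isolating precisely where the hypothesis that $Y$ is non-formal enters (the factorization of $\hat A\to R$ through each $A_{d}$), a point the paper leaves implicit. One shared soft spot: both you and the paper assert with little justification that $Y$ is locally affine (the paper simply says ``$Y$ is also affine''; you argue via the criterion that $\Spf$ of an admissible proring is a scheme iff the proring is isomorphic to a ring, which tacitly identifies $Y\cap \Spf A$ with $\Spf (A/I)$); this is harmless here, but it is the one step neither treatment spells out.
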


\begin{proof}
It is enough to consider a closed subscheme $Y$ of an affine 
formal scheme $X:=\Spf A$.
Then $Y$ is also affine, say $Y = \Spec B$. 
If we write $A =(A_{d})$, then every map $A_{d} \to B$ representing
$A \to B$ is surjective. So $\hat A \to B$ is surjective. Its kernel $I \subseteq \hat A$
is an open ideal. So the quotient topology on $\hat A/I$ is discrete,
so $B = \hat A/I = \hat A \sslash I$. Thus $Y$ is strict.
\end{proof}

\begin{prop}
Let $X$ be a mild formal scheme, $Y \subseteq X$
a strict formal subscheme and $\phi:W \to X$ a morphism of mild formal schemes.
Then the formal subscheme $Y \times _{X} W \subseteq W$ is strict.
\end{prop}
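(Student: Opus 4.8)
The plan is to reduce to the affine case and there identify the defining proideal of $Y\times_X W$ as the image of a tensor product, invoking throughout the characterization that a closed formal subscheme of a mild formal scheme is strict exactly when its defining proideal sheaf is mild.

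First I would note that $Y\to X$ is an immersion, so by Proposition \ref{prop-immersion-basechange} the projection $Y\times_X W\to W$ is again an immersion; thus $Y\times_X W$ is genuinely a formal subscheme of $W$ and the statement makes sense. Since strictness is a local condition on the subscheme, it suffices to work near a point $w\in Y\times_X W$ with image $x=\phi(w)\in Y$. Because $Y$ is strict and $X,W$ are mild, I can choose a mildly affine open $U=\Spf A\subseteq X$ containing $x$ in which $Y\cap U$ is closed with defining proideal $I:=\cI(U)$, where $I$ is a mild $A$-promodule (this is precisely strictness of $Y$, read through the characterization by mild defining proideals), together with a mildly affine open $V=\Spf B\subseteq W$ containing $w$ with $\phi(V)\subseteq U$, corresponding to a morphism $A\to B$ of mild admissible prorings.

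Next I would compute the defining proideal over $V$. On this affine piece the fiber product is $(Y\cap U)\times_U V=\Spf\big((A/I)\otimes_A B\big)$, and since $-\otimes_A B$ is right exact, tensoring $0\to I\to A\to A/I\to 0$ yields $(A/I)\otimes_A B=B/J$ with $J:=\Im(I\otimes_A B\to B)$. Hence $J$ is the defining proideal of $Y\times_X W$ over $V$, and by the characterization it remains only to prove that $J$ is a mild $B$-promodule. I would first observe that $B$, being a mild proring, is also mild as an $A$-promodule: viewed over $A$ it is still epi, and the surjections $\hat B\to B_e$ that witness its mildness as a proring are exactly those required by Proposition \ref{prop-mild-criterion}. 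Then Corollary \ref{cor-tensor-mild} gives that $I\otimes_A B$ is mild. The criterion of Proposition \ref{prop-mild-criterion} tests mildness of an epi promodule only through surjectivity of $\widehat{I\otimes_A B}\to (I\otimes_A B)_e$ (concretely, lifting $\sum_k m_k\otimes b_k$ via the surjections $\hat I\to I_d$ and $\hat B\to B_d$ to a compatible system), a condition insensitive to whether $I\otimes_A B$ is regarded over $A$ or over $B$; so $I\otimes_A B$ is mild as a $B$-promodule as well. Finally $J$ is an epimorphic image of $I\otimes_A B$ in the category of $B$-promodules, so Corollary \ref{cor-quot-mild} shows $J$ is mild, and therefore $Y\times_X W$ is strict.

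The main obstacle I anticipate is the bookkeeping over which ring the word \emph{mild} is being asserted: Corollary \ref{cor-tensor-mild} delivers mildness as an $A$-promodule, whereas strictness of $Y\times_X W$ demands mildness of $J$ as a $B$-promodule. The clean resolution is to lean on Proposition \ref{prop-mild-criterion}, whose surjectivity criterion for an epi representative does not refer to the base ring beyond its own mildness, so mildness passes freely between the $A$- and $B$-module structures. One must also be careful that it is the defining ideal $J$ itself, and not merely the quotient proring $B/J$, whose mildness has to be checked.
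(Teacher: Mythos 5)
Your proof is correct and follows essentially the same route as the paper's: both identify the defining proideal of $Y\times_{X}W$ as the image of the pulled-back proideal of $Y$ via right exactness of $\phi^{*}$ (i.e.\ of $-\otimes_{A}B$), then deduce its mildness from Corollary \ref{cor-tensor-mild} and Corollary \ref{cor-quot-mild}, and conclude through the characterization of strict closed formal subschemes by mild defining proideal sheaves. The only difference is presentational: the paper argues at the sheaf level, quoting its propositions that pullbacks and quotients of mild semicoherent promodules are mild, whereas you unpack the same argument affine-locally and make explicit the $A$-versus-$B$ base-ring bookkeeping (via Proposition \ref{prop-mild-criterion}) that the paper leaves implicit.
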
 

\begin{proof}
We may suppose that $Y \subseteq X$ is closed.
If $\cI_{Y} \subseteq \cO_{X}$  denotes the defining proideal sheaf,
then we have the exact sequence
\[
0 \to \cI_{Y} \to \cO_{X} \to \cO_{W} \to 0.
\]
Since $\phi^{*}$ is right exact, the sequence
\[
\phi^{*}\cI_{Y} \to \cO_{W} \to \cO_{Y \times_{X}W} \to 0
\]
is exact. Hence the defining proideal of $Y \times_{X}W \subseteq W$
is the image of $\phi^{*}\cI_{Y} \to \cO_{W} $.
It is mild because $\phi^{*}\cI_{Y}$ is so, which proves the proposition.
\end{proof}

\begin{prop}\label{prop-surjective-adic}
Let $A$ be a Noetherian adic ring and
$(I_{d})_{d \in D}$
a directed set of ideals of definition in $A$ (not necessarily a basis of
ideals of definition),
and let $B := \varprojlim A/I_d$. Suppose that $B$ is adic.
Then the natural map $A \to B$ is surjective. Moreover for every $d$, 
the topology on $B$
is the $(I_{d}^n B)_{n \in \NN}$-topology.
\end{prop}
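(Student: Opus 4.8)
The plan is to transport the whole question to a single ideal of definition of $A$, obtained by pulling back an ideal of definition of $B$, and then to produce preimages by successive approximation. First I would record the basic geometry of $\phi\colon A\to B=\varprojlim_{d}A/I_{d}$. For each $d$ the projection $B\to A/I_{d}$ is surjective (lift through $A$), so $\phi(A)$ is dense and $\phi(A)+K_{d}=B$ for every $d$, where $K_{d}:=\Ker(B\to A/I_{d})$; the $K_{d}$ form a basis of open ideals of $B$. Since $A$ is Noetherian and adic, Proposition \ref{prop-adic+pro-Noetherian-Noetherian} tells me that every ideal of definition of $A$ defines the \emph{same} topology as the given one; in particular, for each fixed $d$ the powers $(I_{d}^{\,n})_{n}$ form a basis of open ideals of $A$ and $A=\varprojlim_{n}A/I_{d}^{\,n}$. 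Now choose, using that $B$ is adic, an ideal of definition $\mathfrak{b}\subseteq B$ whose powers $(\mathfrak{b}^{m})_{m}$ are a basis; as $(K_{d})$ and $(\mathfrak{b}^{m})$ are two bases of the same topology they are cofinal, so there are indices with $K_{d_{1}}\subseteq\mathfrak{b}$ and $\mathfrak{b}^{m_{0}}\subseteq K_{d_{0}}$. Setting $\mathfrak{c}:=\phi^{-1}(\mathfrak{b})$ I get $I_{d_{1}}\subseteq\mathfrak{c}$ and $\mathfrak{c}^{m_{0}}\subseteq I_{d_{0}}$; sandwiching $\mathfrak{c}$ between two ideals of definition and invoking Proposition \ref{prop-adic+pro-Noetherian-Noetherian} again shows that $\mathfrak{c}$ is itself an ideal of definition and that the $\mathfrak{c}$-adic topology equals the original one, so $A$ is $\mathfrak{c}$-adically complete.

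The surjectivity of $\phi$ is the heart of the matter, and the main obstacle is precisely that the family $(I_{d})$ need \emph{not} be cofinal with the powers $(I_{d_{0}}^{\,n})_{n}$, so one cannot lift a compatible system $(a_{d}+I_{d})_{d}$ to a Cauchy sequence in $A$ directly. The ideal $\mathfrak{c}$ is introduced exactly to bypass this. I would first check that $\phi(\mathfrak{c}^{m})$ is dense in $\mathfrak{b}^{m}$ for every $m$: density of $\phi(A)$ gives it for $m=1$ (if $\beta\in\mathfrak{b}$, choosing $a$ with $\beta-\phi(a)\in\mathfrak{b}^{N}\subseteq\mathfrak{b}$ forces $\phi(a)\in\mathfrak{b}$, hence $a\in\mathfrak{c}$), and one propagates to higher $m$ by approximating generators of $\mathfrak{b}^{m}$ as products of elements of $\mathfrak{b}$. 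Then, given $\beta\in B$, I would build $r=\sum_{m\ge0}a_{m}$ with $a_{m}\in\mathfrak{c}^{m}$ and $\beta-\phi(a_{0}+\dots+a_{m})\in\mathfrak{b}^{m+1}$, using the density at each step; since $A$ is $\mathfrak{c}$-adically complete the series converges, and since $B$ is separated one gets $\phi(r)=\beta$. This proves $A\twoheadrightarrow B$, and note that this argument uses only completeness of $A$ and separatedness of $B$, not any Noetherian hypothesis on $B$.

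Finally, the \emph{moreover} follows formally from surjectivity. Because $\phi$ is onto, $\phi(\mathfrak{c})=\mathfrak{b}\cap\phi(A)=\mathfrak{b}$, and more generally $I_{d}^{\,n}B=\phi(I_{d}^{\,n})$ and $\mathfrak{b}^{n}=\phi(\mathfrak{c}^{n})$ for all $n$. Thus the topology of $B$ admits $(\phi(\mathfrak{c}^{n}))_{n}$ as a basis of open ideals. For any fixed $d$, the equality of the $\mathfrak{c}$-adic and $I_{d}$-adic topologies on $A$ means $(\mathfrak{c}^{n})_{n}$ and $(I_{d}^{\,n})_{n}$ are cofinal bases of the topology of $A$; applying the surjection $\phi$ shows $(\phi(\mathfrak{c}^{n}))_{n}=(\mathfrak{b}^{n})_{n}$ and $(I_{d}^{\,n}B)_{n}$ are cofinal, so $(I_{d}^{\,n}B)_{n}$ is a basis of open ideals of $B$. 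Hence the topology on $B$ is the $(I_{d}^{\,n}B)_{n}$-topology for every $d$, as claimed. The one point demanding care throughout is the repeated appeal to Proposition \ref{prop-adic+pro-Noetherian-Noetherian} to move freely between ideals of definition of $A$; everything else is soft completion bookkeeping.
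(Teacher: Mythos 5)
Your proof is correct, but it takes a genuinely different route from the paper's, essentially reversing the logical order. The paper proves the \emph{moreover} first: it observes that the kernels $\hat I_d := \varprojlim_{d' \ge d} I_d/I_{d'}$ of $B \to A/I_d$ are ideals of definition of $B$, applies Proposition \ref{prop-adic+pro-Noetherian-Noetherian} to $B$ itself (which is pro-Noetherian and adic), computes $B/\hat I_e^{\,n} = (A/I_d)/(I_e/I_d)^n$ for suitable $d$, and invokes \cite[0, Prop.\ 7.2.9]{EGA} to conclude $\hat I_e^{\,n} = I_e^n B$; only then does it deduce surjectivity, by citing \cite[Th.\ 8.4]{Matsumura} (topological Nakayama: $A$ is $I_e$-adically complete, $B$ is separated for the $(I_e^n B)$-topology, and $A/I_e \to B/I_eB$ is onto). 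You instead prove surjectivity first, by a self-contained successive-approximation argument built on the pulled-back ideal $\mathfrak{c} = \phi^{-1}(\mathfrak{b})$, and then obtain the topological statement formally by pushing the cofinality of $(\mathfrak{c}^n)$ and $(I_d^n)$ through the surjection (legitimate, since images of ideals under a surjective homomorphism are ideals). Both arguments hinge on Proposition \ref{prop-adic+pro-Noetherian-Noetherian}, but you apply it only to $A$ (for $I_d$ and for the sandwiched ideal $\mathfrak{c}$), whereas the paper also needs it for $B$. What your route buys is self-containedness — your approximation step is in effect a proof of the cited Matsumura/EGA statements in the case at hand — and a clean isolation of where Noetherianness enters (only via Proposition \ref{prop-adic+pro-Noetherian-Noetherian} on $A$; the surjectivity mechanism itself uses only completeness of $A$ and separatedness of $B$). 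What the paper's route buys is brevity via standard references, plus the explicit identification $\hat I_d = I_d B$ of the limit ideals. Two small points you leave implicit are fine but worth spelling out: that $a^{m_0}$ topologically nilpotent implies $a$ topologically nilpotent (needed for $\mathfrak{c}$ to be an ideal of definition), and that density of $\phi(\mathfrak{c})$ in $\mathfrak{b}$ propagates to density of $\phi(\mathfrak{c}^m)$ in $\mathfrak{b}^m$ even though elements of $\mathfrak{b}^m$ carry coefficients from $B$.
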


\begin{proof}
Being pro-Noetherian and adic, from Proposition \ref{prop-adic+pro-Noetherian-Noetherian},
 $B$ is Noetherian.
For each $d$, put
\[
\hat I _d := \lim _{\substack{ \longleftarrow \\ d' \ge d }} I_d /I_{d'} .
\]
Fix $e \in D$, and set $J := \hat I_e $.
Since $J$ is an ideal of definition, again from Proposition \ref{prop-adic+pro-Noetherian-Noetherian}, 
the topology on $B$ is identical to the $J$-adic topology. 
Since $(\hat I_d)$ is also a basis of ideals of definition of $B$, 
for every  $n$, there exists $d \in D$ such that $\hat I_d \subset J^{n} $.
Then we have
\[
 B /J^n =( B / \hat I_d ) /  ( J^{n} /   \hat I_d) = (A /I_d)/(I_e/I_d)^{n}.
\]
Hence there exists $d \in D$ such that the kernel of $B/J^m \to B/J^n$ is 
\[
 (I_e /I_d)^{n}/(I_e/I_d)^{m} = I_e^{n} (B/J^m)  .
\]
 Besides $B/J = A /I_e$ is clearly  a finitely generated $A$-module.
As a result, the projective systems  $(A/I_e^{i})$ and $(B/ J^n)$ satisfy the conditions of 
\cite[0, Prop.\ 7.2.9]{EGA}, and hence  
\[
J^n = \Ker (B \to B / J^n) = I_e^n B.
\]
This shows the second assertion.

For each $d$, the map $ A/I_d  \to B /I_d B$ is surjective
and $  B $ is separated for the $ ( I_d^n B)$-topology.
From \cite[Th.\ 8.4]{Matsumura}, $A \to B$ is surjective.
\end{proof}

\begin{cor}\label{cor-strict-adic}
Let $X$ be a locally Noetherian formal scheme and $Y \subseteq X$ a
closed formal subscheme with $Y_{\red} = X_{\red}$. 
Then $Y$ is strict if and only if $Y$ is adic.
\end{cor}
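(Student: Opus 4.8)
The plan is to handle the two implications separately and, since the statement is local, to reduce to an affine piece $U = \Spf R$ on which $R := \hat\cO_X(U)$ is Noetherian and adic. As $Y$ is closed in $X$ with $Y_\red = X_\red$, the underlying spaces agree and $S := \hat\cO_Y(U)$ satisfies $S_\red = R_\red$. For the implication ``$Y$ strict $\Rightarrow$ $Y$ adic'' I would simply observe that strictness presents $S$ as a normal quotient $R \sslash J$ with $J = \Ker(R \to S)$ a closed ideal; Proposition \ref{prop-strict-sub} then shows $R \sslash J$ is Noetherian and adic, in particular adic, so $Y$ is adic. This half uses neither Proposition \ref{prop-surjective-adic} nor the hypothesis $Y_\red = X_\red$.

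The substance is the converse, and the plan is to force it into the shape of Proposition \ref{prop-surjective-adic}. After shrinking $U$ I may also assume $S$ is adic. I would write the admissible proring attached to $R$ as $\check R = (R/K_d)$ with each $K_d$ an ideal of definition (cofinally the $J^n$). The defining proideal presents $\cO_Y(U)$ as a quotient of $\check R$; since a quotient of an epi proring is again Mittag-Leffler (cf.\ Proposition \ref{prop-quot-ML}) and the surjection can be realised by a surjective level morphism (Lemma \ref{lem-surjective}), after reindexing I obtain $\cO_Y(U) = (R/\tilde K_d)$ with open ideals $\tilde K_d \supseteq K_d$, so that $S = \varprojlim_d R/\tilde K_d$ and the natural map $R \to S$ is exactly the one appearing in Proposition \ref{prop-surjective-adic}.

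The key step is to upgrade the $\tilde K_d$ from open ideals to \emph{ideals of definition}, and this is precisely where $Y_\red = X_\red$ enters. The equality $S_\red = R_\red$ forces the quotient $R/K_d \to R/\tilde K_d$ to be an isomorphism on reduced rings, whence $\sqrt{\tilde K_d} = \sqrt{K_d} = \sqrt J$; thus every element of $\tilde K_d$ lies in $\sqrt J$ and is therefore topologically nilpotent, so $\tilde K_d$ is an ideal of definition. Now Proposition \ref{prop-surjective-adic} applies to the directed family $(\tilde K_d)$ and yields that $R \to S$ is surjective. Consequently $S = R/\Ker(R \to S)$, and since $R$ is gentle this quotient is already complete; hence $S = R \sslash \Ker(R \to S)$ is a normal quotient of $R$, i.e.\ $Y$ is strict.

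The main obstacle I anticipate is the bookkeeping in the second paragraph: presenting the defining proideal by an honest decreasing, directed family of open ideals $\tilde K_d$ of $R$ so that $S$ literally equals $\varprojlim_d R/\tilde K_d$ with $R \to S$ the natural map, and checking the compatibilities needed to invoke Proposition \ref{prop-surjective-adic}. Once the $\tilde K_d$ are in hand, the radical computation that makes them ideals of definition and the final deduction of the normal-quotient structure are short.
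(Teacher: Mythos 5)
Your proposal is correct and is essentially the paper's own proof, which consists of exactly the citation of Propositions \ref{prop-strict-sub} (for strict $\Rightarrow$ adic) and \ref{prop-surjective-adic} (for adic $\Rightarrow$ strict); your radical computation showing that $Y_{\red}=X_{\red}$ turns the defining open ideals $\tilde K_{d}$ into ideals of definition is precisely the glue the paper leaves implicit. The one point you elide---that $S=R/\Ker(R\to S)$ holds \emph{topologically}, i.e.\ that the topology on $S$ agrees with the quotient topology, which is what ``normal quotient'' requires---is exactly what the ``moreover'' clause of Proposition \ref{prop-surjective-adic} together with Proposition \ref{prop-adic+pro-Noetherian-Noetherian} supplies, so it is covered by the propositions you already invoke.
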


\begin{proof}
It is a consequence of Propositions \ref{prop-strict-sub} and \ref{prop-surjective-adic}.
\end{proof}

\begin{expl}\label{ex-kxy-nonadic2}
Let $A =k[[x,y]]$ be the admissible ring as in Example \ref{ex-kxy-nonadic}
and $A^{\adic}$ be the same ring $k[[x,y]]$ endowed with the 
$(xy)$-adic topology. 
The identity map $A^{\adic} \to A$ is a morphism of admissible rings.
Then $A$ is not a normal quotient. 
 So $\Spf A$ is a closed formal subscheme
of $\Spf A^{\adic}$ which is not strict.
\end{expl}

\begin{expl}\label{expl-infinite-embedded-points}
Suppose that the ring $\CC[x][[t]]$ is endowed with the $(t)$-adic topology.
Put $X := \Spf \CC[x][[t]]$. 
The underlying topological space of $\cX$ is identified with that of $\AA^1= \Spec \CC[x]$.
For each $a \in \CC$,
we define 
\[
Y_a := \Spec \CC[x][[t]]/  (t^2,(x-a)t),
\]
which is a closed subscheme of $X$ and
 has an embedded point at $a \in \CC=\AA^1(\CC)$. 
For a finite subset $S \subseteq \CC$, 
we define  $Y_{S}$
to be the subscheme of $X$ that is
isomorphic to $Y_{a}$ around each $a \in S$
and to $\AA^1$ outside $S$.

Let $T$ be a subset of $\CC$.
Then the $Y_{S}$ with finite $S \subseteq T$ 
form an inductive system.
Define a closed formal subscheme  $Y$ of $X$ by
\[
Y :=\varinjlim_{\text{finite }S \subseteq T} Y_{S}.
\]
Then 
\begin{align*}
\hat \cO_{Y,p} \cong 
\begin{cases}
( k[x,y]/(y^2,xy))_{(x,y)} & (p \in  T) \\
k[x]_{(x)} & (p \in \CC \setminus T) \\
k(x) & (p \text{ the generic point}).
\end{cases}
\end{align*}
Thus all complete stalks of $\hat \cO_Y$ are discrete. 
If $Y$ is locally Noetherian, then it is impossible that infinitely many complete stalks of $\cO_{Y}$
have an embedded prime.
Therefore if $T$ is an infinite set, then  $Y$ is not Noetherian nor
a strict formal subscheme. 
Moreover for every open subscheme $U \subseteq X$, 
$Y \cap U$ is not a strict closed formal subscheme of $U$ either.
If $T$ is uncountable, then $Y$ is not gentle (but mild).
\end{expl}

\begin{thm}\label{thm-single-point}
Let $X$ be a locally Noetherian formal scheme. 
Every closed formal subscheme of $\cX$ is strict
if and only if the underlying topological space of $X$ is discrete.
\end{thm}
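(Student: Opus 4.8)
The plan is to convert strictness into a statement about the mildness of the defining proideal sheaf. A locally Noetherian formal scheme is, locally, the formal spectrum of a Noetherian adic ring, hence gentle and in particular mild; and for a mild formal scheme a closed formal subscheme is strict if and only if its defining proideal sheaf is mild. So I would prove both implications as assertions about mildness of proideals, treating the two directions separately: the forward one ($X$ discrete $\Rightarrow$ every closed formal subscheme strict) by a local mildness computation exploiting finiteness of length, and the reverse (contrapositive: $X$ not discrete $\Rightarrow$ some non-strict closed formal subscheme) by producing a completion along a point whose defining proideal is not even Mittag-Leffler.

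For the direction assuming $X$ discrete I would first reduce to the affine case with a single point. Since $X$ is discrete and locally Noetherian, each $x\in X$ has an open affine neighbourhood $\Spf A$ whose underlying space is the one point $\{x\}$; then $A_{\red}$ is a field, $A$ is Noetherian and adic, and by Proposition \ref{prop-adic+pro-Noetherian-Noetherian} its topology is the $\fm$-adic topology for the maximal ideal $\fm$ (the ideal of topologically nilpotent elements). As strictness is a local condition at the points of $Y$, it suffices to treat a closed formal subscheme $Y\subseteq\Spf A$. Writing $X=\varinjlim_n\Spec A/\fm^n$ and $Y_n:=Y\times_X\Spec A/\fm^n=\Spec A/J_n$, I obtain a decreasing chain of ideals $J_n\supseteq\fm^n$ with $\cO_Y(\Spf A)\cong(A/J_n)_{n\in\NN}$, so the defining proideal is $\cI(\Spf A)\cong(J_n/\fm^n)_{n\in\NN}$. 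The crucial point is that each $A/\fm^n$ has finite length (Noetherian with nilpotent maximal ideal), so the decreasing images $(J_m+\fm^n)/\fm^n$ inside $A/\fm^n$ stabilize; this is exactly the Mittag-Leffler condition for $(J_n/\fm^n)$. Being indexed by $\NN$ this proideal is gentle, and a gentle Mittag-Leffler promodule is mild, so $\cI$ is mild and $Y$ is strict.

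For the reverse direction I argue by contraposition. If $X$ is not discrete, then in some affine chart $\Spf A$ the space $\Spec A_{\red}$ is not discrete, so there is a closed point $x_0$ that is not isolated; equivalently the corresponding open maximal ideal $\fm_0$ satisfies $\dim A_{\fm_0}\ge 1$, so $\fm_0 A_{\fm_0}$ is not nilpotent. I would then form the completion $X_{/\{x_0\}}$ along $x_0$, as in Example \ref{expl-completion}, a closed formal subscheme of $X$ whose defining proideal is $(\fm_0^n)_n$. This proideal is not Mittag-Leffler: were it so, some $\fm_0^{n'}$ would equal $\bigcap_m\fm_0^m$, and localizing at $\fm_0$ together with Nakayama's lemma would force $\fm_0^{n'}A_{\fm_0}=0$, contradicting $\dim A_{\fm_0}\ge 1$. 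Hence the defining proideal is not mild and $X_{/\{x_0\}}$ is not strict. Alternatively one may invoke Corollary \ref{cor-strict-adic}: since $X_{/\{x_0\}}$ has the same reduction as its support while its complete stalk $\widehat{A_{\fm_0}}$ is non-discrete, it is not adic, hence not strict.

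The main obstacle I anticipate is the forward direction, and specifically the passage from discreteness to the Mittag-Leffler property of the defining proideal. The decisive input is the finiteness of length of $A/\fm^n$, which is exactly what the single-point reduction buys and what fails at a non-isolated point; getting that reduction to single-point charts correct, and identifying the defining proideal with $(J_n/\fm^n)$ so that the finite-length stabilization applies, is where the real content lies. The reverse direction is comparatively routine once the completion along a positive-dimensional point is recognized as producing a non-Mittag-Leffler, hence non-mild, proideal.
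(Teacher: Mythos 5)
Your overall strategy is sound, and it is genuinely different from the paper's. The paper never goes through mildness here: for the ``if'' direction it makes the same reduction to a one-point affine chart $\Spf A$, with $A$ a complete Noetherian local ring carrying the $\fm$-adic topology, writes a closed formal subscheme as $Y=\varinjlim_{d}\Spec A/I_{d}$ for a directed family of open ideals, divides out $\bigcap I_{d}$, and invokes Chevalley's lemma \cite{Chevalley}: in a complete Noetherian local ring a directed family of ideals with zero intersection is cofinal with $(\fm^{n})$. Hence the $(I_{d})$-topology is the $\fm$-adic one and $Y$ is the normal quotient $\Spf A/\bigcap I_{d}$, i.e.\ strict. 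Your route --- defining proideal is Mittag-Leffler and gentle, hence mild, plus the proposition of \S\ref{subsec-what-subschemes} that a closed formal subscheme of a mild formal scheme is strict if and only if its defining proideal sheaf is mild --- is a legitimate alternative that stays inside the promodule formalism and avoids quoting Chevalley. But the identification $\cO_{Y}(\Spf A)\cong (A/J_{n})_{n\in\NN}$, which you rightly flag as the crux, is not a definitional unwinding, and it is exactly where finiteness must enter. A priori $\cO_{Y}(\Spf A)\cong (A/J_{e})_{e\in E}$ for an \emph{arbitrary} directed set $E$, with each $J_{e}$ open (every representative $A/\fm^{n}\to A/J_{e}$ is surjective by Lemma \ref{lem-surjective}). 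Your $J_{n}$ is then the stable value $K_{n}=\bigcap_{e}(J_{e}+\fm^{n})$, which exists by DCC in the Artinian ring $A/\fm^{n}$ (this is also what makes the fibre products $Y_{n}$ schemes); and the isomorphism $(A/J_{e})_{e}\cong (A/K_{n})_{n}$ holds because openness gives, for each $e$, an $n$ with $\fm^{n}\subseteq J_{e}$, whence $K_{n}\subseteq J_{e}+\fm^{n}=J_{e}$, producing the inverse morphism (the composites are identities by Lemma \ref{lem-pro-identity}). With that paragraph inserted your forward direction is complete; one even sees that $(K_{n}/\fm^{n})$ is epi, so Mittag-Leffler is automatic.

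In your ``only if'' direction there is one cosmetic problem and one real one. Cosmetic: on a chart $\Spf A$ with ideal of definition $I$, the structure sheaf is the proring $(A/I^{m})_{m}$, so the defining proideal of the completion at $x_{0}$ is the kernel $\bigl((\fm_{0}^{n}+I^{m})/I^{m}\bigr)$, not the system of ideals $(\fm_{0}^{n})_{n}$ of the abstract ring $A$; moreover ``$x_{0}$ not isolated'' is \emph{not} equivalent to $\dim A_{\fm_{0}}\ge 1$ (take $A=\CC[[t]]$: one isolated point, dimension one) but to $\dim (A/I)_{\fm_{0}}\ge 1$. The repair is routine: run Nakayama in $A/I^{m}$ --- stabilization of the images would give $(\fm_{0}/I^{m})^{n'}=(\fm_{0}/I^{m})^{n'+1}$, hence $\dim(A/I^{m})_{\fm_{0}}=0$ after localizing, contradicting non-isolatedness. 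Real: the ``alternative'' via Corollary \ref{cor-strict-adic} is wrong on two counts. That corollary assumes $Y_{\red}=X_{\red}$, which fails for the completion at a point unless $X$ itself has a single point; and the intermediate claim is false, since $X_{/\{x_{0}\}}$ \emph{is} adic --- it is $\Spf$ of the $\fm_{0}$-adic completion of $A$, a Noetherian complete local ring with its adic topology. What fails is its strictness as a subscheme of $X$, not its adicness as an abstract formal scheme. (For comparison, the paper's own ``only if'' argument is different from both of yours: it completes $A_{\red}$ at a closed non-open point and argues that the injective map $A_{\red}\to \hat A_{\red}$ cannot factor as $A_{\red}\to A_{\red}/J\cong \hat A_{\red}$ for any ideal $J$.)
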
 

\begin{proof}
The ``if'' direction is essentially due to Chevalley \cite[Lem.\ 7]{Chevalley} (see also \cite[Ch.\ VIII, \S 5, Th.\ 13]{Zariski-Samuel-II}).
To show this, we may suppose that the underlying topological space of $X$ consists of a single point.
Then for some Noetherian complete local ring  $(A,\fm)$ with the $\fm$-adic topology,
we have $X \cong \Spf A$.
There exists a directed set $(I_{d})$
of open ideals of $A$ such that
\[
 Y = \varinjlim _{d} \Spec A /I_d.
\] 
Replacing $A$ with $A / \bigcap  I_{d} $,
we may suppose that $\bigcap  I_{d}  = 0$.
Then for each $n \in \NN$, since $A/\fm^{n}$ is  Artinian,
there exists $d $ such that
$I_{d} (A/\fm^{n})=0$, equivalently $I_{d } \subseteq \fm^{n}$.
Conversely  for every $d $,
there exists $n \in \NN$ with $\fm^{n} \subseteq I_{d}$.
Thus the $(I_d)$-topology coincides with the $\fm$-adic topology, and so
$Y = X$.

We now prove the ``only if'' direction. 
Suppose that the underlying topological space of $X$ is not discrete.
Then there exists a closed but not open point $x$ of $X$.
Let $\Spf A \subseteq X$ be an affine neighborhood of $x$.
Then $\Spf A$ has at least two points. 
Let $A_\red$ be the reduced ring associated to $A$, 
that is, the ring $A$ modulo the ideal of nilpotent elements.
Then $\Spf A$ and $\Spf A_\red$ have the same underlying topological space. 
If $\hat A_\red $ is the $\fm $-adic completion of $A_\red$ with $\fm$ the maximal ideal
of $x$, then $\Spf \hat A_\red$ is a closed formal subscheme of $\Spf A _\red$ consisting of a single point,
hence not isomorphic to $\Spf A_\red$.
Being injective, the natural map $A _\red \to \hat A_\red$ does not factors as $A_\red \to A_\red/ J \cong \hat A_\red $
 for any nonzero ideal $J$. Hence $\Spf \hat A_\red$ is not a closed formal strict subscheme of 
 either $\Spf A_\red$
 or of $\Spf A$.
\end{proof}

As a consequence of a theorem in \cite{Heinzer-Rotthaus}, 
Bill Heinzer showed the following (see the first page of \cite{AJL-correction}): 

\begin{thm}\label{thm-Heinzer}
 Let $k$ be a field. There exists a nonzero ideal $I \subseteq k[x^\pm,y,z][[t]]$ 
 with $ I \cap  k[x,y,z][[t]]= (0)$.
\end{thm}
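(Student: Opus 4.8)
The plan is to reduce the statement to the non‑triviality of the generic fibre of the inclusion of domains $A := k[x,y,z][[t]] \hookrightarrow B := k[x^{\pm},y,z][[t]]$, and then to invoke the cited construction of Heinzer--Rotthaus. First I would observe that producing the required ideal is equivalent to producing a single nonzero $f \in B$ that divides no nonzero element of $A$ inside $B$: given such an $f$, the principal ideal $I := fB$ is nonzero, and if $0 \ne h \in I \cap A$ then $h = fg$ exhibits $f$ as a divisor of the nonzero element $h \in A$, a contradiction, so $I \cap A = (0)$. This divisibility condition is in turn a statement about a localization. Writing $S := A \setminus \{0\}$, the ring $S^{-1}B = B \otimes_A \mathrm{Frac}(A)$ is a localization of the domain $B$, hence a domain, and it is a field precisely when every nonzero element of $B$ divides some nonzero element of $A$. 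Thus it suffices to prove that $S^{-1}B$ is \emph{not} a field: any nonzero maximal ideal $\mathfrak n \subsetneq S^{-1}B$ then contracts to $I := \mathfrak n \cap B$ with $I \ne 0$ and $I \cap A = (0)$, since the elements of $S$ become units and so cannot lie in the proper ideal $\mathfrak n$. Geometrically, $S^{-1}B$ is the coordinate ring of the generic fibre of $\Spec B \to \Spec A$, and the theorem says exactly that this fibre has a point beyond the generic one.

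To produce the witnessing element I would look for $f$ of the form $f = \sum_{n \ge 1} u_n t^n$ with $u_n \in k[x^{\pm},y,z]$ whose orders in $x$ tend to $-\infty$. Such an $f$ lies in $B$ but in no localization $A[x^{-1}] = k[x,y,z][[t]][x^{-1}]$, because every element of $A[x^{-1}]$ has $x$‑orders bounded below uniformly across its $t$‑coefficients; so $f$ is a genuinely ``new'' element of the completion. For a $g \in B$ with only finitely many nonzero $t$‑coefficients one checks directly, by taking the $x$‑orders of the $u_n$ to grow fast enough (lacunary, say super‑exponentially), that the lowest $x$‑degree term of each coefficient of $fg$ survives without cancellation and is eventually negative, whence $fg \notin A$ unless $g = 0$. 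The content of the theorem of Heinzer--Rotthaus is to choose the $u_n$ (using the room afforded by the three base variables $x,y,z$) so that this persists for \emph{every} nonzero $g \in B$: the resulting $f$ is transcendental over $\mathrm{Frac}(A)$ and residually/idealwise independent in their sense, which is exactly what forces $f$ to divide no nonzero element of $A$.

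The hard part is precisely this last, uniform, no‑cancellation statement. Unlike the case of $g$ with finite $t$‑support, a general $g \in B$ may have coefficients whose $x$‑orders tend to $-\infty$ arbitrarily rapidly, so that no single weighting of $x$ and $t$ gives a valuation on all of $B$; consequently no fixed growth rate of the exponents of the $u_n$ can dominate every $g$ by naive degree bookkeeping, and a simple diagonal estimate does not close. Controlling these unboundedly negative $x$‑orders---ruling out that the cancellations forced by an identity $fg \in A$ can propagate consistently through all coefficients---is exactly where the Heinzer--Rotthaus machinery of residual algebraic independence is indispensable, and this is the step I would lean on the cited result to supply.
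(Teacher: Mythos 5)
The first thing to note is that the paper contains no proof of Theorem \ref{thm-Heinzer} at all: it is imported as Heinzer's result, deduced from a theorem of \cite{Heinzer-Rotthaus} and reported on the first page of \cite{AJL-correction}. Your proposal ultimately delegates the same core step---the nontriviality of the generic fibre of $A=k[x,y,z][[t]] \hookrightarrow B=k[x^{\pm},y,z][[t]]$---to that same citation, so in substance you are doing exactly what the paper does; the reductions you supply on top (a single nonzero $f \in B$ dividing no nonzero element of $A$ suffices; equivalently $S^{-1}B$ is not a field; a nonzero maximal ideal of $S^{-1}B$ contracts to the desired $I$) are all correct, if routine. The one point worth flagging is comparative: the paper's Theorem \ref{thm-example-no-closure}, proved in full immediately after this one, is precisely the construction you sketch---$f = y + a_1x^{-1}t + a_2x^{-2}t^2+\cdots$ with $|a_{i+1}|/|a_i| \to \infty$---and its proof shows that your pessimism about closing the uniform no-cancellation step is not quite right: over $\CC$, a growth condition on the \emph{coefficients} (exploiting the archimedean absolute value, rather than the exponents of $x$) does defeat cancellation against every nonzero $g$, by an elementary tracking of lowest terms, though only in two polynomial variables plus $t$. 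What the Heinzer--Rotthaus input buys, and what such an explicit construction does not reach, is the statement for an arbitrary field $k$, where no absolute-value argument is available.
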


Suppose that $ k[x,y,z][[t]]$ and $k[x^\pm,y,z][[t]]$ are given the 
$(t)$-adic topologies. Then $X := \Spf k[x^\pm,y,z][[t]]$ is an open 
formal subscheme of $\tilde X :=  k[x,y,z][[t]]$.
Let $I \subseteq k[x^\pm,y,z][[t]]$ be an ideal as in the theorem
and $Y := \Spf k[x^\pm,y,z][[t]]/I$. The theorem says that
there is no closed strict formal subscheme $\tilde Y \subseteq \tilde X$
with $Y=\tilde Y \cap X$. So,
The closure $\bar Y$ of $Y$ in $\tilde X$ is not strict.

There exists also a simpler example:

\begin{thm}\label{thm-example-no-closure}
Consider an element  of  $\CC [x^{\pm },y][[t]]$
\[
 f:= y + a_1 x^{-1}t + a_2 x^{-2}t^2 + a_3 x^{-3}t^3 +\cdots , \ a_i \in \CC \setminus \{0\}.
\]
Suppose that the function $ i\mapsto |a_i| $ is strictly increasing and 
\[
\lim_{i \to \infty} \frac{ |a _{i+1}|}{|a_i|} = \infty.
\] 
Then 
\[
(f) \cap \CC[x,y][[t]] = (0) .
\]
\end{thm}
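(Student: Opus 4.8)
The plan is to trade the ideal-theoretic assertion for an injectivity statement about a single evaluation homomorphism. Write $\phi:=\sum_{i\ge1}a_i x^{-i}t^i$, so that $f=y+\phi$ with $\phi\in t\,\CC[x^{\pm}][[t]]$. Because $\phi$ has positive $t$-adic order, the assignment $x\mapsto x$, $y\mapsto-\phi$, $t\mapsto t$ extends to a continuous ring homomorphism
\[
\mathrm{ev}\colon\CC[x^{\pm},y][[t]]\longrightarrow\CC[x^{\pm}][[t]],
\]
and $\mathrm{ev}(f)=-\phi+\phi=0$, so $(f)\subseteq\Ker(\mathrm{ev})$. Hence every $P\in(f)\cap\CC[x,y][[t]]$ satisfies $\mathrm{ev}(P)=0$, and the theorem follows once I show that $\mathrm{ev}$ is \emph{injective on the subring} $\CC[x,y][[t]]$ (the series with no negative powers of $x$). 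Checking that $\mathrm{ev}$ is a well-defined continuous ring homomorphism is routine ($G_n(x,-\phi)t^n$ has $t$-order $\ge n$, and substitution is multiplicative coefficientwise).

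The key simplification is that $\phi$ depends only on $u:=t/x$: one has $\phi=\psi(u)$ with $\psi(u):=\sum_{i\ge1}a_i u^i\in\CC[[u]]$. I would then grade $\CC[x^{\pm},y][[t]]$ by the total degree $s:=(\deg_x)+(\deg_t)$, so that $\deg y=0$ and $\deg u=0$; the map $\mathrm{ev}$ preserves this grading, and for $P=\sum_{n,d,e}p_{n,d,e}\,x^{e}y^{d}t^{n}\in\CC[x,y][[t]]$ (with $e,d,n\ge0$) the degree-$s$ component of $\mathrm{ev}(P)$ equals $x^{s}E_s(t/x)$, where
\[
E_s(u)=\sum_{n=0}^{s}u^{n}\,Q_{s-n,\,n}\!\bigl(-\psi(u)\bigr),\qquad Q_{e,n}(y):=[x^{e}]P_n(x,y)\in\CC[y].
\]
Since distinct monomials $x^{s-b}t^{b}$ are linearly independent, $\mathrm{ev}(P)=0$ is equivalent to $E_s=0$ in $\CC[[u]]$ for every $s\ge0$. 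For fixed $s$ only finitely many indices $n\le s$ occur, so each $E_s$ is an honest one-variable identity.

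The hypothesis enters exactly once. The growth condition $\lim_i|a_{i+1}|/|a_i|=\infty$ forces $|a_i|^{1/i}\to\infty$, so $\psi$ has radius of convergence $0$; consequently $\psi$ is \emph{not algebraic} over $\CC(u)$, because an algebraic formal power series is always convergent. Now read $E_s=0$ as $\Theta_s\bigl(u,\psi(u)\bigr)=0$ for the polynomial $\Theta_s(Z,W):=\sum_{n=0}^{s}Z^{n}Q_{s-n,n}(-W)\in\CC[Z,W]$. If some $Q_{s-n,n}\ne0$ then $\Theta_s$ is a nonzero polynomial of positive degree in $W$ (its $W$-coefficients are $\pm\sum_n p_{n,d,s-n}u^{n}$, which vanish identically only when all the $p$'s do), exhibiting $\psi(u)$ as a root of a nonzero polynomial over $\CC(u)$ — contradicting non-algebraicity. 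Hence all $Q_{s-n,n}=0$, i.e.\ $p_{n,d,e}=0$ whenever $e+n=s$; letting $s$ range over $\No$ kills every coefficient, so $P=0$.

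I expect the bookkeeping — well-definedness of $\mathrm{ev}$ and the collapse of the coefficients into the single identity $E_s=0$ — to be straightforward, the grading making it mechanical. The genuine content, and the main obstacle, is the step from a hypothetical nontrivial relation to algebraicity of $\psi$: I must confirm that a nonzero $\Theta_s$ really has positive $W$-degree (so that $\psi(u)$ is a bona fide algebraic element, not a spurious relation among the $p$'s alone), and then invoke that the super-geometric growth of the $a_i$ gives $\psi$ radius of convergence $0$, hence non-algebraic. That a geometric choice such as $a_i\equiv1$ makes $\psi=u/(1-u)$ rational and lets $(x-t)f=xy+t(1-y)$ survive in $\CC[x,y][[t]]$ shows this non-algebraicity input is indispensable.
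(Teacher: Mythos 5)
Your proof is correct in substance, but it takes a genuinely different route from the paper's. The paper argues directly on coefficients: assuming $h=fg\in\CC[x,y][[t]]$ with $g\neq 0$, it tracks the lowest term (in a lexicographic order on the $(x,y)$-exponents) of the tails $h_i'=\sum_{j=1}^{i}a_jx^{-j}g_{i-j}$, shows a nonzero $g$ forces a linear relation $\sum_{j\in\Lambda}a_{j+i-i_1}c_j=0$ with fixed nonzero $c_j$ for all large $i$, and kills this with a triangle-inequality estimate using the growth of the $a_i$ --- entirely elementary and self-contained. You instead recast the statement as injectivity, on the subring $\CC[x,y][[t]]$, of the substitution homomorphism $\mathrm{ev}\colon y\mapsto-\psi(t/x)$, use the $(\deg_x+\deg_t)$-grading to collapse $\mathrm{ev}(P)=0$ into one-variable identities $\Theta_s(u,\psi(u))=0$, and conclude from the classical fact (Puiseux/Eisenstein) that a formal power series algebraic over $\CC(u)$ has positive radius of convergence, whereas your $\psi$ has radius $0$ since $|a_i|^{1/i}\to\infty$. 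Your route buys conceptual clarity and strictly more generality: it shows the conclusion holds whenever $\psi$ is transcendental over $\CC(u)$, divergence being only a sufficient criterion (and conversely an algebraic $\psi$ always produces a nonzero element of the intersection, as your $a_i\equiv 1$ example shows, so the dichotomy is exact); it also uses only the ratio hypothesis, not the monotonicity. The price is the appeal to the nontrivial classical input that algebraic series converge, which the paper's hands-on computation avoids entirely.

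One step needs repair, and it is exactly the one you flagged: it is \emph{not} true that a nonzero $\Theta_s$ automatically has positive $W$-degree (take $P=x$; then $\Theta_1=1$). The fix is a trivial case split. If $\deg_W\Theta_s=0$, then $E_s=\Theta_s(u)$ is a nonzero polynomial in $u$, contradicting $E_s=0$ with no appeal to algebraicity; if $\deg_W\Theta_s\geq 1$, then $\Theta_s(u,W)$ is a nonzero polynomial of positive $W$-degree over $\CC(u)$ (its $W$-coefficients $\pm\sum_n p_{n,d,s-n}u^n$ are polynomials in $u$, not all zero), and transcendence of $\psi$ gives the contradiction. Either way $E_s=0$ forces $\Theta_s=0$, i.e.\ $p_{n,d,s-n}=0$ for all $n,d$, and letting $s$ run over $\No$ gives $P=0$. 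With this one-line adjustment your argument is complete.
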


\begin{proof}
We prove the assertion by contradiction. So we suppose that
there exists $0 \ne g = \sum _{i \in \No} g_i t^i \in \CC  [x^{\pm },y][[t]]$ with $g_i \in \CC[x^\pm,y]$
such that  $ h := fg \in \CC[x,y][[t]]$. 
If we write $ h = \sum_{i \in \No} h_i t^i $ with $h_i \in \CC[x,y]$,
then for every $i \in \No$, we have
\begin{align*}\label{eq-h-g}
 h_ i  = y g_i + \sum_{j=1}^{i} a_j x ^{- j } g_ {i-j}  = y g_i + h_i' ,\ (h_i'   := \sum_{j=1}^{i} a_j x ^{- j } g_ {i-j}) .
\end{align*}
In what follows, we will show that
for sufficiently large $i$, the bottom term of 
$h_{i+1}'$ (that is, the lowest term in the lexicographic order)
is lower in $y$-order than that of $h_{i}'$, which leads to a contradiction.

For each $i \in \No$, write
\[
g_{i}= \sum _{m \in \ZZ, n \in \No} g_{imn} x^{m} y^{n} ,\ g_{imn} \in \CC.
\]
We set
\begin{align*}
 d_i & := \inf \{ m \in \ZZ | \exists n , g_{imn} \ne 0 \} \ \text{(the order of $g_{i}$ in $x$)},\\
 e_i & := \inf \{ n \in \No | g_{id_in} \ne 0  \} \ \text{(the order of $ \sum _{n}g_{id_{i}n}$ in $y$)},  \\
 D_i & := \inf \{ d _ {i-j} -j | 1 \le j \le i   \} =\inf \{ d_{j'} -i +j' | 0 \le j' \le i-1 \} \\
 & \text{(the infimum of the orders of  $x^{-j} g_{i-j}$, $1 \le j \le i$)}, \\
 E_i & := \inf \{ e_{ i-j} | 1 \le j \le i, \ d_{i-j}-j=D_i  \} \\
 &= \inf \{ e_{j'}| 0 \le j' \le i-1, \ 
 d_{j'}-i +j' = D_i \} \\
 & \text{(the infimum of the $y$-orders of those terms } \\
 &\text{in the $x^{-j}g_{i-j}$, $1 \le j \le i$ which are of $x$-order $D_{i}$)}.
\end{align*}
Here by convention, $\inf \emptyset = + \infty$.
We easily see that for every $i' > i$, 
\[
 D_{i'} < D_i \text{ and } E_{i'} \le E_i.
\]

If for $i_0 \in \NN$, $D_{i_0} < 0$ and 
if  the coefficient of $ x^{D_{i_0}} y^{E_{i_0}} $ in $h'_{i_0}$
is nonzero, then  the coefficient of $ x^{D_{i_0}} y^{E_{i_0}} $ in $yg_{i_0}$
is also nonzero.
Moreover if either ``$m < D_{i_0}$'' or ``$m = D_{i_0}$ and $n < E_{i_0} $'', then
 the coefficient of $ x^m y^n $ in $yg_{i_0}$ vanishes.
It follows that 
\[
  d_{i_0 } = D_{i_0} \text{ and } e_{ i_0} = E_{i_0},
\]
and that
\[
  D_{i_0 +1} = D_{i_0} -1 \text{ and } E_{ i_0+ 1} = E_{i_0}-1,
\]
and that the coefficient of  $ x^{D_{i_0+1}} y^{E_{i_0+1}} $ in $h'_{i_0+1}$
is again nonzero. As a result, $E_{i+1} = E_i -1$ for every $i \ge i_0$.
Since  $E_i \in \No$ for every $i$, it is impossible.

Now it remains to show that for some $i \in \NN$,  $D_i <0$ and 
the coefficient of $x^{D_i}y^{E_i}$ in $h'_i$ is nonzero.
Suppose by contrary that for every $i \in \NN$ with $D_i <0$, 
the coefficient of $x^{D_i}y^{E_i}$ in $h'_i$ is zero. 
Since $i \mapsto D_i$ is strictly decreasing, there exists 
 $ i_1 \in \NN $ such that for every $i \ge i_1$, $D_i < 0$.
Then  for every $i \ge i_1$, the coefficient of $x^{D_i} y ^{E_i}$ in $yg_i $
must be zero.
Therefore we have
\[
 D_ i = D_{i_1} - ( i - i_1 ) \text{ and } E_i = E_{i_1}.
\]
Let  
\[
\Lambda := \{ j| \text{the coefficient of $x^{D_{i_1}} y^{E_{i_1}}$ in $ x^{-j} g_{i_1-j} $ is nonzero} \}
 \subseteq \{ 1,2 ,\dots, i_1 \}
\] 
and let $0 \ne c_j \in \CC $ be the coefficient of $x^{D_{i_1}} y^{E_{i_1}}$ 
in $x^{-j} g_{i_1-j}$, $j \in \Lambda$.
For every $i \ge i_1$, the  coefficient of $x^{D_i}y^{E_i}$ in $h_i'$ is
\[
   \sum _{j \in \Lambda}a_{ j +i-i_1} c_j  = 0.
\]
Let $j_0 \in \Lambda$ be the largest element and $j_1 \in \Lambda$ the second largest one. 
(Note that $\sharp \Lambda \ge 2$).
From the assumption on the  $a_i$, for $i \gg i_1$,
we have
\[
| a_{ j_0 +i-i_1} | - 
( \sharp \Lambda -1)|  a_{ j_1 +i-i_1} | (\max _{j \in \Lambda \setminus \{j_0\} } |c_j /c_{j_0}| )>0.
\]
Therefore, for $i \gg 0$,
\begin{align*}
0&= |\sum _{j \in \Lambda}a_{ j +i-i_1} c_j| \\
& \ge  |c_{j_0}|  \left( | a_{ j_0 +i-i_1} | - \sum_{j \in \Lambda \setminus \{j_0\} }  
|  a_{ j +i-i_1}  c_j  /c_{j_0}  |  \right)\\
& \ge  |c_{j_0}|  \left( | a_{ j_0 +i-i_1} | - 
( \sharp \Lambda -1)|  a_{ j_1 +i-i_1} | (\max _{j \in \Lambda \setminus \{j_0\} } |c_j /c_{j_0}| )\right)\\
& >0 
\end{align*}
This is a contradiction. 
We have proved  the theorem.
\end{proof}

If we remove one more variable, then there is no ideal as in Theorems \ref{thm-Heinzer} and \ref{thm-example-no-closure}:

\begin{prop}
Let $k$ be a field.
Then for  any nonzero ideal $I$ of  $ k[x^\pm][[t]]$, 
$ I \cap k[x][[t]] \neq (0)$.
\end{prop}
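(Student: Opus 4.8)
The plan is to reduce at once to a single element together with a statement about a single multiplier. Since $I\neq(0)$, pick any $0\neq f\in I$; because $(f)\subseteq I$ and $R:=k[x^{\pm}][[t]]$ is a domain, it suffices to produce a nonzero $g\in R$ with $fg\in S:=k[x][[t]]$, for then $fg$ is a nonzero element of $I\cap S$. Writing $f=\sum_{i\ge i_0}f_i t^i$ with $f_{i_0}\neq0$ its leading $t$-coefficient, I would first normalize in the $x$-direction: with $d:=\ord_x f_{i_0}\in\ZZ$, replace $f$ by $F:=x^{-d}f\in I$ (legitimate, as $x^{-d}$ is a unit of $R$), so that the leading coefficient becomes $q:=x^{-d}f_{i_0}\in k[x]$ with $q(0)\neq0$. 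Finding a nonzero $g$ with $Fg\in S$ then finishes the proof, since $\gamma:=x^{-d}g$ satisfies $f\gamma=Fg$.

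The key step is a surjectivity lemma, and this is exactly where the hypothesis of a \emph{single} variable is used. Decompose $k[x^{\pm}]=k[x]\oplus x^{-1}k[x^{-1}]$ and let $\pi$ denote the projection onto the second summand (the strictly negative powers of $x$). I claim that for $q\in k[x]$ with $q(0)\neq0$ the $k$-linear map $k[x^{\pm}]\to x^{-1}k[x^{-1}]$, $g\mapsto\pi(qg)$, is surjective. Indeed $\pi(q\,x^{-m})$ has lowest term $q(0)x^{-m}$ with $q(0)$ a nonzero \emph{scalar}, so the images $\pi(q\,x^{-1}),\pi(q\,x^{-2}),\dots$ form a triangular system with invertible scalar diagonal, and any finite element of $x^{-1}k[x^{-1}]$ is hit by back-substitution. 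The point is that the ``diagonal'' is the constant $q(0)\in k^{\times}$; with a second variable $y$ present it would instead be $q(0,y)\in k[y]$, which is not a unit and for which surjectivity fails, matching the counterexamples of Theorems \ref{thm-Heinzer} and \ref{thm-example-no-closure}. I expect this lemma to be the main (indeed the only nontrivial) obstacle; the rest is bookkeeping.

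With the lemma in hand I would build $g=\sum_{m\ge0}g_m t^m$ coefficient by coefficient. Set $g_0=1$, and for each $n=i_0+m$ write $(Fg)_n=q\,g_m+c_n$, where $c_n:=\sum_{i=i_0+1}^{n}F_i\,g_{n-i}$ depends only on $g_0,\dots,g_{m-1}$; choosing $g_m\in k[x^{\pm}]$ with $\pi(q\,g_m)=-\pi(c_n)$, which is possible by the lemma, forces $(Fg)_n\in k[x]$ (for $n<i_0$ the coefficient vanishes automatically). This yields $g\in R$ with $g_0=1\neq0$ and $Fg\in S$, while $(Fg)_{i_0}=q\neq0$ shows $Fg\neq0$. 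Then $\gamma=x^{-d}g\in R\setminus\{0\}$ gives $f\gamma=Fg\in I\cap S\setminus\{0\}$, as desired. I would stress that no convergence or boundedness issue arises: each $g_m$ lies in $k[x^{\pm}]$ and the ambient ring $R$ imposes no lower bound on the $x$-orders of the coefficients of $g$, so the recursion is unconstrained and the limit automatically defines an element of $R$.
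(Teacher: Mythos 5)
Your proof is correct, and it takes a genuinely different route from the paper's. You stay inside $R=k[x^{\pm}][[t]]$ and build, coefficient by coefficient in $t$, a multiplier $g$ with $fg\in k[x][[t]]\setminus\{0\}$; the engine is your triangular surjectivity lemma, which works precisely because the diagonal entry $q(0)$ is a unit of $k$ --- this is where the one-variable hypothesis enters. The paper instead inverts the leading coefficient: a recursion of the same flavor, but carried out in $k[x^{\pm},f_n^{-1}][[t]]$, shows that $(f)=(t^n)$ there; then, after arranging $f_n\in k[x]$ with $f_n(0)\neq 0$ (so that $D(x)$ and $D(f_n)$ cover $\Spec k[x]$ --- again the one-variable hypothesis, in geometric form), it glues the closed formal subschemes $\Spf k[x^{\pm}][[t]]/(f)$ and $\Spf k[x,f_n^{-1}][[t]]/(t^n)$ into a strict closed formal subscheme $Z\subsetneq \Spf k[x][[t]]$, whose defining ideal $J\subseteq k[x][[t]]$ is closed, nonzero, and contained in $(f)\cap k[x][[t]]$. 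So the paper's argument leans on the formal-scheme machinery developed earlier (strictness, gluing, the correspondence between strict closed formal subschemes and closed ideals) and yields an abstract nonzero ideal, while yours is self-contained commutative algebra: it avoids localization and descent altogether and produces an explicit nonzero element of $I\cap k[x][[t]]$. Your closing remark that the lemma breaks with a second variable present (the diagonal becomes $q(0,y)$, not a unit) identifies exactly the same obstruction that blocks the paper's covering argument in that setting, consistent with Theorems \ref{thm-Heinzer} and \ref{thm-example-no-closure}.
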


\begin{proof}
It  suffices to prove the assertion in
 the case where $I$ is principal, say  $ I= (f)$, $f \in k[x^\pm][[t]] $.
Write
\[
f = \sum_{ i \ge n} f_i t^i \in k[x^\pm][[t]], \ f_i \in k[x^\pm], \ f_n \ne 0.
\]
Define  $g_i \in k[x ^\pm, f_n^{-1}]$ inductively as follows;
\[
 g_0 := f_n^{-1} , g _ {i+1} := - ( \sum _{ 0 \le j \le i} g_j f_{n+ i +1 -j} )/ f_n .
\] 
Then 
\begin{align*}
 f   (\sum_{i \ge 0} g_i t^i) &=  \sum _{m \ge n} (( f_n g_{m-n} + \sum _{\substack{i+j =m \\ j < m-n}}f_ig_j ) t^m) \\
 &= t^n + \sum _{m > n} (( - \sum _{j< m-n}g_jf_{m-j} + \sum _{\substack{i+j =m \\ j < m-n}}f_ig_j ) t^m) \\
& = t^n .
\end{align*}
Since $\sum_{i \ge 0} g_i t^i$ is invertible,
ideals $ (f)  $ and $(t^n)$ of $ k[ x^\pm, f_n^{-1}][[t]] $ are identical.
Now if necessary, replacing $f$ with $x^{i}f$ for some  $i \in \ZZ$,
we may suppose that $f_{n} \in k[x] \setminus (x)$. 
Then $\Spf k[x][[t]] $ is covered by the open formal subschemes
$ \Spf k[x^{\pm}][[t]] $ and  $\Spf k[x,f_{n}^{-1}][[t]]$.
Gluing $ \Spf k[x^\pm][[t]]/(f) $ and $\Spf k[x, f_n^{-1}][[t]] /(t^n)$, we obtain 
a closed formal strict subscheme $Z$ of $ \Spf k[x][[t]] $.
Since $ Z$ contains $ \Spf k[x^\pm][[t]]/(f)$ as an open formal subscheme, 
$Z$ is not identical to $\Spf k[x][[t]]$.
Hence
  $Z$ is defined by a nonzero ideal $J \subseteq k[x][[t]]$.
Therefore 
\[
I \cap  k[x][[t]] \supseteq J \neq (0).  
\]
\end{proof}

\subsection{Locally pre-Noetherian formal schemes and plain formal subschemes}

\begin{defn}
A mild formal scheme $X$ is said to be {\em locally pre-Noetherian} 
if for every $x \in X$, $\hat \cO_{X,x}$ is Noetherian (not necessarily adic).
\end{defn}

\begin{defn}
A formal subscheme $Y$ of a mild formal scheme $X$ is said to be
{\em plain} if for every $y \in Y$, the map of complete stalks $\hat \cO_{X,y} \to \hat \cO_{Y,y}$
is surjective.
\end{defn}

\begin{prop}\label{prop-plain-pre-pre}
A plain formal subscheme of a locally pre-Noetherian formal scheme is again 
locally pre-Noetherian.
\end{prop}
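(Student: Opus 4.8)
The plan is to verify directly the two conditions contained in the definition of \emph{locally pre-Noetherian} for the subscheme $Y$: that $Y$ is mild, and that every complete stalk $\hat \cO_{Y,y}$ is Noetherian. Both will follow immediately from results already established, so this is really a matter of assembling the right citations rather than doing any new work.

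First I would dispose of mildness. Since $X$ is locally pre-Noetherian it is by definition mild, and a plain formal subscheme is in particular a formal subscheme; so by the corollary asserting that every formal subscheme of a mild formal scheme is mild, $Y$ is mild. This is more than a bookkeeping step: it is precisely mildness of $Y$ that guarantees that $\hat \cO_{Y,y}$ is a bona fide (local) complete ring, so that the question of its being Noetherian is even meaningful.

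For the stalk condition I would fix a point $y \in Y$. Because a formal subscheme is set-theoretically a locally closed subset of $X$ (a closed formal subscheme of an open formal subscheme), $y$ is also a point of $X$ and the complete stalk $\hat \cO_{X,y}$ is defined; moreover passing to an open neighborhood does not change it. The hypothesis that $Y$ is \emph{plain} supplies, verbatim, the surjectivity of the natural map $\hat \cO_{X,y} \to \hat \cO_{Y,y}$. Since $X$ is locally pre-Noetherian, $\hat \cO_{X,y}$ is Noetherian, and $\hat \cO_{Y,y}$ is its image under a surjective ring homomorphism, hence a quotient ring of a Noetherian ring, hence Noetherian. As $y$ was arbitrary, every complete stalk of $Y$ is Noetherian and $Y$ is locally pre-Noetherian.

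I do not expect any genuine obstacle here: the only external inputs are the mildness corollary and the elementary fact that a quotient of a Noetherian ring is again Noetherian. The single point worth stating carefully is that Noetherianness in this argument is a property of the underlying abstract ring, so it is enough that $\hat \cO_{Y,y}$ be the image of $\hat \cO_{X,y}$ as a ring; the linear topologies on the two complete rings play no role in this step, and in particular no adicness or gentleness assumption is needed.
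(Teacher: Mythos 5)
Your proof is correct and is precisely the argument the paper leaves implicit (its own proof is just ``Obvious''): mildness of $Y$ comes from the corollary on formal subschemes of mild formal schemes, and each complete stalk $\hat \cO_{Y,y}$ is a quotient of the Noetherian ring $\hat \cO_{X,y}$ via the surjection furnished by plainness, hence Noetherian. Nothing more is needed, and your remark that no topological (adic or gentle) hypotheses enter is also accurate.
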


\begin{proof}
Obvious.
\end{proof}

\begin{prop}\label{prop-strict-gentle->plain}
A strict formal subscheme of a gentle formal scheme is plain.
\end{prop}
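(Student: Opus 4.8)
The plan is to reduce to the affine case and there read plainness off the defining proideal sheaf, the essential input being that gentleness forces a Mittag-Leffler inverse system of \emph{countable} type, whose $\varprojlim^{1}$ vanishes.

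First I would observe that being plain is a condition imposed at each point $y \in Y$, whereas being strict and being gentle are local conditions; moreover, by the proposition characterising strictness (a closed formal subscheme is strict exactly when its defining proideal sheaf is mild), this property restricts to distinguished open subsets. Since every distinguished open of a gently affine formal scheme is again gently affine, I may therefore assume $X = \Spf A$ with $A$ gentle and $Y \subseteq X$ closed and strict, and fix $y \in Y$ corresponding to a prime $\fp \subseteq A_{\red}$. The goal then becomes the surjectivity of the map of complete stalks $\hat \cO_{X,y} \to \hat \cO_{Y,y}$.

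Next I would write down the defining short exact sequence of semicoherent $\cO_{X}$-promodules $0 \to \cI_{Y} \to \cO_{X} \to \cO_{Y} \to 0$, in which $\cI_{Y}$ is mild because $Y$ is strict. Passing to stalks at $y$ is exact, so $0 \to \cI_{Y,y} \to \cO_{X,y} \to \cO_{Y,y} \to 0$ is exact; here $\cO_{X,y} = A_{\fp}$ is gentle (the stalk of a gentle formal scheme is gentle), and $\cI_{Y,y} = \cI_{Y}(X) \otimes_{A} \cO_{X,y}$ is mild, being a localization of the mild promodule $\cI_{Y}(X)$ (Corollary~\ref{cor-tensor-mild}); in particular $\cI_{Y,y}$ is Mittag-Leffler. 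Since $A$ is gentle, all three pro-objects are of countable type.

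The crux --- and the step where gentleness is indispensable --- is to upgrade this exact sequence of promodules to a genuine surjection after completion. I would represent it by a level short exact sequence $0 \to K_{d} \to M_{d} \to N_{d} \to 0$ over a common countable (equivalently, $\NN$-indexed) directed set, with $(M_{d})$ and $(N_{d})$ representing $\cO_{X,y}$ and $\cO_{Y,y}$ and $(K_{d})$ representing the Mittag-Leffler pro-object $\cI_{Y,y}$; by stability of the Mittag-Leffler property under isomorphism, the system $(K_{d})$ satisfies the classical Mittag-Leffler condition. Taking inverse limits gives $\hat \cO_{X,y} = \varprojlim M_{d}$ and $\hat \cO_{Y,y} = \varprojlim N_{d}$, and the vanishing of $\varprojlim^{1}(K_{d})$ for a countable Mittag-Leffler system yields the sought surjectivity of $\hat \cO_{X,y} \to \hat \cO_{Y,y}$, so that $Y$ is plain. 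I expect the only real subtlety to lie exactly here: an epimorphism of mild promodules only becomes an \emph{epimorphism in the category of complete modules}, that is, a map with dense image, via Lemma~\ref{lem-epi}, and it is precisely the countability hidden in the word ``gentle'' that converts dense image into honest surjectivity through the vanishing of $\varprojlim^{1}$.
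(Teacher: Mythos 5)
Your proof is correct, and its skeleton coincides with the paper's: reduce to a closed formal subscheme of a suitable open, form the stalk exact sequence $0 \to \cI_{Y,y} \to \cO_{X,y} \to \cO_{Y,y} \to 0$, use strictness to know the kernel is mild (hence Mittag-Leffler), and let countability finish the job. Where you diverge is the concluding mechanism. The paper stays inside the formalism of its section on strict subschemes: completing the exact sequence of mild promodules gives $\hat\cO_{Y,y} = \hat\cO_{X,y} \sslash \hat\cI_{y}$, a normal quotient, and since $\hat\cO_{X,y}$ is gentle the complete quotient agrees with the ordinary quotient ($M \sslash P = M/P$ for gentle $M$, quoting Matsumura, Th.\ 8.1), whence surjectivity. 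You instead unwind this into a tower argument: represent the sequence by a level short exact sequence of $\NN$-indexed systems and invoke the vanishing of $\varprojlim^{1}$ for countable Mittag-Leffler systems. These are the same underlying fact --- the completeness of countably-topologized quotients that the paper cites is proved by exactly your $\varprojlim^{1}$-style argument --- so neither route is more general; the paper's version is shorter because it reuses the normal-quotient machinery already set up, while yours is more self-contained and makes explicit precisely where gentleness (countability) enters, which is pedagogically valuable. Two small points you should make explicit to close the argument fully: first, to obtain a genuinely \emph{level} short exact sequence you need the representatives $M_{d} \to N_{d}$ to be surjective, which follows from Lemma \ref{lem-surjective} because $\cO_{Y,y}$, being the stalk of an admissible proring, is epi; second, your appeal to Corollary \ref{cor-tensor-mild} for the mildness of $\cI_{Y,y}$ requires knowing that $\cO_{X,y}=A_{\fp}$ is itself a mild $A$-promodule, which holds since it is epi and gentle.
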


\begin{proof}
Let $X$ be a gentle formal scheme and $Y$
be a gentle and strict formal subscheme. 
If necessary, replacing $X$ with an open formal subscheme, we may
suppose that $Y$ is a closed formal subscheme, say with the defining 
proideal $\cI \subseteq \cO_{X}$. 
Then for each $y \in Y$, we have the short exact sequence of
mild $\cO_{X,y}$-modules
\[
0 \to \cI_{y} \to \cO_{X,y} \to \cO_{Y,y} \to 0 .
\]
Hence we have $\hat \cO_{Y,y} = \hat \cO_{X,y} \sslash \hat \cI_{y}$.
Since $\hat \cO_{X,y}$ is gentle, $\hat \cO_{X,y} \sslash \hat \cI_{y}= \hat \cO_{X,x} / \hat \cI_{y}$.
As a consequence, the natural map $\hat \cO_{X,y} \to \hat \cO_{Y,y}$
is surjective and $Y$ is a plain formal subscheme.
\end{proof}

\begin{prop}
Every (non-formal) subscheme of a mild formal scheme is plain.
\end{prop}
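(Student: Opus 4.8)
The plan is to mimic the proof that a non-formal subscheme is strict, and then add one extra local ingredient coming from mildness. Since plainness is a condition at each point $y \in Y$ and is local on $X$, I first reduce to the affine situation: choose a mildly affine neighborhood $X = \Spf A$ of $y$ (possible since $X$ is mild) inside which $Y$ is a closed subscheme; as $Y$ is an honest scheme, $Y = \Spec B$ for a ring $B$. Writing $A = (A_d)$, the morphism $A \to B$ of prorings is represented by a surjection $A_{d_0} \to B$, exactly as in the proof that subschemes are strict. Because $A$ is mild, Proposition \ref{prop-mild-criterion} gives that each $\hat A \to A_d$ is surjective, so the composite $\hat A \to A_{d_0} \to B$ shows that $\hat A \to B$ is surjective.

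Next I identify the complete stalks at $y$. Let $\fq \subseteq B$ be the prime corresponding to $y$ and $\fp \subseteq A_\red$ its image, so that $\cO_{X,y} = A_\fp = (A_{d,\fp})$ and hence $\hat\cO_{X,y} = \varprojlim_d A_{d,\fp}$, while $\hat\cO_{Y,y} = B_\fq$ (discrete, since $Y$ is a scheme). The map $\hat\cO_{X,y} \to \hat\cO_{Y,y}$ sits in a commutative square together with $\hat A \to B$ and the two localization maps $\hat A \to \hat\cO_{X,y}$ and $B \to B_\fq$. The crucial point is that, $X$ being mild, $\hat\cO_{X,y}$ is a \emph{local} ring (as recorded after the definition of the complete structure sheaf), and its residue field is $\kappa(\fp)$; moreover the surjection $A_\red \twoheadrightarrow B_\red$ induces $A_\red/\fp \cong B/\fq$, so $\kappa(\fp) \cong \kappa(\fq)$.

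To conclude, let $\mathfrak P \subseteq \hat A$ be the preimage of $\fq$ under the surjection $\hat A \twoheadrightarrow B$; localizing at $\mathfrak P$ yields a surjection $\hat A_{\mathfrak P} \twoheadrightarrow B_\fq$. Now any $s \in \hat A \setminus \mathfrak P$ has nonzero image in $\kappa(\fq) = \kappa(\fp)$, hence its image in the local ring $\hat\cO_{X,y}$ is a unit; therefore the map $\hat A \to \hat\cO_{X,y}$ factors through $\hat A_{\mathfrak P}$. The surjection $\hat A_{\mathfrak P} \twoheadrightarrow B_\fq$ then factors as $\hat A_{\mathfrak P} \to \hat\cO_{X,y} \to B_\fq$, which forces $\hat\cO_{X,y} \to B_\fq = \hat\cO_{Y,y}$ to be surjective, i.e.\ $Y$ is plain. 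The step I expect to be the only real obstacle is precisely this last one: lifting the denominators of $B_\fq$, which requires the locality of $\hat\cO_{X,y}$ and the identification of residue fields. This is exactly where mildness is indispensable, since for a merely epi (non-mild) admissible proring the complete stalk need not behave well enough for the denominators to lift to units.
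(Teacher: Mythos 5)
Your argument is correct, but it takes a genuinely different route from the paper's, which never leaves the stalk. The paper writes $\cO_{X,y}=(A_{d})$, observes that every representative $A_{d} \to \cO_{Y,y}$ of the stalk map is surjective (Lemma \ref{lem-surjective}, $\cO_{Y,y}$ being a single ring and hence trivially epi), recalls that the stalk of a mild formal scheme is itself a mild proring, and applies Proposition \ref{prop-mild-criterion} directly to $\cO_{X,y}$: the map $\hat \cO_{X,y} \to A_{d}$ is surjective, and composing the two surjections finishes the proof --- no localization of $\hat A$, no residue fields, no factorization argument. You instead apply Proposition \ref{prop-mild-criterion} to the global sections, obtain $\hat A \twoheadrightarrow B$, and descend to complete stalks by factoring $\hat A_{\mathfrak{P}} \twoheadrightarrow B_{\fq}$ through $\hat \cO_{X,y}$. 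This is valid, but it obliges you to carry the extra ingredients you list, one of which --- that the residue field of the local ring $\hat \cO_{X,y}$ is all of $\kappa(\fp)$ --- you assert rather than prove; it is true, but it rests on the mildness of the stalk proring $(A_{d,\fp})$ (so that $\hat \cO_{X,y} \to A_{d,\fp}$ is onto), i.e.\ on the very proposition that powers the paper's shorter proof, so nothing is saved. You could streamline your own route by avoiding residue fields and locality altogether: if $s \in \hat A \setminus \mathfrak{P}$, then $s_{d} \notin \fp_{d}$ for every $d$ (since $\fp_{d}$ is the preimage of $\fq$ under $A_{d} \to B$), so the image of $s$ is a unit in each $A_{d,\fp}$, and the compatible system of inverses exhibits its image in $\hat \cO_{X,y} = \varprojlim_{d} A_{d,\fp}$ as a unit. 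Finally, your closing claim that mildness is indispensable ``exactly'' at the denominator-lifting step misdescribes your own proof: you already used mildness to get $\hat A \twoheadrightarrow B$ (without it this surjectivity is lost, e.g.\ for the non-mild epi proring of Example \ref{expl-nonmild}, where $\hat R = k$), whereas in the paper's proof the single use of mildness occurs at the stalk.
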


\begin{proof}
Let $Y$ be a subscheme of a mild formal scheme $X$.
For each $y \in Y$, $\cO_{Y,y}$ is a ring and $\cO_{X,y}$ is an admissible ring.
If we write $\cO_{X,y} =(A_{d})$, then every morphism $A_{d} \to \cO_{Y,y}$
representing $\cO_{X,y} \to \cO_{Y,y}$ is surjective.
Since $\cO_{X,y}$ is mild, $\hat \cO_{X,y} \to A_{d} \to \cO_{Y,y} = \hat \cO_{Y,y}$
is surjective, which proves the proposition.
\end{proof}

\begin{expl}
Let $A $ and $A^{\adic}$ be as in Example \ref{ex-kxy-nonadic} 
and \ref{ex-kxy-nonadic2}. 
The formal schemes $ X:= \Spf A $ and $X ^\adic := \Spf A^\adic $
 have the same underlying topological space,
which consists of three open prime ideals, $(x,y)$,  $ ( x)  $ and $(y)$. 
The complete stalks of $\cO_X$ and $\cO _{X^\adic}$ at $ (x,y) $ and $(y)$ are identical as rings,
but not at $ (x) $. 
We have 
\[
\hat \cO_{X, (x)} = k((y))[[x]] / (x) =k((y)) \text{ and } \hat  \cO_{X^\adic, (x)} = k((y))[[x]].
\] 
It follows that via the morphism $X \to X^\adic$
induced by the identity map $A^\adic \to A$, $X$ is a plain formal subscheme of $X^{\adic}$
but not strict.
\end{expl}

\begin{expl}
With the notation as in Example \ref{expl-infinite-embedded-points}, if
$T \subseteq \CC$ is infinite, then 
$Y \subseteq X$ is plain but not strict. 
\end{expl}

\section{Formal separatrices of singular foliations}\label{sec-foliation}

In this section, we construct non-adic formal schemes
from singularities of foliations. 

\subsection{Formal separatrices}

Let $X$ be a smooth algebraic variety over $\CC$, and $ \Omega _{X}=\Omega_{X/\CC}$ 
the sheaf of (algebraic) K\"ahler differential forms.
A {\em (one-codimensional) foliation} on $X$ is an invertible saturated subsheaf
$\cF$ of $\Omega _{X}$ satisfying the integrability condition: $ \cF \wedge d\cF =0 $.
We say that a foliation $\cF$ is {\em smooth} at $x \in X$ if
the quotient sheaf $\Omega _X /\cF$ is locally free around $x$, and
that $\cF$ is {\em singular} at $x$ otherwise.
We say that $\cF$ is {\em smooth} if $\cF$ is smooth at every point.
The pair $(X,\cF)$ of a smooth variety  $X$ and a foliation on $X$
is called a {\em foliated variety}.
 
\begin{defn}
Let $(X,\cF)$ be a foliated variety, $x \in X (\CC)$, $ X_{/x} := \Spf \hat \cO_{X,x}$,
 $Y \subseteq  X_{/x}$  a strict closed formal 
 subscheme of codimension one defined by $0 \neq f \in \hat \cO_{X,x}$,
and $ \omega \in \Omega_{X,x} $ a generator of $\cF_x$.
We say that $Y$ is  a {\em formal separatrix} (of $\cF$) at $x$ if
$f$ divides $ \omega \wedge d f $. 
\end{defn}

Frobenius theorem says that if $\cF$ is smooth at $x$,
there exists a unique smooth formal separatrix of $\cF$ at $x$.
Miyaoka \cite{Miyaoka} proved  that  the family of smooth formal separatrices
at smooth points of a foliation is a formal scheme:

\begin{thm}\label{thm-Miyaoka}\cite[Cor.\ 6.4]{Miyaoka}
Let $(X,\cF)$ be a foliated variety. Suppose that $\cF$ is smooth.
Then there exists a strict closed formal 
subscheme $L$ of  $(X \times _\CC X)_{/ \Delta_X}$ such that
for every point $x \in X$, $ p_2( p_1^{-1}(x)) $ is the smooth formal separatrix of $\cF$ at $x$.
Here $\Delta_X \subseteq X \times _\CC X$ is the diagonal, 
$(X \times _\CC X)_{/ \Delta_X}$ is the completion of $X \times _\CC X$ along $\Delta_X$
and $p_{1},p_{2}:L \to X$ 
are the first and second projections.
\end{thm}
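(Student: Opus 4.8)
The plan is to realise $L$ as the zero locus of a relative first integral of the foliation, obtained by integrating $\cF$ in the fibre direction of the first projection. Write $Z := (X \times_\CC X)_{/\Delta_X}$ and regard it, via $p_1$, as a formal scheme over $X$ whose fibre over a closed point $x$ is $X_{/x} = \Spf \hat\cO_{X,x}$, embedded through the second factor. Pulling back $\cF \subseteq \Omega_X$ along $p_2$ gives an invertible subsheaf $p_2^*\cF$ of the relative cotangent sheaf $\Omega_{Z/X}$, and relative integrability is inherited from $\cF \wedge d\cF = 0$. First I would produce, locally on $X$, a relative first integral: a section $F$ of $\cO_Z$ cutting out the relative leaf through the diagonal section, normalised so that $F$ vanishes on $\Delta_X$. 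Setting $L := V(F)$ and checking fibrewise — where Frobenius provides the \emph{unique} smooth formal separatrix $Y_x \subseteq X_{/x}$ through $x$ — that $p_2$ carries $p_1^{-1}(x)$ onto $Y_x$ would give the stated property.

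For the local construction I would fix local coordinates $t_1,\dots,t_n$ on $X$, put $x_i := p_1^* t_i$ and $y_i := p_2^* t_i$ on $X \times_\CC X$, so that $\cI_{\Delta_X} = (y_1-x_1,\dots,y_n-x_n)$ and $p_2^*\omega = \sum_i a_i(y)\,dy_i$ for a local generator $\omega$ of $\cF$. Since $\omega \wedge d\omega = 0$, $\omega$ is nonvanishing, and $\chara \CC = 0$, the formal Frobenius theorem integrates $p_2^*\omega$ along the fibres; the recursion on the $\cI_{\Delta_X}$-adic degree solves, at each step, linear equations whose coefficients are regular in the base variables $x_i$, and the antiderivatives needed in characteristic $0$ exist. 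Hence the resulting $F$ is genuinely a section of $\cO_Z = \varprojlim \cO_{X\times X}/\cI_{\Delta_X}^{\,n}$, not merely a formal series for each fixed $x$. The local $F$'s are determined up to a relative unit (uniqueness of the leaf through $\Delta_X$), so the loci $V(F)$ agree on overlaps and glue to a global closed formal subscheme $L \subseteq Z$.

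It then remains to verify that $L$ is \emph{strict}. The space $Z$ is the completion of the smooth variety $X \times_\CC X$ along the smooth closed subvariety $\Delta_X$, hence a locally Noetherian adic formal scheme carrying the $\cI_{\Delta_X}$-adic topology; on a distinguished affine $\Spf A \subseteq Z$ it corresponds to a Noetherian adic ring $A$. Since every ideal of a Noetherian adic ring is closed (\S\ref{subsec-what-subschemes}), the defining ideal $J$ of $L$ satisfies $A/J = A \sslash J$, so $L$ is a strict closed formal subscheme, and by Proposition \ref{prop-strict-sub} it is again locally Noetherian and adic.

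I expect the main obstacle to be the globalisation of the formal integration, that is, a \emph{relative} formal Frobenius theorem with coherent dependence on the base: one must show that the fibrewise first integrals assemble into a single element of $\Gamma(Z,\cO_Z)$ with regular dependence on $x$, rather than an unrelated formal series at each point. Once this parametrised Frobenius statement is secured, the gluing of the local $L$'s and the strictness verification are routine.
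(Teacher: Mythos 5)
The first thing to note is that the paper contains \emph{no proof} of this statement: Theorem \ref{thm-Miyaoka} is imported as a black box from Miyaoka \cite{Miyaoka} (Cor.\ 6.4) and is only \emph{used} in \S\ref{sec-foliation}. So there is no internal argument to compare yours against; your proposal has to be judged on its own terms and against Miyaoka's construction.

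On those terms, your strategy is the natural one: integrate the everywhere-smooth, corank-one foliation in the fibre direction of $p_1$ to get a relative first integral $F$ on $Z = (X\times_\CC X)_{/\Delta_X}$, set $L := V(F)$, and check fibrewise against the unique smooth formal separatrix given by Frobenius. Your strictness verification is correct and uses exactly the machinery of \S\ref{subsec-what-subschemes}: $Z$ is locally Noetherian and adic, every ideal of a Noetherian adic ring is closed, hence $A/J = A \sslash J$. Note that this step genuinely depends on your having produced an honest ideal sheaf (locally generated by a section $F$ of $\hat\cO_Z$): by Theorem \ref{thm-single-point}, a locally Noetherian formal scheme with non-discrete underlying space always has closed formal subschemes that are \emph{not} strict, so strictness is not automatic and the reduction to a principal ideal is doing real work.

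The genuine gap is the one you flag yourself, and it is not routine: the parametrised formal Frobenius theorem is the entire content of the result, and your proposal asserts it rather than proves it. Two points in particular need an argument. First, the recursion producing $F = \sum_\alpha c_\alpha(x)\, u^\alpha$ (with $u_i = y_i - x_i$) requires at each step division by a coefficient of $\omega$ that is a unit, which only makes sense locally on $X$ where a fixed coefficient is invertible, and one must verify that the recursion closes using the integrability $\omega \wedge d\omega = 0$; that verification is where the theorem actually lives. Second, the gluing step needs agreement of the \emph{ideals} $(F)$ and $(F')$ on overlaps, not merely of their zero loci; this requires a uniqueness statement for relative first integrals --- e.g.\ that two sections vanishing on $\Delta_X$ with nonvanishing relative differential and satisfying $dF \wedge dF' = 0$ differ by a unit --- which again must be proved in the formal relative setting, not just fibre by fibre. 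As a sketch, the proposal identifies the correct strategy and the correct reduction; as a proof, it defers the substance to a ``parametrised Frobenius'' statement that is of the same order of difficulty as the theorem itself.
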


Let $(X,\cF)$ be a foliated variety and  $C \subseteq X$ a closed smooth subvariety of dimension 1.
Suppose that $C$ meets only at a single point $ o $ with the singular
locus of $\cF$. Let $U \subseteq X$ be the smooth locus of $\cF$
and $L \subseteq (U \times _\CC U)_{/\Delta _U}$ the family of formal separatrices
as in the theorem.
Then $C \setminus \{o\}$ is a closed subvariety of $U$.
The fiber product 
\[
L_{C\setminus \{o\}} := ( C \setminus \{o\}) \times _{U, p_1} L
\]
 is the family of
the smooth formal separatrices over $C \setminus \{o\}$,
and a strict formal subscheme of $Z_{C}:=(C \times _\CC X)_{/\Delta_ C}$.
Let $L _C := \overline{ L_{ C\setminus \{o\} } }$ be the closure of
$ L_{ C\setminus \{o\} }$ in $Z_{C}$.

\begin{prop}\label{prop-equivalent-conditions-closed}
The following are equivalent:
\begin{enumerate}
\item ${L_{C}}$ is locally Noetherian.
\item ${L_{C}}$ is adic.
\item $ {L_{C}}$ is locally pre-Noetherian.
\item $ {L_{C}} \subseteq Z _{C}$ is strict.
\item ${L_{C}} \subseteq Z_{C}$ is plain.
\end{enumerate}
\end{prop}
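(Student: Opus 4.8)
The plan is to treat $\{(1),(2),(4)\}$ as a single cluster handled by the general machinery of \S\ref{sec-mild-gentle}--\S\ref{sec-local-properties}, to route $(4)\Rightarrow(5)\Rightarrow(3)$ the same way, and then to close the circle with the one genuinely geometric step $(3)\Rightarrow(2)$, where the singularity at $o$ does all the work. First I would record the ambient facts. Since $C\times_\CC X$ is smooth, hence Noetherian, and $\Delta_C\cong C$ is closed in it, the completion $Z_C=(C\times_\CC X)_{/\Delta_C}$ is a locally Noetherian adic formal scheme, in particular gentle and locally ind-Noetherian, with $(Z_C)_\red=\Delta_C$. As $L_C$ is the closure (Proposition \ref{prop-closure}) of $L_{C\setminus\{o\}}$, whose reduced part is $\Delta_{C\setminus\{o\}}$, we get $(L_C)_\red=\Delta_C=(Z_C)_\red$; being a closed formal subscheme of $Z_C$, $L_C$ is moreover mild and locally ind-Noetherian. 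Finally, over $C\setminus\{o\}$ the foliation is smooth, so there $L_C$ agrees with the smooth separatrix family $L_{C\setminus\{o\}}$, whose complete stalks are the Noetherian adic local rings of a smooth formal hypersurface; thus all five conditions hold automatically at every point of $C\setminus\{o\}$ (including the generic point of $C$), and each condition reduces to a statement about the single closed point $o$.

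With these reductions the downhill implications are immediate. One has $(4)\Rightarrow(2)$ by Proposition \ref{prop-strict-sub}, $(2)\Rightarrow(4)$ by Corollary \ref{cor-strict-adic} (legitimate because $(L_C)_\red=(Z_C)_\red$), and $(2)\Rightarrow(1)$ by Corollary \ref{cor-ind-adic-Noet} applied to the mild, locally ind-Noetherian, adic scheme $L_C$; together with the trivial $(1)\Rightarrow(2)$ this makes $(1),(2),(4)$ equivalent. Likewise $(4)\Rightarrow(5)$ by Proposition \ref{prop-strict-gentle->plain} (as $Z_C$ is gentle) and $(5)\Rightarrow(3)$ by Proposition \ref{prop-plain-pre-pre} (as $Z_C$ is locally pre-Noetherian), so the whole chain $(1)\Leftrightarrow(2)\Leftrightarrow(4)\Rightarrow(5)\Rightarrow(3)$ is in hand. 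It therefore remains only to climb back up, for which I would prove $(3)\Rightarrow(2)$.

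For this crux I would localize at $o$ and write $\hat\cO_{Z_C,o}\cong \cO_{C,o}[[u_1,\dots,u_n]]$ with its $(u_1,\dots,u_n)$-adic topology, $s$ a uniformizer of $C$ at $o$. Using that closure commutes with the jet filtration (the remark following Proposition \ref{prop-closure}), I would present $L_C=\varinjlim_k\overline{L_{C\setminus\{o\},k}}$, where $\overline{L_{C\setminus\{o\},k}}$ is the scheme-theoretic closure, in the $k$-th infinitesimal neighborhood of $\Delta_C$, of the $k$-jet of the separatrix family. Since that family is cut out over $C\setminus\{o\}$ by coefficients regular away from $o$ but with poles at $o$, forming the closure at $o$ forces one to clear these poles, and the thickening that the $k$-th jet of $L_C$ acquires over $o$ is governed by the maximal pole order $P(k)$ of the coefficients occurring up to order $k$. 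The dichotomy I would establish is: if $P(k)$ stays bounded, the defining proideal of $L_C$ at $o$ is mild and the $(u)$-adic topology descends, so $L_C$ is adic at $o$ and $(2)$ holds; whereas if $P(k)\to\infty$ — the situation in which the separatrices fail to converge to a formal separatrix at $o$ — then $\hat\cO_{L_C,o}$ carries unboundedly growing embedded structure along $\Delta_C$ and is not Noetherian, so $(3)$ fails. Hence $(3)$ forces $P(k)$ bounded, yielding $(2)$.

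The main obstacle is exactly this dichotomy. The delicate point is that Noetherianity of a complete linearly topologized ring does \emph{not} imply adicity in general (Example \ref{ex-kxy-nonadic} is Noetherian yet non-adic), so no abstract argument can suffice; one must use the specific shape of $L_C$ as the closure of a \emph{smooth} family to exclude the intermediate ``Noetherian but non-adic'' behaviour, showing that whenever adicity fails the jet growth is violent enough to destroy the Noetherian property of the stalk outright. Making the measure $P(k)$ and the resulting embedded structure precise, and tying them cleanly to the mildness of the defining proideal (via Proposition \ref{prop-surjective-adic}), is where the real effort lies.
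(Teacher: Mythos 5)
Your handling of the formal implications is sound and essentially matches the paper's: $(1)\Rightarrow(2),(3)$ trivially, $(2)\Rightarrow(1)$ by Corollary \ref{cor-ind-adic-Noet}, $(2)\Leftrightarrow(4)$ via Proposition \ref{prop-strict-sub} and Corollary \ref{cor-strict-adic} (the paper only needs $(2)\Rightarrow(4)$, closing the cycle through $(3)$ instead), $(4)\Rightarrow(5)$ by Proposition \ref{prop-strict-gentle->plain}, and $(5)\Rightarrow(3)$ by Proposition \ref{prop-plain-pre-pre}. The gap is $(3)\Rightarrow(2)$, which you correctly identify as the crux but do not prove. Your ``dichotomy'' on the pole order $P(k)$ --- bounded $P(k)$ gives adicity, unbounded $P(k)$ destroys Noetherianity of the stalk --- \emph{is} the content of the implication, and you explicitly defer it as ``where the real effort lies.'' Worse, nothing in the sketch indicates how unbounded $P(k)$ would contradict Noetherianity: as you yourself note via Example \ref{ex-kxy-nonadic}, a complete stalk can be Noetherian while the topology is badly non-adic, and ``unboundedly growing embedded structure'' is not tied to any invariant that Noetherian rings cannot have. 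So the proposed route does not close, even in outline.

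The paper's proof of $(3)\Rightarrow(2)$ rests on a specific algebraic mechanism that is absent from your proposal. Let $p \in L_{C}$ be the point over $o$, $A := \hat\cO_{L_{C},p}$, and $I \subseteq A$ the largest ideal of definition. Because $L_{C}$ is the closure of the \emph{integral} family $L_{C\setminus\{o\}}$, the ideal $I$ is prime, the symbolic powers $I^{(n)}$ form a basis of ideals of definition, $A$ is a domain, and the $\fm$-adic completion of $A$ (with $\fm$ the maximal ideal of $p$) is again a domain. Granting $(3)$, $A$ is Noetherian, and Zariski's theorem on analytically irreducible local domains (\cite[page 33, Lem.\ 3]{Zariski}, \cite[Ch.\ VIII, \S 5, Cor.\ 5]{Zariski-Samuel-II}) says precisely that the $I^{(n)}$-topology coincides with the $I$-adic topology; it is analytic irreducibility of the completion, not any jet or pole-order estimate, that rules out the ``Noetherian but non-adic'' behaviour. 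One then passes from the stalk at $p$ to an affine open $\Spf B \subseteq L_{C}$, whose topology is the $J$-adic one for $J$ the largest ideal of definition. Without this argument, or an honest proof of your dichotomy, the proposal has a genuine gap at the one implication that is not formal.
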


\begin{proof}
$1 \Rightarrow  2 $  and $1 \Rightarrow 3$: Trivial.

$2 \Rightarrow 1$:  Corollary \ref{cor-ind-adic-Noet}. 

$3 \Rightarrow 2$: 
Let $p \in L_{C}$ be the point over $o \in C$,
$A :=\hat \cO_{L_{C},p}$ and $ I \subseteq A $ the largest ideal of definition,
which is prime.
From the construction of $L_{C}$, the symbolic powers $ I^{(n)} $
form a basis of ideals of definition in $A$. 
It is easy to see that $A$ is a domain. 
If $\fm \subseteq A$ is the maximal ideal of $p$,
then  the $\fm$-adic completion of $A$ is also a domain.
From \cite[page 33, Lem.\ 3]{Zariski} (see also \cite[Ch.\ VIII, \S 5, Cor.\ 5]{Zariski-Samuel-II}), 
the topology on $A$ is equal to the $I$-adic topology. 

Now let $\Spf B \subseteq L_{C}$ be an affine open with $B$ an admissible ring
and $J \subseteq B$ the largest ideal of definition.
Then we see that the topology on $B$ is the $J$-adic topology,
which prove the assertion.

$2 \Rightarrow 4 $:  Corollary \ref{cor-strict-adic}.

$4 \Rightarrow 5$: Proposition \ref{prop-strict-gentle->plain}.

$5 \Rightarrow 3$: Proposition \ref{prop-plain-pre-pre}.
\end{proof}

\begin{thm}\label{thm-limit-formal-separatrix}
Suppose that one of the conditions in Proposition \ref{prop-equivalent-conditions-closed} holds.
Then the fiber of $  {L_C} \to C $ over $o$ is
a formal separatrix at $o$. 
\end{thm}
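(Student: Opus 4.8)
The plan is to compute the fibre $L_{C,o}$ of $L_C \to C$ over $o$ explicitly in a complete local ring, and then to transport the separatrix (i.e.\ $\cF$-invariance) identity from the generic fibres over $C\setminus\{o\}$ to the special fibre over $o$. First I would work in $B := \hat\cO_{Z_C,o}$, the complete local ring of $Z_C=(C\times_\CC X)_{/\Delta_C}$ at the point of $\Delta_C\cong C$ corresponding to $o$. Since $Z_C$ is the completion of the smooth variety $C\times_\CC X$ along $\Delta_C$, the ring $B$ is a regular (hence factorial) complete Noetherian local domain. Choosing a local parameter $t$ of $C$ at $o$ and fibre coordinates $u_1,\dots,u_n$ cutting out $\Delta_C$, I have a parameter $t\in B$ with $B/(t)\cong \hat\cO_{X,o}\cong\CC[[u_1,\dots,u_n]]$, this last ring being the coordinate ring of the fibre $X_{/o}$. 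By the standing hypothesis and Proposition \ref{prop-equivalent-conditions-closed}, $L_C$ is adic and strict in $Z_C$, so its defining proideal sheaf is mild and is generated near $o$ by a single element $f\in B$, and inverting $t$ recovers $L_{C\setminus\{o\}}$.

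Next I would pin down the fibre. Because $L_C$ is the closure of the divisor $L_{C\setminus\{o\}}$, the ideal $(f)$ is $t$-saturated, i.e.\ $t\nmid f$ in the factorial ring $B$; hence $\bar f := f\bmod t\in\CC[[u]]$ is nonzero. The scheme-theoretic fibre is then $L_{C,o}=\Spf B/(f,t)=\Spf \hat\cO_{X,o}/(\bar f)$, a codimension-one closed formal subscheme of $X_{/o}$ defined by $0\neq\bar f$; and since $\hat\cO_{X,o}$ is Noetherian every ideal is closed, so $L_{C,o}$ is automatically strict. Thus $L_{C,o}$ already has the shape demanded of a formal separatrix, and it remains only to check the divisibility condition $\bar f \mid \omega_o\wedge d\bar f$ for a generator $\omega_o$ of $\cF_o$.

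For that, let $\omega$ be a generator of $\cF$ near $o$ and let $d^{\mathrm{rel}}$ denote the fibrewise ($\cO_C$-linear) differential on $Z_C$; writing $\omega=\sum a_i\,dx_i$ with $x_i=u_i+\gamma_i(t)$ for the parametrization $\gamma$ of $C$, the relative form $\omega^{\mathrm{rel}}=\sum a_i\,du_i$ reduces modulo $t$ to the generator $\omega_o$, and $d^{\mathrm{rel}}f$ reduces to $d\bar f$. By Theorem \ref{thm-Miyaoka} the family $L$ consists of leaves, so over $U$, in particular over $C\setminus\{o\}$, each fibre is $\cF$-invariant; equivalently $\omega^{\mathrm{rel}}\wedge d^{\mathrm{rel}}f\in f\cdot\Omega^2_{Z_C/C}$ after inverting $t$. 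As $\Omega^2_{Z_C/C}$ is free over $B$ (the fibres are formal disks), I may clear denominators and fix the least $k\ge 0$ with $t^k\,\omega^{\mathrm{rel}}\wedge d^{\mathrm{rel}}f=f\,\theta$ for some integral $\theta\in\Omega^2_{Z_C/C}$.

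The main point is that $k=0$. If $k\ge 1$, reduction modulo $t$ gives $\bar f\,\bar\theta=0$ in the free $\CC[[u]]$-module $\Omega^2_{X_{/o}}$; since $\CC[[u]]$ is a domain and $\bar f\neq 0$, this forces $\bar\theta=0$, so $\theta\in t\,\Omega^2_{Z_C/C}$, and cancelling one factor of $t$ (legitimate because $B$ is a domain and $\Omega^2_{Z_C/C}$ is free) contradicts minimality of $k$. Hence $\omega^{\mathrm{rel}}\wedge d^{\mathrm{rel}}f=f\,\theta$ holds integrally, and reducing modulo $t$ yields $\omega_o\wedge d\bar f=\bar f\,\bar\theta$, i.e.\ $\bar f\mid\omega_o\wedge d\bar f$, so $L_{C,o}$ is a formal separatrix of $\cF$ at $o$. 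The two delicate steps, where the hypothesis is genuinely used, are the identification of the defining equation $f$ as an honest element of $B$ (this needs strictness/adicness, so that the proideal is mild) and the descending induction on $k$ that clears the denominators in $t$; everything else is an unwinding of the definition of formal separatrix.
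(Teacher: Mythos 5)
Your proof is correct and follows essentially the same route as the paper's: pull $\omega$ back to a relative form on $Z_C/C$, use that $L_C$ is a hypersurface cut out by a single section $f$, establish the divisibility $f \mid \omega^{\mathrm{rel}}\wedge d^{\mathrm{rel}}f$ coming from the family of separatrices over $C\setminus\{o\}$, and reduce modulo the fiber over $o$ to get $\bar f \mid \omega_o \wedge d\bar f$. The only difference is one of detail: the paper asserts the integral divisibility on all of $Z_C$ in a single sentence, whereas you justify it via the $t$-saturation of $(f)$ (so $\bar f \neq 0$) and the descending induction clearing powers of $t$ in the factorial local ring $\hat\cO_{Z_C,o}$ --- a careful elaboration of the same step rather than a different argument.
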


\begin{proof}
We need to use complete modules of differentials of locally 
Noetherian formal schemes. 
For a morphism $f:W \to V$ of locally Noetherian formal schemes,
we have a complete module of differentials, $\hat \Omega_{W/V}$, 
which is a semicoherent complete $\hat \cO_W$-module, and a derivation 
$\hat d_{W/V}: \hat \cO_W \to \hat \Omega_{W/V}$.
We refer to \cite{AJP} for details.

If necessary, shrinking $X$, we can take a nowhere vanishing 
$\omega \in \cF (X)$.
Let 
\[
 \psi: Z_{C} \to X.
\]
be the projection.
Pulling back $\omega$, we obtain a global section $\psi^* \omega$ of $ \hat \Omega_{Z_{C}/C} $.
Since $L_C$ is a hypersurface in $Z_{C}$, it is defined by a section $f$ of $\hat \cO_{Z_{C}}$. 
Since the restriction of $L_C$ to $C \setminus \{o\}$ is
 the family of formal separatrices along $C \setminus \{o\}$, $f$
 divides $\psi^* \omega \wedge \hat d_{Z_{C}/C}f $. 

Let $ Y$ be the fiber of $L _C \to C$ over $o$, which is a hypersurface of $X_{/o}$
defined by the image $\bar f \in \hat \cO_{X,o}$ of $f$.
Then $\bar f$ divides $\omega \wedge \hat d_{X_{/o}/\CC} \bar f$. 
Hence $Y$ is a formal separatrix. 
\end{proof}

\subsection{Jouanolou's theorem}

We recall Jouanolou's result on Pfaff forms.
We refer to  \cite{Jouanolou} for details.

An {\em  algebraic Pfaff form of degree $m$}  on $\PP_\CC ^2$ is a one-form
\[
 \omega = \omega _1 dx + \omega_3 dy + \omega_3 dz
\]
such that $\omega _i$ are homogeneous polynomials of degree $m$ and the equation
\[
  x \omega _1 + y \omega _2 + z \omega _3 =0
\] 
holds. 
A {\em Pfaff equation of degree $m$} on $\PP_\CC^2$ is a class of algebraic Pfaff forms 
modulo nonzero scalar multiplications. 

Let $\omega$ be an algebraic Pfaff form on $\PP_\CC^2$ and $ [\omega] $ its Pfaff equation class.
An {\em algebraic solution} of  $\omega$ or $[\omega]$ is a class of  homogeneous polynomials 
$f \in \CC[x,y,z]$
modulo nonzero scalar multiplications
such that $f$ divides $\omega \wedge df $.

Let $V_m$ be the vector space of the algebraic Pfaff forms of degree $m$ on $\PP_\CC^2$.
Then the set of the Pfaff equations of degree $m$ on $\PP_\CC^2$
is identified with the projective space $\PP (V_m)= (V_m \setminus \{0\} )/ \CC^*$.
Define 
\[
 Z_m \subseteq \PP (V_m)
\]
to be the set of the Pfaff equations that have no algebraic solution.

\begin{thm}\cite[\S 4]{Jouanolou}
Suppose $m \ge 3$. 
Then $Z_m$ is the intersection of countably many nonempty Zariski open subsets of $\PP(V_m)$
and contains the class of the algebraic Pfaff form
\[
 (x^{m-1}z -y^m ) dx + (y^{m-1}x -z^m) dy +(z^{m-1}y -x^m ) dz .
\]
\end{thm}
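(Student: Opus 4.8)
The plan is to exhibit $Z_m$ as a countable intersection of complements of ``degree-$d$ solution loci'' and then to handle the explicit form separately. For each $d \ge 1$ let $W_d$ be the space of homogeneous polynomials of degree $d$ in $x,y,z$, and set
\[
A_{m,d} := \{ [\omega] \in \PP(V_m) \mid \exists\, 0 \ne f \in W_d,\ f \mid \omega \wedge df \},
\]
the locus of Pfaff equations admitting an algebraic solution of degree $d$. Since any algebraic solution has positive degree, the Pfaff equations possessing some solution form $\bigcup_{d \ge 1} A_{m,d}$, so that
\[
Z_m = \bigcap_{d \ge 1} \bigl( \PP(V_m) \setminus A_{m,d} \bigr).
\]
I would therefore prove three things: (i) each $A_{m,d}$ is Zariski closed; (ii) each $A_{m,d}$ is proper; and (iii) the explicit form lies in $Z_m$.

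For (i) I would use the incidence correspondence
\[
I_{m,d} := \{ ([\omega],[f]) \in \PP(V_m) \times \PP(W_d) \mid f \mid \omega \wedge df \}.
\]
Expanding $\omega\wedge df$, its three coefficients are bilinear in the coefficients of $\omega$ and in the partials of $f$, hence homogeneous of degree $m+d-1$; the requirement that each lie in the ideal $(f)$ is the requirement that it lie in the image of the injective multiplication map $W_{m-1}\to W_{m+d-1}$, $h\mapsto fh$, which is a determinantal (hence closed) condition on the joint coefficients. Thus $I_{m,d}$ is closed, and since $\PP(W_d)$ is proper, $A_{m,d}=\pr_1(I_{m,d})$ is closed; consequently each $\PP(V_m)\setminus A_{m,d}$ is Zariski open.

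For (ii) the natural route is a dimension count on $I_{m,d}$: for fixed $[f]$ the fibre $\{[\omega]\mid f\mid \omega\wedge df\}$ is cut out by linear conditions on $\omega$, so it is a linear subsystem whose dimension I would bound from above, and combining this bound with $\dim\PP(W_d)$ gives $\dim I_{m,d}<\dim\PP(V_m)$ for each $d$ once $m\ge 3$. Hence $A_{m,d}\subsetneq\PP(V_m)$, so $\PP(V_m)\setminus A_{m,d}$ is a nonempty dense open. (Granting (iii) this nonemptiness is automatic, since the explicit form then lies in every $\PP(V_m)\setminus A_{m,d}$.) Finally, because $\CC$ is uncountable, an irreducible $\CC$-variety is never a countable union of proper closed subsets, so $Z_m=\bigcap_d(\PP(V_m)\setminus A_{m,d})$ is itself nonempty, which already yields existence before any explicit witness is produced.

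The main obstacle is (iii): showing that
\[
\omega_0 = (x^{m-1}z - y^m)\,dx + (y^{m-1}x - z^m)\,dy + (z^{m-1}y - x^m)\,dz
\]
has no algebraic solution of any degree. Here I would exploit symmetry: one checks directly that $\omega_0$ is invariant under the cyclic substitution $(x,y,z)\mapsto(y,z,x)$, and that $\omega_0$ carries a further large diagonal symmetry group $G$ acting transitively on its singular locus, which consists of $m^2+m+1$ nondegenerate points. Supposing a reduced invariant curve $C=\{f=0\}$ existed, one may replace it by a $G$-invariant curve and then confront the Camacho--Sad index relation $\sum_{p}\mathrm{CS}(\omega_0,C,p)=(\deg C)^2$, the sum being over the singular points, with the rigid constraints that the transitive $G$-action places on the local indices; the resulting numerical identities cannot be met, a contradiction. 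This is exactly the content of Jouanolou's analysis in \cite[\S 4]{Jouanolou}, and it is the genuinely hard, nonformal heart of the theorem.
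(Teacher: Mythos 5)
The paper itself offers no proof of this statement: it is imported wholesale from Jouanolou \cite[\S 4]{Jouanolou}, so your proposal has to stand on its own as a proof of Jouanolou's theorem, not be compared against an internal argument. The soft half of what you do is correct, and is essentially how the countable-intersection structure arises in the source: $A_{m,d}$ is the image in $\PP(V_m)$ of the incidence variety $I_{m,d}\subseteq \PP(V_m)\times\PP(W_d)$, which is closed because divisibility by $f$ is a rank condition on the augmented matrix of the multiplication map $W_{m-1}\to W_{m+d-1}$ (of constant rank, since multiplication by $f\neq 0$ is injective), and $\pr_1$ is a closed map by completeness of $\PP(W_d)$. So $Z_m$ is indeed a countable intersection of Zariski open sets, and, granting your step (iii), each of these opens contains $[\omega_0]$ and is therefore nonempty.

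The genuine gap is step (iii), and it swallows the whole theorem. Both the membership of the explicit form in $Z_m$ and the nonemptiness of the open sets rest on it (your uncountability fallback requires (ii), whose dimension count you also never carry out), yet for (iii) you offer only a strategy sketch that ends by deferring to ``Jouanolou's analysis in \cite[\S 4]{Jouanolou}'' --- that is, you defer the hard, nonformal heart of the theorem to the very result being proved, which is circular. The sketch itself is also not salvageable as stated. First, the attribution is anachronistic: the Camacho--Sad index theorem (1982) postdates Jouanolou's lecture notes (1979), whose own proof of nonexistence is a direct algebraic argument exploiting the symmetries, so your argument is not ``exactly the content'' of \cite[\S 4]{Jouanolou}. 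Second, the numerics are wrong: solving $x^{m-1}z=y^m$, $y^{m-1}x=z^m$, $z^{m-1}y=x^m$ gives the points $(1,\eta,\eta^m)$ with $\eta^{m^2-m+1}=1$, so the singular locus of $\omega_0$ consists of $m^2-m+1$ points (e.g.\ $7$ for $m=3$), not $m^2+m+1$. Index-theoretic proofs of the nonexistence statement do exist in later literature, but they require determining which singular points can lie on a putative invariant curve, controlling the singularities of that curve, and actually computing the local indices; none of that work appears here. As written, nothing beyond the openness of the sets $\PP(V_m)\setminus A_{m,d}$ is actually established.
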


From \cite[page 4, Prop.\ 1.4]{Jouanolou}, every algebraic Pfaff form $\omega$ on $\PP_\CC^2$ is
integrable: $  d \omega \wedge \omega = 0$. So $\omega$ defines also a foliation $\cF_\omega$ on $\CC^3$.
From \cite[page 85, Prop.\ 2.1]{Jouanolou}, the only singular point of $\cF _\omega$
is the origin. Accordingly we can define 
the family $L_{\omega, C \setminus \{o\}}$ of formal separatrices along $C \setminus \{o\}$
and its closure
$L _{\omega,C}$ for any line $C \subset \CC^3$ through the origin.

Let $ f = \sum_{i \ge n} f_i  \in \CC [[x,y,z]] $. Here $f_i$ is a homogeneous polynomial of degree $i$
and $f_n \ne 0$. Suppose that $f$ defines a formal separatrix at the origin, equivalently
that $f$ divides $\omega \wedge df$.
Then the class of $f_n$ is an algebraic solution of the Pfaff equation $[\omega]$.
Hence if $[\omega ] \in Z_m$, then $\cF_\omega$ has no formal separatrix at the origin.

\begin{cor}\label{cor-last-corollary}
For $[\omega] \in Z_m$ and a line $C \subseteq \CC^3$ through the origin, 
the formal subscheme $L _{\omega , C}$ of $ Z_{C}\cong \Spf \CC[w][[x,y,z]]$
is neither strict, plain, locally pre-Noetherian nor adic.
\end{cor}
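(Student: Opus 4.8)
The plan is to argue by contradiction, combining the equivalence of Proposition \ref{prop-equivalent-conditions-closed}, the geometric conclusion of Theorem \ref{thm-limit-formal-separatrix}, and Jouanolou's nonexistence result. Since the four listed properties---strict, plain, locally pre-Noetherian, and adic---are among the five mutually equivalent conditions of Proposition \ref{prop-equivalent-conditions-closed}, it suffices to derive a contradiction from the assumption that \emph{any one} of them holds.

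First I would check that the present situation falls under the hypotheses of Theorem \ref{thm-limit-formal-separatrix}. Here $X = \CC^{3}$ is smooth, and by the cited results of Jouanolou the only singular point of $\cF_{\omega}$ is the origin $o$. A line $C \subseteq \CC^{3}$ through the origin is a smooth closed subvariety of dimension one meeting the singular locus $\{o\}$ only at $o$, so the constructions of $L_{\omega, C \setminus \{o\}}$, of $Z_{C} \cong \Spf \CC[w][[x,y,z]]$, and of the closure $L_{\omega,C}$ are all legitimate, and Theorem \ref{thm-limit-formal-separatrix} applies verbatim.

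Now suppose, for contradiction, that $L_{\omega,C}$ satisfies one of the four properties. By Proposition \ref{prop-equivalent-conditions-closed} all five conditions then hold, so in particular the hypothesis of Theorem \ref{thm-limit-formal-separatrix} is met. That theorem yields that the fiber of $L_{\omega,C} \to C$ over $o$ is a formal separatrix of $\cF_{\omega}$ at $o$. On the other hand, $[\omega] \in Z_{m}$ means, by definition, that $[\omega]$ has no algebraic solution; as recorded in the paragraph preceding the corollary, passing to the lowest-degree homogeneous term $f_{n}$ of any $f \in \CC[[x,y,z]]$ defining a formal separatrix shows that the class of $f_{n}$ would be an algebraic solution of $[\omega]$, so $\cF_{\omega}$ admits no formal separatrix at the origin. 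This contradicts the existence of the separatrix produced above, and the corollary follows.

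Since the deduction is essentially a chain of already-established implications, there is no serious analytic obstacle; the only point requiring care is the verification that the notion of ``formal separatrix at $o$'' delivered by Theorem \ref{thm-limit-formal-separatrix} coincides exactly with the one ruled out by the leading-term argument behind Jouanolou's theorem---i.e.\ that both refer to a codimension-one strict closed formal subscheme of $X_{/o}$ defined by a single $0 \ne f \in \hat \cO_{X,o}$ with $f$ dividing $\omega \wedge df$. Confirming this identification closes the argument.
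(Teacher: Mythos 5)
Your proposal is correct and follows essentially the same argument as the paper: assume one of the four properties, invoke Theorem \ref{thm-limit-formal-separatrix} (whose hypothesis is exactly that one of the equivalent conditions of Proposition \ref{prop-equivalent-conditions-closed} holds) to produce a formal separatrix of $\cF_{\omega}$ at the origin, and contradict the fact, recorded just before the corollary via the leading-term argument, that $[\omega] \in Z_{m}$ admits no formal separatrix there. The paper's proof is simply a terser version of yours; your added verifications (that the geometric setup satisfies the hypotheses, and that the two notions of formal separatrix coincide) are sound but already implicit in the preceding discussion.
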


\begin{proof}
If $L_{\omega,C}$ is either strict, plain, locally pre-Noetherian or adic,
then from Theorem \ref{thm-limit-formal-separatrix},
the foliation $\cF_\omega$ has a formal separatrix at the origin.
Hence $[\omega] \notin Z_m$, a contradiction.
\end{proof}

%%%%%%%%%%%%%%%%%%%%%%%%%%%%%%%%%%%%%%%%%%%%%%%%%%%%%%%%%%%%%%%%%%%%%%%%%%%%%%%

\end{document}